\setlist[description]{leftmargin=\parindent,labelindent=\parindent}
\newtheorem{thm}{Theorem}[section]
\newtheorem{prop}[thm]{Proposition}
\newtheorem{lem}[thm]{Lemma}
\newtheorem{cor}[thm]{Corollary}
\theoremstyle{definition}
\newtheorem{definition}[thm]{Definition}
\newtheorem{example}[thm]{Example}
\newtheorem{rem}[thm]{Remark}
\numberwithin{equation}{section}
\newcommand{\X}{\mathcal{X}}
\newcommand{\Y}{\mathcal{Y}}
\newcommand{\V}{\mathcal{V}}
\newcommand{\N}{\mathcal{N}}
\newcommand{\B}{\mathcal{B}}
\newcommand{\zz}{\mathbb{Z}}
\newcommand{\qq}{\mathbb{Q}}
\newcommand{\C}{\mathcal{C}}
\newcommand{\M}{\mathcal{M}}
\newcommand{\p}{\mathbb{P}}
\newcommand{\pp}{\mathbb{P}}
\renewcommand{\H}{\mathcal{H}}
\newcommand{\F}{\mathcal{F}}
\renewcommand{\P}{\mathcal{P}}
\newcommand{\E}{\mathcal{E}}
\renewcommand{\O}{\mathcal{O}}
\newcommand{\Mg}{\mathcal{M}_g}
\renewcommand{\tilde}{\widetilde}
\DeclareMathOperator{\rank}{rank}
\DeclareMathOperator{\Aut}{Aut}
\DeclareMathOperator{\GL}{GL}
\DeclareMathOperator{\SL}{SL}
\DeclareMathOperator{\SP}{Sp}
\DeclareMathOperator{\Pic}{Pic}
\DeclareMathOperator{\Spec}{Spec}
\DeclareMathOperator{\Supp}{Supp}
\DeclareMathOperator{\coker}{coker}
\DeclareMathOperator{\Sym}{Sym}
\DeclareMathOperator{\PGL}{PGL}
\DeclareMathOperator{\BSL}{BSL}
\DeclareMathOperator{\BGL}{BGL}
\DeclareMathOperator{\codim}{codim}
\DeclareMathOperator{\PSp}{PSp}
\DeclareMathOperator{\PGSp}{PGSp}
\DeclareMathOperator{\BSp}{BSp}
\renewcommand{\gg}{\mathbb{G}}
\newcommand{\ee}{{\vec{e}} \nobreak\hspace{.16667em plus .08333em}'}
\begin{document}
\title{The Chow Rings of the Moduli Spaces of Curves of Genus 7, 8, and 9}

\author{Samir Canning}
\address{Department of Mathematics, ETH Z\"urich, R\"amistrasse 101, 8092 Z\"urich, Switzerland.}
\author{Hannah Larson}
\address{Department of Mathematics, Harvard University, One Oxford Street, Cambridge, MA 02138.}
\thanks{During the preparation of this article, S.C. was partially supported by NSF RTG grant DMS-1502651. H.L. was supported by the Hertz Foundation and NSF GRFP under grant DGE-1656518. This work will be part of S.C.'s and H.L.'s Ph.D. theses.}
\email{samir.canning@math.ethz.ch}
\email{hlarson@math.harvard.edu}
\maketitle

\begin{abstract}
The rational Chow ring of the moduli space $\M_g$ of curves of genus $g$ is known for $g \leq 6$.
Here, we determine the rational Chow rings of $\M_7, \M_8,$ and $\M_9$ by showing they are tautological. 
The key ingredient is intersection theory on Hurwitz spaces of degree $4$ and $5$ covers of $\pp^1$ via their associated vector bundles. 
The main focus of this paper is a detailed geometric analysis of special tetragonal and pentagonal covers whose associated vector bundles on $\pp^1$ are highly unbalanced, expanding upon previous work of the authors in the more balanced case. 
In genus $9$, we use work of Mukai 
to present the locus of hexagonal curves as a global quotient stack,
and, using equivariant intersection theory, we show its Chow ring is generated by restrictions of tautological classes.
\end{abstract}

\section{Introduction}
In his landmark paper \cite{Mum}, Mumford introduced the Chow ring of the moduli space $\M_g$ of genus $g$ curves. Since then, much progress has been made on the determination of $A^*(\M_g)$ in low genus, which we summarize below.

\begin{itemize}[leftmargin=50pt]
    \item[($g = 2$)] Mumford \cite{Mum} in 1983, determined $A^*(\overline{\M}_2)$ with rational coefficients. \\
    Vistoli \cite{V3} in 1998, determined $A^*(\M_2)$ with integral coefficients, \\
    E. Larson \cite{L2} in 2020, determined $A^*(\overline{\M}_2)$ with integral coefficients.
    \item[($g = 3$)] Faber \cite{F2} in 1990, determined $A^*(\overline{\M}_3)$ with rational coefficients, \\
    Di Lorenzo--Fulghesu--Vistoli \cite{DFV} in 2020, determined the integral Chow ring of the locus of smooth plane quartics.
    \item[($g=4$)] Faber \cite{F3} in 1990, determined $A^*(\M_4)$ with rational coefficients.
    \item [($g=5$)] Izadi \cite{Iz} in 1995, determined $A^*(\M_5)$ with rational coefficients.
    \item[($g=6$)] Penev--Vakil \cite{PV} in 2015, determined $A^*(\M_6)$ with rational coefficients.
\end{itemize}

In each of the above cases,
the rational Chow ring of $\M_g$ is equal to the \emph{tautological subring} $R^*(\Mg) \subseteq A^*(\M_g)$, a subring generated by certain natural classes which we now define.
Let $f: \mathcal{C} \rightarrow \M_g$ be the universal curve. 
The tautological subring is the subring of $A^*(\M_g)$ generated by the \emph{kappa classes}, $\kappa_i := f_*(c_1(\omega_f)^{i+1})$.

In this paper, we
tackle the next open cases of genus $7, 8$, and $9$ using the new machinery of tautological classes on the Hurwitz space \cite{part1,part2}. We prove that the rational Chow rings of $\M_7, \M_8,$ and $\M_9$ are all generated by tautological classes, and thereby determine these Chow rings using work of Faber \cite{F}.
In addition to our theorems in genus $7, 8$, and  $9$, our techniques give new and much simpler proofs of the genus $5$ and $6$ cases (see Section \ref{tet56}).
In particular, in genus $6$, we establish that all classes supported on the
bielliptic locus are tautological, which was not fully explained in \cite{PV}. 
\begin{thm} \label{main}
The Chow ring of the moduli space of genus $7$ curves is generated by tautological classes. Hence,
\[
A^*(\mathcal{M}_7)\cong \qq[\kappa_1,\kappa_2]/I_7,
\]
where $I_7$ is the ideal generated by the classes
\[
\begin{cases}
    2423\kappa_1^2\kappa_2-52632\kappa_2^2 \\
    1152000\kappa_2^2-2423\kappa_1^4\\
    16000\kappa_1^3\kappa_2-731\kappa_1^5.
\end{cases}
\]

\end{thm}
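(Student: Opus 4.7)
The plan is to stratify $\M_7$ by gonality and establish tautologicality of $A^*(\M_7)$ one stratum at a time, combining the Hurwitz-space computations of \cite{bigpaper} with a hands-on analysis of the maximally unbalanced loci. By Brill--Noether theory, a general genus $7$ curve is pentagonal, the tetragonal locus is a divisor, and the trigonal and hyperelliptic loci have higher codimension. Writing $\M_7^k \subseteq \M_7$ for the closed substack of curves of gonality at most $k$, we obtain the filtration
\[
\M_7^{2} \subset \M_7^{3} \subset \M_7^{4} \subset \M_7^{5} = \M_7.
\]
Via the excision sequence $A^*(\M_7^{k-1}) \to A^*(\M_7^k) \to A^*(\M_7^k \setminus \M_7^{k-1}) \to 0$ at each step, it suffices inductively to show that the Chow ring of each open gonality stratum is generated by restrictions of tautological classes from $\M_7$, and that classes on the lower-gonality closed locus push forward into the tautological subring.

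For the top two strata the tool is the Hurwitz stack $\H_{k,7}$ of degree $k$ simply-branched covers $f \colon C \to \pp^1$ of genus $7$ curves, which dominates the gonality-$k$ locus of $\M_7$ under the $\PGL_2$ quotient. The Tschirnhaus bundle $E = (f_*\OC / \O_{\pp^1})^\vee$ is a rank $k-1$ vector bundle on $\pp^1$, and its splitting type stratifies $\H_{k,7}$. The main theorems of \cite{bigpaper} compute the Chow ring of the open substack $\H_{k,7}^\circ$ parametrizing covers of sufficiently balanced splitting type, presenting its generators as pullbacks of tautological classes from $\M_7$. Applying this for $k = 4, 5$ handles the pentagonal stratum of $\M_7$ and the balanced portion of the tetragonal stratum.

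The heart of the argument, and the main original obstacle, is the complement: the unbalanced splitting-type strata in $\H_{4,7}$ and $\H_{5,7}$, which are the principal focus of the paper. For each such stratum the strategy is geometric and case-by-case: one exploits the rigidity imposed by the unbalanced splitting (a factorization of the cover through a quotient, an additional base-point-free pencil, or a realization of the curve on a specific scroll, Hirzebruch surface, or del Pezzo) to present the stratum as a global quotient stack. Equivariant intersection theory then computes its Chow ring, and one verifies that each generator is the restriction of a tautological class from $\M_7$. The remaining trigonal and hyperelliptic loci are handled via their classical quotient-stack descriptions (divisors on Hirzebruch surfaces and double covers of $\pp^1$), for which the Chow rings are already known to be tautological.

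Combining these computations through the excision sequences produces a surjection $R^*(\M_7) \twoheadrightarrow A^*(\M_7)$, so $A^*(\M_7)$ is tautological. The explicit presentation with generators $\kappa_1, \kappa_2$ and the three stated relations then follows from Faber's \cite{F} calculation of $R^*(\M_7)$. I expect the main technical difficulty to be the explicit parametrization of each unbalanced splitting-type stratum in $\H_{5,7}$ and $\H_{4,7}$, together with the verification that every class produced by the equivariant computation on these strata reduces to a polynomial in $\kappa_1$ and $\kappa_2$ restricted from $\M_7$.
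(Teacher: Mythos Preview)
Your high-level strategy matches the paper's exactly: gonality stratification, Hurwitz spaces $\H_{4,7}$ and $\H_{5,7}$, the results of \cite{bigpaper} for the balanced open, case-by-case analysis of unbalanced strata via quotient-stack presentations, and Faber's computation for the final presentation.

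There is, however, a genuine missing ingredient. Showing that $A^*(\Sigma)$ of a bad locally closed stratum $\Sigma$ is generated by restrictions of tautological classes is not, by itself, enough to conclude that classes \emph{supported on} $\overline{\Sigma}$ are tautological. The push-pull formula $\iota_*(\iota^*\tau) = \tau \cdot [\overline{\Sigma}]$ requires that the fundamental class $[\overline{\Sigma}]$ itself be tautological on $\H_{k,7} \smallsetminus \beta^{-1}(\M_7^{k-1})$. The paper supplies this via universal degeneracy formulas for splitting loci (from \cite{L}): after excising $\beta^{-1}(\M_7^{k-1})$, each bad pair-stratum happens to coincide with a \emph{single} splitting locus occurring in its expected codimension, so its class is a polynomial in Casnati--Ekedahl classes. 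Your proposal does not address how $[\overline{\Sigma}]$ is produced, and without this step the excision argument does not close.

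Two smaller points. First, the paper stratifies by the \emph{pair} of splitting types of both Casnati--Ekedahl bundles $E_\alpha$ and $F_\alpha$, not just the Tschirnhausen bundle; the second bundle is essential both for describing the bad locus $\Supp R^1\pi_*(\F^\vee \otimes \Sym^2\E)$ and for the quotient-stack presentation $[(U\times\gg_m)/\SL_2\ltimes(\Aut(\vec{e})\times\Aut(\vec{f}))]$ of each stratum. Second, you have the difficulty backwards: in genus $7$ the pentagonal case is essentially free---every unbalanced pentagonal stratum already lies in $\beta^{-1}(\M_7^4)$---while the tetragonal case is where the work lies, with two bad strata $\Sigma_2$ (with $E=(2,4,4)$, $F=(4,6)$) and $\Sigma_3$ (with $E=(2,3,5)$, $F=(4,6)$) requiring the analysis.
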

The computation of the tautological ring is originally due to Faber \cite{F}. We used the Sage \cite{sage} package admcycles \cite{admcycles} and a program of Pixton \cite{Pixton} to obtain the above presentation and those below. 


\begin{thm} \label{main8}
The Chow ring of the moduli space of genus $8$ curves is generated by tautological classes. Hence,
\[
A^*(\M_8)=\qq[\kappa_1,\kappa_2]/I_8,
\]
where $I_8$ is the ideal generated by the classes
\[
\begin{cases}
714894336\kappa_2^2-55211328\kappa_1^2\kappa_2+1058587\kappa_1^4\\
62208000\kappa_1\kappa_2^2-95287\kappa_1^5 \\
144000\kappa_1^3\kappa_2-5617\kappa_1^5. \\

\end{cases}
\]
\end{thm}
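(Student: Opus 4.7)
The plan is to show that $A^*(\M_8)$ is generated by tautological classes; the explicit presentation in terms of $\kappa_1, \kappa_2$ modulo the ideal $I_8$ then follows from Faber's computation of the tautological ring \cite{F}, which we extract using Sage \cite{sage}, admcycles \cite{admcycles}, and Pixton's program \cite{Pixton}.

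Since the Brill--Noether gonality of a genus $8$ curve is $5$, I would stratify $\M_8$ by gonality as $\M_8 = \M_8^{\text{hyp}} \sqcup \M_8^{3} \sqcup \M_8^{4} \sqcup \M_8^{5}$, with the pentagonal stratum $\M_8^5$ open and dense. Excision then reduces the problem to showing two things: (i) $A^*(\M_8^5)$ is generated by restrictions of tautological classes from $\M_8$, and (ii) classes supported on each of the lower gonality strata push forward to tautological classes on $\M_8$. For the pentagonal locus, I would work on the Hurwitz space $\H_{5,8}$, which comes equipped with vector bundles on $\pp^1$ (the Tschirnhaus bundle and the bundle of quadrics cutting out the cover) whose splitting types provide a further stratification; the results of \cite{bigpaper} apply on the large open subset where these splittings are sufficiently balanced and already produce a tautological Chow ring there.

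The tetragonal locus $\M_8^4$ would be analyzed in parallel via the Hurwitz space $\H_{4,8}$ and the Casnati--Ekedahl resolution, again with the balanced open part handled by \cite{bigpaper}. The trigonal locus, being of comparatively large codimension, and the hyperelliptic locus, presented as a quotient of an open subset of a projective space of binary forms of degree $2g+2=18$, contribute only classes already known to be tautological, for instance as pushforwards of polynomial expressions in $\psi$- and $\kappa$-classes from $\overline{\M}_{0,18}/S_{18}$.

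The technical core of the argument, and the main obstacle, is the detailed geometric study of the special tetragonal and pentagonal loci whose associated vector bundles on $\pp^1$ are so unbalanced that they lie outside the open subsets treated in \cite{bigpaper}. For each such locus I would present it as a global quotient stack, use equivariant intersection theory to compute its Chow ring, and verify that all generators are restrictions of tautological classes on $\M_8$. Assembling these contributions with the balanced Hurwitz space computations and the lower gonality strata yields the tautological generation of $A^*(\M_8)$, after which the presentation stated in Theorem \ref{main8} is immediate from Faber's tautological ring in genus $8$.
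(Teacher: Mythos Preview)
Your proposal is correct and follows essentially the same route as the paper: stratify $\M_8$ by gonality, handle the hyperelliptic and trigonal strata via known results (the paper invokes \cite{PV2} and Faber's Brill--Noether class argument rather than the $\overline{\M}_{0,18}/S_{18}$ picture you sketch, but either suffices), and then study $\H_{4,8}$ and $\H_{5,8}$ via the Casnati--Ekedahl splitting-type stratification, with \cite{bigpaper} covering the balanced open and a case-by-case quotient-stack analysis of the unbalanced strata. One small caveat: ``comparatively large codimension'' is not in itself an argument for the trigonal locus---the actual input is that $A^*(\M_g^3\smallsetminus\M_g^2)$ is generated by the restriction of $\kappa_1$ (Patel--Vakil) together with the tautologicality of the fundamental class---but otherwise your outline matches the paper's proof.
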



\begin{rem}
The authors would like to point out contemporaneous work of Maxwell da Paixão de Jesus Santos, which, using different techniques, makes significant progress towards showing $A^*(\M_8)$ is tautological (it is proved that non-tautological classes must be supported on the bielliptic locus).
\end{rem}

\begin{thm} \label{main9}
The Chow ring of the moduli space of genus $9$ curves is generated by tautological classes. Hence,
\[
A^*(\M_9)=\qq[\kappa_1,\kappa_2,\kappa_3]/I_9,
\]
where $I_9$ is the ideal generated by the classes
\[
\begin{cases}
    5195\kappa_1^4+3644694\kappa_1\kappa_3+749412\kappa_2^2-265788\kappa_1^2\kappa_2
    \\ 
    33859814400\kappa_2\kappa_3-95311440\kappa_1^3\kappa_2+2288539\kappa_1^5
    \\
    19151377\kappa_1^5+16929907200\kappa_1\kappa_2^2-1142345520\kappa_1^3\kappa_2
    \\
    1422489600\kappa_3^2-983\kappa_1^6
    \\
    1185408000\kappa_2^3-47543\kappa_1^6.
    \\
\end{cases}
\]
\end{thm}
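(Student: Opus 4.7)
The plan is to stratify $\M_9$ by gonality and show that the Chow ring of each stratum is generated by restrictions of tautological classes from $\M_9$. By Brill--Noether theory, a curve of genus $9$ has gonality at most $6$, so $\M_9$ decomposes into locally closed substacks parametrizing curves of gonality $2, 3, 4, 5$, and $6$, with the hexagonal stratum being open and dense. Applying excision inductively along this stratification, if the Chow ring of each stratum is generated by restrictions of tautological classes, then $A^*(\M_9)$ is tautological as a ring. The explicit presentation displayed in the theorem then follows from Faber's computation of the tautological ring of $\M_9$, which we realize concretely using admcycles \cite{admcycles} together with Pixton's program \cite{Pixton}.

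For the low gonality strata I would combine several ingredients. The hyperelliptic locus admits a well-known quotient description whose Chow ring is straightforward to control, and the trigonal locus can be handled via the scrollar/Maroni stratification on the associated Hirzebruch surfaces. The tetragonal and pentagonal strata are the natural setting for the Hurwitz-space intersection theory developed in \cite{bigpaper}, which handles the open locus where the associated vector bundles on $\p^1$ are not too unbalanced. The remaining ``unbalanced'' loci are closed substacks of strictly smaller dimension; as in the genus $7$ and $8$ analyses, I would give concrete geometric descriptions of the covers over each such stratum, parametrizing them by the universal bundles on $\p^1$ with prescribed splitting type, and then show by direct equivariant computation that their Chow rings are generated by tautological restrictions.

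The main obstacle, and the heart of the genus $9$ argument, is the hexagonal stratum, which is the generic stratum of $\M_9$ and to which the general Hurwitz-space machinery does not directly apply. Here the key input is Mukai's theorem \cite{Muk} describing general genus $9$ curves via a symplectic Grassmannian construction. I would use Mukai's description to realize the hexagonal locus as a global quotient stack $[X/G]$ for a reductive group $G$ acting on a quasi-projective variety $X$, and then invoke Edidin--Graham equivariant intersection theory to compute $A^*_G(X)$. The crucial verification is then that the natural equivariant generators arising from Mukai's construction (Chern classes of the standard equivariant bundles on $X$) can be identified with restrictions of tautological classes on $\M_9$; together with standard equivariant generation results, this yields the surjectivity of the tautological ring onto $A^*$ of the hexagonal stratum.

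Combining these surjections via excision along the gonality stratification shows the tautological subring of $A^*(\M_9)$ is the whole Chow ring. The three displayed relations among $\kappa_1, \kappa_2, \kappa_3$ are then read off from the tautological presentation computed via admcycles, giving the stated isomorphism $A^*(\M_9) \cong \qq[\kappa_1,\kappa_2,\kappa_3]/I_9$.
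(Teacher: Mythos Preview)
Your overall architecture matches the paper's: the gonality stratification, the use of \cite{bigpaper} for the $k=4,5$ Hurwitz spaces, and Mukai's model for the hexagonal stratum are exactly what is done. However, there is a genuine gap in your treatment of the tetragonal locus, and your description of the hexagonal step is missing the one nontrivial identification.

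For $k=4$, your sentence ``as in the genus $7$ and $8$ analyses'' is precisely where genus $9$ diverges. In genus $7$ and $8$, every bad pair splitting locus outside $\Psi$ (and outside $\beta^{-1}(\M_g^3)$) can be realized as a \emph{single} splitting locus for $\E$ or $\F$ of the expected codimension, so its fundamental class is a CE class on $\H_{4,g}\smallsetminus\beta^{-1}(\M_g^3)$ by the universal degeneracy formulas. In genus $9$ there are two strata, with $(\vec{e},\vec{f})=((3,4,5),(4,8))$ and $((2,4,6),(4,8))$, for which this fails: neither the $\vec{e}$ nor the $\vec{f}$ splitting locus has the expected codimension, and the paper does \emph{not} show their fundamental classes are tautological on the Hurwitz space. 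Instead one must argue downstairs on $\M_9$. For $\vec{e}=(2,4,6)$ one uses that the union $\Sigma_{(2,*,*)}(\E)$ does have the expected codimension, together with the fact that the bielliptic component $\vec{e}=(2,5,5)$ has $\beta_*[\overline{\Sigma}_7]=0$ because $\beta^{-1}(\B_9)\to\B_9$ has positive-dimensional fibers; this forces $\beta_*[\Sigma_8]$ to be tautological. For $\vec{e}=(3,4,5)$ one shows geometrically that such curves carry a $g^2_6$ birational onto a nodal plane sextic, so $\beta(\overline{\Sigma}_6)$ is a Brill--Noether locus of the expected dimension and its class is tautological by Faber's argument. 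In both cases, because the fundamental class is only known to be tautological on $\M_9\smallsetminus\M_9^3$ (not on $\H_{4,9}$), one must prove the stronger statement that $A^*(\Sigma_i)$ is generated by $\beta^*\kappa_1,\beta^*\kappa_2$, not merely by CE classes. Your outline does not contain these ideas, and the ``direct equivariant computation'' you propose would not by itself produce the fundamental classes.

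For the hexagonal stratum, the nontrivial point is how the Chern classes of the standard $\SP_6$ representation $\V$ become tautological. The tautological subbundle of the Grassmann bundle is handled via its identification (up to a twist) with the dual Hodge bundle, but for $\V$ one needs the identification $\Sym^2\V\cong\mathcal{I}_2(\C)$, the bundle of quadrics cutting out the canonical curve; from this, $c_{2i}(\V)$ is extracted via the splitting principle. Your phrase ``standard equivariant generation results'' does not supply this step.

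Finally, a minor slip: there are five relations in $I_9$, not three.
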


\begin{rem}
Despite their complicated looking presentations, the tautological rings above have many nice properties. Faber proved that they are Gorenstein rings with socle in degree $g-2$ \cite{F}.  He also points out that $g=9$ is the first case in which the tautological ring is not a complete intersection ring.
Several different methods of producing relations among tautological classes in arbitrary genus have found only the Faber--Zagier relations, which may suggest that the Gorenstein property only occurs in low genus cases (see \cite{Pan} for a discussion).
\end{rem}

\begin{rem}
An interesting consequence of Theorems \ref{main}, \ref{main8}, and \ref{main9} is that for $g=7,8,9$, the cycle class map $A^*(\Mg)\rightarrow H^{2*}(\Mg,\qq)$ is injective. It is unknown whether this holds in general, and it could even fail quite dramatically: when $g$ is large, it is unknown whether $A^*(\M_g)$ is finite or infinite dimensional as a $\qq$-vector space, whereas cohomology is finitely generated for any algebraic variety. On the moduli space of stable curves, Pikaart \cite[Corollary 4.7]{Pikaart} has shown that $H^{33}(\overline{\M}_g,\qq)\neq 0$ for $g$ sufficiently large. It then follows from work of Jannsen \cite[Theorem 3.6]{Jannsen} that the map $A^*(\overline{\M}_g)\rightarrow H^{2*}(\overline{\M}_g,\qq)$ is not injective for $g$ sufficiently large.
\end{rem}


In our previous work about tautological classes on the Hurwitz space \cite{part1,part2}, degree four covers $C\rightarrow \p^1$ that factor through a lower genus curve presented a major difficulty. The primary example of this issue is when $C$ is \emph{bielliptic}, where degree four covers $C\rightarrow \p^1$ arise from the double cover $C\rightarrow D$ and any double cover $D\rightarrow \p^1$, where $D$ is an elliptic curve.
A main challenge of this paper is therefore to prove that classes supported on the bielliptic locus of $\M_g$ for $g \leq 9$ are tautological.
Indeed, the bielliptic locus is the source of the first known example of a \emph{nontautological} algebraic class on $\M_g$: in \cite{VZ}, van Zelm proves that the fundamental class of the locus of bielliptic curves $\B_{12} \subset \M_{12}$ is nontautological.
The techniques we develop for the bielliptic locus in genus $g \leq 9$
also extend to genus $10$.
\begin{thm}\label{bielliptic10}
The fundamental class of the bielliptic locus $\B_{10} \subset \M_{10}$ is tautological (hence equal to zero).
\end{thm}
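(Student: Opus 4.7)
The codimension of $\B_{10}$ in $\M_{10}$ is $g-1 = 9$, which strictly exceeds the top nonzero degree $g-2 = 8$ of the tautological ring $R^*(\M_{10})$ by Faber's socle theorem cited above. Hence every tautological class in codimension $9$ vanishes identically, and Theorem~\ref{bielliptic10} reduces to proving $[\B_{10}] = 0$ in $A^*(\M_{10})$.

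The plan is to exploit the tetragonal structure on every bielliptic genus $10$ curve. If $\pi : C \to E$ is the bielliptic quotient and $g : E \to \pp^1$ is any degree-two map, then $f := g \circ \pi : C \to \pp^1$ has degree $4$. A Castelnuovo--Severi argument excludes a bielliptic curve of genus $\geq 4$ from being hyperelliptic and one of genus $\geq 6$ from being trigonal, and moreover forces every tetragonal pencil on a bielliptic curve of genus $\geq 8$ to factor through $\pi$. Thus in genus $10$ the bielliptic locus is precisely the image under the forgetful map $\beta : \H_{4,10} \to \M_{10}$ of a closed substack $\alpha(\X) \subset \H_{4,10}$ covered by a Hurwitz-type stack $\X$ parametrizing towers $C \to E \to \pp^1$ of two double covers with $g(E)=1$, $g(C)=10$.

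A Casnati--Ekedahl computation shows that such a cover has Tschirnhaus bundle
\[
(f_* \O_C)/\O_{\pp^1} \;\cong\; \O(-2) \oplus g_* N^{-1}, \qquad N \in \Pic^9(E),
\]
which always contains an $\O(-2)$ summand; this places $\alpha(\X)$ inside an \emph{unbalanced} Tschirnhaus stratum, the generic splitting type over $\H_{4,10}$ being $(4,4,5)$. I would then realize $\X$ as an explicit quotient stack built from $\M_{1,1}$, its universal degree-$9$ Picard scheme, and the relative linear system of branch divisors for $\pi$, and use the equivariant intersection theory of the present paper for unbalanced tetragonal covers to express $\alpha_*[\X] \in A^*(\H_{4,10})$ as a polynomial in Chern classes of the universal Casnati--Ekedahl bundles supported on this stratum.

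Finally, the pushforward formulas of \cite{bigpaper} (extended to unbalanced strata by the methods of the present paper) express $\beta_*\alpha_*[\X]$ as a tautological class in $R^9(\M_{10})$, which vanishes by the opening paragraph. The principal obstacle is the third step: \cite{bigpaper} handles pushforwards from the balanced open part of $\H_{4,g}$, whereas the bielliptic cycle is supported on an unbalanced stratum, so one must carefully extend the Chern-class calculus and pushforward formulas to this regime — precisely the sort of analysis this paper performs in the genus $7, 8, 9$ cases. Once the pushforward is identified with a class in the tautological subring, the dimension count closes the argument without any explicit coefficient computation.
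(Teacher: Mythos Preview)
Your proposal shares the right broad framework with the paper's proof—use Castelnuovo--Severi to force every $g^1_4$ on a bielliptic genus $10$ curve to factor through the elliptic quotient, work on $\H_{4,10}$, locate the bielliptic covers via their Casnati--Ekedahl bundles, and push forward—but it misses the specific observation that makes the argument short and complete.

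The paper's proof rests on a coincidence peculiar to $g=10$: the preimage $\beta^{-1}(\B_{10})\subset\H_{4,10}$ is exactly the pair splitting locus with $E=(2,5,6)$, $F=(4,9)$, and—crucially—this is the \emph{only} stratum with $F=(4,9)$ (the constraints \eqref{totaldeg}--\eqref{conditional} rule out any other $E$). Hence $\beta^{-1}(\B_{10})=\Sigma_{(4,9)}(\F)$ is a \emph{single} splitting locus for $\F$, and it occurs in its expected codimension $4=h^1(\pp^1,\E nd(\O(4,9)))$. Lemma~\ref{sigmaCE} then gives $[\beta^{-1}(\B_{10})]$ as a polynomial in CE classes on $\H_{4,10}\smallsetminus\beta^{-1}(\M_{10}^3)$ for free, via universal degeneracy formulas—no direct class computation is needed. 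Lemma~\ref{distinctparts}\eqref{dp1} shows $A^*(\Sigma)$ is generated by CE classes, so by push-pull and Theorem~\ref{pushforward} every class supported on $\B_{10}$ is tautological on $\M_{10}\smallsetminus\M_{10}^3$; Looijenga's vanishing finishes.

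Your plan instead tries to build an explicit quotient-stack presentation of the tower stack $\X$, compute $\alpha_*[\X]$ directly, and then extend the pushforward calculus of \cite{bigpaper} to the unbalanced regime. As you yourself flag, that last step is the ``principal obstacle,'' and you do not carry it out—so the proposal is a sketch rather than a proof. You correctly noted the $\O(-2)$ summand in $E^\vee$ (i.e.\ $e_1=2$), but you never brought $F$ into the picture, and it is precisely the $F$-splitting that does the work: recognizing the bielliptic stratum as $\Sigma_{(4,9)}(\F)$ in expected codimension is what lets the paper bypass the hard computation you were bracing for.
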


\subsection{Overview of the proof} Our basic approach is to use the stratification of $\M_g$ by \emph{gonality}, the minimal degree of a map $C \to \pp^1$. 
Precisely, let us define
\[
\M_g^k:=\{[C]\in \M_g: C \text{ has a } g^1_k\},
\]
which is the locus of curves of gonality less than or equal to $k$. For $g=7, 8$, a general curve of genus $g$ has gonality $5$, so our stratification takes the form
\[
\M^2_g\subset \M^3_g\subset \M^4_g\subset \M^5_g=\M_g.
\]
In genus $9$, a general curve has gonality $6$, so we have one more stratum
\[
\M^2_9\subset \M^3_9\subset \M^4_9\subset \M^5_9 \subset \M_9^6 =\M_9.
\]
It suffices to show for each $k$ that all classes supported on $\M_g^k$ are tautological up to classes supported on $\M_g^{k-1}$. In other words, we must show that every class in $A^*(\M_g^k \smallsetminus \M_g^{k-1})$ pushes forward to a class in $A^*(\M_g \smallsetminus 
\M_g^{k-1})$ that is the restriction of a tautological class on $\M_g$. 

We shall call a class on $A^*(\M_g \smallsetminus 
\M_g^{k-1})$ tautological if it is the restriction of a tautological class from $\M_g$. 
As $k$ increases, each stratum $\M_g^k \smallsetminus \M_g^{k-1}$ becomes more complicated. 
Our main contribution is a better understanding of the strata for $k = 4, 5$, which was the main stumbling block in extending previous work on low genus curves.
We now explain our process in more detail, starting with the curves of lowest gonality and working upwards.

\subsection*{(1) An easy start}
Faber \cite{F} showed that the fundamental class of any Brill--Noether locus of the expected dimension is tautological. In particular the fundamental class of $\M_g^k$ is tautological.
It is well known that $A^*(\M_g^2) \cong \qq$ for all $g$. By a result of Patel-Vakil \cite{PV2}, $A^*(\M_g^3\smallsetminus \M_g^2)$ is generated by the restriction of $\kappa_1$ for all $g \neq 3$.
Using Faber's result and the push-pull formula this establishes for $g \neq 3$ that
\begin{equation} \label{Mg3}
\text{all classes supported on $\M_g^3$ are tautological.}
\end{equation}
(See Remark \ref{g3rem} for an alternative argument when $g = 3$.)
Note that \eqref{Mg3} already establishes that $A^*(\M_g) = R^*(\M_g)$ for $g \leq 4$.
More generally, using the push-pull formula and Faber's result, if the Chow ring of each locally closed stratum $A^*(\M_g^k \smallsetminus \M_g^{k-1})$ were generated by the restrictions of tautological classes for all $k$, we would be done. However, this is not the case for $k > 3$.


\subsection*{(2) Why it must get harder}
For $k = 4, 5$,
the Chow ring of $\M_g^k\smallsetminus \M_g^{k-1}$ 
is \emph{not} in general generated by restrictions of tautological classes. By considering curves of bidegree $(4, 4)$ on $\pp^1 \times \pp^1$, one can show that when $g \geq 8$, the map $\beta: \H_{4,g} \to \M_g^4$ is an isomorphism away from loci of codimension $2$ in both spaces. Therefore, the Picard rank conjecture, proved by Deopurkar--Patel \cite{DP} for $k = 4, 5$, shows that 
\[\dim A^1(\M_g^4\smallsetminus \M_g^3) = \dim A^1(\H_{4,g}) = 2.\]
Hence, the first Chow group of the locally closed stratum $\M_g^4 \smallsetminus \M_g^3$ cannot be generated by the restriction of $\kappa_1$. The analogous result holds for $k = 5$ when $g \geq 10$.
Furthermore, it is known that there exist classes supported on $\M_g^4$ that are not tautological in some genera:
van Zelm \cite{VZ} has shown that the fundamental class of the bielliptic locus $\B_{12} \subset \M_{12}$ is \emph{not} a tautological class. 
This makes the tetragonal locus (Section \ref{tet}) one of the most interesting parts, and it will, of course, require some special observations about genus $7$, $8$, and $9$ curves. (In Section \ref{tet56}, we also explain how to prove $A^*(\M_g) = R^*(\M_g)$ for $g = 5$ and $6$ using our techniques.) In Section \ref{noluck}, we discuss why our techniques cannot access the bielliptic locus when $g \geq 11$. In the case $g = 10$, we prove Theorem \ref{bielliptic10}: the class of the bielliptic locus $\B_{10} \subset \M_{10}$ is tautological.

\subsection*{(3) Using the Hurwitz space}
Our approach is to study the Chow rings of the Hurwitz stacks $\H_{4,g}$ and $\H_{5,g}$ parametrizing degree $4$ and $5$ covers, respectively, of the projective line. Let $\beta: \H_{k,g} \to \M_g$ denote the forgetful map.  The induced map $\H_{k,g} \smallsetminus \beta^{-1}(\M_g^{k-1}) \to \M_g^k \smallsetminus \M_g^{k-1}$ is proper and surjective, and thus induces a surjection on rational Chow groups.
In \cite{part2}, we showed that for $k = 4, 5$, classes in the \emph{tautological ring of} $\H_{k,g} \smallsetminus \beta^{-1}(\M^{k-1}_g)$ (see Definition \ref{tdef}) push forward to tautological classes on $\M_g \smallsetminus \M^{k-1}_g$ (see Theorem \ref{pushforward}). 
This is a useful (and non-trivial) tool because there are tautological classes on the Hurwitz space which are \emph{not} pullbacks of tautological classes on $\M_g$.
Thus, we wish to show that the Chow rings of $\H_{k,g} \smallsetminus \beta^{-1}(\M^{k-1}_g)$ are tautological for $k = 4, 5$ and $g = 7, 8, 9$. We succeed in proving this in each of these cases except $k = 4, g = 9$, where some additional special arguments are used. These arguments are carried out in Section \ref{tet} for $k=4$ and Section \ref{pent} for $k=5$.

To accomplish this, we further stratify $\H_{k,g}$.
Given a cover $\alpha: C \to \pp^1$, we define $E_\alpha := (\alpha_* \O_C/\O_{\pp^1})^\vee$ and $F_\alpha := \ker(\Sym^2 E_\alpha \to \alpha_*\omega_\alpha^{\otimes 2})$, which are vector bundles on $\pp^1$.
See Section \ref{CEsec} for an elaboration of the properties of $E_{\alpha}$ and $F_{\alpha}$.
We then stratify $\H_{k,g}$ by the pair of splitting types of $E_\alpha$ and  $F_\alpha$.
Each of these ``pair splitting loci" has a nice description as a quotient stack (Lemmas \ref{squo} and \ref{squo5}).
As a starting point, our previous work \cite{part1,part2} shows that the Chow ring of a union $\Psi$ of the several largest strata is generated by tautological classes (Proposition \ref{Hprime}). This result allows us to narrow down the possible sources of non-tautological classes: they all occur on the complement of $\Psi$. 
Some ``bad" pair splitting loci $\Sigma_i$ remain outside 
of $\Psi$ and not inside $\beta^{-1}(\M_g^{k-1})$. These bad $\Sigma_i$ are the main focus of this paper. Part of the difficulty of these strata is that they all occur in the ``unexpected (pair) codimension" in the sense of Deopurkar--Patel \cite[Remark 4.2]{DP}.

\subsection*{(4) The key coincidence and work to be done.} Using universal degeneracy formulas from \cite{L}, we show that the fundamental class of a single splitting locus (i.e. where \emph{one} of the two vector bundles has a given splitting type) is tautological if it occurs in the ``expected codimension." 
Perhaps the most surprising part of the proof is the following coincidence (when $(k, g) \neq (4, 9)$): after excising strata contained in $\beta^{-1}(\M^{k-1}_g)$, every bad $\Sigma_i$ can be realized as a single splitting locus, and that single splitting locus occurs in the expected codimension (proofs of Lemmas \ref{H47}, \ref{H48}, \ref{H58}, \ref{H59}). Hence, the fundamental class of the closure of each bad $\Sigma_i$ is tautological in $A^*(\H_{k,g} \smallsetminus \beta^{-1}(\M^{k-1}_g))$. That these fundamental classes are tautological is a ``coincidence of small numbers." It in fact fails for $k=4$, $g=12$ by the result of van Zelm \cite{VZ}. 

We then study the Chow rings of the locally closed strata $\Sigma_i$.
Using the description of $\Sigma_i$ as a quotient, we show that the Chow ring of each stratum $A^*(\Sigma_i \smallsetminus \beta^{-1}(\M_g^{k-1}))$ is generated by restrictions of tautological classes on $\H_{k,g}$ (Sections \ref{chows} and \ref{stratasec}). This last step requires a geometric understanding of the equations that define $C$ inside the associated scroll $\pp E_\alpha^\vee$, and when a collection of equations of this type fail to define a smooth curve or produce a curve of gonality less than $k$. These ideas do extend to arbitrary genus, unlike the results in the previous paragraph concerning fundamental classes. We state them as broadly as possible for arbitrary genera as they may be of future use. 

\subsection*{(5) The further work in genus $9$} As the genus increases, the luck with fundamental classes starts to run out and more subtle arguments are required.
In the case $g = 9, k = 4$, we encounter two bad pair strata $\Sigma_i$ that occur in unexpected codimension and cannot be realized as a single splitting locus. Although we do not compute their classes on $\H_{4,9}$, we still manage to show that their push forwards to $\M_9 \smallsetminus \M_9^3$ are tautological. For example, one of these problem strata corresponds to the locus of plane sextics with one double point (Lemma \ref{hasg26}). The class of this locus is tautological on $\M_9 \smallsetminus \M_9^3$ because it is a Brill--Noether locus of the expected codimension. We then show that the Chow ring of this stratum is generated by the pullback of $\kappa_1$ and $\kappa_2$, which is a stronger statement than being generated by restrictions of tautological classes on $\H_{4,9}$. By the push-pull formula, the push forward of every class supported on this stratum is tautological on $\M_9 \smallsetminus \M_9^3$ (though we remain unsure if they are tautological upstairs on $\H_{4,9}$). We deal with the other problem stratum by showing that its union with the bielliptic locus $\beta^{-1}(\B_9)$ has tautological fundamental class on $\H_{4, 9} \smallsetminus \beta^{-1}(\M_9^3)$ and a trick explained in Figure \ref{78pic}.

In genus $9$, we must also deal with curves of gonality $6$. 
The approach we take to these curves in Section \ref{Gen} is quite different from the approach taken to curves of gonality $5$ and below because there is no uniform description for degree $6$ covers in terms of associated vector bundles.
Instead, using results of Mukai \cite{Muk}, we realize $\M_9 \smallsetminus \M_9^5$ (up to a $\mu_2$ gerbe) as a global quotient of an open subvariety of a Grassmannian by $\SP_6$. The tautological subbundle on the resulting Grassmann bundle over $\BSp_6$ is the Hodge bundle (up to possibly twisting by a line bundle, see Lemma \ref{Sbundle}). It then remains to see that Chern classes of the rank $6$ vector bundle $\V$ associated to this quotient are tautological.
We see this by proving that the rank $21$ vector bundle $\Sym^2 \V$ is the bundle of $21$ quadrics that cut out a canonical genus $9$ curve (Lemma \ref{quadricsbundle}). From this, it follows that the Chern classes of $\Sym^2 \V$ are tautological, and, using the splitting principle, the Chern classes of $\V$ are seen to be tautological as well.
\subsection{Notations and conventions.}
All schemes in this paper are taken over a fixed algebraically closed field of characteristic $0$ or $p>5$. All Chow rings are taken with rational coefficients. We use the subspace convention for projective bundles and Grassmann bundles. 

\subsection*{Acknowledgments} We would like to thank Jarod Alper, Andrea Di Lorenzo, Giovanni Inchiostro, Elham Izadi,  Aaron Landesman, and Ravi Vakil for helpful conversations. We thank Gregor Botero and Maxwell da Paixão de Jesus Santos for their correspondence. We are also grateful to Johannes Schmitt and Jason van Zelm for their advice and help on computing the tautological rings with admcycles, and to David Holmes and Johannes Schmitt for correcting a miscalculation in an earlier version of the presentation of the tautological rings. In addition, we thank Rahul Pandharipande, Dan Petersen, and Burt Totaro for their comments on an early draft of this article. We thank the referee for insightful comments that helped improve this paper.

\section{Hurwitz Schemes and the Tautological Ring}\label{Hurwitz}
In order to study the loci $\M_g^4\smallsetminus \M_g^3$ and $\M_g^5\smallsetminus \M_g^4$, we will study the Hurwitz stacks $\H_{4,g}$ and $\H_{5,g}$ parametrizing degree $4$ and $5$ covers, respectively, of the projective line. 

\begin{definition}
The unparametrized Hurwitz stack $\H_{k,g}$ is the stack whose objects over a scheme $S$ are of the form $(C\rightarrow P\rightarrow S)$ where $P\rightarrow S$ is a $\pp^1$-fibration, $C\rightarrow P$ is a finite flat finitely presented morphism of constant degree $k$, and the composition $C\rightarrow S$ is a family of smooth genus $g$ curves. 
\end{definition}
The Hurwitz stack $\H_{k,g}$ admits a universal diagram
\[
\begin{tikzcd}
\C \arrow[r, "\alpha"] \arrow[rd, "f"'] & \P \arrow[d, "\pi"] \\
                                        & {\H_{k,g}}         
\end{tikzcd}
\]
The universal diagram furnishes several natural classes in the Chow ring of $\H_{k,g}$.
\begin{definition} \label{tdef}
The tautological ring $R^*(\H_{k,g})$ is the subring of $A^*(\H_{k,g})$ generated by classes of the form
\[
f_*(c_1(\omega_f)^i\cdot\alpha^*c_1(\omega_\pi)^j).
\]
If $U \subseteq \H_{k,g}$ is an open substack of $\H_{k,g}$, we define the tautological ring of the open $R^*(U)$ to be the image of the tautological ring under the restriction map
\[
A^*(\H_{k,g})\rightarrow A^*(U).
\]
\end{definition}

\begin{rem}[A note on the $\SL_2$ quotient]  \label{sl2rem}
The Hurwitz stack $\H_{k,g}$ is the $\PGL_2$ quotient of the parametrized Hurwitz scheme $\H_{k,g}^\dagger$. 
One can also take the quotient of $\H_{k,g}^\dagger$ by $\SL_2$. The map $[\H_{k,g}^\dagger/\SL_2] \to [\H_{k,g}^\dagger/\PGL_2] = \H_{k,g}$ is
a $\mu_2$-banded gerbe. It is a general fact that, \emph{with rational coefficients}, the pullback map along any gerbe banded by a finite group induces an isomorphism on Chow rings \cite[Section 2.3]{PV}.
In particular, $A^*(\H_{k,g}) \cong A^*([\H_{k,g}^\dagger/\SL_2])$.
The benefit of the $\SL_2$ quotient is that the pullback of the universal $\pp^1$-fibration to the $\SL_2$ quotient is a $\pp^1$ bundle, i.e. it is equipped with a line bundle of relative degree $1$.
Since we work with rational coefficients throughout, we do not distinguish the $\PGL_2$ and $\SL_2$ quotients and freely assume that $\P$ is equipped with a line bundle $\O_{\P}(1)$ of relative degree $1$. The push forward $\pi_* \O_{\P}(1)$ is the pullback of the universal rank $2$ vector bundle on $\BSL_2$.
\end{rem}

By forgetting the map $C\rightarrow P$, we obtain a morphism
\[
\beta:\H_{k,g}\rightarrow \M_g.
\]
Let 
\[
\beta':\H_{k,g}\smallsetminus \beta^{-1} (\M^{k-1}_g)\rightarrow \M_g\smallsetminus \M^{k-1}_g
\]
be the restriction of $\beta$ to $\H_{k,g}\smallsetminus \beta^{-1} (\M^{k-1}_g)$.
In \cite[Theorem 1.5]{part2}, we showed the following result relating the tautological rings of the relevant Hurwitz stacks and $\M_g$.
\begin{thm} \label{pushforward}
Let $k=4,5$. The map $\beta'$ is proper, so the induced push forward map 
\[\beta_*':A^*(\H_{k,g}\smallsetminus \beta^{-1} (\M^{k-1}_g))\rightarrow A^*(\M_g^k \smallsetminus \M^{k-1}_g)\] is surjective. Moreover, $\beta_{*}'(R^*(\H_{k,g}\smallsetminus \beta^{-1} (\M^{k-1}_g)))\subseteq R^*(\M_g\smallsetminus\M_g^{k-1})$.
\end{thm}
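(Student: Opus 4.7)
My plan is to take properness of $\beta'$ from \cite[Proposition 10.3]{bigpaper} as input. Surjectivity of $\beta'_*$ then follows formally: for $k\le 5$ and $g$ in the relevant range, the map $\beta'$ is generically finite (a generic $k$-gonal curve carries only finitely many $g^1_k$'s), so for any integral closed substack $Z\subseteq \M_g^k\smallsetminus\M_g^{k-1}$ I can pick an irreducible component $W$ of $\beta'^{-1}(Z)$ that dominates $Z$ with finite degree, yielding $\beta'_*[W]=d\cdot[Z]$ for some positive integer $d$ that I may invert with rational coefficients.

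For the tautological containment, I would set up the Cartesian square
\[
\begin{tikzcd}
\mathcal{C}_\H \arrow[r, "\tilde{\beta}"] \arrow[d, "f_\H"'] & \mathcal{C}_\M \arrow[d, "f_\M"] \\
\H_{k,g} \arrow[r, "\beta"'] & \M_g
\end{tikzcd}
\]
so that $\omega_{f_\H}=\tilde{\beta}^*\omega_{f_\M}$ and $\beta_*(f_\H)_* = (f_\M)_* \tilde{\beta}_*$. Passing to the $\SL_2$-model of Remark \ref{sl2rem} gives $c_1(\omega_\pi) = -2H$ rationally, where $H = c_1(\mathcal{O}_\P(1))$. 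By the projection formula, a typical generator of $R^*(\H_{k,g})$ pushes forward as
\[
\beta'_*\, (f_\H)_*\bigl(c_1(\omega_{f_\H})^i\,\alpha^* c_1(\omega_\pi)^j\bigr) \;=\; (-2)^j\, (f_\M)_*\Bigl(c_1(\omega_{f_\M})^i\cdot \tilde{\beta}_*(\alpha^* H^j)\Bigr).
\]
The content of the theorem therefore reduces to showing that, restricted to $\mathcal{C}_\M\smallsetminus f_\M^{-1}(\M_g^{k-1})$, each class $\tilde{\beta}_*(\alpha^* H^j)$ is a polynomial in $c_1(\omega_{f_\M})$ whose coefficients are pulled back from $R^*(\M_g\smallsetminus\M_g^{k-1})$.

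Using Riemann--Hurwitz, $\alpha^* H = \tfrac{1}{2}\bigl(c_1(\omega_\alpha)-c_1(\omega_{f_\H})\bigr)$, so monomials in $\alpha^* H$ reduce to monomials in $c_1(\omega_\alpha)$. I would then apply Grothendieck--Riemann--Roch to the finite flat morphism $\alpha$, expressing $\alpha_*\bigl(c_1(\omega_\alpha)^n\bigr)$ as a universal polynomial in $H$ and the Chern classes of the Casnati--Ekedahl bundle $E_\alpha$ on $\P$. A second Grothendieck--Riemann--Roch, this time applied to the generically finite $\tilde{\beta}$ (whose structure sheaf pushes forward to a generically locally free sheaf on $\mathcal{C}_\M\smallsetminus f_\M^{-1}(\M_g^{k-1})$), matches the resulting expressions against characteristic classes of the universal curve over $\M_g$, producing kappa classes.

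The main obstacle I anticipate is the final bookkeeping: verifying that the iterated pushforward lands entirely in the kappa-subring of $R^*(\M_g\smallsetminus\M_g^{k-1})$, and not in some extension by genuinely Hurwitz-theoretic characteristic classes. Restriction to the open stratum is essential, since the Casnati--Ekedahl bundles degenerate over $\M_g^{k-1}$. On the good locus the ``extra'' data of a choice of $g^1_k$ is averaged out by the finite map $\beta'$, and the Chern classes of the rank-$2$ bundle defining $\P$ contribute only through $A^*(\BSL_2)_\qq$, which is single-variable and easily tracked.
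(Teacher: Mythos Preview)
The theorem is not proved in this paper; it is imported from \cite[Theorem 1.9]{bigpaper}, and the only hint at the method is Remark~\ref{g3rem}, which suggests that in \cite{bigpaper} one rewrites tautological classes as CE classes (Lemma~\ref{CEinR}) and then computes their $\beta'_*$ directly from the Casnati--Ekedahl resolution rather than via a GRR argument for $\tilde\beta$.

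Your sketch has two real gaps. First, $\beta'$ is not generically finite in general: already for $k=5$, $g=7$ one has $\rho(7,1,5)=1$ and $\dim\H_{5,7}=19>18=\dim(\M_7\smallsetminus\M_7^4)$, so $\tilde\beta$ has one-dimensional fibres. Surjectivity of $\beta'_*$ still holds---it is a general fact for proper surjective morphisms with rational coefficients, obtained by taking the closure of a closed point of the generic fibre over each subvariety of the target---but your ``second GRR applied to the generically finite $\tilde\beta$'' is then simply not available.

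Second, and more seriously, your reduction only computes $\beta'_*$ of a \emph{single} generator $f_*\bigl(c_1(\omega_f)^i\,\alpha^*c_1(\omega_\pi)^j\bigr)$. The tautological ring is generated by such classes as a \emph{ring}, and $\beta'_*$ is not a ring homomorphism, so you must handle arbitrary monomials $\prod_m (f_\H)_*(X_m)$. Your projection-formula manoeuvre then forces you onto the $m$-fold fibre power of the universal curve and the pushforward along $\tilde\beta^{[m]}$; even granting a GRR formalism there, the output involves the Todd class of a virtual relative tangent complex for $\tilde\beta^{[m]}$, which has no a~priori reason to be a polynomial in $c_1(\omega_{f_\M})$ with $\kappa$-coefficients. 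This is not residual bookkeeping but the entire content of the claim, and it requires genuine input about the geometry of the fibres of $\beta'$---precisely what the Casnati--Ekedahl structure theorem supplies in \cite{bigpaper}.
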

\begin{rem} \label{g3rem}
In the case $k=3$, it is also true that $\beta'_*(R^*(\H_{3,g} \smallsetminus \beta^{-1}(\M_g^2))) \subset R^*(\M_g \smallsetminus \M_g^2)$. For $g \neq 3$, this follows from work of Patel--Vakil \cite{PV} which shows $A^*(\H_{3,g}) = R^*(\H_{3,g})$ is generated by $\beta^*\kappa_1$. In genus $3$, it turns out $\beta^* \kappa_1 = 0$, so we instead prove the claim as follows. (The following argument does not presuppose $A^*(\M_3) = R^*(\M_3)$ and therefore provides a new proof of this fact in line with our approach.) Let $T \in A^1(\H_{3,3})$ be the class of the locus of covers with a point of triple ramification. By \cite[Proposition 2.15]{DP}, we have $A^1(\H_{3,3}) = R^1(\H_{3,3}) = \qq \cdot  T$. By \cite[Theorem 1.1 (1)]{part2}, we have $A^i(\H_{3,3}) = 0$ for all $i \geq 2$ and $R^*(\H_{3,3}) = A^*(\H_{3,3})$. By \cite[Corollary 7.5]{part2}, $\beta'_*( T)$ is tautological. Hence, the push forwards of all classes from $\H_{3,3}$ are tautological on $\M_3 \smallsetminus \M_3^2$. This argument is representative of the ideas that were used to prove Theorem \ref{pushforward} in \cite[Theorem 1.5]{part2}.
\end{rem}

In light of Theorem \ref{pushforward}, in order to prove Theorems \ref{main} and \ref{main8}, it suffices to show that $A^*(\H_{4,g}\smallsetminus \beta^{-1} (\M^3_g))$ and $A^*(\H_{5,g}\smallsetminus \beta^{-1} (\M^4_g))$ are generated by tautological classes for $g = 7, 8$. We will prove this in Sections \ref{tet} and \ref{pent}. More work is required when $g = 9$.

\subsection{The Casnati--Ekedahl structure theorem} \label{CEsec}
Here, we recall the Casnati--Ekedahl structure theorems for finite Gorenstein covers. The structure theorems furnish distinguished tautological classes, which we call the Casnati--Ekedahl classes, abbreviated CE classes. 
\par We begin with the most general statement, which holds for covers of every degree. Given a degree $k$ cover $\alpha:X\rightarrow Y$ where $Y$ is integral, one obtains an exact sequence
\begin{equation}\label{eqn:defining}
0\rightarrow \mathcal{O}_Y\rightarrow \alpha_*\mathcal{O}_X\rightarrow E_\alpha^{\vee}\rightarrow 0,
\end{equation}
where $E_\alpha$ is a vector bundle of rank $k-1$. 
When $\alpha$ is Gorenstein, $\alpha_* \O_X \cong (\alpha_* \omega_{\alpha})^\vee$. Pulling back and using adjunction, we therefore obtain a map 
\[
\omega_{\alpha}^\vee \to (\alpha^*\alpha_* \omega_{\alpha})^\vee \rightarrow \alpha^*E_\alpha^{\vee},
\]
which induces a map $X\rightarrow \p E_\alpha^{\vee}$ that factors $\alpha:X\rightarrow Y$.
The Casnati--Ekedahl structure theorem gives a resolution of the ideal sheaf of $X$ inside of $\p E_\alpha^{\vee}$ \cite{CE}.
\begin{thm}[Casnati--Ekedahl \cite{CE}]\label{CEstructure}
Let $X$ and $Y$ be schemes, $Y$ integral and let $\alpha:X\rightarrow Y$ be a Gorenstein cover of degree $k\geq 3$. There exists a unique $\p^{k-2}$-bundle $\gamma:\p\rightarrow Y$ and an embedding $i:X\hookrightarrow \p$ such that $\alpha=\gamma\circ i$ and $X_y:=\alpha^{-1}(y)\subset \gamma^{-1}(y)\cong \p^{k-2}$ is a nondegenerate arithmetically Gorenstein subscheme for each $y\in Y$. Moreover, the following properties hold.
\begin{enumerate}
    \item $\p \cong \p E_\alpha^{\vee}$ where $ E_\alpha^{\vee} :=\coker(\mathcal{O}_Y\rightarrow \alpha_*\O_X)$, and $i^*\O_{\pp}(1) \cong \omega_\alpha$.
    \item There is a unique up to unique isomorphism exact sequence of locally free $\mathcal{O}_\p$ sheaves
    \begin{equation}\label{CEseq}
        0\rightarrow \gamma^*F_{k-2}(-k)\rightarrow \gamma^*F_{k-3}(-k+2)\rightarrow \cdots\rightarrow \gamma^*F_{1}(-2)\rightarrow \mathcal{O}_\p\rightarrow \mathcal{O}_X\rightarrow 0.
    \end{equation}
    where $F_i$ is locally free on $Y$.
    The restriction of the exact sequence above to a fiber gives a minimal free resolution of $X_y:=\alpha^{-1}(y)$. Moreover the resolution is self-dual, so there is a canonical isomorphism $\H om_{\O_{\pp}}(F_i, F_{k-2}) \cong F_{k-2-i}$.
    \item The ranks of the $F_i$ are 
    \[ 
    \rank F_i=\frac{i(k-2-i)}{k-1}{{k}\choose{i+1}}.
    \]
    \item There is a canonical isomorphism $F_{k-2}\cong \det  E_\alpha$.
\end{enumerate}
\end{thm}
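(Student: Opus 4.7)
The plan is to construct the embedding $i$ via relative Serre duality, produce a fiberwise minimal free resolution from the arithmetically Gorenstein structure of each $X_y$, and then globalize using flatness of $\alpha$. For a finite flat Gorenstein morphism, relative duality gives $\alpha_*\omega_\alpha \cong (\alpha_*\O_X)^\vee$; dualizing the defining sequence \eqref{eqn:defining} yields a split short exact sequence $0\to E_\alpha\to\alpha_*\omega_\alpha\to\O_Y\to 0$, and the adjunction counit $\alpha^*\alpha_*\omega_\alpha\to\omega_\alpha$ restricts to a morphism $\alpha^*E_\alpha\to\omega_\alpha$ that is surjective because fiberwise it is the canonical map of the nondegenerate arithmetically Gorenstein subscheme $X_y\subset\p^{k-2}$. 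The universal property of $\p E_\alpha^\vee$ then produces the closed embedding $i$ with $i^*\O(1)\cong\omega_\alpha$; uniqueness of $(\p,i)$ follows because $E_\alpha^\vee$ is reconstructed from $\alpha_*\O_X$.

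For the resolution, restrict first to a fiber $X_y\subset\p^{k-2}$: this is an arithmetically Gorenstein subscheme of codimension $k-2$ and degree $k$, so its homogeneous coordinate ring admits a minimal graded free resolution of length $k-2$ (Auslander--Buchsbaum), and this resolution is canonically self-dual by the Gorenstein symmetry of the canonical module. The Hilbert function of $X_y$ is determined by $k$, so the Betti numbers are independent of $y$; a direct computation in the generic case of $k$ points in linearly general position gives the formula $\rank F_i=\frac{i(k-2-i)}{k-1}\binom{k}{i+1}$. Constancy of Hilbert polynomials together with flatness of $\alpha$ lets one iteratively take syzygy sheaves of $\O_X$ inside $\O_\p$ and verify at each step that the syzygies are flat over $Y$, hence locally free on $\p$, producing the global resolution \eqref{CEseq}. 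The fiberwise self-duality lifts by uniqueness of minimal resolutions to the canonical isomorphism $\H om_{\O_\p}(F_i, F_{k-2})\cong F_{k-2-i}$. To identify $F_{k-2}\cong\det E_\alpha$, take determinants along \eqref{eqn:defining} to get $\det\alpha_*\O_X\cong\det E_\alpha^\vee$ and compare with the alternating product of determinants extracted by pushing \eqref{CEseq} forward to $Y$: the self-duality pairs $\det F_i$ with $\det F_{k-2-i}$ for $i<k-2$, leaving $F_{k-2}$ identified with $\det E_\alpha$.

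The hard part is the globalization: showing that the fiberwise minimal resolutions assemble into a resolution of $\O_X$ on $\p$ with each $F_i$ locally free. The Gorenstein hypothesis plays a double role here, both forcing constancy of the Hilbert function of fibers (hence of the Betti numbers) and providing the self-duality that rigidifies each minimal resolution. Once globalization is achieved, the rank formula, the self-duality, and the identification of $F_{k-2}$ all fall out of standard determinant bookkeeping and uniqueness of minimal resolutions.
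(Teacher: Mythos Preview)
The paper does not prove this theorem; it is quoted from Casnati--Ekedahl \cite{CE} and used as a black box. Your sketch follows the original Casnati--Ekedahl argument: build the relative embedding from the surjection $\alpha^*E_\alpha\to\omega_\alpha$, check fiberwise that $X_y\subset\pp^{k-2}$ is arithmetically Gorenstein with Hilbert function (hence graded Betti numbers) independent of $y$, and globalize the fiberwise minimal free resolutions using flatness. This is the right shape, and you correctly flag the globalization step as the substantive one.

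One point deserves more care. Your justification of the surjectivity of $\alpha^*E_\alpha\to\omega_\alpha$ invokes ``the canonical map of the nondegenerate arithmetically Gorenstein subscheme $X_y\subset\pp^{k-2}$,'' but that embedding is exactly what the surjectivity is supposed to produce, so as written the argument is circular. The actual input you need is a statement purely about zero-dimensional Gorenstein schemes $Z$ of length $k\ge 3$ over a field: the hyperplane $E_\alpha\otimes\kappa(y)\subset H^0(\omega_Z)$ (the kernel of the trace) already generates $\omega_Z$, i.e.\ has no base points. This is a short local check using that the socle of an Artinian Gorenstein local ring is one-dimensional, and it is precisely where the hypothesis $k\ge 3$ enters (for $k=2$ the map has a base point). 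Once this is established, the rest of your outline---self-duality from the Gorenstein symmetry, the rank formula from the Hilbert series, and the determinant bookkeeping identifying $F_{k-2}\cong\det E_\alpha$---is correct and matches what Casnati--Ekedahl do.
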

In the cases $k = 4, 5$, self-duality of the resolution determines all of the bundles $F_i$ in terms of $E_\alpha$ and $F_\alpha := F_1$ and tensor products and determinants thereof.
Twisting \eqref{CEseq} by $\O_{\pp}(2)$ and pushing forward, we see that $F_\alpha = \ker(\Sym^2 E_\alpha \to \alpha_*\omega_{\alpha}^{\otimes 2})$. We shall use this notation throughout.

Applying this to the universal cover
$\alpha: \C \to \P$ over $\H_{k,g}$, we obtain vector bundles $\E := E_\alpha$ and $\F := F_\alpha$ on $\P$.
The bundle $\E$ is sometimes called the ``universal Tschirnhausen bundle" and has degree $g + k - 1$ on the fibers of $\pi: \P \to \H_{k,g}$ (see e.g. \cite[Example 3.1]{part1}).
Next, let $z := -\frac{1}{2}c_1(\omega_{\pi}) = c_1(\O_{\P}(1))$. 
For $i = 1, \ldots, k-1$, we define classes $a_i \in A^i(\H_{k,g})$ and $a_i' \in A^{i-1}(\H_{k,g})$ by the formula
\begin{equation} \label{adef}
a_i := \pi_*(z \cdot c_i(\E)), \quad a_i' := \pi_*(c_i(\E)) 
\qquad \Rightarrow \qquad 
c_i(\E) = \pi^* a_i + \pi^* a_i' z.
\end{equation}
Similarly, we define 
\begin{equation} \label{bdef}
b_i := \pi_*(z \cdot c_i(\F)), \quad b_i' := \pi_*(c_i(\F)) 
\qquad \Rightarrow \qquad 
c_i(\F) = \pi^* b_{i} + \pi^* b_{i}' z .
\end{equation}
Finally, we set $c_2 := c_2(\pi_*\O_{\P}(1)) \in A^*(\H_{k,g})$, so $z^2 = -\pi^*c_2 \in A^2(\P)$.
\begin{definition}\label{CEring}
We define $a_i, a_i', b_i, b_i', c_2$ to be the \emph{Casnati--Ekedahl (CE) classes}.
\end{definition}
Note that the CE classes generate all $\pi_*$'s of polynomials in $z$ and the Chern classes of $\E$ and $\F$.
In \cite[Theorem 3.10]{part1}, we proved that the CE classes (together with some suitable generalizations when $k > 5$) are generators for the tautological ring.
\begin{lem} \label{CEinR}
The Casnati--Ekedahl classes lie in the tautological ring $R^*(\H_{k,g})$. Conversely, when $k = 4, 5$, every tautological class is a polynomial in the above Casnati--Ekedahl classes.
\end{lem}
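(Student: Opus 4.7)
The plan is to prove both containments by running Grothendieck--Riemann--Roch (GRR) on the two maps $\alpha\colon\C\to\P$ and $\pi\colon\P\to\H_{k,g}$ and to translate between the two families of classes using the $\pp^1$-bundle decomposition $A^*(\P)=A^*(\H_{k,g})\{1,z\}$. Write $\chi:=c_1(\omega_\alpha)$ and $K:=c_1(\omega_\pi)$, so that $c_1(\omega_f)=\chi+\alpha^*K$ and $z=-K/2$. A binomial expansion together with the projection formula along $\alpha$ rewrites each tautological generator $f_*(c_1(\omega_f)^i(\alpha^*K)^j)$ as a $\qq$-linear combination of the classes $\pi_*(z^m\cdot\alpha_*(\chi^l))$, and conversely each such class is visibly tautological. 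It therefore suffices to pass back and forth between $\{\pi_*(z^m\alpha_*(\chi^l))\}$ and the CE classes $\{a_i,a_i',b_i,b_i',c_2\}$.

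For the forward inclusion, GRR on the finite Gorenstein cover $\alpha$ gives $\ch(\alpha_*\O_\C)=\alpha_*(\td(-\omega_\alpha))$, and since $\td(-\omega_\alpha)$ is a universal power series in $\chi$, each $\ch_i(\E^\vee)$ is a nonzero rational multiple of $\alpha_*(\chi^i)$ for $i\ge 1$. Hence every Chern class $c_i(\E)$ is a universal polynomial in the $\alpha_*(\chi^j)$'s; the short exact sequence $0\to\F\to\Sym^2\E\to\alpha_*\omega_\alpha^{\otimes2}\to0$ extracted from the twisted CE resolution, plus a second GRR computation, gives the analogous statement for each $c_i(\F)$. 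To land on $\H_{k,g}$, observe that on the $\pp^1$-bundle $\P$ every pushforward admits a unique decomposition $\alpha_*(\chi^l)=\pi^*U_l+\pi^*V_l\cdot z$ with $U_l=\pi_*\alpha_*(\chi^l)=f_*(\chi^l)$ and $V_l=\pi_*(z\cdot\alpha_*(\chi^l))=-\tfrac12 f_*(\alpha^*K\cdot\chi^l)$ both manifestly tautological. Products of such decompositions collapse modulo $z^2=-\pi^*c_2$, and $c_2$ itself equals $-\pi_*(z^3)=\tfrac{1}{8k}f_*((\alpha^*K)^3)$ by GRR on $\pi$. Assembling these observations exhibits every CE class as a polynomial in tautological classes.

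For the converse when $k=4,5$, the task is to invert the analysis and express each $\alpha_*(\chi^l)$ as a polynomial in Chern classes of $\E$ and $\F$. Twist the Casnati--Ekedahl resolution~\eqref{CEseq} by $\O_{\pp\E^\vee}(n)$ and push forward along $\gamma$: the coincidence that makes $k=4,5$ special is that every bundle appearing is determined by $\E$ and $\F$, namely $F_{k-2}=\det\E$ by Theorem~\ref{CEstructure}(4) and, when $k=5$, $F_2=\F^\vee\otimes\det\E$ by the self-duality in Theorem~\ref{CEstructure}(2). Consequently $[\alpha_*\omega_\alpha^{\otimes n}]$ in the Grothendieck group of $\P$ is a universal polynomial expression in $[\E]$ and $[\F]$, so $\ch(\alpha_*\omega_\alpha^{\otimes n})$ is a polynomial in Chern classes of $\E$ and $\F$ on $\P$. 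On the other hand, GRR gives $\ch_j(\alpha_*\omega_\alpha^{\otimes n})=p_j(n)\,\alpha_*(\chi^j)$, where $p_j(n)$ is a polynomial of degree $j$ in $n$ with leading coefficient $1/j!$; picking any $n$ with $p_j(n)\ne 0$ solves for $\alpha_*(\chi^j)$ in terms of $c_\bullet(\E),c_\bullet(\F)$. Substituting back into $\pi_*(z^m\alpha_*(\chi^l))$ and reducing $z$-powers via $z^2=-\pi^*c_2$ produces the desired polynomial in $a_\bullet,a_\bullet',b_\bullet,b_\bullet',c_2$. The main obstacle is precisely this coincidence: for $k\ge 6$ the further bundles $F_2,F_3,\ldots$ in the CE resolution carry independent Chern classes not determined by $\E$ and $\F$, and these must be adjoined as additional generators---the ``suitable generalizations" alluded to in the statement.
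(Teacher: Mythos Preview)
The paper does not prove this lemma here; it simply cites \cite[Lemma~3.10]{bigpaper}. Your GRR approach is the natural one and is in the same spirit as that reference, but your write-up contains a genuine technical error that propagates through both directions.

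You assert that the relative Todd class for $\alpha$ equals $\td(-\omega_\alpha)$, a power series in $\chi=c_1(\omega_\alpha)$ alone. This is false. Since $\C$ and $\P$ are relative curves over $\H_{k,g}$, the virtual relative tangent is $T_\alpha^{\mathrm{vir}}=[\omega_f^\vee]-[\alpha^*\omega_\pi^\vee]$, a \emph{difference} of two line bundles, and the Todd class of such a virtual rank-zero bundle is not determined by its first Chern class. Concretely,
\[
\td(T_\alpha^{\mathrm{vir}})=\td(\omega_f^\vee)\cdot\alpha^*\td(\omega_\pi^\vee)^{-1}
\]
is a power series in both $c_1(\omega_f)$ and $\alpha^*K$, not in $\chi$ alone; already its degree-$2$ term is not a function of $\chi$. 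Consequently your claim that $\ch_i(\E^\vee)$ is a nonzero multiple of $\alpha_*(\chi^i)$ fails (for $n=0$ the coefficient of $\alpha_*(\chi^i)$ is the Bernoulli number $B_i/i!$, which vanishes for odd $i\ge 3$), and likewise the formula $\ch_j(\alpha_*\omega_\alpha^{\otimes n})=p_j(n)\,\alpha_*(\chi^j)$ is missing terms.

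The repair is routine but should be made explicit. By the projection formula $\alpha_*(\chi^a(\alpha^*K)^b)=K^b\,\alpha_*(\chi^a)$, so GRR actually yields
\[
\ch_j(\alpha_*\omega_\alpha^{\otimes n})=p_j(n)\,\alpha_*(\chi^j)+\sum_{b\ge 1}q_{j,b}(n)\,K^b\,\alpha_*(\chi^{j-b}),
\]
with $p_j$ of degree $j$ in $n$ and leading coefficient $1/j!$. One can then solve for $\alpha_*(\chi^j)$ by induction on $j$ (choosing $n$ with $p_j(n)\neq 0$), since the extra terms involve only lower $\alpha_*(\chi^a)$'s and powers of $K=-2z$. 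With this correction both inclusions go through exactly as you outline.
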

Furthermore, we proved in \cite[Lemmas 5.3 and 5.11]{part1} that the CE classes are generators for the entire Chow ring of a certain open substack of $\H_{k,g}$ when $k=4,5$.
\begin{prop}\label{Hprime}
Let $g\geq 2$ be an integer. Then the following hold:
\begin{enumerate}
    \item The Chow ring of $\Psi = \H_{4,g}\smallsetminus \Supp R^1\pi_*(\F^{\vee}\otimes \Sym^2 \E)$ is generated by the restrictions of CE classes.
    \item The Chow ring of $\Psi = \H_{5,g}\smallsetminus \Supp R^1\pi_*(\wedge^2 \F\otimes \E \otimes \det \E^{\vee})$ is generated by the restrictions of CE classes.
\end{enumerate}
\end{prop}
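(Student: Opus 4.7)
My plan is to realize $\Psi$ as an open substack of a vector bundle over a carefully chosen base stack, and then invoke homotopy invariance for Chow rings. The main input is the Casnati--Ekedahl structure theorem (Theorem \ref{CEstructure}), which for $k = 4, 5$ realizes the universal cover as the zero locus of a section of a specific bundle built from $\E$, $\F$, and $\O_\P(1)$.

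For $k = 4$, Theorem \ref{CEstructure} presents $\C \subset \pp \E^\vee$ as the vanishing locus of a section of the rank-$2$ bundle $\gamma^*\F^\vee \otimes \O_{\pp \E^\vee}(2)$. Pushing down along $\gamma : \pp\E^\vee \to \P$ and then $\pi : \P \to \H_{4,g}$, such a section corresponds to a global section of $\pi_*(\F^\vee \otimes \Sym^2 \E)$. By cohomology and base change, the hypothesis $R^1\pi_*(\F^\vee \otimes \Sym^2 \E) = 0$ defining $\Psi$ is exactly the condition that $\V := \pi_*(\F^\vee \otimes \Sym^2 \E)|_\Psi$ is a vector bundle of the expected rank. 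For $k = 5$, the resolution has length $3$ and (by Buchsbaum--Eisenbud) the cover is the Pfaffian locus of a skew-symmetric map $\gamma^*F_2(-3) \to \gamma^*F_1(-2)$; using $F_2 \cong F_1^\vee \otimes \det \E$ and pushing down, this becomes a section of $\wedge^2 \F \otimes \E \otimes \det \E^\vee$, so that $\V := \pi_*(\wedge^2 \F \otimes \E \otimes \det \E^\vee)|_\Psi$ is again a vector bundle on $\Psi$.

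Now let $\N$ denote the moduli stack of triples $(P/S, E, F)$ where $P \to S$ is a $\pp^1$-bundle and $E, F$ are vector bundles on $P$ of the appropriate ranks and relative degrees, and let $\N^\circ \subset \N$ be the open substack where the relevant $R^1\pi_*$ vanishes. The uniqueness clause in Theorem \ref{CEstructure} refines the forgetful map $\Psi \to \N^\circ$ (remembering $\P$, $\E$, $\F$) to an open immersion $\Psi \hookrightarrow \mathrm{Tot}(\V)$ over $\N^\circ$, the complement being the locus where the section fails to cut out a smooth degree-$k$ cover. Homotopy invariance gives $A^*(\mathrm{Tot}(\V)) \cong A^*(\N^\circ)$, and restriction to $\Psi$ is surjective. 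The Chow ring of $\N^\circ$ is generated by the Chern classes of the universal $E, F$ on the universal $P$ and by $c_1(\O_P(1))$; pushing forward under $\pi$ yields polynomials in the CE classes $a_i, a_i', b_i, b_i', c_2$ of Definition \ref{CEring}, whose restrictions to $\Psi$ therefore generate $A^*(\Psi)$.

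The main obstacle is verifying that $A^*(\N)$ really is generated by Chern classes of the universal bundles. The subtlety is that the moduli of vector bundles on $\pp^1$ with fixed rank and degree decomposes, by splitting type, as a disjoint union of classifying stacks of \emph{non-reductive} automorphism groups, and these strata must be compared via excision. The standard remedy -- and the technical heart of \cite[Lemmas 5.3 and 5.11]{bigpaper}, which I would invoke -- is to present an appropriate atlas of $\N$ as a quotient of an open in an affine space by a product of general linear groups (via a relative Quot construction over $\BSL_2$), making the Chow ring manifestly a polynomial algebra in Chern classes of tautological bundles.
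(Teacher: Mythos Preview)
The paper does not prove this proposition in the text; it records the statement with a citation to \cite[Lemmas 5.3 and 5.11]{bigpaper}. Your sketch accurately reconstructs the architecture of that argument: extract from the Casnati--Ekedahl resolution a tautological section of $\V$, realize $\Psi$ inside the total space of $\V$ over a base stack $\N^\circ$ parametrizing $(P,E,F)$, reduce via homotopy invariance to $A^*(\N^\circ)$, and present $\N^\circ$ (after passing to an affine bundle) as a quotient of an open in affine space by a product of general linear groups over $\BSL_2$. You even cite the same lemmas at the end.

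One technical point deserves correction. The map $\Psi \to \mathrm{Tot}(\V)$ is not an open immersion: every point of $\mathrm{Tot}(\V)$ carries an extra $\gg_m$ of automorphisms, namely scaling $(E,F)$ by $(\lambda\cdot\id_E,\lambda^2\cdot\id_F)$ for $k=4$ (respectively $(\lambda^2\cdot\id_E,\lambda^3\cdot\id_F)$ for $k=5$), which fixes the section $\eta$ but is not induced by any automorphism of the cover. The remedy---visible in Lemmas~\ref{squo} and~\ref{squo5} at the level of a single pair-splitting stratum---is to include in the data an isomorphism $\det E\cong\det F$ (respectively $(\det E)^{\otimes 2}\cong\det F$); equivalently, $\Psi$ is the $\gg_m$-bundle over an open of $\mathrm{Tot}(\V)$ associated to $\pi_*(\det\E\otimes\det\F^\vee)$. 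This does not affect your conclusion: passing to the $\gg_m$-bundle kills only the single class $a_1-b_1$, so $A^*(\N^\circ)\to A^*(\Psi)$ is still surjective and the CE classes still generate.
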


\begin{rem}
Combining Theorem \ref{pushforward}, Lemma \ref{CEinR}, and Proposition \ref{Hprime}, if we knew that $\Supp R^1\pi_*(\F^\vee \otimes \Sym^2 \E)$ were contained in $\beta^{-1}(\M_g^3)$ and $\Supp R^1\pi_*(\wedge^2 \F \otimes \E \otimes \det \E^\vee)$ were contained in $\beta^{-1}(\M_g^4)$, we would be done. However, as we shall see, this is not the case (except when $k = 5, g = 7$, which seems mostly a coincidence).
\end{rem}

\section{Splitting Loci}
\label{slsec}
 Every vector bundle $E$ on $\pp^1$ splits as a direct sum of line bundles, $E \cong \O(e_1) \oplus \cdots \oplus \O(e_r)$. We call the tuple of integers $\vec{e} = (e_1, \ldots, e_r)$ with $e_1 \leq \cdots \leq e_r$ the \emph{splitting type} of $E$ and abbreviate the corresponding sum of line bundles by $\O(\vec{e}) := \O(e_1) \oplus \cdots \oplus \O(e_r)$.
 
 If $E$ is a vector bundle on a $\pp^1$ bundle $\pi: P \to B$, then the base $B$ is stratified by locally closed subvarieties called \emph{splitting loci}
 \[\Sigma_{\vec{e}}(E) := \{b \in B : E|_{\pi^{-1}(b)} \cong \O(\vec{e})\}.\]
The above equation describes splitting loci set-theoretically. Below, we give a moduli-theoretic interpretation.
Though not necessary here, equations giving a subscheme structure to $\Sigma_{\vec{e}}(E) \subset B$ in terms of Fitting supports can be found in \cite[Section 2]{L}.
 
Suppose $W$ is a rank $2$ vector bundle with trivial determinant.
We say that a vector bundle $E$ on $\pi: \pp W \to B$ is a \emph{family of vector bundles of splitting type $\vec{e}$} if $B$ admits a cover $U_i$ so that:
\begin{itemize}
    \item there exist isomorphisms $\psi_i: W|_{U_i} \cong \mathbb{A}^2 \times U_i$, linear in $\mathbb{A}^2$ (and therefore $\pi^{-1}(U_i) \cong U_i \times \pp^1$).
    \item there exist isomorphisms $\phi_i: E|_{\pi^{-1}(U_i)} \cong q_i^* \O(\vec{e})$, where $q_i: \pi^{-1}(U_i) \cong U_i \times \pp^1 \to \pp^1$ is the composition of the isomorphism above with the second projection.
\end{itemize}
This gives rise to gluing data on the overlaps which satisfy a cocycle condition on the triple overlaps.
The data of the vector bundle $W$ is equivalent to the data of a principal $\SL_2$ bundle. A family of vector bundles of splitting type $\vec{e}$, is equivalent to the data of:
\begin{itemize}
    \item transition functions for $W$ over $U_i \cap U_j$, i.e. maps $\psi_{ij}: U_i \cap U_j \to \SL_2$ satisfying the cocycle condition $\psi_{ik} = \psi_{ij} \circ \psi_{jk}$ on $U_i \cap U_j \cap U_k$
    \item transition functions for $E$ over $U_i \cap U_j$, i.e. maps $\phi_{ij} : U_i \cap U_j \to \Aut(\O(\vec{e}))$ such that when restricted to the triple overlap $U_i \cap U_j \cap U_k$ we have
    $\phi_{ik} = \phi_{ij} \circ (\psi_{ij} \cdot  \phi_{jk})$ where $\psi_{ij}$ acts on $\phi_{jk}$ by change of coordinates (made precise below).
\end{itemize}
The action of $\SL_2$ on $\Aut(\O(\vec{e}))$ that arises above can be described concretely as follows.
We have
\[\Aut(\vec{e}) := \Aut(\O(\vec{e})) \subset H^0(\pp^1, \E nd(\O(\vec{e})) = \bigoplus_{i, j} H^0(\pp^1, \O_{\pp^1}(e_j - e_i)).\]
We let $\SL_2$ act on a factor $H^0(\pp^1, \O(e_j - e_i))$ via the $(e_j - e_i)$th symmetric power of the standard representation (if $e_j - e_i < 0$ then this cohomology group is $0$).
The cocycle conditions above are described by multiplication in the semidirect product $\SL_2 \ltimes \Aut(\vec{e})$.

By this discussion, a family of vector bundles of splitting type $\vec{e}$ over $B$ determines a principal $\SL_2 \ltimes \Aut(\vec{e})$ bundle on $B$ and vice versa. In other words, the universal $\vec{e}$ splitting locus is the classifying stack $B(\SL_2 \ltimes \Aut(\vec{e}))$. 
Let us write $\pi: \P \to B(\SL_2 \ltimes \Aut(\vec{e}))$ for the universal $\pp^1$ bundle (which is pulled back from $\BSL_2$),
and let $\V(\vec{e})$ denote the universal vector bundle of splitting type $\vec{e}$ on $\P$.
 
 Suppose that $\vec{e} = (e_1, \ldots, e_r)$ consists of distinct degrees $d_1 < \cdots < d_s$ and that $d_i$ occurs with multiplicity $n_i$.
 Then, we have
 \[\Aut(\vec{e}) = \prod_{i=1}^s \GL_{n_i} \ltimes \prod_{i < j} H^0(\pp^1, \O_{\pp^1}(d_j - d_i))^{\oplus (n_i n_j)}. \]
Elements of $\Aut(\vec{e})$ can be represented by block upper triangular matrices where the off diagonal entries are polynomials of the specified degrees on $\pp^1$.

 The $\SL_2$ action is trivial on the block diagonal matrices (the product of $\GL_{n_i}$ subgroup). It follows that 
 \begin{equation} \label{sdir}
 \SL_2 \ltimes \Aut(\vec{e}) \cong \left(\SL_2 \times \prod_{i=1}^s \GL_{n_i}\right) \ltimes \prod_{i < j} H^0(\pp^1, \O(d_j - d_i))^{\oplus n_in_j}.
 \end{equation}
Hence, we have a map $\SL_2 \ltimes \Aut(\vec{e}) \to \prod_{i=1}^s \GL_{n_i}$.
Let $\N_i$ on $B(\SL_2 \ltimes \Aut(\vec{e}))$  be the pullback of the tautological rank $n_i$ vector bundle from the $\BGL_{n_i}$ factor.
 The Harder-Narasimhan filtration on the restriction of $\V(\vec{e})$ to each fiber of $\P \to B$ induces a filtration of $\V(\vec{e})$ where the successive quotients are $(\pi^* \N_i)(d_i)$. We call this the \emph{HN filtration} of $\V(\vec{e})$ and we call the bundles $\N_i$ on $B (\SL_2 \ltimes \Aut(\vec{e}))$ the \emph{HN bundles for $\V(\vec{e})$}.

 Meanwhile, we also have an inclusion $\SL_2 \times \prod_{i=1}^s \GL_{n_i} \to \SL_2 \ltimes \Aut(\vec{e})$. This induces a map $\varphi: \BSL_2 \times \prod_{i=1}^s \BGL_{n_i} \to B (\SL_2 \ltimes \Aut(\vec{e}))$, which by \eqref{sdir} is an affine bundle.
 The pullback $\varphi^*\N_i$ is again the tautological rank $n_i$ bundle coming from the $\BGL_{n_i}$ factor. 
We have the fiber diagram
\begin{center}
\begin{tikzcd}
\P' \arrow{d}[swap]{\pi'} \arrow{r}{\varphi'} & \P \arrow{d}{\pi} \\
\BSL_2 \times \prod_{i=1}^s \BGL_{n_i} \arrow{r}[swap]{\varphi} & B (\SL_2 \ltimes \Aut(\vec{e})).
\end{tikzcd}
\end{center}
We note in passing that the pullback $\varphi'^* \V({\vec{e}})$ on $\P'$ actually splits as a direct sum
 \[\varphi'^* \V(\vec{e}) \cong \bigoplus_{i=1}^s 
 \varphi'^*(\pi^* \N_i)(d_i).\]
 Since $\varphi$ is a vector bundle map, it induces an isomorphism on Chow. This establishes the following.
 
 \begin{lem}
 The Chow ring of $B(\SL_2 \ltimes \Aut(\vec{e}))$ is the free $\zz$ algebra on the universal $c_2$ pulled back from $\BSL_2$ and the Chern classes of the HN bundles $\N_1, \ldots, \N_s$.
 \end{lem}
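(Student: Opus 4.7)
The plan is to use the map $\varphi$ already built in the paragraph preceding the lemma, together with homotopy invariance of Chow rings for vector bundles, and then transport the standard presentation of $A^*(\BSL_2 \times \prod_i \BGL_{n_i})$ across the isomorphism. Write $G := \SL_2 \ltimes \Aut(\vec{e})$, $H := \SL_2 \times \prod_{i=1}^s \GL_{n_i}$, and $V := \prod_{i<j} H^0(\pp^1, \O(d_j - d_i))^{\oplus n_i n_j}$. The decomposition \eqref{sdir} exhibits $G$ as $H \ltimes V$ where $V$ is a linear representation of $H$.

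First, I would verify that $\varphi: BH \to BG$ is (the structure map of) a vector bundle. Given any principal $G$-bundle $P \to X$, the associated fiber bundle $P \times^G V \to X$ is a genuine rank $\dim V$ vector bundle because $V$ is an $H$-representation and the $H$-action on $V$ is linear; reductions of structure group from $G$ to $H$ correspond to sections of this vector bundle. Applied universally over $BG$, this says $\varphi$ is a vector bundle of rank $\dim V$.

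Second, I would invoke homotopy invariance: pullback along any vector bundle induces an isomorphism on Chow rings (in the equivariant Edidin--Graham setting this is standard). Hence $\varphi^*: A^*(BG) \to A^*(BH)$ is a ring isomorphism. The right-hand side is, by Totaro's computation of $A^*(\BSL_2) = \zz[c_2]$ and $A^*(\BGL_{n_i}) = \zz[c_1(\N_i), \ldots, c_{n_i}(\N_i)]$ together with the K\"unneth formula,
\[
A^*(BH) \;=\; \zz[c_2] \otimes \bigotimes_{i=1}^s \zz[c_1(\N_i), \ldots, c_{n_i}(\N_i)],
\]
a polynomial ring in the stated generators.

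Finally, I would identify the generators upstairs. By definition the HN bundle $\N_i$ on $BG$ is pulled back from $\BGL_{n_i}$ along the quotient map $G \twoheadrightarrow \GL_{n_i}$ in \eqref{sdir}, and the universal $c_2$ is pulled back from $\BSL_2$ along $G \twoheadrightarrow \SL_2$; the inclusion $H \hookrightarrow G$ is compatible with both projections, so $\varphi^* c_2$ and $\varphi^* c_j(\N_i)$ are precisely the standard free generators of $A^*(BH)$. Since $\varphi^*$ is a ring isomorphism, $c_2$ and the $c_j(\N_i)$ generate $A^*(BG)$ freely as claimed. The only step that is not entirely formal is checking that $\varphi$ is genuinely a vector bundle and not merely an affine bundle or gerbe, but this is built into the explicit semidirect product form \eqref{sdir}: the coset space $G/H$ is canonically the linear $H$-representation $V$.
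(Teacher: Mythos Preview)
Your argument is essentially identical to the paper's: both use that the map $\varphi: BH \to BG$ induced by the inclusion $H = \SL_2 \times \prod_i \GL_{n_i} \hookrightarrow G$ is an $\mathbb{A}^n$-fibration, invoke homotopy invariance for Chow, and then transport the standard polynomial presentation of $A^*(BH)$ across $\varphi^*$, identifying the generators with $c_2$ and the Chern classes of the $\N_i$.

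One small correction: your justification that $\varphi$ is a genuine \emph{vector} bundle is not quite right. Reductions of structure group correspond to sections of $P \times^G (G/H)$, and while $G/H \cong V$ as a variety, the $G$-action on $G/H$ is $(h,v)\cdot w = h\cdot w + v$, which is affine rather than linear (the unipotent radical acts by translation). So $\varphi$ is only an \emph{affine} bundle in general. This is exactly what the paper states (though the paper then, somewhat loosely, also calls it a ``vector bundle map'' in the next sentence). The conclusion is unaffected: homotopy invariance for Chow groups holds for affine bundles, not just vector bundles, so $\varphi^*$ is still an isomorphism and the rest of your argument goes through verbatim.
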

 \begin{rem}
 The statement above holds with $\zz$-coefficients. We will only use it, however, with $\qq$-coefficients. 
 \end{rem}
 
 The above argument works just as well for a pair of splitting types.
 
 \begin{lem} \label{pair}
 The Chow ring of $B( \SL_2 \ltimes (\Aut(\vec{e}) \times \Aut(\vec{f})) )$ is the free $\zz$ algebra on the universal $c_2$ pulled back from $\BSL_2$, the Chern classes of the HN bundles for $\V(\vec{e})$, and Chern classes of the HN bundles for $\V(\vec{f})$.
 \end{lem}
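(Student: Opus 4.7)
The proof proceeds by the same strategy as the single-splitting-type case immediately preceding the lemma, with only a notational upgrade. My plan is to identify the appropriate Levi subgroup, observe that the inclusion into the full semidirect product becomes an affine bundle on classifying stacks, and then read off the Chow ring from the product of classifying stacks.

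First, I would write out the structure of the group $\SL_2 \ltimes (\Aut(\vec{e}) \times \Aut(\vec{f}))$. Let $\vec{e}$ have distinct degrees $d_1 < \cdots < d_s$ with multiplicities $n_i$, and $\vec{f}$ have distinct degrees $d_1' < \cdots < d_t'$ with multiplicities $m_j$. The same decomposition as in \eqref{sdir} gives
\[
\Aut(\vec{e}) \times \Aut(\vec{f}) \cong \Bigl(\prod_{i=1}^s \GL_{n_i} \times \prod_{j=1}^t \GL_{m_j}\Bigr) \ltimes U,
\]
where $U$ is the product of the off-diagonal affine spaces of $\SL_2$-representations $H^0(\pp^1,\O(d_j-d_i))^{\oplus n_i n_j}$ and $H^0(\pp^1,\O(d_j'-d_i'))^{\oplus m_i m_j}$. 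Because the $\SL_2$ action on the block diagonal factors $\prod_i \GL_{n_i} \times \prod_j \GL_{m_j}$ is trivial, the same argument as before yields
\[
\SL_2 \ltimes (\Aut(\vec{e}) \times \Aut(\vec{f})) \cong \Bigl(\SL_2 \times \prod_{i=1}^s \GL_{n_i} \times \prod_{j=1}^t \GL_{m_j}\Bigr) \ltimes U.
\]

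Next, the inclusion of the Levi factor $H := \SL_2 \times \prod_i \GL_{n_i} \times \prod_j \GL_{m_j}$ into the full group induces a map
\[
\varphi : BH = \BSL_2 \times \prod_{i=1}^s \BGL_{n_i} \times \prod_{j=1}^t \BGL_{m_j} \longrightarrow B(\SL_2 \ltimes (\Aut(\vec{e}) \times \Aut(\vec{f})))
\]
whose fibers are the affine space $U$, so $\varphi$ is an affine bundle. In particular $\varphi^*$ is an isomorphism on Chow rings. Moreover, by construction the pullbacks $\varphi^* \N_i$ and $\varphi^* \N_j'$ of the HN bundles for $\V(\vec{e})$ and $\V(\vec{f})$ are precisely the tautological rank $n_i$ (resp.\ rank $m_j$) bundles on the corresponding $\BGL$ factors, and the pullback of $c_2$ from $\BSL_2$ remains $c_2$ on the $\BSL_2$ factor.

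Finally, the Chow ring of the product of classifying stacks is the tensor product of their Chow rings, each of which is a polynomial ring on the Chern classes of the tautological bundle (see, e.g., the computation of $A^*(\BGL_n)$ and $A^*(\BSL_2)$). Hence $A^*(BH)$ is the free $\zz$-algebra on $c_2$ and the Chern classes of the tautological bundles, which via $\varphi^*$ identifies with $A^*(B(\SL_2 \ltimes (\Aut(\vec{e}) \times \Aut(\vec{f}))))$ being freely generated by $c_2$ together with the Chern classes of the HN bundles for $\V(\vec{e})$ and $\V(\vec{f})$. There is no real obstacle here beyond bookkeeping; the entire content is the observation that all of the $\SL_2$-action lives on the unipotent part $U$, which is exactly what made the single-splitting-type argument work.
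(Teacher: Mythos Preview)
Your proposal is correct and follows exactly the approach the paper intends: the paper's entire ``proof'' is the sentence ``The above argument works just as well for a pair of splitting types,'' and you have faithfully spelled out what that argument becomes when both $\Aut(\vec{e})$ and $\Aut(\vec{f})$ are present. The key observation---that the $\SL_2$ action is trivial on the block-diagonal Levi so that the inclusion of $H$ yields an affine bundle $BH \to B(\SL_2 \ltimes (\Aut(\vec{e})\times\Aut(\vec{f})))$---is precisely the content of \eqref{sdir} applied twice.
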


From our description of the universal $\vec{e}$ splitting locus, one sees that its codimension in the moduli stack of vector bundles on $\pp^1$ bundles is
$h^1(\pp^1, \E nd(\O(\vec{e})))$. Given a family of vector bundles on $\pp^1$ bundles with splitting type $\vec{e}$, we say that $h^1(\pp^1, \E nd(\O(\vec{e})))$ is the \emph{expected codimension}.
It follows that, if non-empty, the codimension of a splitting locus is always at most the expected codimension.
There is a partial ordering on splitting types defined by $\ee \leq \vec{e}$ if $e_1' + \ldots + e_j' \leq e_1 + \ldots + e_j$ for all $j$.
On the moduli space of vector bundles on $\pp^1$ bundles, the $\ee$ splitting locus is in the closure of the $\vec{e}$ splitting locus if and only if $\ee \leq \vec{e}$. (Of course, this need not be the case in every family, as we shall see.) Since codimension can only decrease under pullback, this implies the following fact
\begin{equation} \label{clofact}
\text{Every component of } \bigcup_{\ee \leq \vec{e}} \Sigma_{\vec{e}}(E) \text{ has at most the expected codimension.}
\end{equation}
We note that the union $\bigcup_{\ee \leq \vec{e}} \Sigma_{\vec{e}}(E)$ may not be the closure of $\Sigma_{\vec{e}}(E)$, but it is always closed in the base.

\begin{definition}[Stratifications]
Throughout this paper a \emph{stratification} of $B$ shall mean a disjoint union $B = \bigsqcup_{S \in \mathcal{S}} S$ into locally closed subvarieties (or substacks) equipped with a partial ordering $S' \leq S$ such that for each $S \in \mathcal{S}$, the union $\bigcup_{S' \leq S} S'$ is closed in $B$.
\end{definition}

\begin{example}[Warning] \label{wex}
Our notion of stratification is weaker than some in the literature. For example, say $B$ is the union of the two coordinate axes $B = V(xy) \subset \Spec  k[x,y] \cong \mathbb{A}^2$.
Then $\mathcal{S} = \{V(y), V(x) \smallsetminus (0, 0)\}$ is a stratification of $B$ with partial order $V(y) \leq V(x) \smallsetminus (0, 0)$. We represent this partial order diagramatically as pictured on the right.
\begin{center}
\begin{tikzpicture}
\draw[thick,color = blue] (0, -2) -- (0, -.05);
\draw[thick,color = blue] (0, 2) -- (0, .05);
\node[color = blue] at (0, 2.3) {$V(x) \smallsetminus (0, 0)$};
\draw[thick, color = red] (-2, 0) -- (2, 0);
\node[color = red] at (-2.55, 0) {$V(y)$};

\node[color = blue] at (9, 1) {$V(x) \smallsetminus (0, 0)$};
\node[color = red] at (9, -1) {$V(y)$};
\draw (9, -.6) -- (9, .6);
\end{tikzpicture}
\end{center}

\end{example}

We shall make use of the following key result from \cite{L}.

\begin{thm}[Theorem 1.2 of \cite{L}] \label{degenformulas}
Let $E$ be a vector bundle on a $\pp^1$ bundle $\pi : P \to B$.
Suppose that $\Sigma_{\vec{e}}(E)$ occurs in the expected codimension. Then, modulo classes supported on $\Sigma_{\ee}(E)$ for $\ee < \vec{e}$, the fundamental class of the closure of $\Sigma_{\vec{e}}(E)$ is given by a universal formula in terms of the Chern classes of $\pi_* \O_{P}(1), \pi_*E(m-1)$ and $\pi_*E(m)$ for some $m$ suitably large.
\end{thm}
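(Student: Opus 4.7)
The strategy is to realize the splitting locus $\Sigma_{\vec{e}}(E)$ universally inside a classifying stack whose Chow ring is a polynomial ring, extract the fundamental class there as a polynomial in universal Chern classes, and pull back.

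Pick $m$ so large that $R^1\pi_*E(m-1) = 0$. Then $V := \pi_*E(m-1)$ and $W := \pi_*E(m)$ are vector bundles on $B$ whose ranks $a, b$ are determined by $\vec{e}$ and $m$, and the fiberwise Beilinson resolution globalizes to a short exact sequence on $P$
\[
0 \to \pi^*V \otimes \O_P(-1) \xrightarrow{\phi} \pi^*W \to E(m) \to 0.
\]
The isomorphism class of $E|_{\pi^{-1}(b)}$ is recovered from $\phi_b$, so the splitting-type stratification of $E$ is the pullback of a splitting-type stratification of the cokernel of a universal $\phi$.

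I would then work universally. Let $T = \Hom(V_0 \otimes W_0^\vee, V_1)$, where $V_0, V_1$ are the standard representations of $\GL_a, \GL_b$ and $W_0$ is the standard representation of $\SL_2$, and set $\mathcal{Y} := [T/(\GL_a \times \GL_b \times \SL_2)]$. Over $\mathcal{Y}$ there is a universal $\pp^1$-bundle, universal bundles $\mathcal{V}_0, \mathcal{V}_1$ pulled back from $\BGL_a, \BGL_b$, and a universal $\phi$ whose cokernel defines locally closed substacks $\widetilde\Sigma_{\vec{e}} \subseteq \mathcal{Y}$. The classifying data of $(V, W, \pi_*\O_P(1), \phi)$ yields a morphism $g : B \to \mathcal{Y}$ with $g^{-1}(\widetilde\Sigma_{\vec{e}}) = \Sigma_{\vec{e}}(E)$, and a tangent-space check using $\Ext^1$-deformation theory on $\pp^1$ shows $\widetilde\Sigma_{\vec{e}}$ has codimension $h^1(\pp^1, \E nd(\O(\vec{e})))$, i.e.\ the expected codimension.

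The Chow ring of $\mathcal{Y}$ is the polynomial ring in the Chern classes of $\mathcal{V}_0$, $\mathcal{V}_1$, and the pullback $c_2$ from $\BSL_2$. It then remains to exhibit a polynomial $P_{\vec{e}}$ in these generators that represents $[\overline{\widetilde\Sigma_{\vec{e}}}]$ modulo contributions from $\widetilde\Sigma_{\ee}$ with $\ee < \vec{e}$. I would produce $P_{\vec{e}}$ by iterating Thom--Porteous along the Harder--Narasimhan filtration of the universal cokernel (each successive-quotient degeneracy is an ordinary determinantal locus), or alternatively by computing the equivariant fundamental class of the $\SL_2 \ltimes \Aut(\vec{e})$-orbit closure on the classifying stack $B(\SL_2 \ltimes \Aut(\vec{e}))$ of Lemma \ref{pair} and pushing forward to $\mathcal{Y}$. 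Once $P_{\vec{e}}$ is constructed, pulling back along $g$ gives the theorem: $g^*\mathcal{V}_0 = V$, $g^*\mathcal{V}_1 = W$, $g^*c_2 = c_2(\pi_*\O_P(1))$, and the expected-codimension hypothesis guarantees $g^*[\overline{\widetilde\Sigma_{\vec{e}}}] = [\overline{\Sigma_{\vec{e}}(E)}]$ modulo deeper strata. The main obstacle is producing $P_{\vec{e}}$ in closed form for arbitrary $\vec{e}$: abstractly it must exist by the Chow ring computation, but writing down a workable formula requires significant combinatorial bookkeeping in either approach.
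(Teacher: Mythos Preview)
The paper does not prove this statement: Theorem~\ref{degenformulas} is quoted as Theorem~1.2 of \cite{L} and used as a black box, so there is no proof in the paper to compare your proposal against.

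That said, your outline is broadly the right shape and is in the spirit of how such results are established. The Beilinson-type presentation $0 \to \pi^*V \otimes \O_P(-1) \to \pi^*W \to E(m) \to 0$ and the resulting classifying map $g$ to a stack whose Chow ring is polynomial in the Chern classes of $\mathcal{V}_0, \mathcal{V}_1, c_2$ are correct and are the key structural inputs.

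The place to press harder is your final step. Knowing that the universal class $[\overline{\widetilde\Sigma_{\vec{e}}}]$ is \emph{some} polynomial $P_{\vec{e}}$ in those generators does not by itself yield $g^*P_{\vec{e}} = [\overline{\Sigma_{\vec{e}}(E)}]$ under the expected-codimension hypothesis: pullback of a fundamental class along an arbitrary map need not be the fundamental class of the preimage. What makes this work is that the stratum closures can be cut out as (iterated) degeneracy loci of explicit maps of bundles built from $\pi_*E(m-1)$, $\pi_*E(m)$, and $\pi_*\O_P(1)$; the Thom--Porteous/Kempf--Laksov formalism then produces a \emph{refined} class which is simultaneously a universal polynomial in Chern classes and which agrees with the honest fundamental class whenever the codimension is as expected. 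You gesture at this (``iterating Thom--Porteous along the HN filtration''), but you frame it as combinatorial bookkeeping for producing a closed formula, whereas it is really the substance of the argument: the theorem only asserts existence of a universal formula, and the degeneracy-locus construction is what delivers existence \emph{together with} the correct behavior under pullback. Your alternative route via pushforward from $B(\SL_2 \ltimes \Aut(\vec{e}))$ would give a universal polynomial but does not directly supply the refined-class mechanism needed for the expected-codimension conclusion.
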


Applying this to the universal CE bundles, we obtain the following.
 \begin{lem} \label{sigmaCE}
 Let $\E$ and $\F$ be the universal CE bundles on $\P \to \H_{k,g}$. If $\Sigma_{\vec{e}}(\E)$ occurs in the expected codimension, then, modulo classes supported on $\Sigma_{\ee}(\E)$ for $\ee < \vec{e}$ the fundamental class of its closure is expressible in terms of CE classes. The analogous statement holds for the classes of $\Sigma_{\vec{f}}(\F)$.
 \end{lem}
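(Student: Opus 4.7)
The plan is to deduce the lemma directly from Theorem \ref{degenformulas}; the substantive work is to verify that the Chern classes appearing in the universal degeneracy formula all lie in the subring generated by the CE classes. First, I apply Theorem \ref{degenformulas} with $E = \E$ on the universal $\pp^1$-bundle $\pi: \P \to \H_{k,g}$. Assuming $\Sigma_{\vec e}(\E)$ occurs in the expected codimension, the theorem tells us that, modulo classes supported on $\Sigma_{\ee}(\E)$ for $\ee < \vec e$, the class $[\overline{\Sigma_{\vec e}(\E)}]$ is given by a universal polynomial in the Chern classes of $\pi_* \O_{\P}(1)$, $\pi_* \E(m-1)$, and $\pi_*\E(m)$ for some sufficiently large $m$. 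It therefore suffices to show that each of these Chern classes is a polynomial in $a_i, a_i', b_i, b_i', c_2$.

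The Chern classes of $\pi_*\O_\P(1)$ are immediate from the $\SL_2$ convention of Remark \ref{sl2rem}: $c_1(\pi_*\O_\P(1)) = 0$ and $c_2(\pi_*\O_\P(1)) = c_2$. For $m$ chosen large enough that $R^1\pi_* \E(m) = 0$, Grothendieck--Riemann--Roch gives
\[
\ch(\pi_* \E(m)) \;=\; \pi_*\bigl(\ch(\E) \cdot e^{mz} \cdot \td(T_\pi)\bigr).
\]
The integrand is a polynomial in $z$ and the Chern classes $c_i(\E) = \pi^* a_i + \pi^* a_i' z$. Using the relation $z^2 = -\pi^* c_2$, any such polynomial on $\P$ can be put in the canonical form $\pi^*\alpha + \pi^*\beta \cdot z$ with $\alpha, \beta$ polynomials in $a_i, a_i', c_2$. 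Pushing forward with $\pi_*(1) = 0$ and $\pi_*(z) = 1$ produces a polynomial in CE classes. Hence $\ch(\pi_*\E(m))$, and therefore the Chern classes of $\pi_*\E(m)$, lie in the CE subring; the same applies to $\pi_*\E(m-1)$. This proves the claim for $\Sigma_{\vec e}(\E)$.

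The argument for $\Sigma_{\vec f}(\F)$ is the same, using $c_i(\F) = \pi^* b_i + \pi^* b_i' z$ and the classes $b_i, b_i'$ in place of $a_i, a_i'$. I do not anticipate any real obstacle here: the CE classes were defined precisely to furnish every push-forward of a monomial in $z$ and the Chern classes of $\E$ or $\F$, which is exactly what GRR applied to the degeneracy formulas of \cite{L} demands. The only point requiring care is to keep track of the qualifier ``modulo classes supported on smaller splitting loci'' as it is propagated from Theorem \ref{degenformulas} into the statement of the lemma.
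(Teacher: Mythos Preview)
Your proof is correct and follows essentially the same route as the paper: apply Theorem \ref{degenformulas}, then use Grothendieck--Riemann--Roch (with $R^1\pi_*\E(m)=0$ for $m$ large) to express the Chern classes of $\pi_*\E(m)$ and $\pi_*\E(m-1)$ as polynomials in the CE classes, noting that $c_2(\pi_*\O_\P(1))=c_2$ is CE by definition and $c_1(\pi_*\O_\P(1))=0$ by the $\SL_2$ convention. You even spell out the normal form $\pi^*\alpha+\pi^*\beta\cdot z$ a bit more explicitly than the paper does, but the argument is the same.
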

 \begin{proof}
Recall that the class $c_2 = c_2(\pi_* \O_{\P}(1))$ is a CE class by definition (and $c_1(\O_{\P}(1)) = 0$ because we are working over $\BSL_2$). 
 For $m$ suitably large, we have  $R^1\pi_* \E(m) = 0$ and $R^1\pi_* \E(m-1) = 0$, so
 by Grothendieck--Riemann--Roch, the Chern classes of $\pi_*\E(m)$ and $\pi_*\E(m-1)$ are polynomials in the CE classes. (See \cite[Example 3.4]{part3} for an example in codimension $1$). Similarly, the Chern classes of
 $\pi_*\F(m)$ and $\pi_*\F(m-1)$ are polynomials in the CE classes for $m$ suitably large.
 The result now follows from Theorem \ref{degenformulas}.
 \end{proof}

 We shall need a slight variant of the above lemma concerning particular unions of splitting loci (some of which will be allowed to occur in the wrong codimension).
Let us define
\[\Sigma_{(n, *, \ldots, *)}(E) := \bigcup_{e_1 = n} \Sigma_{\vec{e}}(E)\]
In \cite[Lemma 5.1]{L}, it was shown that if the above union occurs in its expected codimension, equal to $\deg(\vec{e}) + 1 - (n+1)r$, then --- modulo classes supported on $\Sigma_{(n', *, \ldots, *)}(E)$ for $n' < n$ --- its fundamental class can be computed with the Porteous formula. In particular, it is expressible in terms of the Chern classes of $\pi_* E(m-1), \pi_*E(m)$ and $\pi_* \O_{P}(1)$ for $m$ suitably large. Arguing as in Lemma \ref{sigmaCE}, we obtain the following result.

\begin{lem} \label{2star}
Suppose that every component of the union $\Sigma_{(n,*, \ldots, *)}(\E)$ occurs in codimension $\deg(\vec{e}) + 1 - (n+1)r$. 
Then, modulo classes supported on $\Sigma_{(n', *, \ldots, *)}(\E)$ for $n' < n$,
the fundamental class of the
closure of $\Sigma_{(n,*, \ldots, *)}(\E)$
is expressible in terms of CE classes.
\end{lem}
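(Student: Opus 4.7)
The plan is to mirror the proof of Lemma \ref{sigmaCE} essentially verbatim, replacing the use of Theorem \ref{degenformulas} with \cite[Lemma 5.1]{L}, which is the analogous degeneracy formula for unions of splitting loci of the form $\Sigma_{(n,*,\ldots,*)}$. So the structure will be: (i) reduce the class of the closure of $\Sigma_{(n,*,\ldots,*)}(\E)$ to a polynomial in Chern classes of certain pushforward bundles via a Porteous-type formula, and (ii) show these pushforward Chern classes are themselves polynomials in CE classes via Grothendieck--Riemann--Roch.

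More precisely, I would first invoke \cite[Lemma 5.1]{L}: under the assumption that every component of $\Sigma_{(n,*,\ldots,*)}(\E)$ occurs in the codimension $\deg(\vec{e}) + 1 - (n+1)r$ predicted by Porteous, the fundamental class of the closure of $\Sigma_{(n,*,\ldots,*)}(\E)$ is, modulo classes supported on $\Sigma_{(n',*,\ldots,*)}(\E)$ for $n' < n$, a universal polynomial in the Chern classes of $\pi_*\E(m-1)$, $\pi_*\E(m)$, and $\pi_*\O_{\P}(1)$, for any sufficiently large $m$.

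It then remains to show each of these pushforward Chern classes lies in the subring generated by CE classes. Choose $m$ large enough that $R^1\pi_*\E(m-1) = R^1\pi_*\E(m) = 0$, so that $\pi_!\E(m-1) = \pi_*\E(m-1)$ and $\pi_!\E(m) = \pi_*\E(m)$ in K-theory. By Grothendieck--Riemann--Roch applied to $\pi: \P \to \H_{k,g}$, the Chern character of $\pi_*\E(m)$ is a polynomial in $\pi$-pushforwards of monomials in $z = c_1(\O_{\P}(1))$ and the Chern classes of $\E$; by the definitions \eqref{adef} of $a_i, a_i'$ (together with $z^2 = -\pi^*c_2$), each such pushforward is a polynomial in CE classes. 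The same applies to $\pi_*\E(m-1)$, and $c_2(\pi_*\O_{\P}(1)) = c_2$ is by definition a CE class.

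I do not expect any real obstacle here, since this is formally parallel to Lemma \ref{sigmaCE}; the only substitution is using the Porteous-style formula of \cite[Lemma 5.1]{L} in place of Theorem \ref{degenformulas}. The one point worth double-checking is that the ``modulo classes supported on $\Sigma_{(n',*,\ldots,*)}(\E)$ for $n' < n$'' error term in \cite[Lemma 5.1]{L} matches the form of the error term in the statement of the lemma being proved, which it does by construction.
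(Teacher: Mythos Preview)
Your proposal is correct and follows exactly the paper's approach: the paper simply notes that \cite[Lemma 5.1]{L} gives the Porteous-type formula for $\Sigma_{(n,*,\ldots,*)}(\E)$ in terms of Chern classes of $\pi_*\E(m-1)$, $\pi_*\E(m)$, and $\pi_*\O_{\P}(1)$, and then says ``Arguing as in Lemma \ref{sigmaCE}, we obtain the following result.'' Your write-up is precisely that argument spelled out.
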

This is useful to us as illustrated in the following example.

\begin{example} \label{246ex}
The expected codimension for splitting type $(2, 4, 6)$ is $5$, but suppose that $\Sigma_{(2, 4, 6)}(\E)$ occurs in codimension $4$. Suppose $\Sigma_{(2, 5, 5)}(\E)$ also occurs in codimension $4$ and $\Sigma_{(2, 3, 7)}(\E)$ and $\Sigma_{(2, 2, 8)}(\E)$ are empty. Then, we have $\Sigma_{(2, *, *)}(\E) = \Sigma_{(2, 4, 6)}(\E) \cup \Sigma_{(2, 5, 5)}(\E)$, and every component occurs occurs in codimension $4 = 13 - 9 = \deg(\vec{e})+1 - (2+1)(3)$. Thus, the above lemma shows that the fundamental class of the union $\Sigma_{(2, 4, 6)}(\E) \cup \Sigma_{(2, 5, 5)}(\E)$ is expressible in terms of CE classes (modulo classes supported on $\Sigma_{(n', *, *)}(\E)$ for $n' < 2$).
\end{example}

\subsection{Pair splitting loci on $\H_{4,g}$}
Let $\E$ and $\F$ be the universal CE bundles on $\P$, the universal $\p^1$-bundle on $\H_{4,g}$.
Let $\vec{e}$ be a splitting type of rank $3$ and degree $g + 3$, and let $\vec{f}$ be a splitting type of rank $2$ and degree $g + 3$. 
Each splitting locus of the form $\Sigma := \Sigma_{\vec{e}}(\E) \cap \Sigma_{\vec{f}}(\F)$ has a concrete description as a quotient stack. This description seems well-known in the literature, but with slightly different presentations (see for example \cite[p. 20]{DP}, \cite[Theorem 4.4]{CE}, and \cite[Section 3]{CDC}). Here, we outline our preferred way of thinking about this quotient, following our set-up in \cite[Section 3]{part1}.

The vector space 
\[\Phi: H^0(\pp^1, \O(\vec{f})^\vee \otimes \Sym^2 \O(\vec{e})) \xrightarrow{\sim} H^0(\pp \O(\vec{e})^\vee, \gamma^*\O(\vec{f})^{\vee}\otimes\O_{\pp \O(\vec{e})^\vee}(2))\]
parametrizes pencils of relative quadrics on the $\pp^1$ bundle $\pp \O(\vec{e})^\vee$.
Let 
\[U \subset H^0(\pp^1, \O(\vec{f})^\vee \otimes \Sym^2 \O(\vec{e}))\]
be the open subset of sections $\eta$ whose vanishing locus $V(\Phi(\eta)) \subset \pp \O(\vec{e})^\vee$ defines a smooth, irreducible quadruple cover of $\pp^1$. Considering its Hilbert polynomial, one can show that such a cover will have genus $g$. It turns out --- essentially from the Casnati--Ekedahl structure theorem --- that all degree $4$, genus $g$ covers $\alpha: C \to \pp^1$ with $E_\alpha \cong \O(\vec{e})$ and $F_\alpha \cong \O(\vec{f})$ arise in this way. We make this precise below.

There is a natural action of
$\SL_2\ltimes(\Aut(\vec{e}) \times \Aut(\vec{f}))$ on $U$. Since $\vec{e}$ and $\vec{f}$ are the same degree, we have $\det \O(\vec{e}) \otimes \det \O(\vec{f})^\vee \cong \O_{\pp^1}$, so $\SL_2\ltimes(\Aut(\vec{e}) \times \Aut(\vec{f}))$ also acts on a copy of $\gg_m \subset H^0(\pp^1, \det \O(\vec{e}) \otimes \det \O(\vec{f})^\vee)$. Our discussion will be simplified slightly by considering also the framed Hurwitz space $\rho: \H_{k,g}^\dagger \to \H_{k,g}$ (see Remark \ref{sl2rem}). Let us write $\Sigma^\dagger := \rho^{-1}(\Sigma)$, so $\Sigma = [\Sigma^\dagger/\SL_2]$. This allows us to think about the quotient in two steps.

\begin{lem} \label{squo}
We have $\Sigma^\dagger \cong [(U \times \gg_m)/\Aut(\vec{e}) \times \Aut(\vec{f})]$, and therefore 
\[\Sigma = [\Sigma^\dagger/\SL_2] \cong [(U\times \gg_m)/\SL_2 \ltimes(\Aut(\vec{e}) \times \Aut(\vec{f}))].\]
\end{lem}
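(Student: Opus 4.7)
The plan is to build a morphism $\Phi\colon [(U \times \gg_m)/(\Aut(\vec{e}) \times \Aut(\vec{f}))] \to \Sigma^\dagger$ and then invert it using Theorem \ref{CEstructure}. I first unpack the target: an object of $\Sigma^\dagger(S)$ is a Gorenstein quadruple cover $\alpha\colon C \to \pp^1_S$ (with the $\pp^1$-bundle trivialized) such that the Casnati--Ekedahl bundles $E_\alpha$ and $F_\alpha$ have constant splitting types $\vec{e}$ and $\vec{f}$ on fibers over $S$. By Theorem \ref{CEstructure}, such an $\alpha$ is equivalent to the datum of a self-dual resolution
\[
0 \to \gamma^*\det E_\alpha(-4) \to \gamma^* F_\alpha(-2) \to \O_{\pp E_\alpha^\vee} \to \O_C \to 0
\]
whose cokernel is the structure sheaf of a smooth genus-$g$ quadruple cover.

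To construct $\Phi$, I would take $(\eta, \lambda) \in U \times \gg_m$ and use the isomorphism in the excerpt to regard $\eta$ as the middle map $\gamma^* \O(\vec{f})(-2) \to \O_{\pp \O(\vec{e})^\vee}$ of a candidate resolution; by the defining condition of $U$, its vanishing locus is a smooth genus-$g$ quadruple cover $C \subset \pp \O(\vec{e})^\vee$. Self-duality of the resolution then pins down the leftmost map up to a nonzero scalar coming from $H^0(\pp^1, \det \O(\vec{e}) \otimes \det \O(\vec{f})^\vee)$ (a line, since both determinants equal $\O_{\pp^1}(g+3)$), and $\lambda \in \gg_m$ supplies this scalar. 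The action of $(\phi, \psi) \in \Aut(\vec{e}) \times \Aut(\vec{f})$, namely $\eta \mapsto \psi^{-1} \circ (\Sym^2 \phi) \circ \eta$ and $\lambda \mapsto (\det \phi)(\det \psi)^{-1} \lambda$, encodes precisely the change of trivializations $E_\alpha \cong \O(\vec{e})$ and $F_\alpha \cong \O(\vec{f})$, so $\Phi$ descends to the quotient stack.

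Conversely, given $\alpha \in \Sigma^\dagger(S)$, the frame bundles of $E_\alpha$ and $F_\alpha$ are $\Aut(\vec{e})$- and $\Aut(\vec{f})$-torsors that are fppf-locally trivial, because the splitting types are constant on fibers. Passing to a trivialization, the middle map of the resolution becomes some $\eta \in U$, and the canonical isomorphism $F_2 \cong \det E_\alpha$ of Theorem \ref{CEstructure}(4), combined with the trivializations, yields $\lambda \in \gg_m$. This defines an inverse morphism $\Sigma^\dagger \to [(U \times \gg_m)/(\Aut(\vec{e}) \times \Aut(\vec{f}))]$ after descent.

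The main obstacle is verifying that these two morphisms are mutually inverse, which amounts to checking that the pair $(\eta, \lambda)$ carries exactly the data of the self-dual resolution (no less and no more) and that the $\Aut$-action precisely captures all trivialization ambiguity; both points should follow from the uniqueness clause in Theorem \ref{CEstructure}(2) and essentially repackage arguments already in \cite[Section 3]{bigpaper}. The second assertion of the lemma follows immediately by passing to the further $\SL_2$-quotient that forgets the framing of the $\pp^1$-fibration, per Remark \ref{sl2rem}.
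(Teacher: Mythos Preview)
Your approach matches the paper's: both describe the forward map by sending $\eta$ to the cover $V(\Phi(\eta))$ and then defer the verification that this is an equivalence to \cite[Section~3, Theorem~3.6]{bigpaper}, restricted to the subcategories cut out by the splitting-type and smoothness conditions. One small correction worth noting: the $\gg_m$ factor does not ``supply the leftmost map'' of the resolution (that map is already determined by $\eta$, as the kernel inclusion); rather, as in the paper's condition~($3'$), the element $\lambda$ records an isomorphism $\phi\colon \det E \cong \det F$, whose role is to rigidify the residual $\gg_m$ of automorphisms that scale $E$ and $F$ compatibly with $\eta$ but act nontrivially on $\det E \otimes \det F^{\vee}$.
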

\begin{proof}
We shall prove the statement for the framed stratum $\Sigma^\dagger$, from which the second statement follows.
By definition, $\Sigma^\dagger \subset \H_{4,g}$ parametrizes covers $\alpha: C \to \pp^1$ such that $E_\alpha \cong \O(\vec{e})$ and $F_\alpha \cong \O(\vec{e})$. As a fiber category, the objects of $\Sigma^\dagger(S)$ for a scheme $S$ are degree $4$ covers $\alpha: C \to \pp^1 \times S$ such that
\begin{enumerate}
    \item $E_\alpha$ on $\pp^1 \times S$ is a family of vector bundles of splitting type $\vec{e}$ 
    \item $F_\alpha$ on $\pp^1 \times S$ is a family of vector bundles on splitting type $\vec{f}$.
    \item $C \to S$ is a family of smooth genus $g$ curves. 
\end{enumerate}
The morphisms in $\Sigma^\dagger(S)$ are isomorphisms of covers over $\pp^1 \times S$. The category $\Sigma^\dagger(S)$ is the subcategory of $\mathrm{Quad}(\pp^1 \times S)$ from \cite[Section 3.2]{part1} where we impose the additional conditions on the splitting types in (1) and (2), and the smoothness in condition (3).

Let $G := \Aut(\vec{e}) \times \Aut(\vec{f})$.
As explained in the start of this section, a principal $\Aut(\vec{e})$ bundle is equivalent to a family of vector bundles of splitting type $\vec{e}$ on $\pp^1 \times S$.
Via this identification, $[(U \times \gg_m)/G](S)$ is the category whose objects are tuples $(E, F, \phi, \eta)$ where
\begin{enumerate}
    \item[($1'$)] $E$ on $\pp^1 \times S$ is a family of vector bundles of splitting type $\vec{e}$
    \item[($2'$)] $F$ on $\pp^1 \times S$ is a family of vector bundles of splitting type $\vec{f}$
    \item[($3'$)] $\phi$ is an isomorphism $\det E \cong \det F$
    \item[($4'$)] $\eta$ is a global section of $F^\vee \otimes \Sym^2 E$ such that $V(\eta) \subset \pp E^\vee \to \pp^1 \times S$ is a degree $4$ cover over $S$, and the composition $V(\eta) \to S$ is a family of smooth curves.
\end{enumerate}
An arrow $(E_1, F_1, \phi_1, \eta_1)$ to $(E_2, F_2, \phi_2, \eta_2)$ is a pair of isomorphisms $\xi: E_1 \to E_2$, and $\psi: F_1 \to F_2$, such that the following diagrams commute
\begin{center}
\begin{tikzcd}
F_1 \arrow{d}[swap]{\psi} \arrow{r}{\eta_1} &\Sym^2 E_1 \arrow{d}{\Sym^2 \xi} \\
F_2 \arrow{r}{\eta_2} &\Sym^2 E_2
\end{tikzcd}
\hspace{1in}
\begin{tikzcd}
\det F_1 \arrow{r}{\phi_1} \arrow{d}[swap]{\det \psi} & \det E_1 \arrow{d} {\det \xi} \\
\det F_2 \arrow{r}{\phi_2} & \det E_2.
\end{tikzcd}
\end{center}
Thus, the category $[(U \times \gg_m)/G](S)$ is the subcategory of $\mathrm{Quad}'(\pp^1 \times S)$ from \cite[Section 3.2]{part1} where we impose the additional conditions on the splitting types in ($1'$) and ($2'$) and the smoothness in condition ($4'$).

There is a natural map $[(U \times \gg_m)/G] \to \Sigma^\dagger$ that sends a tuple $(E, F, \phi, \eta)$ over $S$ to the degree $4$ cover $V(\Phi(\eta)) \subset \pp E^\vee \to \pp^1 \times S$.
Theorem 3.6 of \cite{part1} showed that a corresponding map $\mathrm{Quad}'(\pp^1 \times S) \to \mathrm{Quad}(\pp^1 \times S)$ is an equivalence of categories. The argument there restricts to give an equivalence of the subcategories $[(U \times \gg_m)/G](S)$ and $\Sigma^\dagger(S)$.
\end{proof}

It follows from Lemma \ref{squo} that the $\vec{e}, \vec{f}$ splitting locus is irreducible of codimension
\begin{equation} \label{codimeq}
h^1(\pp^1, \E nd (\O(\vec{e}))) + h^1(\pp^1, \E nd(\O(\vec{f}))) - h^1(\pp^1, \O(\vec{f})^\vee \otimes \Sym^2 \O(\vec{e}))
\end{equation}
inside $\H_{4,g}$ (see also \cite[Remark 4.2]{DP}).
In light of Proposition \ref{Hprime}, we are primarily concerned with the $\vec{e}, \vec{f}$ splitting loci for which $h^1(\pp^1, \O(\vec{f})^\vee \otimes \Sym^2 \O(\vec{e})) \neq 0$, equivalently $2e_1 - f_2 \leq -2$.  By \eqref{codimeq} these are the pair splitting loci whose codimension is \emph{not} the sum of the expected codimensions for $\vec{e}$ and $\vec{f}$.


\subsection{Pair splitting loci on $\H_{5,g}$} 
Let $\E$ and $\F$ be the universal CE bundles on $\pi: \P \to \H_{5, g}$.
Let $\vec{e}$ be a splitting type of rank $4$ and degree $g + 4$, and let $\vec{f}$ be a splitting type of rank $5$ and degree $2(g + 4)$.
Similar to the previous subsection, we describe each splitting locus of the form $\Sigma := \Sigma_{\vec{e}}(\E) \cap \Sigma_{\vec{f}}(\F)$ as a quotient stack. Again this description is well-known, though in varying language (see for example \cite[p. 24]{DP}, \cite[Theorem 3.8]{C}). We give a presentation following our set up in \cite[Section 3]{part1}.

In degree $5$, the relevant space of section is
\begin{align*}
&\Phi: H^0(\pp^1, \O(\vec{e}) \otimes \O(-g-4) \otimes \wedge^2 \O(\vec{f})) \\
&\qquad \xrightarrow{\sim} H^0(\pp \O(\vec{e})^\vee, \O_{\pp \O(\vec{e})^\vee}(1) \otimes \gamma^*(\O_{\pp^1}(-g-4)  \otimes \wedge^2 \O(\vec{f})))
\end{align*}
Sections of the right-hand side are represented by $5 \times 5$ skew-symmetric matrices $M$ of linear forms on $\pp \O(\vec{e})^\vee$.
Given
such a matrix $M$, we write $D(M) \subset \pp \O(\vec{e})^\vee$ to mean the subscheme defined by the $4 \times 4$ Pfaffians of $M$ (see Section \ref{pent} for explicit equations in coordinates). These Pfaffians correspond to the equations of the Grassmann bundle $G(2, \O(\vec{f})) \subset \pp(\wedge^2 \O(\vec{f}))$ under its relative Pl\"ucker embedding, as we now explain. A section 
\[\eta \in H:= H^0(\pp^1, \O(\vec{e}) \otimes \O(-g-4) \otimes \wedge^2 \O(\vec{f}))\]
can be viewed as a linear map $\eta: \O(\vec{e})^\vee \otimes \O(g+4) \to \wedge^2 \O(\vec{f})$. If this map is injective with locally free cokernel, then $D(\Phi(\eta)) \subset \pp \O(\vec{e})^\vee$ is the intersection of $\eta(\pp \O(\vec{e})^\vee)$ with $G(2, \O(\vec{f})) \subset \pp(\wedge^2 \O(\vec{f}))$. The Grassmann bundle $G(2, \O(\vec{f})) \subset \pp(\wedge^2 \O(\vec{f}))$ has degree $5$ and codimension $3$ in each fiber over $\pp^1$, so one expects this intersection to be a degree $5$ cover of $\pp^1$.

Let $U \subset H$ be the open subvariety of sections $\eta$ such that $D(\Phi(\eta))$ is a smooth, irreducible degree $5$ cover of $\pp^1$. 
Considering its Hilbert polynomial, one can show that such a cover will have genus $g$. It turns out --- essentially from the Casnati--Ekedahl structure theorem and further work of Casnati \cite{C} --- that all degree $5$, genus $g$ smooth covers $\alpha: C \to \pp^1$ with $E_\alpha \cong \O(\vec{e})$ and $F_\alpha \cong \O(\vec{f})$ arise in this way.

Precisely, there is a natural action of $\SL_2 \ltimes (\Aut(\vec{e}) \times \Aut(\vec{f}))$ on $U$. Since $\deg(\vec{f}) = 2\deg(\vec{e})$, we have $\det \O(\vec{e})^{\otimes 2} \otimes \det \O(\vec{f})^\vee \cong \O_{\pp^1}$, so $\SL_2 \ltimes (\Aut(\vec{e}) \times \Aut(\vec{f}))$ acts on a copy of $\gg_m \subset H^0(\pp^1, \det \O(\vec{e})^{\otimes 2} \otimes \det \O(\vec{f})^\vee)$. As in the previous subsection, we will consider the quotient in two steps. Let $\rho: \H_{5,g}^\dagger \to \H_{5,g}$ be the parametrized Hurwitz space and set $\Sigma^\dagger := \rho^{-1}(\Sigma)$ so $\Sigma = [\Sigma^\dagger/\SL_2]$.

\begin{lem} \label{squo5}
We have
 $\Sigma^\dagger \cong [(U \times \gg_m)/\Aut(\vec{e}) \times \Aut(\vec{f})]$. Therefore,
 \[\Sigma = [\Sigma^\dagger/\SL_2] \cong [(U\times \gg_m)/\SL_2 \ltimes(\Aut(\vec{e}) \times \Aut(\vec{f}))].\]
\end{lem}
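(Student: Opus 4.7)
The plan is to mirror the proof of Lemma \ref{squo} carefully in the degree $5$ setting, with the skew-symmetric Pfaffian description of degree $5$ Gorenstein covers playing the role that the symmetric quadratic description plays in degree $4$. As in the degree $4$ case, it suffices to prove the statement for the framed stratum $\Sigma^\dagger$, since the identification $\Sigma = [\Sigma^\dagger/\SL_2]$ is immediate from the definition $\Sigma^\dagger = \rho^{-1}(\Sigma)$ and the fact that passage to the quotient by $\SL_2$ commutes with forming the quotient by $\Aut(\vec{e}) \times \Aut(\vec{f})$ in the semidirect product.

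First I would spell out $\Sigma^\dagger(S)$ as a fibered category: its objects are degree $5$ covers $\alpha : C \to \pp^1 \times S$ such that $E_\alpha$ has splitting type $\vec{e}$, $F_\alpha$ has splitting type $\vec{f}$, and $C \to S$ is a family of smooth genus $g$ curves; morphisms are isomorphisms of covers over $\pp^1 \times S$. In parallel, I would unwind $[(U \times \gg_m)/G](S)$ where $G := \Aut(\vec{e}) \times \Aut(\vec{f})$ into tuples $(E, F, \phi, \eta)$ on $\pp^1 \times S$ with $E$ of splitting type $\vec{e}$, $F$ of splitting type $\vec{f}$, $\phi : (\det E)^{\otimes 2} \xrightarrow{\sim} \det F$ the $\gg_m$-factor trivialization (using $2\deg \vec{e} = \deg \vec{f}$), and $\eta \in H^0(\pp^1 \times S,\, E \otimes \O(-g-4) \otimes \wedge^2 F)$ such that the Pfaffian locus $D(\Phi(\eta)) \subset \pp E^\vee$ is a smooth irreducible degree $5$ cover of $\pp^1 \times S$. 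Morphisms are compatible pairs $(\xi, \psi) : (E_1, F_1) \to (E_2, F_2)$ making the analogues of the two diagrams in the proof of Lemma \ref{squo} commute, with $\Sym^2 \xi$ replaced by $\wedge^2 \psi$ against $\xi$.

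The bridge between the two descriptions is the natural functor that sends a tuple $(E, F, \phi, \eta)$ to the degree $5$ cover $D(\Phi(\eta)) \to \pp^1 \times S$. Its essential surjectivity and full faithfulness are exactly what is needed. The key step, and the main obstacle, is to show that every object of $\Sigma^\dagger(S)$ arises in this way and with the claimed uniqueness of data. This is precisely the content of the degree $5$ Casnati--Ekedahl structure theorem (Theorem \ref{CEstructure}) together with Casnati's refinement \cite{C}, which says that a Gorenstein degree $5$ cover is cut out inside $\pp E_\alpha^\vee$ by the $4 \times 4$ Pfaffians of a skew-symmetric matrix of linear forms whose entries are encoded by an element of $H^0(\pp^1, E_\alpha \otimes \O(-g-4) \otimes \wedge^2 F_\alpha)$, uniquely determined up to the automorphisms encoded by $G$ and the choice of the trivialization $\phi$.

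With this in hand, verification proceeds exactly as in \cite[Theorem 3.6]{bigpaper}: the categories $\mathrm{Pfaff}'(\pp^1 \times S)$ and $\mathrm{Pfaff}(\pp^1 \times S)$ (the degree $5$ analogues of $\mathrm{Quad}'$ and $\mathrm{Quad}$) are equivalent, and restricting to the subcategories cut out by the splitting-type conditions on $E, F$ and the smoothness condition on $D(\Phi(\eta))$ restricts the equivalence to an equivalence between $[(U \times \gg_m)/G](S)$ and $\Sigma^\dagger(S)$. The compatibility of the $\SL_2$-action on the framing then yields the second claimed isomorphism. I do not expect any genuinely new computation beyond what is already done for the degree $4$ case; the substance lies in citing the correct structural result for degree $5$ Gorenstein covers.
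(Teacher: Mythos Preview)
Your proposal is correct and follows essentially the same approach as the paper: both reduce to the framed statement, describe the two sides as fibered categories, and invoke the degree $5$ structural equivalence from \cite{bigpaper} restricted to the subcategories cut out by the splitting-type and smoothness conditions. The only discrepancies are cosmetic: the paper calls the relevant categories $\mathrm{Pent}$ and $\mathrm{Pent}'$ (from \cite[Section 3.3]{bigpaper}) rather than $\mathrm{Pfaff}$ and $\mathrm{Pfaff}'$, and the degree $5$ equivalence is \cite[Theorem 3.8]{bigpaper} rather than Theorem 3.6.
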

\begin{proof}
The proof is very similar to Lemma \ref{squo}. There is a map 
\[[(U \times \gg_m)/\Aut(\vec{e}) \times \Aut(\vec{f})] \to \Sigma^\dagger\]
that comes from sending a section $\eta \in U$ to the associated cover $D(\Phi(\eta)) \to \pp^1$. 
The categories $\Sigma^\dagger(S)$ and $[(U \times \gg_m)/\Aut(\O(\vec{e})) \times \Aut(\O(\vec{f}))](S)$ are readily seen to be subcategories of $\mathrm{Pent}(\pp^1 \times S)$ and $\mathrm{Pent}'(\pp^1 \times S)$ respectively, defined in \cite[Section 3.3]{part1}; these two subcategories are seen to be equivalent under the equivalence given in \cite[Theorem 3.8]{part1}.
\end{proof}

It follows from Lemma \ref{squo5} that the $\vec{e}, \vec{f}$ splitting locus is irreducible of codimension
\begin{equation} \label{codimeq5}
h^1(\pp^1, \E nd (\O(\vec{e}))) + h^1(\pp^1, \E nd(\O(\vec{f}))) - h^1(\pp^1, \O(\vec{e}) \otimes \O(-g-4) \otimes \wedge^2 \O(\vec{f}))
\end{equation}
inside $\H_{5,g}$. In light of Proposition \ref{Hprime}, our primary interest will be in strata where the last term $h^1(\pp^1, \O(\vec{e}) \otimes \O(-g-4) \otimes \wedge^2 \O(\vec{f})) \neq 0$, or equivalently $e_1 + f_1 + f_2 - (g+4) \leq -2$.

\section{The Tetragonal Locus}\label{tet}
In this section, we study the stratification of $\H_{4,g}$ by the pair splitting loci of the CE bundles $\E$ and $\F$.
Given a degree $4$, genus $g$ cover $\alpha: C \to \pp^1$, we let $E = E_\alpha$ and $F = F_\alpha$ be the associated vector bundles as in Section \ref{CEsec}.
Since they are vector bundles on $\pp^1$, the bundles $E$ and $F$ split.
\begin{align*}
E &= \O(e_1) \oplus \O(e_2) \oplus \O(e_3) &\qquad &e_1 \leq e_2 \leq e_3
\intertext{and}
F &= \O(f_1) \oplus \O(f_2) &\qquad &f_1 \leq f_2.
\end{align*}
In this section, we use the roman font, $E$ and $F$, to denote vector bundles of a fixed splitting type. By slight abuse of notation, we sometimes write $E = \vec{e}$ to mean $E \cong \O(\vec{e})$.

When $C$ is not hyperelliptic, the splitting type of $E$ can be interpreted geometrically as follows: under the canonical embedding, the fibers of $\alpha$ span a $2$-plane. The union of these two planes is called the \emph{associated $3$-fold scroll.} 
The embedding $C \subset \pp E^\vee$ given by the Casnati--Ekedahl theorem is constructed so that $\O_{\pp E^\vee}(1)|_C = \omega_{\alpha} = \omega_C \otimes \alpha^*\omega_{\pp^1}^\vee$. 
Let $\gamma:  \pp E^\vee \to \pp^1$ be the structure map.
Then, the associated scroll is the image of $\pp E^\vee \to \pp^{g-1}$ via the line bundle $\O_{\pp E^\vee}(1) \otimes \gamma^* \omega_{\pp^1}$ on $\pp E^\vee$.

Meanwhile, the bundle $F$ parametrizes the pencil of relative quadrics that define $C \subset \pp E^\vee$. If $X, Y, Z$ are relative coordinates on $\pp E^\vee$ corresponding to a splitting, then the pencil is generated by
\begin{align}
p &= p_{1,1}X^2 + p_{1,2} XY + p_{2,2} Y^2 + p_{1,3} XZ + p_{2,3} YZ + p_{3,3} Z^2 \label{peq} \\
q &= q_{1,1}X^2 + q_{1,2} XY + q_{2,2} Y^2 + q_{1,3} XZ + q_{2,3} YZ + q_{3,3}, Z^2, \label{qeq}
\end{align}
where $p_{i,j}$ and $q_{i, j}$ are polynomials on $\pp^1$ of degrees
 \[\deg(p_{i,j}) = e_i + e_j - f_1 \qquad \text{and} \deg(q_{i,j}) = e_i + e_j - f_2.\]

For a stratum to be non-empty, $\vec{e}$ and $\vec{f}$ must satisfy certain constraints, which we collect below.
Considering the defining sequence \eqref{eqn:defining} of $E_\alpha$, we see that $\deg(E) = -\deg(\alpha_* \O_C) = -\chi(\alpha_* \O_C) + 4 = g + 3$. By \cite[Theorem 4.4]{CE}, one must have $\det E \cong \det F$, so
 \begin{equation} \label{totaldeg}
e_1 + e_2 + e_3 = f_1 + f_2 = g+ 3.
\end{equation}
For a cover to be irreducible, we must have $1 = h^0(C, \O_C) = h^0(\pp^1, E^\vee) + 1$. This implies $e_1 \geq 1$. An upper bound on the largest part was given in \cite[Proposition 2.6]{DP}:
\begin{equation}
e_1 \geq 1 \qquad \qquad \text{and} \qquad \qquad e_3 \leq \frac{g + 3}{2}.
\end{equation}
It is well-known (see e.g. \cite[p. 127]{S}) that
\begin{equation} \label{hypeq}
\text{$e_1 = 1$ if and only if $C$ is hyperelliptic,}
\end{equation}
in which case $\alpha$ factors as $C \xrightarrow{h} \pp^1 \xrightarrow{i} \pp^1$, where $h: C \to \pp^1$ is the hyperelliptic map and $i: \pp^1 \to \pp^1$ is a degree $2$ cover.

We now turn to the geometry of the quadrics that cut out $C$.
If $p_{1,1} = 0$ and $q_{1,1} = 0$, then $V(p, q)$ contains the section $Y = Z = 0$. Thus,
\begin{equation} \label{no0}
    \text{$p_{1,1}$ and $q_{1,1}$ cannot both be $0$} \qquad \Rightarrow \qquad 2e_1 \geq f_1.
\end{equation}
If $q_{1,1} = q_{1,2} = q_{2,2} = 0$, then the quadric $q$ is divisible by $Z$. That is, $V(q)$ is reducible, so $C$, being irreducible, must lie in one component. Then fibers of $C \to \pp^1$ would then each span a line under the canonical embedding, giving $C$ a $g^2_4$, which is impossible when $g > 3$.
\begin{equation} \label{q12van}
 \text{$q_{1,1}, q_{1,2}$ and $q_{2,2}$ cannot all be $0$} \qquad \Rightarrow \qquad 2e_2 \geq f_2.
\end{equation}
On the other hand, if $q_{1,1} = q_{1,2} = q_{1,3} = 0$, then $V(q)$ is singular all along the section $Y = Z = 0$. Therefore, in order for $C$ to be smooth, no other quadric in the pencil can vanish at any point along the section $Y = Z = 0$:
\begin{equation} \label{q1jvan}
\text{if $q_{1,1} = q_{1,2} = q_{1,3} = 0$, then $p_{1,1}$ must be non-vanishing on $\pp^1$.}
\end{equation}
In terms of splitting types this implies,
 \begin{equation} \label{conditional}
 \text{if $f_2 > e_1 + e_3$, then $f_1 = 2e_1$.}
\end{equation}

Let us write $\Psi := \H_{4,g} \smallsetminus \Supp R^1\pi_*(\F^\vee \otimes \Sym^2 \E)$.
By Proposition \ref{Hprime}, we know that $A^*(\Psi)$ is generated by tautological classes.
The complement of $\Psi$ is the union of splitting loci which satisfy $2e_1 - f_2 \leq -2$. We will therefore need some results concerning the Chow rings of locally closed strata $\Sigma_{\vec{e}}(\E) \cap \Sigma_{\vec{f}}(\F)$ for such $\vec{e}, \vec{f}$, which we prove in Section \ref{chows}.
In Sections \ref{tet7}, \ref{tet8}, \ref{tet9}, we specialize to the cases $g = 7, 8, 9$ respectively.

\subsection{Strategy} \label{strategysec} Our basic strategy will be as follows:
\begin{enumerate}
\item Use 
conditions \eqref{totaldeg} -- \eqref{conditional}
to determine the allowed pairs of splitting types $\vec{e}, \vec{f}$. The partial order on splitting types of Section \ref{slsec} induces a partial order on pairs of splitting types by $(\ee, \vec{f}') \leq (\vec{e}, \vec{f})$ if $\ee \leq \vec{e}$ and $\vec{f}' \leq \vec{f}$.
\item Starting with strata at the bottom of our $\leq$ order and working upwards, show that for each stratum outside of $\Psi$, at least one of the following is satisfied:
\begin{enumerate}
    \item the stratum is contained in $\beta^{-1}(\M_g^{3})$
    \item its fundamental class in $\H_{4,g} \smallsetminus \beta^{-1}(\M_g^{3})$ is tautological (modulo classes supported on strata below it in the partial order) \textit{and} the Chow ring of the locally closed stratum is generated by the restrictions of CE classes.
    \item the push forward of its fundamental class to $\M_g \smallsetminus \M_g^3$ is tautological \textit{and} the Chow ring of the locally closed stratum is generated by the restrictions of $\kappa_1, \kappa_2$.
\end{enumerate} 
\end{enumerate}
Case (c) will only be needed in genus $9$;  thus, in genus $7$ and $8$, we will actually establish that $A^*(\H_{4,g} \smallsetminus \beta^{-1}(\M_g^3))$ is generated by CE classes.

\begin{rem}
We note the ``trade-off" between choices (b) and (c) above. If a class is tautological on $\H_{4,g} \smallsetminus \beta^{-1}(\M_g^3)$, then its push forward to $\M_g \smallsetminus \M_g^3$ is tautological by Theorem \ref{pushforward}. On the other hand, $\kappa_1$ and $\kappa_2$ are CE classes, but need not generate all CE classes. Therefore, in (b) if we prove the stronger statement about the fundamental class, we only need the weaker statement about the Chow ring; in (c) if we only prove the weaker condition about the fundamental class, we need the stronger statement about the Chow ring. 
\end{rem}

\subsection{Chow rings of very unbalanced splitting strata} \label{chows}
In Lemma \ref{squo}, each $\vec{e}, \vec{f}$ splitting locus $\Sigma = \Sigma_{\vec{e}}(\E) \cap \Sigma_{\vec{f}}(\F)$ was described as a quotient of the form $[(U \times \gg_m)/G]$, where $G := \SL_2 \ltimes(\Aut(\vec{e}) \times \Aut(\vec{f}))$.
The quotient $[(U \times \gg_m)/G]$ is a $\gg_m$ bundle over $[U/G]$, and $U \subset H:= H^0(\pp^1, \O(\vec{f})^\vee \otimes \Sym^2 \O(\vec{e}))$ is an open subvariety of affine space. Hence, there is a series of surjections
\begin{equation} \label{surjs}
A^*(BG) \twoheadrightarrow A^*([U/G]) \twoheadrightarrow A^*(\Sigma),
\end{equation}
We gave generators for $A^*(BG)$ in Lemma \ref{pair}. To show that $A^*(\Sigma)$ is generated by CE classes, it will suffice to show that the images of these generators under \eqref{surjs} can be written in terms of CE classes. Similarly, to show the stronger statement that $A^*(\Sigma)$ is generated by $\kappa_1$ and $\kappa_2$, we must show that the images of the generators of $A^*(BG)$ under \eqref{surjs} are all expressible in terms of $\kappa_1$ and $\kappa_2$. We first consider the case when $\vec{e}$ has a repeated part.

\begin{lem} \label{twoequalparts}
Let $\Sigma$ be the $\vec{e}, \vec{f}$ splitting locus and suppose $e_1 < e_2 = e_3$ and $f_1 < f_2$. Then, $A^*(\Sigma)$ is generated by the restrictions of CE classes. 
\end{lem}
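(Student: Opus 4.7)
The plan is to combine Lemmas \ref{squo} and \ref{pair} to reduce the statement to a finite linear-algebra check. Lemma \ref{squo} identifies $\Sigma$ with $[(U \times \gg_m)/G]$ for $G = \SL_2 \ltimes(\Aut(\vec{e}) \times \Aut(\vec{f}))$ and $U$ an open subvariety of a $G$-representation, giving a surjection $A^*(BG) \twoheadrightarrow A^*(\Sigma)$. By Lemma \ref{pair}, $A^*(BG)$ is generated by $c_2$, the Chern classes of the HN bundles $\N_1, \N_2$ for $\V(\vec{e})$, and the Chern classes of the HN bundles $\L_1, \L_2$ for $\V(\vec{f})$. Because $e_1 < e_2 = e_3$, the bundle $\N_1$ is a line bundle and $\N_2$ has rank two; because $f_1 < f_2$, both $\L_j$ are line bundles. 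Since $c_2$ is a CE class by definition, the problem reduces to exhibiting $c_1(\N_1), c_1(\N_2), c_2(\N_2), c_1(\L_1), c_1(\L_2)$ as polynomials in (restrictions of) CE classes.

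Next I would exploit the HN filtrations of $\E|_\Sigma$ and $\F|_\Sigma$, whose successive quotients are $\pi^*\N_i(e_i)$ and $\pi^*\L_j(f_j)$, to express $c_*(\E)$ and $c_*(\F)$ in terms of $z$ and the classes $\pi^*c_k(\N_i), \pi^* c_k(\L_j)$. Taking $\pi_*$ of these (and of their products with $z$), using the relations $\pi_*(1) = 0$, $\pi_*(z) = 1$, $\pi_*(z^2) = 0$, $\pi_*(z^3) = -c_2$, turns each CE class $a_i, a_i', b_i, b_i'$ into an explicit polynomial in the Chern classes of the HN bundles and $c_2$. Concretely, $a_1$ and $a_2'$ read as two linear combinations of $c_1(\N_1), c_1(\N_2)$ with coefficient matrix
\[
\begin{pmatrix} 1 & 1 \\ 2e_2 & e_1 + e_2 \end{pmatrix},
\]
and $b_1, b_2'$ read as two linear combinations of $c_1(\L_1), c_1(\L_2)$ with coefficient matrix $\bigl(\begin{smallmatrix} 1 & 1 \\ f_2 & f_1 \end{smallmatrix}\bigr)$.

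The key step, where the hypotheses are used, is inverting these two $2 \times 2$ systems: their determinants are $e_1 - e_2$ and $f_1 - f_2$, both nonzero by the strict inequalities $e_1 < e_2$ and $f_1 < f_2$. This recovers $c_1(\N_1), c_1(\N_2), c_1(\L_1), c_1(\L_2)$ as polynomials in CE classes. Finally, $c_2(\N_2)$ falls out from $a_2 = \pi_*(z \cdot c_2(\E))$: expanding the right side, every term other than $c_2(\N_2)$ is already a polynomial in classes we have solved for (together with $c_2$), so $c_2(\N_2)$ is itself CE. All generators from Lemma \ref{pair} are therefore CE, and the surjection $A^*(BG) \twoheadrightarrow A^*(\Sigma)$ factors through the CE subring. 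The only step with real content is the determinant check, which is immediate; everything else is bookkeeping with Chern classes and push-forwards along $\pi$.
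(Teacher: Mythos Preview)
Your proposal is correct and follows essentially the same approach as the paper: both use the surjection $A^*(BG)\twoheadrightarrow A^*(\Sigma)$ from Lemmas \ref{squo} and \ref{pair}, expand the Chern classes of $\V(\vec{e})$ and $\V(\vec{f})$ via the HN filtration, and invert the resulting $2\times 2$ linear systems using the hypotheses $e_1<e_2$ and $f_1<f_2$, with $c_2(\N_2)$ recovered afterwards from $a_2$. The only differences are cosmetic (naming conventions for the HN bundles).
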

\begin{proof}
Set $G := \SL_2 \ltimes (\Aut(\vec{e}) \times \Aut(\vec{f}))$ and let $\pi: \P \to BG$ be the $\pp^1$ bundle pulled back from $\BSL_2$. Let $L$ of rank $1$ and $R$ of rank $2$ be the HN bundles for $\vec{e}$ so that we have a filtration
\begin{equation} \label{Efilt}
0 \rightarrow (\pi^*R)(e_2) \rightarrow \V(\vec{e}) \rightarrow (\pi^* L)(e_1) \rightarrow 0.
\end{equation}
Similarly, let $M$ and $N$ be the rank $1$ HN bundles for $\vec{f}$ so that we have a filtration
\begin{equation} \label{Ffilt}
0 \rightarrow (\pi^* N)(f_2) \rightarrow \V(\vec{f}) \rightarrow (\pi^* M)(f_1) \rightarrow 0. 
\end{equation}

Let $r_i = c_i(R)$, and $\ell = c_1(L), m = c_1(M)$ and $n = c_1(N)$. Let $c_2$ be the second Chern class pulled back from $\BSL_2$. 
By Lemma \ref{pair}, the classes $r_1, r_2, \ell, m, n$ and $c_2$ freely generate $A^*(BG)$.
Setting $z = c_1(\O_{\P}(1))$, and using the splitting principle with \eqref{Efilt}. we obtain the identities
\begin{align} \label{c1E}
c_1(\V(\vec{e})) &= c_1(R(e_2)) + c_1(L(e_1)) = r_1 + 2e_2z + \ell + e_1z = (r_1 + \ell) + (g+3) z.
\intertext{Recalling that $z^2 = -c_2$ on $\P$, and using the splittng principle we also have}\label{c2E}
 c_2(\V(\vec{e}))&=(2e_2\ell+(e_1+e_2)r_1)z-(2e_1e_2+e_2^2)c_2+\ell r_1+r_2.
 \end{align}
 
Similarly, using the splitting principle on \eqref{Ffilt}, we obtain the identities
\begin{align} \label{c1F}
c_1(\V(\vec{f})) &= c_1(M(f_1)) + c_1(N(f_2)) = m + f_1 z + n + f_2 z = (m + n) + (g+3) z, \\
c_2(\V(\vec{f}))&=(f_2m+f_1n)z-f_2f_1c_2+mn. \label{c2F}
\end{align}

By slight abuse of notation, let us denote the images of $r_1, r_2, \ell, m, n$ and $c_2$ under the map \eqref{surjs} by the same letters. (The pullback of $c_2$ is the CE class $c_2$, as both are pulled back from $\BSL_2$.) These classes are generators for $A^*(\Sigma)$. 
By \eqref{c1E}, we have $a_1 = r_1 + \ell$. By \eqref{c2E}, we have $a_2' = e_1 r_1 + 2e_2 \ell$.
We have $e_1 < 2e_1$, so the classes $r_1$ and $\ell$ are expressible in terms of $a_1$ and $a_2'$. Next, \eqref{c2E} shows $a_2 = r_2 + r_1 \ell -(2e_1e_2+e_2^2)c_2$, so $r_2$ is also expressible in terms of CE classes. Finally, $b_1 = m + n$ by \eqref{c1F} and $b_2' = f_2 m + f_1 n$ by \eqref{c2F}, so $m$ and $n$ are expressible in terms of $b_1$ and $b_2'$ because $f_1<f_2$. Hence, the CE classes generate $A^*(\Sigma)$.
\end{proof}

Now we consider the case when all parts of $\vec{e}$ are distinct. 
The proof follows a similar set up, but requires that we also make use of some relations among the generators of $A^*(BG)$ when pulled back to $A^*([U/G])$, i.e. that the first map $v^*$ in \eqref{surjs} has a kernel. The classes in the kernel come from considering the complement of $U \subset H := H^0(\pp^1, \O(\vec{f})^\vee \otimes \Sym^2 \O(\vec{e}))$, which corresponds equations whose vanishing locus in $\pp E^\vee$ fails to be a smooth, irreducible curve. The second map in \eqref{surjs} also has a kernel. By Lemma \ref{squo}, we have that $\Sigma \to [U/G]$ is the $\gg_m$ bundle associated to the line bundle $\pi_*(\det \V(\vec{e}) \otimes \det\V(\vec{f})^\vee)$. Thus, by a theorem of Vistoli, the kernel of $A^*([U/G]) \to A^*(\Sigma)$ is
generated by $c_1(\pi_*(\det \V(\vec{e}) \otimes \det\V(\vec{f})^\vee)) = a_1 - b_1$.

\begin{lem} \label{distinctparts}
Let $\Sigma$ be the $\vec{e}, \vec{f}$ splitting locus and suppose $e_1 < e_2 < e_3$ and $f_1 < f_2$ and $2e_1 < f_2$. Then the following are true:
\begin{enumerate}
    \item If $2e_1 = f_1$, then $A^*(\Sigma)$ is generated by the restrictions of CE classes. \label{dp1}
    \item If $2e_1 = f_1$, and $e_1 + e_2 < 2e_2 = f_2$, then $A^*(\Sigma)$ is generated by $\kappa_1$ and $\kappa_2$. \label{for8}
    \item $\mathrm{(i)}$ If $2e_1 > f_1$, and $e_1 + e_2 < e_1 + e_3 = 2e_2 = f_2$, then $A^*(\Sigma)$ is generated by restrictions of CE classes.  \\
   $\mathrm{(ii)}$ Furthermore, if we also have $g \neq 9 - f_1$, then $A^*(\Sigma)$ is generated by $\kappa_1$ and $\kappa_2$. \label{for6}
\end{enumerate}
\end{lem}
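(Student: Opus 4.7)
Following Lemma~\ref{twoequalparts}, let $L_1, L_2, L_3$ be the HN line bundles for $\vec{e}$ and $M, N$ those for $\vec{f}$, with first Chern classes $\ell_i = c_1(L_i), m = c_1(M), n = c_1(N)$. By Lemma~\ref{pair}, these together with $c_2$ freely generate $A^*(BG)$, and the splitting principle gives
\begin{align*}
a_1 &= \ell_1 + \ell_2 + \ell_3, & b_1 &= m + n, \\
a_2' &= (e_2{+}e_3)\ell_1 + (e_1{+}e_3)\ell_2 + (e_1{+}e_2)\ell_3, & b_2' &= f_2 m + f_1 n,
\end{align*}
with analogous formulas for $a_2, a_3, a_3', b_2$ involving $c_2$.

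I would combine relations from the surjections \eqref{surjs}. The $\gg_m$-bundle $\Sigma \to [U/G]$ yields $a_1 - b_1 = 0$. Codimension-one loci in $H \setminus U$ contribute further relations coming from canonically defined sub-coefficients of $\eta \in H$. When $2e_1 = f_1$, the coefficient ``$p_{11}$'' is a section of the line bundle $L_1^{\otimes 2} \otimes M^\vee$ on $BG$: composing $\eta \colon F \to \Sym^2 E$ with the canonical projection $\Sym^2 E \twoheadrightarrow \Sym^2 L_1(e_1)$, the restriction to the subbundle $N(f_2) \subset F$ vanishes (since $2e_1 - f_2 < 0$), so the map factors through the quotient $M(f_1)$. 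Its vanishing forces $V(p,q)$ to contain the section of $\pp E^\vee \to \pp^1$ corresponding to the quotient $E \twoheadrightarrow L_1(e_1)$, violating irreducibility; this gives (R1): $2\ell_1 - m = 0$. An analogous argument when $2e_2 = f_2$ shows ``$q_{22}$'' is a section of $L_2^{\otimes 2} \otimes N^\vee$ whose vanishing makes $q$ divisible by the equation of $\pp((E/L_3(e_3))^\vee)$, giving (R2): $2\ell_2 - n = 0$. In case~(3), where additionally $e_1+e_3 = f_2$, ``$q_{13}$'' is canonically a section of $L_1 L_3 \otimes N^\vee$; its vanishing combined with the positive degree of $p_{11}$ (which forces $p_{11}$ to have a zero on $\pp^1$) produces a singularity of $V(p,q)$ at the intersection of the two lines of each degenerate fiber of $V(q)$, giving (R3): $\ell_1 + \ell_3 - n = 0$.

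For case~(1), combining (R1) with $a_1 = b_1$ gives $m = 2\ell_1, n = \ell_2 + \ell_3 - \ell_1$; the CE classes $a_1, a_2', b_2'$ become linear combinations of $\ell_1, \ell_2, \ell_3$ with coefficient determinant $2(e_2-e_3)(f_2-f_1) \neq 0$, so we solve. For case~(3)(i), combining (R2), (R3), and $a_1 = b_1$ yields $n = 2\ell_2$, $\ell_3 = 2\ell_2 - \ell_1$, $m = \ell_2$; then $a_1 = 3\ell_2$ and $a_2' = (e_3-e_1)\ell_1 + (3e_1+2e_2+e_3)\ell_2$ are linearly independent combinations of $\ell_1, \ell_2$, so CE classes generate $A^*(\Sigma)$. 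For case~(2), (R1), (R2), and $a_1 = b_1$ give $m = 2\ell_1, n = 2\ell_2, \ell_3 = \ell_1+\ell_2$ (the last consistent with $e_3 = e_1+e_2$, forced by $e_1+e_2+e_3 = f_1+f_2$); then $a_1 = 2(\ell_1+\ell_2)$ and $a_2' = (2e_1+3e_2)\ell_1 + (3e_1+2e_2)\ell_2$ are linearly independent, so CE classes again generate.

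The main obstacle is upgrading in parts (2) and (3)(ii) from ``CE classes generate'' to ``$\kappa_1, \kappa_2$ generate''. Since $\kappa_1$ has degree one and $\kappa_2$ has degree two, this requires $\dim A^1(\Sigma) \leq 1$; one must therefore produce one additional codimension-one relation in each case, forcing the surviving generators $\ell_1, \ell_2$ to become scalar multiples of a single class (which will then be a nonzero multiple of $\kappa_1|_\Sigma$). I expect such a relation to come from a further divisor in $H \setminus U$---a singular-conic, discriminant, or branch-locus divisor---whose class pushes to the needed linear combination of $\ell_i$'s. The hypothesis $g \neq 9 - f_1$ in case~(3)(ii) should emerge as the nondegeneracy of a particular coefficient in that computation; since $f_1 = (g+3)/3$ in case~(3), the excluded case $g = 9-f_1$ corresponds to $g = 6$, outside the range of interest.
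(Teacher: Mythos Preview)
Your setup and the relations (R1), (R2), (R3) match the paper's exactly, and your arguments for parts (1), (3)(i), and the ``CE classes generate'' half of (2) are correct and essentially the same as the paper's (you use the $\gg_m$-bundle relation $a_1=b_1$ a bit earlier than the paper does, which streamlines the linear algebra slightly).

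The genuine gap is the one you yourself flag: in parts (2) and (3)(ii) you need one more codimension-one relation, and you also need to handle $c_2$. Your guess about the source is right, but you need to actually supply the ingredients. The paper uses three inputs from \cite{bigpaper} that you are missing:
\begin{itemize}
\item The class of the full discriminant $\Delta = H \smallsetminus U$ gives the relation $(8g+20)a_1 - 8a_2' - b_2' = 0$ in $A^1(\H_{4,g})$ \cite[Section 8.4]{bigpaper}. This is the extra linear relation you are looking for; note it is a \emph{global} relation on $\H_{4,g}$, not merely on this stratum, and your (R1), (R2) are relations coming from particular components of $\Delta$.
\item The identity $\kappa_1 = (12g+24)a_1 - 12a_2'$ in $A^1(\H_{4,g})$ \cite[Lemma 10.8]{bigpaper}. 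Without this you cannot conclude that the single surviving codimension-one generator is a nonzero multiple of $\kappa_1$.
\item The identity expressing $c_2$ in terms of $\kappa_2$ modulo products of codimension-one classes \cite[Example 10.10]{bigpaper}. You do not address $c_2$ at all in the $\kappa$-generation step, but it is a genuine codimension-two generator of $A^*(BG)$ and must be dealt with.
\end{itemize}
Once these are in hand, the paper assembles a $5\times 5$ system (your relations (R1)/(R2) or (R2)/(R3), the $a_1-b_1$ relation, the discriminant relation, and the $\kappa_1$ formula) whose determinant is $48(g+5)(e_2-e_1)$ in case (2) and $-12(f_1+g-9)(e_3-e_1)$ in case (3)(ii); the latter is exactly where the hypothesis $g\neq 9-f_1$ enters, confirming your expectation.
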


\begin{proof}
Set $G = \SL_2 \ltimes (\Aut(\vec{e}) \times \Aut(\vec{f}))$ and let $\pi: \P \to BG$ be the $\pp^1$ bundle pulled back from $\BSL_2$ as before.
Let $L, S, T$ be the rank $1$ HN bundles on $BG$ for $\vec{e}$,
so that $\V(\vec{e})$ is filtered by $(\pi^*L)(e_1), (\pi^* S)(e_2),$ and $(\pi^*T)(e_3)$.
Similarly, let $M$ and $N$ be the rank $1$ HN bundles on $BG$ for $\vec{f}$ so that $\V(\vec{f})$ is filtered by $(\pi^*M)(f_1)$ and $(\pi^*N)(f_2)$.
Let $s = c_1(S), t = c_1(T), \ell = c_1(L), m = c_1(M)$ and $n = c_1(N)$. 
By Lemma \ref{pair}, the classes $s, t,\ell, m, n$ and $c_2$ freely generate $A^*(BG)$.

Using the splitting principle (and omitting $\pi$ pullbacks) as in Lemma \ref{twoequalparts}, we have
\begin{equation*} c_1(\V(\vec{e})) = c_1(L(e_1)) + c_1(S(e_2)) + c_1(T(e_3)) = (\ell + s + t) + (g+3) z.
\end{equation*}
Recalling that $z^2 = -c_2$ on $\P$, we also have
\begin{align*} 
    c_2(\V(\vec{e}))=((e_2+e_3)\ell+(e_1+e_3)t+(e_1+e_2)s)z-(e_1e_2+e_1e_3+e_2e_3)c_2+\ell (t+s)+ts
\end{align*}
Thus, we have 
\begin{equation} \label{as} a_1 = \ell + s + t \qquad\text{and} \qquad a_2' = (e_2 + e_3)\ell + (e_1+e_3)s + (e_1+e_2)t.
\end{equation}
Similarly, using the splitting principle, the Chern classes of $\V(\vec{f})$ satisfy the same identities as in \eqref{c1F} and \eqref{c2F}, so
\begin{equation} \label{bs}b_1 = m + n \qquad \text{and} \qquad b_2' = f_2m + f_1n.
\end{equation}

Notice that $A^*(BG)$ has $5$ generators in codimension $1$, but there are only $4$ codimension $1$ CE classes (namely $a_1, a_2', b_1, b_2'$). Thus, to have any hope of the CE classes generating $A^*(\Sigma)$, the first map \eqref{surjs} must have some kernel in codimension $1$. 
In each of the cases below, we describe one, or two such relations.

(1) Assume that $2e_1 = f_1$. Corresponding to our filtration of $\V(\vec{e})$, there is a quotient $\Sym^2 \V(\vec{e}) \to (\pi^* L)^{\otimes 2}(2e_1)$. Tensoring with $\V(\vec{f})^\vee$, we obtain a quotient
\begin{equation} \label{firstq}
\V(\vec{f})^\vee \otimes \Sym^2 \V(\vec{e}) \to \V(\vec{f})^\vee \otimes (\pi^*L)^{\otimes 2}(2e_1).
\end{equation}
Using our filtration of $\V(\vec{f})^\vee$, we see that the right hand term above is filtered by the line bundles $\pi^*(N^\vee \otimes L^{\otimes 2})(2e_1 - f_2)$ and $\pi^*(M^\vee \otimes L^{\otimes 2})(2e_1 - f_1) = \pi^*(M^\vee \otimes L^{\otimes 2})$. Noting that $2e_1 - f_2 < 0$, cohomology and base change then shows that the push forward of the right hand term of \eqref{firstq} is $\pi_*(\V(\vec{f})^\vee \otimes (\pi^*L)^{\otimes 2}(2e_1)) \cong M^\vee \otimes L^{\otimes 2}$.
Applying $\pi_*$ to \eqref{firstq}, we therefore obtain a surjection
\[\pi_*(\V(\vec{f})^\vee \otimes \Sym^2 \V(\vec{e})) \to M^\vee\otimes L^{\otimes 2}.\]
The total space of $\pi_*(\V(\vec{f})^\vee \otimes \Sym^2 \V(\vec{e}))$ is simply $[H^0(\pp^1, \O(\vec{f})^\vee \otimes \Sym^2 \O(\vec{e}))/G]$. In the notation of \eqref{peq}, the above surjection corresponds to projection onto the coefficient $p_{1,1}$. 
Since $2e_1 - f_2 < 0$, the coefficient $q_{1,1} = 0$. Thus by \eqref{no0}, we must have $p_{1,1} \neq 0$.
That is, $U$ lies in the complement of the kernel of this projection. Put differently, writing $v: [U/G] \to BG$ for the map to the base, the line bundle $v^*(M^\vee \otimes L^{\otimes 2})$ has a non-vanishing section on $[U/G]$. Hence, we have the relation
\begin{equation} \label{extra}
    0 = v^*(2\ell - m).
\end{equation}
We collect \eqref{as}, \eqref{bs} and \eqref{extra} into a $5 \times 5$ matrix equation in  $A^1(\Sigma)$:
\[
\left( \begin{matrix} a_1 \\ a_2' \\ b_1 \\ b_2' \\ 0 \end{matrix}\right) =
\left(\begin{matrix}
1 & 1 & 1 & 0 & 0 \\
e_2 + e_3 & e_1 + e_3 & e_1 + e_2 & 0 & 0 \\
0 & 0 & 0 & 1 & 1 \\
0 & 0 & 0 & f_2 & f_1 \\
2 & 0 & 0 & -1 & 0
\end{matrix}\right) \left(\begin{matrix}
\ell \\ s \\ t \\ m \\ n \end{matrix}\right).
\]
The matrix of coefficients above has determinant $2(e_3 - e_2)(f_2 - f_1)$.
Since $e_2 \neq e_3$ and $f_1 \neq f_2$, the matrix is invertible, so the images of the classes $\ell, s, t,m, n$ are expressible in terms of the CE classes $a_1, a_2', b_1, a_2'$. The images of $\ell, s, t,m, n$ and $c_2$ under \eqref{surjs} generate $A^*(\Sigma)$. Hence, $A^*(\Sigma)$ is generated by CE classes. 

(2) Suppose further that $2e_1 = f_1$, and $e_1 + e_2 < 2e_2 = f_2$.
Because $2e_1 < e_1 + e_2 < f_2$, both $q_{1,1}$ and $q_{1,2}$ are zero. By \eqref{q12van}, $q_{2,2}$ must be nonzero. Since $2e_2 = f_2$, the coefficient $q_{2,2}$ is degree $0$, so its vanishing is a codimension $1$ condition. Using an argument similar to the above, this gives rise to a non-vanishing global section of $v^*(S^{\otimes 2} \otimes N^\vee)$ on $[U/G]$. Hence, we obtain the relation $v^*(2 s - n) = 0$. 
As in (1) we still have the relation $v^*(2\ell - m) = 0$. 
Meanwhile, 
we also know of some relations among CE classes that hold in $A^1(\H_{4,g})$. First off, we have $0 = a_1 - b_1$, which corresponds to $\Sigma \to [U/G]$ being a $\gg_m$ bundle associated to a line bundle with first Chern class $a_1 - b_1$. Second, we have $0 = (8g + 20) a_1 - 8a_2' - b_2'$, by \cite[Lemma 5.4]{part3}, corresponding to the fundamental class of $\Delta := H^0(\pp^1, \O(\vec{f})^\vee \otimes \Sym^2 \O(\vec{e})) \smallsetminus U$. (Our other relations above $2\ell - m = 2s - n = 0$ came from certain components of $\Delta$).
Finally, by \cite[Lemma 7.6]{part2}, we have $\kappa_1 = (12g + 24)a_1 - 12a_2'$. Using \eqref{as} and \eqref{bs} to rewrite these in terms of $s, t, \ell, m, n$ (and that $e_1 + e_2 = e_3 = f_1 + f_2 = g+3$), we lay out a $5 \times 5$ matrix summarizing these relations that hold in $A^1(\Sigma)$:
\begin{equation} \label{bigmat}
\left(\begin{matrix}
 0 \\
 0 \\
 0 \\
 0 \\
 \kappa_1
\end{matrix} \right)=
\left(\begin{matrix}
2 & 0 & 0  & 0 & -1 \\
0 & 0 & 2 & -1 & 0 \\
1 & 1 & 1 & - 1 & -1 \\
8e_2 + 44 & 8 e_3 + 44 & 8e_1 + 44 & -f_2 & -f_1 \\
12e_2 + 60 & 12e_3 + 60 & 12 e_1 + 60 & 0 & 0
\end{matrix} \right)
\left(\begin{matrix} s \\ t \\ \ell \\ m \\ n \end{matrix}\right).
\end{equation}
Taking into account $f_2 = 2e_2$ and $f_1 = 2e_1$, the determinant of the above matrix is equal to $48(g+5)(e_2 - e_1) \neq 0$, so $\kappa_1$ is a generator for $A^1(\Sigma)$. 

Next, we want to show that the entire ring $A^*(\Sigma)$ is generated by $\kappa_1$ and $\kappa_2$. Because $A^*(\Sigma)$ is generated in codimension $1$ and $2$, it suffices to show that $A^2(\Sigma)$ is generated by $\kappa_2$, together with products of codimension $1$ classes. This in turn follows if we can write $c_2$ in terms of products of codimension $1$ classes and $\kappa_2$. Such an identity in fact holds in $A^2(\H_{4,g})$, as implied by \cite[Example 7.8]{part2}.
Precisely, combining \cite[Equations (7.3) and (7.4)]{part2}, we see
\begin{equation}\label{c2k2}
c_2 = -24(2g^3-32g^2+138g-12) \kappa_2 + \text{products of codimension $1$ classes} \in A^2(\H_{4,g}).
\end{equation}

(3) Now we assume that $2e_1 > f_1$ and $e_1 + e_2 < e_1 + e_3 = 2e_2 = f_2$. Since $e_1 + e_2 < f_2$, we have $q_{1,1} = q_{1,2} = 0$, so by \eqref{q12van}, we must have $q_{2,2} \neq 0$. Since $\deg(q_{2,2}) = 2e_2 - f_2 = 0$, this is also a codimension $1$ condition. The coefficient $q_{2,2}$ is corresponds to a non-zero section of $v^*(S^{\otimes 2} \otimes N^\vee)$, so we obtain the relation $v^*(2s - n) = 0$ as in (2). Collecting \eqref{as}, \eqref{bs}, and the relation $v^*(2s-n)=0$ in a matrix equation, we have
\[
\left( \begin{matrix} a_1 \\ a_2' \\ b_1 \\ b_2' \\ 0 \end{matrix}\right) =
\left(\begin{matrix}
1 & 1 & 1 & 0 & 0 \\
e_2 + e_3 & e_1 + e_3 & e_1 + e_2 & 0 & 0 \\
0 & 0 & 0 & 1 & 1 \\
0 & 0 & 0 & f_2 & f_1 \\
0 & 2 & 0 & 0 & -1
\end{matrix}\right) \left(\begin{matrix}
\ell \\ s \\ t \\ m \\ n \end{matrix}\right).
\]
The determinant of the above matrix is 
$-2(e_3-e_1)(f_2-f_1)$. Because $e_3>e_1$ and $f_2>f_1$, this determinant does not vanish, and so the codimension $1$ generators are expressible in terms of CE classes. This completes the proof of (3)(i).

To show (3)(ii), we will need to produce more relations.
Because $2e_1 > f_1$, the coefficient $q_{1,1}$ is a polynomial of positive degree on $\pp^1$, in particular it must vanish somewhere. Thus, by \eqref{q1jvan}, one of $q_{1,1}, q_{1,2}, q_{1,3}$ must be nonzero. However, we know $q_{1,1} = q_{1,2} = 0$, so we must have $q_{1,3} \neq 0$. Again, $\deg(q_{1,3}) = e_1 + e_3 - f_2 = 0$, so this is a codimension $1$ condition. This coefficient of $q_{1,3}$ gives a non-zero section of $v^*(L \otimes T \otimes N^\vee)$, on $[U/G]$, so we obtain the relation $v^*(\ell + t - n) = 0$.
Now, we can just replace the second row of the matrix in \eqref{bigmat} (the top row and bottom three are still valid relations), to get an equation in $A^1(\Sigma)$:
\begin{equation}
\left(\begin{matrix}
 0 \\
 0 \\
 0 \\
 0 \\
 \kappa_1
\end{matrix} \right)=
\left(\begin{matrix}
2 & 0 & 0  & 0 & -1 \\
0 & 1 & 1 & 0 & -1 \\
1 & 1 & 1 & - 1 & -1 \\
8e_2 + 44 & 8 e_3 + 44 & 8e_1 + 44 & -f_2 & -f_1 \\
12e_2 + 60 & 12e_3 + 60 & 12 e_1 + 60 & 0 & 0
\end{matrix} \right)
\left(\begin{matrix} s \\ t \\ \ell \\ m \\ n \end{matrix}\right).
\end{equation}
The determinant of the above matrix is $-12(f_1 + g - 9)(e_3 - e_1)$, which is non-zero given the hypotheses in the lemma. Thus $\kappa_1$ generates $A^1(\Sigma)$. By \eqref{c2k2}, we see that the codimension $2$ generator $c_2$ is expressible in terms of $\kappa_2$  and $\kappa_1^2$. Hence, $A^*(\Sigma)$ is generated by $\kappa_1$ and  $\kappa_2$, as desired.
\end{proof}
 
\subsection{Genus $5$ and $6$}\label{tet56}
As a warm-up, we will explain how the argument works in genus $5$ and $6$, thus giving new proofs of the results of Izadi \cite{Iz} and Penev-Vakil \cite{PV}, who proved that the Chow ring is equal to the tautological ring in genus $5$ and $6$, respectively.

Using \eqref{totaldeg}--\eqref{conditional}, we have the following allowed pairs of splitting types in genus $5$. We label a stratum with a $\Psi_i$ if it is contained within $\Psi$ (see Proposition \ref{Hprime}), equivalently if $2e_1 - f_2 \geq -1$:

\begin{enumerate}
    \item [($\Psi_0$)] $E=(2, 3, 3), F=(4,4)$.
    \item [($\Psi_1$)] $E=(2, 3, 3), F=(3,5)$.
    \item [($\Psi_2$)] $E=(2, 2, 4), F=(3,5)$.
    \item [($Z$)] $E=(1, 3, 4), F=(2,6)$.
\end{enumerate}

\begin{prop}
The Chow ring $A^*(\H_{4,5}\smallsetminus \beta^{-1}(\M^3_5))$ is generated by tautological classes. Hence, $A^*(\M_5)$ is tautological.
\end{prop}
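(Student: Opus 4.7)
The plan is to identify, among the four pair splitting strata, which ones are already handled and which require further work, and to show that after excising $\beta^{-1}(\M_5^3)$ the entire Hurwitz space collapses into the ``easy'' open $\Psi$ from Proposition \ref{Hprime}.

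First I would recheck the condition $2e_1-f_2\geq -1$ defining membership in $\Psi$ for each of the four strata above. Strata $\Psi_0,\Psi_1,\Psi_2$ satisfy $2e_1-f_2\in\{0,-1\}$ and so lie in $\Psi$ by definition; consequently, Proposition \ref{Hprime}(1) says their Chow rings (and any further restriction thereof) are generated by restrictions of CE classes, which are tautological by Lemma \ref{CEinR}. Only the stratum $Z$ with $E=(1,3,4)$, $F=(2,6)$ sits outside $\Psi$, since $2e_1-f_2=-4$.

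The key observation is that the stratum $Z$ has $e_1=1$, so by \eqref{hypeq} every cover parametrized by $Z$ factors through the hyperelliptic map. Hence $Z\subseteq \beta^{-1}(\M_5^2)\subseteq \beta^{-1}(\M_5^3)$, and therefore
\[
\H_{4,5}\smallsetminus \beta^{-1}(\M_5^3)\;\subseteq\;\Psi.
\]
Restricting the surjection $A^*(\Psi)\twoheadrightarrow A^*(\H_{4,5}\smallsetminus\beta^{-1}(\M_5^3))$ to the CE generators of the left-hand side then shows the right-hand side is generated by tautological classes.

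To deduce that $A^*(\M_5)$ is tautological, I would combine this with Theorem \ref{pushforward} and \eqref{Mg3}. Since a general genus $5$ curve is tetragonal we have $\M_5^4=\M_5$, so Theorem \ref{pushforward} provides a surjection $\beta'_*\colon A^*(\H_{4,5}\smallsetminus\beta^{-1}(\M_5^3))\twoheadrightarrow A^*(\M_5\smallsetminus\M_5^3)$ that carries the tautological subring into $R^*(\M_5\smallsetminus\M_5^3)$; combined with the previous paragraph this shows all classes on $\M_5\smallsetminus\M_5^3$ are restrictions of tautological classes. Since by \eqref{Mg3} all classes supported on $\M_5^3$ are tautological, the excision sequence finishes the argument. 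There is no genuine obstacle here: the genus $5$ case is almost a tautology once one notices that the only bad stratum has $e_1=1$, which is exactly the hyperelliptic condition, and so disappears after removing $\beta^{-1}(\M_5^3)$.
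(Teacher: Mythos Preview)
Your proof is correct and follows essentially the same approach as the paper: you observe that the only stratum outside $\Psi$ has $e_1=1$, hence is hyperelliptic and contained in $\beta^{-1}(\M_5^3)$, so $\H_{4,5}\smallsetminus\beta^{-1}(\M_5^3)\subseteq\Psi$; then Proposition~\ref{Hprime}, Theorem~\ref{pushforward}, and \eqref{Mg3} finish the argument exactly as in the paper.
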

\begin{proof}
By \eqref{hypeq}, the stratum $Z$ consists of entirely hyperelliptic curves. Hence, $\H_{4, 5} \smallsetminus \beta^{-1}(\M_5^3)$ is contained in $\H_{4,5} \smallsetminus Z = \Psi_0 \cup \Psi_1 \cup \Psi_2 = \Psi$.
In particular, by Proposition \ref{Hprime}, we see $A^*(\H_{4,5} \smallsetminus \beta^{-1}(\M_5^3))$ is generated by tautological classes.
By Theorem \ref{pushforward}, it follows that $A^*(\M_5\smallsetminus \M^3_5)$ is generated by tautological classes. Classes supported on $\M^3_5$ are known to be tautological \eqref{Mg3}, so we conclude that $A^*(\M_5)$ is tautological.
\end{proof}

The genus $6$ case is similar. By \eqref{totaldeg}--\eqref{conditional}, we have the following pairs of splitting types:

\begin{enumerate}
    \item[($\Psi_0$)] $E = (3, 3, 3), F = (4,5)$, generic stratum.
    \item[($\Psi_1$)] $E = (2, 3, 4), F = (4,5)$, codimension $1$, with $E$ unbalanced.
    \item[($\Psi_2$)] $E = (3, 3, 3), F = (3,6)$, codimension $2$, with $F$ unbalanced.
    \item[($\Sigma_3$)] $E = (2, 3, 4), F = (3, 6)$, codimension $2$, with $E$ and $F$ unbalanced. 
    \item[($Z$)] $E= (1, 4, 4), F = (2, 7)$, codimension $2$.
\end{enumerate}

We first identify the curves of lower gonality
\begin{lem}
We have $\beta^{-1}(\M_6^3) = Z \cup \Psi_2$
\end{lem}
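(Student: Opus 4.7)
The plan is to prove both inclusions separately.

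For $Z \cup \Psi_2 \subseteq \beta^{-1}(\M_6^3)$: the inclusion $Z \subseteq \beta^{-1}(\M_6^3)$ is immediate from \eqref{hypeq}, since $e_1 = 1$ on $Z$ forces every cover there to be hyperelliptic. For $\Psi_2$, the balanced splitting $E = (3, 3, 3)$ identifies $\pp E^\vee$ with $\pp^2 \times \pp^1$, and the equality $f_2 = 6 = 2 e_i$ for each $i$ forces every $q_{i, j}$ to be a constant, so the second CE quadric $q$ is the pullback of a fixed conic $Q \subset \pp^2$. For generic $\eta \in U$, $Q$ is smooth and $V(q) \cong Q \times \pp^1 \cong \pp^1 \times \pp^1$, in which $C$ has bidegree $(a, 4)$ with $(a-1) \cdot 3 = g = 6$, giving $a = 3$. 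Projection onto $Q$ exhibits a $g^1_3$ on $C$. Closedness of $\beta^{-1}(\M_6^3)$ combined with the irreducibility of $\Psi_2$ (Lemma \ref{squo}) extends this from the generic point to all of $\Psi_2$.

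For the reverse inclusion, I would take $x \in \beta^{-1}(\M_6^3)$ and analyze the two cases: $C$ hyperelliptic or $C$ trigonal non-hyperelliptic. If $C$ is hyperelliptic, decompose $\alpha = h \circ \pi$ with $h$ the hyperelliptic double cover and $\pi : \pp^1 \to \pp^1$ of degree $2$; then pushing forward in stages gives $\alpha_* \O_C = \pi_*(\O \oplus \O(-7)) = (\O \oplus \O(-1)) \oplus \O(-4)^{\oplus 2}$, so $E_\alpha = (1, 4, 4)$, and the constraints \eqref{no0}--\eqref{conditional} force $F_\alpha = (2, 7)$, whence $x \in Z$. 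If $C$ is trigonal non-hyperelliptic with unique $g^1_3$ given by $h$, then since $\gcd(3, 4) = 1$ the map $(\alpha, h) : C \to \pp^1 \times \pp^1$ is birational onto its image, and the arithmetic genus formula $(3-1)(4-1) = 6 = g$ shows $C$ is a smooth bidegree $(3, 4)$ curve in $\pp^1 \times \pp^1$. Pushing the resolution $0 \to \O(-3, -4) \to \O \to \O_C \to 0$ forward by $\alpha$ (the first projection) gives $\alpha_* \O_C = \O \oplus \O(-3)^{\oplus 3}$, so $E_\alpha = (3, 3, 3)$.

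The hard part will be pinning down $F_\alpha = (3, 6)$, which I would do as follows. Using $\omega_\alpha = \O(3, 2)|_C$ (a straightforward bidegree computation on $\pp^1 \times \pp^1$) together with the identification $\O_{\pp E^\vee}(1) = \O_{\pp^2}(1) \boxtimes \O_{\pp^1}(3)$ under the trivialization $E = \O(3)^{\oplus 3}$, one computes that the restriction of $\O_{\pp^2}(1)$ along the first projection of the CE embedding $C \hookrightarrow \pp^2 \times \pp^1$ equals $h^* \O_{\pp^1}(2)$. Since $|h^* \O(2)|$ factors through the Veronese embedding $\pp^1 \hookrightarrow \pp^2$, the image of $C$ in $\pp^2$ under this first projection lies on a fixed conic $\kappa \subset \pp^2$. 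Hence $C \subset \kappa \times \pp^1 \subset \pp E^\vee$, and the defining quadric of $\kappa$ is a constant element of the pencil, forcing $f_2 = 6$ and therefore $F_\alpha = (3, 6)$, so $x \in \Psi_2$. Combining the two cases gives $\beta^{-1}(\M_6^3) \subseteq Z \cup \Psi_2$, completing the proof.
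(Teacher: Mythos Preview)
Your argument is correct in outline and follows the same two-inclusion strategy as the paper, but there is one step that needs tightening. In the trigonal case you compute that the first projection $g:C\to\pp^2$ of the CE embedding satisfies $g^*\O_{\pp^2}(1)\cong h^*\O_{\pp^1}(2)$, and then assert that $g$ factors through the Veronese conic because ``$|h^*\O(2)|$ factors through the Veronese.'' But knowing the line bundle only tells you $g$ is given by \emph{some} $3$-dimensional subspace of $H^0(C,h^*\O(2))$, not necessarily the pullback $h^*H^0(\pp^1,\O(2))$. You need the extra observation that $h^0(C,h^*\O(2))=3$: indeed $h_*\O_C=\O\oplus\O(-a)\oplus\O(-b)$ with $a+b=8$ and $a,b\geq 3$, so $h^0(h_*\O_C(2))=3$. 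Then the complete linear system coincides with $h^*H^0(\pp^1,\O(2))$ and your conclusion follows.

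For comparison, the paper avoids this computation entirely by working in the other direction: it first writes down the embedding $C\hookrightarrow\pp^1\times\pp^1\hookrightarrow\pp^1\times\pp^2$ (Veronese on the second factor), checks that it satisfies the conditions of Theorem~\ref{CEstructure}, and then invokes the \emph{uniqueness} of the Casnati--Ekedahl $\pp^{k-2}$-bundle to conclude this \emph{is} $\pp E_\alpha^\vee$. This immediately gives $E_\alpha=(3,3,3)$ and, since the Veronese conic is visibly one of the defining quadrics with constant coefficients, $f_2=6$. Your pushforward computation of $\alpha_*\O_C$ and subsequent analysis of the CE map is a valid alternative, but the uniqueness argument is shorter and sidesteps the $h^0$ check above. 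Likewise, for the hyperelliptic case your explicit computation of $\alpha_*\O_C$ is correct but unnecessary: \eqref{hypeq} already gives $Z=\beta^{-1}(\M_6^2)$ since $Z$ is the unique stratum with $e_1=1$.
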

\begin{proof}
By \eqref{hypeq}, we already know $Z = \beta^{-1}(\M_6^2)$, so must show that $\Psi_2 = \beta^{-1}(\M_6^3 \smallsetminus \M_6^2)$. We first show $\Psi_2 \subseteq \beta^{-1}(\M_6^3 \smallsetminus \M_6^2)$. On $\Psi_2$, we have $\pp E^\vee \cong \pp^1 \times \pp^2$. Since $\deg(q_{i,j}) =0$ and $\deg(p_{i,j}) = 3$ for all $i, j$, the projection onto the $\pp^2$ factor realizes $C$ as a degree $3$ cover of a conic in $\pp^2$. To show the reverse inclusion, suppose $\sigma: C \to \pp^1$ is a trigonal curve that also admits a degree $4$ map $\alpha: C \to \pp^1$. Then $(\alpha, \sigma): C \to \pp^1 \times \pp^1$ is birational onto its image, which is a curve of bidegree $(3, 4)$. By the genus formula, the genus of the image is $6$, so $(\alpha, \sigma): C \to \pp^1 \times \pp^1$ is an embedding. Composing with the degree $2$ Veronese on the second factor, we obtain a map $C \hookrightarrow \pp^1 \times \pp^1 \hookrightarrow \pp^1 \times \pp^2$ which is an embedding of $C$ in a $\pp^2$ bundle satisfying the properties of $\pp$ in Theorem \ref{CEstructure}. By its uniqueness, we see that $\pp E_\alpha \cong \pp^1 \times \pp^2$, i.e. $E_\alpha = (3, 3, 3)$. 
Meanwhile, the bundle $F_\alpha$ corresponds to the quadrics vanishing on $C \subset \pp E_\alpha^\vee \cong \pp^1 \times \pp^2$. The curve $C$ lies on a quadric whose equation is pulled back from the $\pp^2$ factor. Writing this quadric in the form \eqref{qeq}, we see that $\deg(q_{i,j}) = 0$, so $f_2 = 6$. Hence, $F_\alpha = (3, 6)$.
\end{proof}


\begin{prop}
 The Chow ring $A^*(\H_{4,6}\smallsetminus \beta^{-1}(\M^3_6))$ is generated by tautological classes. Hence, $A^*(\M_6)$ is tautological.
\end{prop}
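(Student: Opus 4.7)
The plan is to apply the strategy of Section \ref{strategysec} to the three strata $\Psi_0, \Psi_1, \Sigma_3$ that remain in $\H_{4,6} \smallsetminus \beta^{-1}(\M_6^3)$ after excising $Z \cup \Psi_2$. Both $\Psi_0$ and $\Psi_1$ satisfy $2e_1 - f_2 \geq -1$, so they lie inside the open substack $\Psi$ of Proposition \ref{Hprime}, and their Chow rings are generated by CE classes for free. The main obstacle is therefore the single bad stratum $\Sigma_3$, corresponding to $E = (2,3,4)$ and $F = (3,6)$, which is the unique pair splitting locus here with $2e_1 - f_2 < -1$.

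For $\Sigma_3$, I plan to verify condition (b) of Section \ref{strategysec}. First, to see that $[\Sigma_3]$ is tautological in $A^*(\H_{4,6} \smallsetminus \beta^{-1}(\M_6^3))$, note that the single splitting locus $\Sigma_{(3,6)}(\F) \subset \H_{4,6}$ has expected codimension $h^1(\pp^1, \E nd(\O(3,6))) = 2$, which matches the actual codimension of both its components $\Psi_2$ and $\Sigma_3$. Lemma \ref{sigmaCE} then gives that $[\overline{\Sigma_{(3,6)}(\F)}]$ is tautological modulo classes supported on $\Sigma_{\vec{f}\,'}(\F)$ for $\vec{f}\,' < (3,6)$; the only such allowed $F$-splitting type is $(2,7)$, which occurs only in $Z \subset \beta^{-1}(\M_6^3)$. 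Hence, after excision, the restriction of $[\overline{\Sigma_{(3,6)}(\F)}]$ is supported entirely on $\Sigma_3$ (since the closure of $\Psi_2$ meets the complement only in $\Sigma_3$), which lets me identify it with a positive rational multiple of $[\Sigma_3]$, yielding the tautologicality of $[\Sigma_3]$. Second, the pair $(\vec{e}, \vec{f}) = ((2,3,4), (3,6))$ satisfies the hypotheses of Lemma \ref{distinctparts}(3)(i): the parts of $\vec{e}$ and $\vec{f}$ are strictly increasing, $2e_1 = 4 > 3 = f_1$, and $e_1 + e_2 = 5 < 6 = e_1 + e_3 = 2e_2 = f_2$. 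The lemma then gives that $A^*(\Sigma_3)$ is generated by restrictions of CE classes.

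With these two ingredients in hand, the projection formula shows that the push-forward of any class from $A^*(\Sigma_3)$ along the closed inclusion into $\H_{4,6} \smallsetminus \beta^{-1}(\M_6^3)$ is tautological. Combined with Proposition \ref{Hprime} applied to the open complement $\Psi_0 \cup \Psi_1 \subset \Psi$, the excision sequence gives that $A^*(\H_{4,6} \smallsetminus \beta^{-1}(\M_6^3))$ is generated by tautological classes. Theorem \ref{pushforward} transfers this to $A^*(\M_6 \smallsetminus \M_6^3)$, and combining with \eqref{Mg3} (classes supported on $\M_6^3$ are tautological) gives $A^*(\M_6) = R^*(\M_6)$.
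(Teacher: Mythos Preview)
Your proof is correct and follows essentially the same approach as the paper: you identify $\Sigma_3$ as the only bad stratum outside $\beta^{-1}(\M_6^3)$, realize it as the single splitting locus $\Sigma_{(3,6)}(\F)$ of expected codimension $2$ to get its fundamental class via Lemma~\ref{sigmaCE}, invoke Lemma~\ref{distinctparts}(3)(i) for $A^*(\Sigma_3)$, and finish with Proposition~\ref{Hprime}, Theorem~\ref{pushforward}, and \eqref{Mg3}. One small cosmetic point: the parenthetical ``since the closure of $\Psi_2$ meets the complement only in $\Sigma_3$'' is not quite the right justification---the point is simply that on $\H_{4,6} \smallsetminus (Z \cup \Psi_2)$ one has $\Sigma_{(3,6)}(\F) = \Sigma_3$ directly, so the restricted class is exactly $[\Sigma_3]$ rather than merely a multiple.
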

\begin{proof}
Working on the complement of $\beta^{-1}(\M^3_6) = Z \cup \Psi_2$, we observe that $\Sigma_3$ is the $(3,6)$ splitting locus for $\F$, i.e. $\Sigma_3 = \Sigma_{(3,6)}(\F)$. Moreover,
\[
\codim \Sigma_3=2=h^1(\p^1,\E nd(\O(3,6))).
\]
Thus, by Lemma \ref{sigmaCE}, the fundamental class of $\Sigma_3 \subset \H_{4,6} \smallsetminus \beta^{-1}(\M_6^3)$ is expressible in terms of CE classes. By Lemma \ref{distinctparts} \eqref{for6} (i), we see $A^*(\Sigma_3)$ is generated by restrictions of CE classes.
Hence, using the push-pull formula, every class supported on $\Sigma_3 \subset \H_{4,6}\smallsetminus \beta^{-1}(\M_6^3)$ is tautological.

Meanwhile, $\H_{4,6}\smallsetminus (\beta^{-1}(\M^3_6) \cup \Sigma_3) = \Psi_0\cup \Psi_1 = \Psi$ is the open subset considered in Proposition \ref{Hprime}.
Hence, $A^*(\H_{4,6}\smallsetminus (\beta^{-1}(\M^3_6) \cup \Sigma_3))$ is generated by tautological classes.
By excision and the first paragraph of the proof, all of $A^*(\H_{4,6} \smallsetminus \beta^{-1}(\M_6^3))$ is tautological.
By Theorem \ref{pushforward},  $A^*(\M_6\smallsetminus \M^3_6)$ is generated by tautological classes. Combined with \eqref{Mg3}, we obtain that $A^*(\M_6)$ is tautological.
\end{proof}

\begin{rem}
(1) We note that the stratum $\Sigma_3$ consists of plane quintic curves. Indeed, on $\Sigma_3$, we have $p_{1,1} = 0$ and $\deg(q_{1,1}) = 1$ so the curve meets the line $Y = Z = 0$ in $\pp E^\vee$ in one point, say $\nu \in C$. The canonical line bundle on $C$ is the restriction of $\O_{\pp E^\vee}(1) \otimes \omega_{\pp^1}$, which contracts the line $Y = Z = 0$ in the map $\pp E^\vee \to \pp^5$. Thus, $\nu$ is contained in each of the planes spanned by the image of a fiber of $\alpha$ under the canonical embedding. Hence, by geometric Riemann--Roch, the $g^1_4$ plus $\nu$ is a $g^2_5$. The locus of genus $6$ curves possessing a $g^2_5$ is codimension $3$ in $\M_6$, but this stratum has codimension $2$ in $\H_{4,6}$ because projection from any point on a plane quintic gives a $g^1_4$.

(2) It turns out $\Sigma_3$ in genus $6$ is the only case where Lemma \ref{distinctparts} \eqref{for6}(i) holds but $g = 9 - f_1$. 
The fact that $\Sigma_3 \to \M_6$ has positive-dimensional fibers seems to provide some geometric intuition for this exception where we fail to obtain the stronger statement in (3)(ii).
\end{rem}

\subsection{Genus $7$ tetragonal curves}\label{tet7}
Using \eqref{totaldeg} -- \eqref{conditional}, the allowed splitting types in genus $7$ are as follows. We label a stratum with a $\Psi_i$ if it is contained within $\Psi$ (see Proposition \ref{Hprime}), equivalently if $2e_1 - f_2 \geq -1$.

\begin{enumerate}
    \item[($\Psi_0$)] $E = (3, 3, 4), F = (5,5)$: generic stratum; associated scroll is smooth.
    \item[($\Psi_1$)] $E = (3, 3, 4), F = (4, 6)$: associated scroll is smooth, $F$ unbalanced.
    \item[($\Sigma_2$)] $E = (2, 4, 4), F = (4,6)$: associated scroll is a cone over $\pp^1 \times \pp^1$ (which is embedded in a hyperplane in $\pp^6$ via $\O(2, 1)$). General bielliptics live in here as a proper closed subvariety, described in \cite[Theorem 2.3]{CDC}.
    \item[($\Sigma_3$)] $E = (2, 3, 5), F = (4,6)$: the associated scroll is a cone over the Hirzebruch surface $\mathbb{F}_2$ (embedded via $\O(1)$ on $\pp(\O(1) \oplus \O(3)$). The ``special bielliptics" live in here as a proper closed subvariety, described in \cite[Theorem 2.3]{CDC}.
    \item[($Z$)] $E = (1, 4, 5), F = (2,8)$: stratum of hyperelliptic curves (see \eqref{hypeq}).
\end{enumerate}

\begin{figure}[h!]
    \centering
\begin{tikzcd}[column sep = 6pt] \\
\text{codimension} & & \text{closure order} & \\[-6pt]
0& & \Psi_0  & \\
1& & \Psi_1 \arrow[hook]{u} & \\
2& \Sigma_2 \arrow[hook]{ur} & & \arrow[hook]{ul} Z \\
3& \Sigma_3 \arrow[hook]{u}
\end{tikzcd}
\hspace{1.75in}
\begin{tikzcd}
\text{our $\leq$ order} \\[-6pt]
\Psi_0 \\
\Psi_1 \arrow[no head]{u} \\
\Sigma_2 \arrow[no head]{u}  \\
\Sigma_3 \arrow[no head]{u}  \\
Z \arrow[no head]{u} 
\end{tikzcd}
\caption{Two partial orders on the genus $7$ strata}
\label{7fig}
\end{figure}
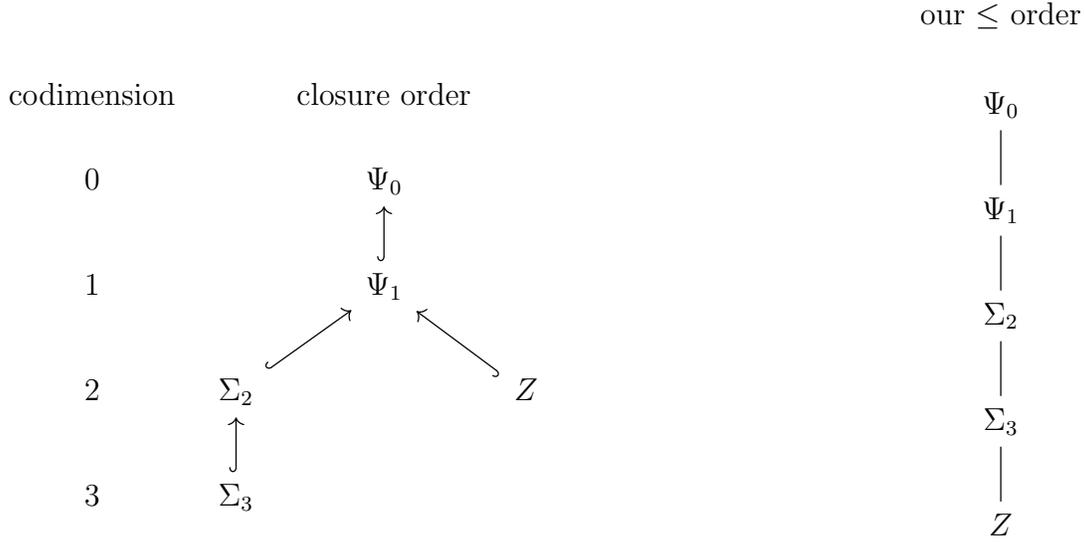

The table on the left of Figure \ref{7fig} lists the codimensions of strata (computed with \eqref{codimeq}). It also
indicates the partial order of which strata lie in the closure of others, which can be seen by considering \eqref{clofact}. This should be contrasted with the our partial ordering $\leq$, which is pictured on the right.

\begin{lem} \label{H47}
The Chow ring $A^*(\H_{4,7} \smallsetminus \beta^{-1}(\M_7^3))$ is generated by CE classes. Hence, all classes supported on $\M_7^4$ are tautological on $\M_7$.
\end{lem}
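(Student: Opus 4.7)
The plan is to apply the strategy of Section \ref{strategysec}: stratify $\H_{4,7} \smallsetminus \beta^{-1}(\M_7^3)$ by the pair splitting loci of $\E$ and $\F$ and work upward from the closed strata. From the enumeration above, the strata outside of $\Psi = \Psi_0 \cup \Psi_1$ are $Z$, $\Sigma_2$ and $\Sigma_3$. The hyperelliptic stratum $Z$ lies in $\beta^{-1}(\M_7^2) \subseteq \beta^{-1}(\M_7^3)$ and is thus excised at the outset. The crucial observation will be that, after excising suitable lower strata, each of $\Sigma_2$ and $\Sigma_3$ can be realized as a single splitting locus of $\E$ in the expected codimension, so that Lemma \ref{sigmaCE} produces a tautological formula for its fundamental class.

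I would first deal with $\Sigma_3$, where $\vec{e} = (2,3,5)$ and $\vec{f} = (4,6)$. A direct computation of $\E nd(\O(2,3,5))$ gives $h^1(\pp^1, \E nd(\O(2,3,5))) = 3$, matching the codimension of $\Sigma_3$ from \eqref{codimeq}. In the open $\H_{4,7} \smallsetminus \beta^{-1}(\M_7^3)$ the only splitting type strictly below $(2,3,5)$ that occurs in the genus $7$ list is $(1,4,5) = Z$, which has already been excised; hence $\Sigma_3 = \Sigma_{(2,3,5)}(\E)$ on this open and Lemma \ref{sigmaCE} shows that the fundamental class of its closure is a polynomial in CE classes. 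Since $e_1 = 2, e_2 = 3, e_3 = 5$ are distinct, $f_1 = 4 < f_2 = 6$, and $2e_1 = 4 = f_1$, Lemma \ref{distinctparts}\eqref{dp1} applies to give that $A^*(\Sigma_3)$ is generated by restrictions of CE classes. The push-pull formula then shows that every class supported on $\Sigma_3$ in this open is a polynomial in CE classes.

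Next I would handle $\Sigma_2$ on the further open $\H_{4,7} \smallsetminus (\beta^{-1}(\M_7^3) \cup \Sigma_3)$. One computes $h^1(\pp^1, \E nd(\O(2,4,4))) = 2$, equal to the codimension of $\Sigma_2$. The splitting types strictly below $(2,4,4)$ appearing in the list are $(2,3,5)$ and $(1,4,5)$, both now excised, so $\Sigma_2 = \Sigma_{(2,4,4)}(\E)$ on this open and Lemma \ref{sigmaCE} again gives that the fundamental class of its closure is a polynomial in CE classes. Since $e_1 = 2 < e_2 = e_3 = 4$ and $f_1 = 4 < f_2 = 6$, Lemma \ref{twoequalparts} applies to show that $A^*(\Sigma_2)$ is generated by restrictions of CE classes, and push-pull finishes this stratum.

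Finally, the remaining open $\H_{4,7} \smallsetminus (\beta^{-1}(\M_7^3) \cup \Sigma_3 \cup \Sigma_2) = \Psi_0 \cup \Psi_1 \subseteq \Psi$ has Chow ring generated by CE classes by Proposition \ref{Hprime}. Chaining these three excision steps together, $A^*(\H_{4,7}\smallsetminus \beta^{-1}(\M_7^3))$ is generated by CE classes, hence by tautological classes. Theorem \ref{pushforward} then gives that every class in $A^*(\M_7^4 \smallsetminus \M_7^3)$ is tautological; combining with \eqref{Mg3} for the trigonal and hyperelliptic loci, all classes supported on $\M_7^4$ are tautological on $\M_7$. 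The only step that could in principle fail, and which is the ``lucky coincidence'' of the genus $7$ tetragonal setup, is that both bad pair strata $\Sigma_2$ and $\Sigma_3$ happen to be single $\E$-splitting loci of expected codimension once the lower strata have been removed.
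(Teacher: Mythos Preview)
Your argument is correct and follows essentially the same route as the paper: excise $Z$, then show $\Sigma_3$ and $\Sigma_2$ are single $\E$-splitting loci of the expected codimension, invoke Lemma~\ref{sigmaCE} for their fundamental classes, the appropriate structure lemma for their Chow rings, and finish with Proposition~\ref{Hprime} and Theorem~\ref{pushforward}. One small remark: for $\Sigma_2$ (where $e_2 = e_3 = 4$) you correctly invoke Lemma~\ref{twoequalparts}, whereas the paper's citation of Lemma~\ref{distinctparts}\eqref{dp1} at that point appears to be a typo; your choice is the right one. Also, your equality $\H_{4,7} \smallsetminus (\beta^{-1}(\M_7^3) \cup \Sigma_3 \cup \Sigma_2) = \Psi_0 \cup \Psi_1$ implicitly uses $\beta^{-1}(\M_7^3) = Z$, which the paper justifies by noting a genus $7$ curve cannot carry both a $g^1_3$ and a $g^1_4$; you only need the inclusion into $\Psi$, which holds regardless, so the argument stands.
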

\begin{proof}
We implement Strategy \ref{strategysec}, starting at the bottom of the partial ordering. By \eqref{hypeq}, we have 
\[Z = \beta^{-1}(\M_7^2) = \beta^{-1}(\M_7^3).\]
The second equality follows because a genus $7$ curve cannot possess maps to $\pp^1$ of degrees $3$ and $4$, otherwise it would map to $\pp^1 \times \pp^1$ with image a curve of bidegree $(3, 4)$, which has genus $6$.

Next, we claim that, modulo classes supported on $Z$, the fundamental class of $\Sigma_3$ is expressible in terms of CE classes.
To see this, observe that $\Sigma_3 = \Sigma_{(2,3,5)}(\E)$. Moreover,
\[\codim \Sigma_3 = 3 = h^1(\pp^1, \E nd(\O(2, 3, 5))).\]
Thus, the claim follows from Lemma \ref{sigmaCE}. By Lemma \ref{distinctparts} \eqref{dp1}, we see $A^*(\Sigma_3)$ is generated by the restrictions of CE classes, so by the push-pull formula, every class supported on on $\Sigma_3 \subset \H_{4,7} \smallsetminus Z$ is expressible in terms of CE classes, i.e. is tautological.

Similarly, modulo classes supported on $Z$ and $\Sigma_3$, we claim that the fundamental class of $\Sigma_2$ is expressible in terms of CE classes. To see this, observe that $\Sigma_2 = \Sigma_{(2, 4, 4)}(\E)$ and
\[\codim \Sigma_2 = 2 = h^1(\pp^1, \E nd(\O(2, 4, 4))).\]
Thus, the claim again follows from Lemma \ref{sigmaCE}. By Lemma \ref{distinctparts} \eqref{dp1}, we see $A^*(\Sigma_2)$ is generated by restrictions of CE classes. Using the push-pull formula, along with the previous paragraph, we see that every class supported on $\Sigma_2 \cup \Sigma_3 \subset \H_{4,7} \smallsetminus Z$ is tautological.

By Proposition
\ref{Hprime}, we know $A^*(\Psi)$ is generated by tautological classes. Putting this together with the above, we find that $A^*(\Psi \cup \Sigma_2 \cup \Sigma_3) = A^*(\H_{4,7} \smallsetminus \beta^{-1}(\M_7^3))$ is generated by tautological classes. Applying Theorem \ref{pushforward}, every class supported on $\M_7^4 \smallsetminus \M_7^3$ is tautological in $\M_7 \smallsetminus \M_7^3$. classes supported on $\M_7^3$ are known to be tautological (see \eqref{Mg3}), so the result follows.
\end{proof}

\subsection{Genus $8$ tetragonal curves}\label{tet8}
Using \eqref{totaldeg} -- \eqref{conditional}, there are five allowed splitting types for the CE bundles, which give rise to the following stratification of $\H_{4,8}$. Again, we label a stratum with a $\Psi_i$ if it is contained within $\Psi$, equivalently if $2e_1 - f_2 \geq -1$.
\begin{enumerate}
    \item[($\Psi_0$)] $E = (3, 4, 4), F = (5,6)$: generic stratum; the associated scroll is smooth.
    \item[($\Psi_1$)] $E = (3, 4, 4), F = (4,7)$: associated scroll is smooth, $F$ unbalanced.
    \item[($\Psi_2$)] $E = (3, 3, 5), F = (5, 6)$: associated scroll is smooth, $E$ unbalanced.
    \item[($\Sigma_3$)] $E = (2, 4, 5), F = (4,7)$: associated scroll is a cone over $\mathbb{F}_1$. Bielliptic curves are a proper closed subvariety here, see \cite[Theorem 2.3]{CDC}.
    \item[($Z$)] $E = (1, 5, 5), F = (2,9)$: stratum of hyperelliptic curves (see \eqref{hypeq}).
\end{enumerate}

The table on the left of Figure \ref{8fig} lists the codimension of strata (see \eqref{codimeq}) and
indicates which strata are in the closure of others, which can be seen by considering \eqref{clofact}. This should be contrasted with our partial ordering $\leq$, which is pictured on the right.

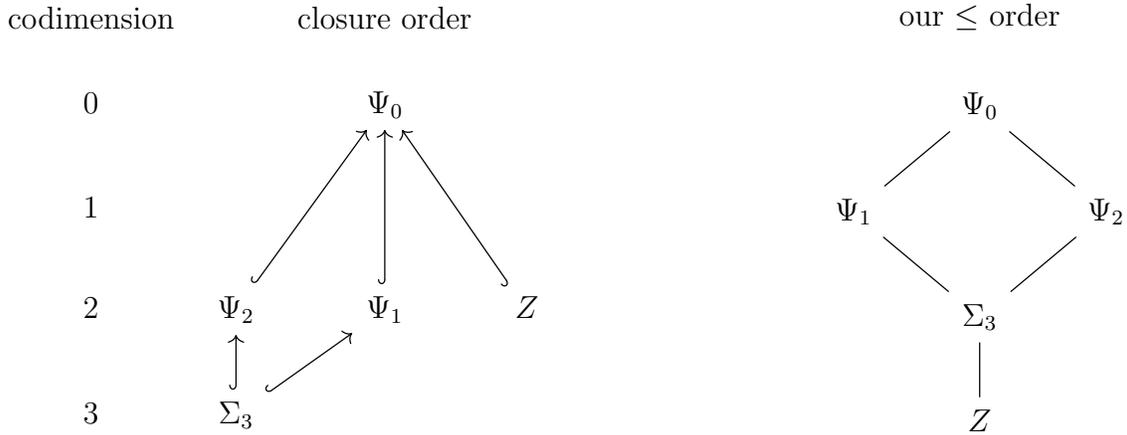
\begin{figure}[h!]
\centering
\begin{tikzcd}[column sep = 6pt]
\text{codimension} & & \text{closure order} & \\[-6pt]
0&& \Psi_0 & \\
1&&& \\
2&\Psi_2 \arrow[hook]{uur} & \Psi_1 \arrow[hook]{uu} & Z \arrow[hook]{uul} \\
3& \Sigma_3 \arrow[hook]{u} \arrow[hook]{ur}
\end{tikzcd}
\hspace{1.3in}
\begin{tikzcd}[column sep = 0pt]
& \text{our $\leq$ order} \\[-6pt]
&\Psi_0 & \\
\Psi_1 \arrow[no head]{ur} && \Psi_2 \arrow[no head]{ul}  \\
&\Sigma_3 \arrow[no head]{ur} \arrow[no head]{ul}  \\
&Z \arrow[no head]{u} 
\end{tikzcd}
\caption{Two partial orders on the genus $8$ strata}
\label{8fig}
\end{figure}

\begin{lem} \label{H48}
The Chow ring $A^*(\H_{4,8} \smallsetminus \beta^{-1}(\M_8^3))$ is generated by CE classes. Hence, all classes supported on $\M_8^4$ are tautological on $\M_8$.
\end{lem}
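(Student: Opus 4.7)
The plan is to follow Strategy \ref{strategysec} exactly as in Lemma \ref{H47}, working up the $\leq$ order in Figure \ref{8fig}. First I would identify $\beta^{-1}(\M_8^3) = Z$: the containment $Z \subseteq \beta^{-1}(\M_8^3)$ is \eqref{hypeq}, and for the reverse, if a genus $8$ curve were both trigonal (via some $\sigma$) and admitted a basepoint-free $g^1_4$ (via some $\alpha$), then since $\gcd(3,4) = 1$ the induced map $(\alpha, \sigma): C \to \pp^1 \times \pp^1$ would be birational onto its image, a curve of bidegree $(4,3)$ whose geometric genus is at most $(4-1)(3-1) = 6 < 8$, a contradiction.

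Next I would treat the single bad pair stratum $\Sigma_3$. Of the five allowed pairs, only $\Sigma_3$ has $\vec{e}$-part equal to $(2,4,5)$, so inside $\H_{4,8} \smallsetminus Z$ the splitting locus $\Sigma_{(2,4,5)}(\E)$ coincides set-theoretically with $\Sigma_3$, and the only strictly smaller allowed $\vec{e}$-splitting is $(1,5,5)$, which is exactly $Z$. A direct count
\[ h^1(\pp^1, \E nd(\O(2,4,5))) = h^1(\O(-2)) + h^1(\O(-3)) + h^1(\O(-1)) = 1 + 2 + 0 = 3 = \codim \Sigma_3 \]
shows $\Sigma_{(2,4,5)}(\E)$ occurs in its expected codimension, so by Lemma \ref{sigmaCE} the class of $\overline{\Sigma_3}$ is a polynomial in CE classes modulo classes supported on $\Sigma_{(1,5,5)}(\E) \subseteq Z$, and these error terms vanish after restricting to $\H_{4,8} \smallsetminus Z$. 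Since $(e_1, f_1) = (2, 4)$ satisfies $2e_1 = f_1$, Lemma \ref{distinctparts}(1) applies and $A^*(\Sigma_3)$ is generated by restrictions of CE classes. The push--pull formula then shows every class supported on $\Sigma_3$ inside $\H_{4,8} \smallsetminus Z$ is a polynomial in CE classes.

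The remaining three strata $\Psi_0, \Psi_1, \Psi_2$ together constitute $\Psi$, since their values of $2e_1 - f_2$ are $0, -1, 0$ respectively (all $\geq -1$), while $\Sigma_3$ and $Z$ give $-3$ and $-7$. Proposition \ref{Hprime} therefore yields that $A^*(\Psi)$ is generated by CE classes. Combining this with the previous paragraph via excision, $A^*(\H_{4,8} \smallsetminus \beta^{-1}(\M_8^3))$ is generated by CE classes. Theorem \ref{pushforward} then promotes these to tautological classes on $\M_8 \smallsetminus \M_8^3$, and \eqref{Mg3} handles classes supported on $\M_8^3$.

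The step I would expect to require the most care is verifying that both Lemma \ref{sigmaCE} and Lemma \ref{distinctparts}(1) apply to $\Sigma_3$: this needs the codimension count to land on $h^1 = 3$ on the nose \emph{and} the equality $2e_1 = f_1$. Both hold here, so genus $8$ is one of the ``lucky'' cases where the sole bad pair stratum outside $\Psi$ is already realized as a single $\E$-splitting locus in its expected codimension --- this is the ``key coincidence'' advertised in the introduction, and it is what makes this case genuinely easier than $g = 9, k = 4$.
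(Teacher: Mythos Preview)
Your proposal is correct and follows essentially the same approach as the paper's proof: identify $Z=\beta^{-1}(\M_8^3)$ via the bidegree-$(3,4)$ argument, realize $\Sigma_3$ as the single $\E$-splitting locus $\Sigma_{(2,4,5)}(\E)$ occurring in its expected codimension $3$, apply Lemma~\ref{sigmaCE} and Lemma~\ref{distinctparts}\eqref{dp1}, then combine with Proposition~\ref{Hprime} for $\Psi=\Psi_0\cup\Psi_1\cup\Psi_2$ via excision. Your added verifications (the explicit $h^1$ count, the check that $2e_1=f_1$, and the $2e_1-f_2$ values) are all correct and simply make explicit what the paper leaves to the reader.
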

\begin{proof}
The proof is very similar to Lemma \ref{H47}. The lowest stratum is again the hyperelliptic locus:  $Z = \beta^{-1}(\M_8^2) = \beta^{-1}(\M_8^3)$. Then, we notice that $\Sigma_3 $ is equal to $\Sigma_{(2,4,5)}(\E)$ and
\[\codim \Sigma_3 = 3 = h^1(\pp^1, \E nd(\O(2, 4, 5))).\]
Thus, by Lemma \ref{sigmaCE}, the fundamental class of $\Sigma_3$ inside $\H_{4,8} \smallsetminus \beta^{-1}(\M_8^3)$ is tautological. By Lemma \ref{distinctparts} \eqref{dp1}, the Chow ring of the locally closed stratum $A^*(\Sigma_3)$ is generated by the restrictions of CE classes. By the push-pull formula, every class supported on $\Sigma_3 \subset \H_{4,8} \smallsetminus \beta^{-1}(\M_8^3)$ is tautological. 

Meanwhile, Proposition \ref{Hprime} shows that $A^*(\Psi) = A^*(\Psi_0 \cup \Psi_1 \cup \Psi_2)$ is generated by tautological classes. Putting this together with the previous paragraph, all of $A^*(\H_{4,8} \smallsetminus \beta^{-1}(\M_8^3))$ is generated by tautological classes. 
\end{proof}

\subsection{Genus $9$ tetragonal curves}\label{tet9}
Using \eqref{totaldeg} -- \eqref{conditional}, we find that the allowed splitting types in genus $9$ are as follows. Again, we label a stratum $\Psi_i$ if $2e_1 - f_1 \geq -1$.
\begin{enumerate}
\item[($\Psi_0$)] $E = (4, 4, 4), F = (6, 6)$: the general stratum, the associated scroll is $\pp^2 \times \pp^1$.
\item[($\Psi_1$)] $E = (4, 4, 4), F = (5, 7)$: codimension $1$, with $F$ unbalanced.
\item[($\Psi_2$)] $E = (3, 4, 5), F = (6, 6)$: codimension $1$, with $E$ unbalanced.
\item[($\Psi_3$)] $E = (3, 4, 5), F = (5, 7)$: codimension 2, both $E$ and $F$ unbalanced.

\item[($\Psi_4$)] $E = (4, 4, 4), F = (4, 8)$: codimension $3$, such curves have bidegree $(4, 4)$ on  $\pp^1 \times \pp^1$.

\item[($\Psi_5$)] $E = (3, 3, 6), F = (6, 6)$: codimension $4$.
\item[($\Sigma_6$)] $E = (3, 4, 5), F = (4, 8)$: codimension $3$, such curves posess a $g^2_6$ (Lemma \ref{hasg26}).
\item[($\Sigma_7$)] $E = (2, 5, 5), F = (4, 8)$: codimension $4$, all such covers factor through a degree $2$ cover of an elliptic curve (see Lemma \ref{bi}).
\item[($\Sigma_8$)] $E = (2, 4, 6), F = (4, 8)$: codimension $4$, the ``special bielliptics" live here as a proper closed locus of codimension $1$ (see Figure \ref{78pic}).
\item[($Z$)] $E = (1, 5, 6), F = (2, 10)$: codimension $2$, the hyperelliptic curves (see \eqref{hypeq}).
\end{enumerate}

In genus $9$, it is less clear which strata lie in the closure of others. However, for our purposes,
all we need is our $\leq$ order, pictured in Figure \ref{9order} below. 
\begin{figure}[h!]
    \centering
\begin{tikzcd}
&&\Psi_0 \\
&\Psi_2 \arrow[no head]{ur} & & \Psi_1 \arrow[no head]{ul} \\
\Psi_5 \arrow[no head]{ur} & &\Psi_3 \arrow[no head]{ul} \arrow[no head]{ur} & &\Psi_4 \arrow[no head]{ul} \\
&&& \Sigma_6 \arrow[no head]{lu} \arrow[no head]{ur} \\
&&& \Sigma_7 \arrow[no head]{u} \\
&&\Sigma_8 \arrow[no head]{ur} \arrow[no head]{uuull} \\
&& Z \arrow[no head]{u}
\end{tikzcd}
    \caption{Our $\leq$ order in genus $9$}
    \label{9order}
\end{figure}
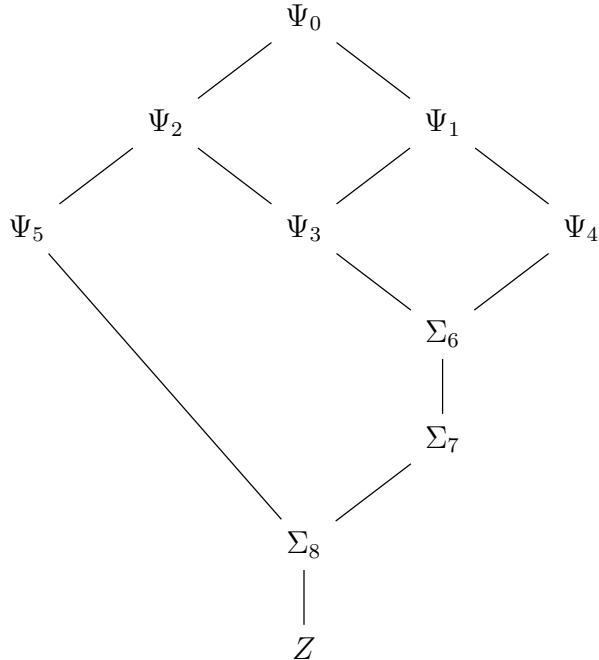

Note that $\Sigma_7$ and $\Sigma_8$ have the same dimension, so $\Sigma_8$ is not contained in the closure of $\Sigma_7$ (see Example \ref{wex} for a baby case of this phenomenon).
\begin{rem}
The two ``problem strata" mentioned in the introduction are $\Sigma_8$ and $\Sigma_6$. Note that these are more ``interesting" nodes in the diagram above: they live directly below two different strata (i.e. there are two lines coming out the tops of these nodes).
\end{rem}

The key to our argument is a good geometric understanding of $\Sigma_8, \Sigma_7$ and $\Sigma_6$. (We already know that $Z$ consists of hyperelliptic curves, so it is not a concern.)

We first describe the bielliptic locus, making use of the explicit description due to Casnati--Del Centina \cite{CDC}  for curves of genus $6 \leq g \leq 9$. 
Let $\B_9 \subset \M_9$ denote the locus of curves $C$ which are double covers of an elliptic curve $D$. By the Castelnuovo--Severi inequality,
$C \to D$ is the unique degree $2$ map of $C$ to an elliptic curve. By Riemann--Hurwitz, $\B_9$ is irreducible of dimension $16$.
Also by the Castelnuovo--Severi inequality, every degree $4$ map $C \to \pp^1$ from a bielliptic curve $C$
factors through the map $C \to D$. That is, $\beta^{-1}(\B_9) \subset \H_{4,9}$ consists of maps of the form $C \to D \to \pp^1$. Hence, $\beta^{-1}(\B_9) \subset \H_{4,9}$ is irreducible of dimension $17$ and $\beta^{-1}(\B_9) \to \B_9$ has $1$-dimensional fibers.
Recall that by Riemann-Hurwitz,
$\dim \H_{4,9} = 21$.

\begin{lem} \label{bi}
Every curve in stratum $\Sigma_7$ is bielliptic. Moreover, every degree $4$ cover that factors through an elliptic curve lives in the closure of $\Sigma_7$, i.e. $\overline{\Sigma}_7 = \beta^{-1}(\B_9)$.
\end{lem}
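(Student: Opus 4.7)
The plan is to first exhibit an explicit degree-$2$ cover $C \to \tilde E$ with $\tilde E$ an elliptic curve for every $\alpha \colon C \to \p^1$ in $\Sigma_7$ via linear projection from a distinguished section of the associated scroll $\p E_\alpha^\vee$, and then to conclude $\overline{\Sigma}_7 = \beta^{-1}(\B_9)$ by a dimension count against the irreducible $17$-dimensional variety $\beta^{-1}(\B_9)$ described in the paragraph preceding the lemma.

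To carry out the projection, I would begin by unpacking the coordinate presentation from Section \ref{CEsec}. For $\alpha \in \Sigma_7$, the degree formulas $\deg p_{ij} = e_i + e_j - 4$ and $\deg q_{ij} = e_i + e_j - 8$ force $q_{1,1} = q_{1,2} = q_{1,3} = 0$, while $q_{2,2}, q_{2,3}, q_{3,3}$ are binary quadrics on $\p^1$, and $p_{1,1}$ is a nonzero constant by \eqref{q1jvan}. Thus $q = q_{2,2} Y^2 + q_{2,3} YZ + q_{3,3} Z^2$ is independent of $X$. Consider the linear projection
\[
\rho \colon \p E_\alpha^\vee \setminus s_0 \longrightarrow \p(\O(5) \oplus \O(5)) \cong \p^1 \times \p^1
\]
away from the section $s_0 = V(Y, Z)$ corresponding to the $\O(2)$ summand. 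Since $p_{1,1}$ is a nonzero constant, $C \cap s_0 = \varnothing$, so $\rho|_C$ is a morphism on all of $C$. Because $q$ is independent of $X$, the image lies inside the bidegree $(2, 2)$ curve $\tilde E := V(q_{2,2} Y^2 + q_{2,3} YZ + q_{3,3} Z^2) \subset \p^1 \times \p^1$, which has arithmetic genus $1$. Comparing the degrees $\deg(C/\p^1) = 4$ and $\deg(\tilde E/\p^1) = 2$ shows that the induced map $C \to \tilde E$ has degree $2$. For generic $\alpha$, a standard Bertini-type check confirms $\tilde E$ is smooth, so the generic member of $\Sigma_7$ is bielliptic.

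Combining this with closedness of $\B_9 \subset \M_9$ (being bielliptic is a closed condition for $g \geq 3$) and irreducibility of $\Sigma_7$ from Lemma \ref{squo}, we obtain $\Sigma_7 \subseteq \beta^{-1}(\B_9)$, which establishes the first assertion. For the second, by \eqref{codimeq} we have $\dim \Sigma_7 = 21 - 4 = 17$, while the paragraph preceding the lemma gives $\dim \beta^{-1}(\B_9) = 17$ with $\beta^{-1}(\B_9)$ irreducible. Thus $\overline{\Sigma}_7$ is a closed irreducible subset of $\beta^{-1}(\B_9)$ of equal dimension, forcing equality.

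I expect the main subtlety to be verifying smoothness of the generic $\tilde E$, a routine Bertini check amounting to exhibiting a single smooth $(2,2)$-form; any non-generic degenerations of $\tilde E$ for special $\alpha \in \Sigma_7$ (which could in principle produce a rational component and hence a second $g^1_2$, contradicting $e_1 = 2$ via \eqref{hypeq}) are absorbed automatically by the closedness of $\B_9$ in $\M_9$.
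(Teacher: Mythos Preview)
Your argument is correct. The second half (the dimension count showing $\overline{\Sigma}_7 = \beta^{-1}(\B_9)$ from $\Sigma_7 \subseteq \beta^{-1}(\B_9)$, irreducibility, and $\dim \Sigma_7 = \dim \beta^{-1}(\B_9) = 17$) is identical to the paper's. The difference lies in how you establish $\Sigma_7 \subseteq \beta^{-1}(\B_9)$. The paper simply observes that $q_{1,2} = q_{1,3} = 0$ for degree reasons and then invokes Casnati--Del Centina \cite[Theorem 2.3]{CDC} as a black box to conclude biellipticity. You instead unpack the geometry: since $q$ has no $X$-term, it descends along the projection from the directrix $s_0 = V(Y,Z)$ to a $(2,2)$ curve on $\pp(\O(5)\oplus\O(5)) \cong \pp^1 \times \pp^1$, and the degree comparison forces $C \to \tilde E$ to be a double cover. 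This is essentially reproving the relevant special case of the Casnati--Del Centina criterion in coordinates.

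What your approach buys is self-containment: a reader need not chase the reference to see why the vanishing of $q_{1,j}$ produces an elliptic quotient. What the paper's approach buys is brevity and uniformity, since \cite{CDC} also supplies the description of the special bielliptics in $\Sigma_8$ used immediately afterward. Your handling of the possible singularity of $\tilde E$ via closedness of $\B_9$ and irreducibility of $\Sigma_7$ is clean and avoids any case analysis; note that your parenthetical remark (a rational $\tilde E$ would force a $g^1_2$ and hence $e_1 = 1$) would, if made rigorous, actually show $\tilde E$ is \emph{always} integral of arithmetic genus $1$, but as you say this is unnecessary.
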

\begin{proof}
On $\Sigma_7$, we have $E = (2,5, 5)$ and $F = (4, 8)$. Thus, for degree reasons, we have $q_{1,2} =q_{1,3} =0$, so the conditions \cite[Theorem 2.3 (general case)]{CDC} are automatically satisfied. This says that $\Sigma_7 \subset \beta^{-1}(\B_9)$. Meanwhile, $\Sigma_7$ is irreducible of codimension $4$, hence dimension $17$. Since $\beta^{-1}(\B_9)$ is closed and irreducible of dimension $17$, we must have $\overline{\Sigma}_7 = \beta^{-1}(\B_9)$.
\end{proof}

Theorem 2.3 of \cite{CDC} shows that $\beta^{-1}(\B_9)$ meets precisely one other stratum, $\Sigma_8$, in codimension $1$ inside $\Sigma_8$. Casnati-Del Centina call the intersection $\overline{\Sigma}_7 \cap \Sigma_8$ the \emph{special bielliptics} (pictured in purple in Figure \ref{78pic}). The special bielliptics $C \xrightarrow{\varphi} D \xrightarrow{\sigma} \pp^1$ are characterized by the property that the branch locus of $\varphi$ is linearly equivalent to $\sigma^*\O_{\pp^1}(8)$ on the elliptic curve $D$.  Given a bielliptic curve $C \to D$, one may always choose a map $D \to \pp^1$ so that the composition $C \to D \to \pp^1$ is special. Hence, $\beta(\overline{\Sigma}_7) \subset \beta(\Sigma_8)$.
\begin{figure}[h!]
    \centering
    \includegraphics[width = 3.5in]{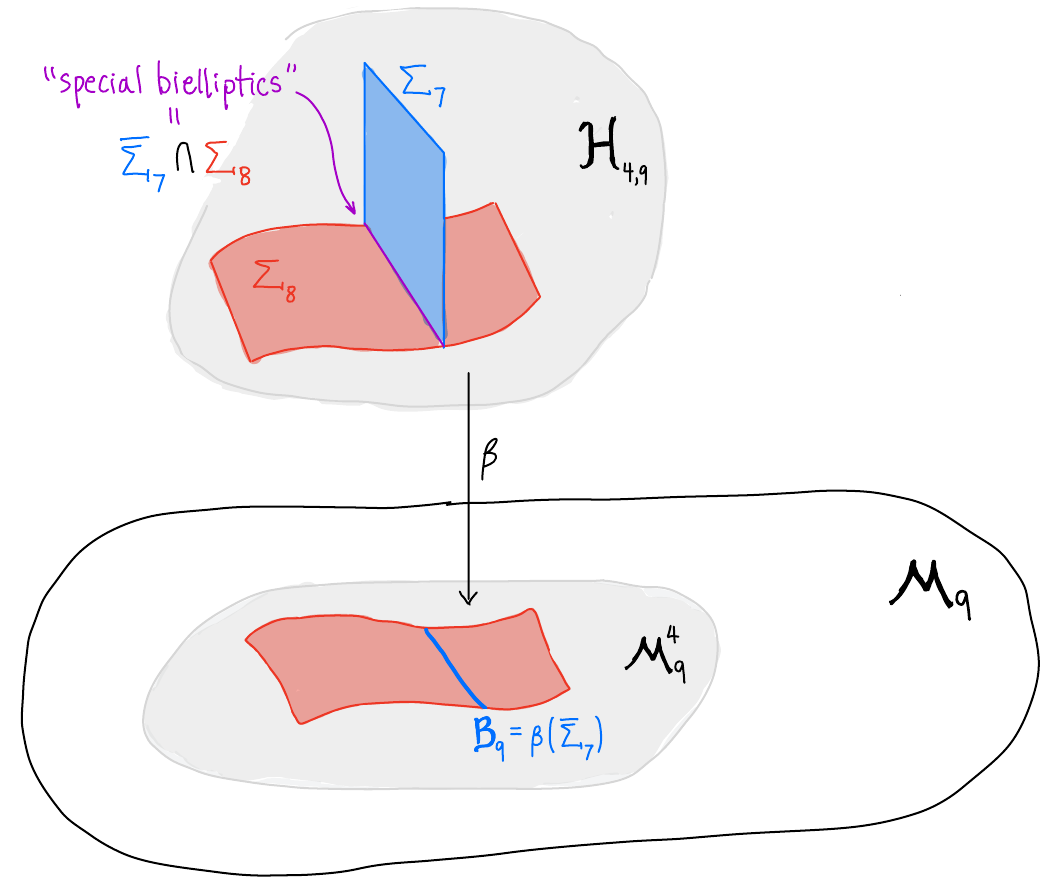}
    \caption{The map $\beta$ contracts $\Sigma_7$ and $\beta(\overline{\Sigma}_7) \subset \beta(\Sigma_8)$}
    \label{78pic}
\end{figure}

Recall that (see Figure \ref{9order}), the stratum $\Sigma_8$ is closed in $\H_{4,9} \smallsetminus Z = \H_{4,9} \smallsetminus \beta^{-1}(\M_9^3)$.
\begin{lem} \label{7and8} 
The push forward $\beta_*(\Sigma_8)$ is tautological on $\M_9 \smallsetminus \M_9^3$. Furthermore, the push forward of any class supported on $\Sigma_8 \cup \Sigma_7$ is tautological on $\M_9 \smallsetminus \M_9^3$.
\end{lem}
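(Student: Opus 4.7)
The plan is to exploit the coincidence that in $\H_{4,9}\smallsetminus Z$, the union $\Sigma_7\cup\Sigma_8$ equals the single splitting locus $\Sigma_{(2,*,*)}(\E)$: among the allowed pair splitting types, only $\Sigma_7$ and $\Sigma_8$ have $e_1=2$, while the only other stratum with $e_1\leq 2$ is the hyperelliptic locus $Z$ (with $e_1=1$). Both $\Sigma_7$ and $\Sigma_8$ occur in codimension $4=(g+3)+1-3(2+1)$, which matches the expected codimension of $\Sigma_{(2,*,*)}(\E)$. Lemma \ref{2star} then gives that, modulo classes supported on $Z$, the fundamental class $[\overline{\Sigma}_7]+[\overline{\Sigma}_8]$ is expressible in CE classes; after excising $Z\subset\beta^{-1}(\M_9^3)$, it is tautological on $\H_{4,9}\smallsetminus\beta^{-1}(\M_9^3)$. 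The main obstacle is that $\Sigma_8$ alone occurs in the wrong codimension ($4$ instead of the expected $5$ from \eqref{codimeq}), so Lemma \ref{sigmaCE} cannot be applied to $\Sigma_8$ directly; the workaround is to couple $\Sigma_8$ with the bielliptic stratum $\Sigma_7$ and exploit that $\beta$ collapses $\overline{\Sigma}_7$.

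For the first assertion, Theorem \ref{pushforward} implies $\beta_*\bigl([\overline{\Sigma}_7]+[\overline{\Sigma}_8]\bigr)$ is tautological on $\M_9\smallsetminus\M_9^3$. By Lemma \ref{bi}, $\overline{\Sigma}_7 = \beta^{-1}(\B_9)$ has dimension $17$ while $\dim\B_9=16$, so $\beta|_{\overline{\Sigma}_7}$ has one-dimensional fibers and $\beta_*[\overline{\Sigma}_7]=0$ for dimension reasons (this is the ``trick'' depicted in Figure \ref{78pic}). Hence $\beta_*[\overline{\Sigma}_8]$ is tautological on $\M_9\smallsetminus\M_9^3$.

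For the ``furthermore'' assertion, reading off from Figure \ref{9order}, I note that $\overline{\Sigma}_8=\Sigma_8$ in $\H_{4,9}\smallsetminus Z$ (the only stratum below $\Sigma_8$ in the $\leq$-order is $Z$), while $\overline{\Sigma}_7\smallsetminus\Sigma_7\subset\Sigma_8$. Excision then writes any class supported on $\Sigma_7\cup\Sigma_8$ as $\iota_{8*}\zeta_8+\iota_{7*}\zeta_7$ with $\zeta_8\in A^*(\overline{\Sigma}_8)$ and $\zeta_7\in A^*(\overline{\Sigma}_7)$. For the $\Sigma_8$ part, Lemma \ref{distinctparts}\eqref{for8} applies (since $2e_1=f_1=4$ and $e_1+e_2=6<8=2e_2=f_2$), giving $\zeta_8=P(\kappa_1,\kappa_2)|_{\overline{\Sigma}_8}$; since the $\kappa_i$ are pulled back from $\M_9$, the projection formula yields $\beta_*\iota_{8*}\zeta_8=P(\kappa_1,\kappa_2)\cdot\beta_*[\overline{\Sigma}_8]$, which is tautological by the first assertion. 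For the $\Sigma_7$ part, Lemma \ref{twoequalparts} applies (since $e_1<e_2=e_3$ and $f_1<f_2$), so $\zeta_7=Q|_{\overline{\Sigma}_7}$ for some polynomial $Q$ in CE classes, and
\[\iota_{7*}\zeta_7 \;=\; Q\cdot[\overline{\Sigma}_7] \;=\; Q\cdot\bigl([\overline{\Sigma}_7]+[\overline{\Sigma}_8]\bigr) \,-\, Q\cdot[\overline{\Sigma}_8].\]
The first summand on the right is tautological on $\H_{4,9}\smallsetminus\beta^{-1}(\M_9^3)$ as a product of tautological classes, and its $\beta$-pushforward is tautological by Theorem \ref{pushforward}; the second is supported on $\overline{\Sigma}_8$ and reduces to the $\Sigma_8$ case. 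The subtle point here is that $A^*(\Sigma_7)$ is only guaranteed to be generated by CE classes (not by $\kappa_1,\kappa_2$), so a naive projection-formula argument fails for $\Sigma_7$; the fix is to express $Q\cdot[\overline{\Sigma}_7]$ in terms of the tautological class $[\overline{\Sigma}_7]+[\overline{\Sigma}_8]$.
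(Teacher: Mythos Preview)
Your first paragraph matches the paper's argument exactly: realize $\Sigma_7\cup\Sigma_8$ as $\Sigma_{(2,*,*)}(\E)$ in the expected codimension, apply Lemma~\ref{2star}, and kill $\beta_*[\overline{\Sigma}_7]$ by the one-dimensional fiber observation from Lemma~\ref{bi}.

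For the ``furthermore'' part your route differs from the paper's. The paper does not touch $A^*(\Sigma_7)$ at all: it instead uses the geometric fact (explained just before the lemma, via the special bielliptics) that $\beta(\overline{\Sigma}_7)\subset\beta(\Sigma_8)$, so $\beta(\Sigma_7\cup\Sigma_8)=\beta(\Sigma_8)$. Since $A^*(\Sigma_8)$ is generated by $\kappa_1,\kappa_2$ and the pushforward $A^*(\Sigma_8)\to A^*(\beta(\Sigma_8))$ is surjective with rational coefficients, every class supported on $\beta(\Sigma_8)$ is already tautological, and there is nothing more to do. Your approach instead handles $\Sigma_7$ directly via Lemma~\ref{twoequalparts} and the identity $Q\cdot[\overline{\Sigma}_7]=Q\cdot([\overline{\Sigma}_7]+[\Sigma_8])-Q\cdot[\Sigma_8]$. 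This is valid and has the virtue of not relying on the special-bielliptic inclusion $\beta(\overline{\Sigma}_7)\subset\beta(\Sigma_8)$; the paper's argument is shorter and uses one less lemma. One small imprecision in your write-up: Lemma~\ref{twoequalparts} controls $A^*(\Sigma_7)$, not $A^*(\overline{\Sigma}_7)$, so strictly speaking you cannot assert $\zeta_7=Q|_{\overline{\Sigma}_7}$. The fix is immediate: choose $Q$ so that $Q|_{\Sigma_7}$ agrees with the restriction of the given class to $\Sigma_7$, take $\zeta_7:=Q|_{\overline{\Sigma}_7}$, and absorb the difference (which is supported on $\overline{\Sigma}_7\cap\Sigma_8\subset\Sigma_8$) into $\zeta_8$.
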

\begin{proof}
We first observe that $\Sigma_{(2, *, *)}(\E) = \Sigma_7 \cup \Sigma_8$ and has pure codimension $4$, which is the expected codimension. By Lemma \ref{2star} (see also Example \ref{246ex}), the fundamental class of $\Sigma_{(2, *, *)}(\E)$ is tautological in $\H_{4,9} \smallsetminus Z = \H_{4, 9} \smallsetminus \beta^{-1}(\M_9^3)$. Hence, by Theorem \ref{pushforward}, we have
\[\beta_*[\Sigma_{(2, *, *)}(\E)] = \beta_*[\Sigma_8] + \beta_*[\overline{\Sigma}_7]\]
is tautological on $\M_9 \smallsetminus \M_9^3$. But, by Lemma \ref{bi}, $\overline{\Sigma}_7 = \beta^{-1}(\B_9)$ maps to $\M_9$ with $1$-dimensional fibers (pictured in blue in Figure \ref{78pic}). Hence, $\beta_*[\overline{\Sigma}_7] = 0$, so $\beta_*[\Sigma_8]$ is tautological on $\M_9 \smallsetminus \M_9^3$. 

By Lemma \ref{distinctparts} \eqref{for8}, we know that $A^*(\Sigma_8)$ is generated by the pullbacks of $\kappa_1$ and $\kappa_2$. Hence, using the push-pull formula, the push forward of every class supported on $\Sigma_8$ is tautological. Since we are working with rational coefficients,
the pushforward map from $A^*(\Sigma_8)$ to $A^*(\beta(\Sigma_8))$ is surjective. Hence,
every class supported on $\beta(\Sigma_8)$ is tautological on $\M_9 \smallsetminus \M_9^3$. The last sentence now follows because $\beta(\Sigma_8 \cup \Sigma_7) = \beta(\Sigma_8)$.
\end{proof}

\begin{example}[Regarding the class of $\B_9$]
To further explicate the second paragraph of the above proof, we explain why the fundamental class of $\B_9 \subset \M_9 \smallsetminus \M_9^3$ is tautological. (Once known to be tautological, this class actually must be $0$ by a result of Looijenga \cite{Looijenga}.)
Let $S = \overline{\Sigma}_7 \cap \Sigma_8 \subset \Sigma_8$ be the locus of special bielliptics (pictured in purple in Figure \ref{78pic}). We know that $S$ maps finitely and surjectively onto $\B_9$. Thus, the fundamental class of $\B_9 \subset \M_9 \smallsetminus \M_9^3$ is a multiple of $\beta_*[S]$. By Lemma \ref{distinctparts} \eqref{for8}, the class of $S$ inside $A^*(\Sigma_8)$ is a multiple of $\beta^*\kappa_1$, so by the push-pull formula, $\beta_*[S]$ is a multiple of $\kappa_1 \cdot \beta_*[\Sigma_8]$.
\end{example}


Continuing up the partial order, we turn next to $\Sigma_6$.

\begin{lem} \label{hasg26}
Every curve in $\Sigma_6$ possesses a $g^2_6$ which is birational onto its image.
\end{lem}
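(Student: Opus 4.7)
The plan is to produce a specific degree-$6$ line bundle $L$ on $C$ with $h^0(L) = 3$, defining a $g^2_6$, and then rule out small-degree images to conclude the associated map is birational.

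On $\Sigma_6$ the CE bundles have splitting types $E = \O(3) \oplus \O(4) \oplus \O(5)$ and $F = \O(4) \oplus \O(8)$. I would consider $M := \omega_\alpha \otimes \alpha^*\O_{\pp^1}(-4)$, the restriction to $C$ of $\O_{\pp E^\vee}(1) \otimes \gamma^*\O_{\pp^1}(-4)$. By push-forward, $\alpha_*M \cong \O_{\pp^1}(-4) \oplus E(-4) \cong \O(-4) \oplus \O(-1) \oplus \O \oplus \O(1)$, so $\deg M = 8$ and $h^0(M) = 3$. The three sections correspond (in the notation of \eqref{peq}--\eqref{qeq}) to $Y, uZ, vZ$ for $[u:v]$ homogeneous on $\pp^1$, all vanishing along the section $\sigma_1 = \{Y = Z = 0\} \subset \pp E^\vee$ corresponding to the $\O(3)$ summand. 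Since $2e_1 - f_2 = -2$, the coefficient $q_{1,1}$ is zero, so $\sigma_1 \subset V(q)$, and $C \cap \sigma_1 = V(p) \cap \sigma_1$ is cut out in $\sigma_1 \cong \pp^1$ by $p_{1,1}$ of degree $2e_1 - f_1 = 2$. Let $\nu_1, \nu_2$ be the two resulting points; the line bundle $L := M(-\nu_1 - \nu_2)$ then has degree $6$ with $h^0(L) = 3$, defining a $g^2_6$.

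Let $\phi_L : C \to \pp^2$ be the associated morphism, $D := \phi_L(C)$, $d := \deg(\phi_L : C \to D)$, and $\delta := \deg D$. Then $d \cdot \delta \leq \deg L = 6$, and since the three sections are linearly independent in $H^0(L)$, $D$ spans $\pp^2$, so $\delta \geq 2$. If $d = 1$, then $C$ is birational to $D$; but a plane curve of degree $\delta \leq 5$ has geometric genus at most $6 < 9$, forcing $\delta = 6$, a plane sextic. I would rule out $d \geq 2$ as follows. If $d = 2$ and $D$ is rational (smooth conic or singular cubic), lifting $C \to D$ through its normalization yields a $g^1_2$ on $C$, contradicting $e_1 = 3 \neq 1$ via \eqref{hypeq}. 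If $d = 2$ and $D$ is a smooth plane cubic, then $C \to D$ exhibits $C$ as bielliptic. If $d = 3$, then $\delta = 2$ and $C$ acquires a $g^1_3$; but a genus $9$ curve carrying both a $g^1_3$ and the $g^1_4$ from $\alpha$ is excluded by Castelnuovo--Severi, since $\gcd(3,4) = 1$ and $(3-1)(4-1) = 6 < 9$. Finally, $d \geq 4$ forces $\delta = 1$, contradicting that $D$ spans $\pp^2$.

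The main obstacle is excluding the bielliptic possibility. For this, I would invoke Lemma \ref{bi}, which identifies $\beta^{-1}(\B_9) = \overline{\Sigma}_7$. It suffices to show $\Sigma_6 \cap \overline{\Sigma}_7 = \emptyset$. Every point of $\overline{\Sigma}_7$ has $E$-splitting type $\vec{e}'$ arising as a specialization of $(2,5,5)$; by upper semicontinuity of $h^0(E(-k))$, such a $\vec{e}'$ must satisfy $\vec{e}' \leq (2,5,5)$ in the partial order on splitting types. Since $(3,4,5) \not\leq (2,5,5)$ (the first partial-sum inequality $3 \leq 2$ already fails), $\Sigma_6$ lies outside $\overline{\Sigma}_7$, ruling out the bielliptic case and completing the proof that $d = 1$.
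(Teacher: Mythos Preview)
Your proof is correct and follows essentially the same arc as the paper's: locate the two points $\nu_1,\nu_2$ where $C$ meets the directrix $\sigma_1=V(Y,Z)$, build a $g^2_6$ from them, and then eliminate the non-birational cases using the bielliptic classification and the incompatibility of a $g^1_3$ with a $g^1_4$ in genus~$9$.

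The one genuine difference is in how the $g^2_6$ is produced. The paper argues geometrically via the canonical model: the line $V(Y,Z)$ maps to a line in $\pp^8$ meeting every $2$-plane spanned by a fiber of $\alpha$, so $F+\nu_1+\nu_2$ spans only a $\pp^3$ and Geometric Riemann--Roch gives $r=2$. You instead compute $\alpha_*(\omega_\alpha\otimes\alpha^*\O(-4))$ directly, read off $h^0=3$, identify the sections as $Y,uZ,vZ$, and strip the base locus $\nu_1+\nu_2$. Your route is more algebraic and self-contained (no picture needed), while the paper's is quicker once the scroll geometry is in hand; both land on the same point. Your exclusion of the bielliptic case via $\overline{\Sigma}_7=\beta^{-1}(\B_9)$ and the splitting-type inequality $(3,4,5)\not\leq(2,5,5)$ is a clean alternative to the paper's appeal to the Casnati--Del Centina statement that $\beta^{-1}(\B_9)$ meets only $\Sigma_7$ and $\Sigma_8$.
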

\begin{proof}
On $\Sigma_6$, we have $2e_1 - f_2 < 0$ and $2e_1 - f_2 = 2$. Therefore, $C = V(p, q)$ meets the line $V(Y, Z) \subset \pp E^\vee$ in $2$ points (counted with multiplicity), say $p + q$. The line $V(Y, Z) \subset \pp E^\vee$ is dual to the canonical quotient $E \to \O(3)$. This line is sent to a line with degree $1$ under the map $\pp E^\vee \to \pp^8$ that factors the canonical embedding (which is given by $\O_{\pp E^\vee}(1) \otimes \gamma^*\omega_{\pp^1}$).

\begin{figure}[h!]
    \centering
    \includegraphics[width=6in]{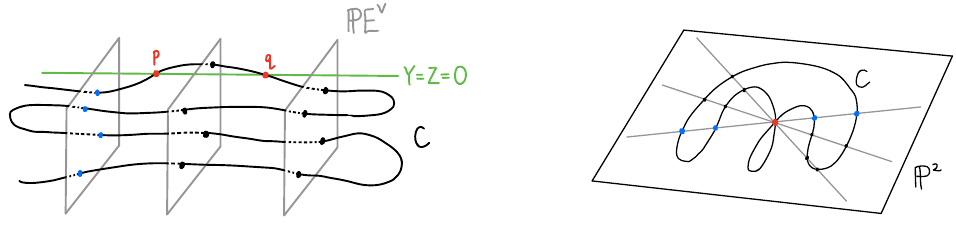}
    \caption{Curves in $\Sigma_6$ possess a $g^2_6$}
    \label{g26}
\end{figure}

As pictured on the left of Figure \ref{g26}, the line spanned by $p, q$ meets each plane spanned by the fibers of the $g^1_4$. Taking a fiber of the $g^1_4$ plus $p$ and $q$, we obtain $6$ points whose span under the canonical is $3$-dimensional. By Geometric Riemann--Roch, these six points constitute a $g^2_6$ (pictured on the right of Figure \ref{g26}). 

A $g^2_6$ is either (1) birational onto its image (2) a double cover of a degree $3$ plane curve or (3) a degree $3$ cover of a conic. A genus $9$ curve cannot have maps to $\pp^1$ of degrees $3$ and $4$ (if so it would map birationally to a curve of bidegree $(3, 4)$ on $\pp^1 \times \pp^1$, which has genus $6$), so we are not in case (3). Meanwhile, we have already established that everything in case (2) is contained in $\Sigma_8 \cup \Sigma_7$, which is disjoint from $\Sigma_6$. Thus, we must be in case (1).
\end{proof}

Let $\mathrm{PS} \subset \M_9$ denote the locus of plane sextics, i.e. curves of genus $9$ with a $g^2_6$ that is birational onto its image.
Let $\Delta^\circ \subset H^0(\pp^2, \O_{\pp^2}(6))$ be the locally closed subvariety of degree $6$ equations on $\pp^2$ whose vanishing locus has exactly one double point. Such curves have geometric genus $9$ and $\Delta^\circ/\GL_3$ maps surjectively onto $\mathrm{PS} \subset \M_9$. In particular, $\mathrm{PS}$ is irreducible and
\begin{equation} \label{dimps}
\dim \mathrm{PS} \leq h^0(\pp^2, \O_{\pp^2}(6)) - 1 -\dim \GL_3 = 28 - 1 - 9 = 18.
\end{equation}

\begin{lem} \label{678}
The push forward $\beta_*[\overline{\Sigma}_6]$ is tautological on $\M_9 \smallsetminus \M_9^3$. Hence, the push forward of any class supported on $\Sigma_8 \cup \Sigma_7 \cup \Sigma_6$ is tautological on $\M_9 \smallsetminus \M_9^3$.
\end{lem}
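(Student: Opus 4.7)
The plan is to prove two auxiliary claims and combine them with Lemma~\ref{7and8}: (a) the pushforward $\beta_*[\overline{\Sigma}_6]$ is tautological on $\M_9 \smallsetminus \M_9^3$, and (b) $A^*(\Sigma_6)$ is generated by the restrictions of $\kappa_1$ and $\kappa_2$ from $\M_9$.

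For (a), I would observe that by Lemma~\ref{hasg26} we have $\beta(\Sigma_6) \subseteq \mathrm{PS}$, and both $\Sigma_6$ and $\mathrm{PS}$ are irreducible with $\dim \Sigma_6 = 18$ and $\dim \mathrm{PS} \leq 18$ by \eqref{dimps}. If $\beta|_{\Sigma_6}$ has generically positive-dimensional fibers, then $\beta_*[\overline{\Sigma}_6] = 0$ and the claim is trivial. Otherwise $\dim \beta(\Sigma_6) = 18$, which forces $\dim \mathrm{PS} = 18$ and $\overline{\beta(\Sigma_6)} = \overline{\mathrm{PS}}$, giving
\[
\beta_*[\overline{\Sigma}_6] = d \cdot [\overline{\mathrm{PS}}] \quad \text{for some } d \geq 1.
\]
Since $\rho(9, 2, 6) = -6$, the subvariety $\overline{\mathrm{PS}}$ is an $18$-dimensional irreducible component of the Brill--Noether locus $\overline{W^2_6} \subset \M_9$ attaining the expected codimension~$6$, so Faber's theorem on Brill--Noether loci of expected codimension shows $[\overline{\mathrm{PS}}]$ is tautological on $\M_9$. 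Hence $\beta_*[\overline{\Sigma}_6]$ is tautological on $\M_9 \smallsetminus \M_9^3$.

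For (b), I would apply Lemma~\ref{distinctparts}(3)(ii) to $\vec{e} = (3,4,5)$ and $\vec{f} = (4,8)$. The hypotheses are immediate: $2e_1 = 6 > 4 = f_1$; $e_1 + e_2 = 7 < 8 = e_1 + e_3 = 2e_2 = f_2$; and $g = 9 \neq 5 = 9 - f_1$.

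Finally, let $\iota \colon \overline{\Sigma}_6 \hookrightarrow \H_{4,9} \smallsetminus \beta^{-1}(\M_9^3)$ be the inclusion (this makes sense since $Z = \beta^{-1}(\M_9^3)$ by the Castelnuovo--Severi inequality). By the definition of our $\leq$-order together with Figure~\ref{9order}, the union $\Sigma_6 \cup \Sigma_7 \cup \Sigma_8$ is closed in $\H_{4,9} \smallsetminus \beta^{-1}(\M_9^3)$, so $\overline{\Sigma}_6 \smallsetminus \Sigma_6 \subseteq \Sigma_7 \cup \Sigma_8$. Any class supported on $\overline{\Sigma}_6$ is of the form $\iota_*\gamma$ for some $\gamma \in A^*(\overline{\Sigma}_6)$; by excision and (b), we may write $\gamma = P(\kappa_1,\kappa_2)|_{\overline{\Sigma}_6} + \gamma'$ with $\gamma'$ supported on $\overline{\Sigma}_6 \smallsetminus \Sigma_6$. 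The projection formula yields
\[
\beta_* \iota_* \bigl(P(\kappa_1,\kappa_2)|_{\overline{\Sigma}_6}\bigr) = P(\kappa_1,\kappa_2) \cdot \beta_*[\overline{\Sigma}_6],
\]
which is tautological by (a), while $\beta_*\iota_*\gamma'$ is tautological by Lemma~\ref{7and8}. The main obstacle is (a): establishing that the individual component $[\overline{\mathrm{PS}}]$ is tautological---rather than merely some combination of components of $\overline{W^2_6}$, which also contains higher-codimension pieces such as $\B_9$---requires care in applying Faber's theorem, and is where the irreducibility of $\mathrm{PS}$ together with the dimension count are essential.
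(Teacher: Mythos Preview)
Your overall architecture matches the paper's, and parts (b) and the final push-pull/excision argument are fine. The gap is in step (a), precisely at the point you flag as the ``main obstacle'' but do not actually resolve.

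You assert that Faber's theorem gives $[\overline{\mathrm{PS}}]$ tautological on $\M_9$ because $\overline{\mathrm{PS}}$ is a component of $W^2_6$ of the expected codimension. This does not work as stated. Faber's argument computes the class of the \emph{entire} Brill--Noether locus, and only when that locus has the expected codimension. On $\M_9$ the locus $W^2_6$ contains $\M_9^3$ (a trigonal curve has a $g^2_6$ via $C \to \pp^1 \xrightarrow{v_2} \pp^2$), and $\dim \M_9^3 = 19 > 18$, so $W^2_6$ does \emph{not} have the expected codimension on $\M_9$. Irreducibility of $\mathrm{PS}$ and the dimension count do not, by themselves, let you peel off one component.

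The paper's fix is to first pass to the open $\M_9 \smallsetminus (\M_9^3 \cup \B_9)$. On this open the locus of curves with a $g^2_6$ is exactly $\mathrm{PS}$: the three possibilities for a $g^2_6$ are birational onto a nodal sextic (giving $\mathrm{PS}$), a double cover of a cubic (bielliptic, excluded), or a triple cover of a conic (trigonal, excluded). Moreover each such curve has only finitely many $g^2_6$'s, which Faber's argument also requires---and which fails over $\B_9$, where bielliptic curves carry a one-parameter family of $g^2_6$'s. One then invokes Lemma~\ref{7and8}, which already says all classes supported on $\B_9$ are tautological, to pass from $\M_9 \smallsetminus (\M_9^3 \cup \B_9)$ back to $\M_9 \smallsetminus \M_9^3$. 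So the missing ingredient is not a new idea but rather the observation that Lemma~\ref{7and8} lets you excise $\B_9$ \emph{before} applying Faber, thereby making the Brill--Noether locus genuinely equal to $\mathrm{PS}$ with the correct codimension and fiber finiteness.
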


\begin{proof}
If $\Sigma_6 \to \M_9$ has positive dimensional fibers, then $\beta_*[\overline{\Sigma}_6] = 0$, which is tautological. So let us assume $\Sigma_6 \to \M_9$ is generically finite onto its image, in which case $\beta_*[\overline{\Sigma}_6]$ is a multiple of $[\beta(\overline{\Sigma}_6)]$.
By \ref{hasg26}, we have $\beta(\overline{\Sigma}_6) \subset \overline{\mathrm{PS}}$ (where the closure of $\mathrm{PS}$ is taken in $\M_9 \smallsetminus \M_9^3$).
It follows that $\dim \mathrm{PS} \geq \dim \Sigma_6 = 18$. By \eqref{dimps}, we conclude that $\dim \mathrm{PS} = 18$ and such curves possess finitely many $g^2_6$'s.
Now, both $\beta(\overline{\Sigma}_6)$ and $\overline{\mathrm{PS}}$ are irreducible of dimension $18$, so they must be equal. Therefore, we wish to show that $[\overline{\mathrm{PS}}]$ is tautological on $\M_9 \smallsetminus \M_9^3$.
We know by Lemma \ref{7and8} that all classes supported on $\B_9$ are tautological, so it suffices to work on the further open $\M_9 \smallsetminus (\M_9^3 \cup \B_9)$.

Let $\rho(g, r, d) := g - (r+1)(g - d + r)$ be the Brill--Noether number. In particular, $\rho(9, 2, 6) = -6$, so the plane sextics are ``expected" to occur in codimension $6$ on $\M_9$.
On the open $\M_9 \smallsetminus (\M_9^3 \cup \B_9)$, the locus of curves that possess a $g^2_6$ is $\mathrm{PS}$, which has dimension $18 = \dim \M_9 + \rho(9, 2, 6)$, so it is a Brill--Noether locus of the expected dimension. (Notice we need to work on the complement of $\M_9^3 \cup \B_9$, as curves of gonality less than or equal to $3$ possess a $g^2_6$, but $\dim \M_9^3 = 19$ is too large. In the end, this is not an issue because we already know all classes supported on $\M_9^3$ are tautological.)
Moreover, each curve in $\mathrm{PS}$ possesses finitely many $g^2_6$'s (this is where it is important we also work on the complement of $\B_9$).
We can therefore apply Faber's argument as in \cite[p. 15-16]{F} on the open $\M_9 \smallsetminus (\M_9^3 \cup \B_9)$ to see that $[\mathrm{PS}]$ is tautological in $A^*(\M_9 \smallsetminus (\M_9^3 \cup \B_9))$.
Since all classes supported on $\B_9$ have already been shown to be tautological, $[\overline{\mathrm{PS}}]$ is tautological in $A^*(\M_9 \smallsetminus \M_9^3)$ too.

By Lemma \ref{distinctparts} \eqref{for6}(ii), we know that $A^*(\Sigma_6)$ is generated by the pullback of $\kappa_1$ and $\kappa_2$. The second claim now follows by the push-pull formula and Lemma \ref{7and8}.
\end{proof}

We now complete the goal of this subsection.

\begin{lem}
All classes supported on $\M_9^4$ are tautological on $\M_9$.
\end{lem}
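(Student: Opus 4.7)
The plan is to synthesize the stratum-by-stratum work of Sections \ref{tet9} into one final push-forward argument, following Strategy \ref{strategysec}. The first step is to identify $\beta^{-1}(\M_9^3)$ inside $\H_{4,9}$. By the Castelnuovo--Severi inequality, a genus $9$ curve cannot simultaneously carry a $g^1_3$ and a $g^1_4$, because that would force $g \leq (3-1)(4-1)=6$. Hence no stratum can map into $\M_9^3 \smallsetminus \M_9^2$, and the only preimage contribution comes from the hyperelliptic locus: $\beta^{-1}(\M_9^3) = \beta^{-1}(\M_9^2) = Z$, by \eqref{hypeq}. Consequently
\[
\H_{4,9} \smallsetminus \beta^{-1}(\M_9^3) \;=\; \Psi \cup \Sigma_6 \cup \Sigma_7 \cup \Sigma_8,
\]
where $\Psi = \Psi_0 \cup \Psi_1 \cup \Psi_2 \cup \Psi_3 \cup \Psi_4 \cup \Psi_5$ is open and $\Sigma_6 \cup \Sigma_7 \cup \Sigma_8$ is its closed complement.

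Next, I would handle the two pieces separately. On the open piece, Proposition \ref{Hprime} asserts that $A^*(\Psi)$ is generated by restrictions of CE classes, which lie in the tautological ring by Lemma \ref{CEinR}. Hence Theorem \ref{pushforward} guarantees that $\beta'_*$ sends every class on $\Psi$ to a tautological class on $\M_9 \smallsetminus \M_9^3$. On the closed piece $\Sigma_6 \cup \Sigma_7 \cup \Sigma_8$, Lemma \ref{678} gives exactly the conclusion we need: the push forward to $\M_9 \smallsetminus \M_9^3$ of every class supported there is tautological.

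Combining these via the excision sequence
\[
A^*(\Sigma_6 \cup \Sigma_7 \cup \Sigma_8) \;\longrightarrow\; A^*(\H_{4,9} \smallsetminus \beta^{-1}(\M_9^3)) \;\longrightarrow\; A^*(\Psi) \;\longrightarrow\; 0,
\]
I would conclude that $\beta'_*$ carries every class of $A^*(\H_{4,9} \smallsetminus \beta^{-1}(\M_9^3))$ into the tautological ring of $\M_9 \smallsetminus \M_9^3$. Since $\beta'_*$ is surjective onto $A^*(\M_9^4 \smallsetminus \M_9^3)$ by Theorem \ref{pushforward}, this shows that every class supported on $\M_9^4 \smallsetminus \M_9^3$ is tautological on $\M_9 \smallsetminus \M_9^3$. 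Finally, \eqref{Mg3} tells us that every class supported on $\M_9^3$ is tautological on $\M_9$, and assembling these two statements yields the lemma.

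There is no genuine obstacle in this step: the hard geometric work has already been packaged into Lemmas \ref{7and8} and \ref{678} (which dealt with the problem strata $\Sigma_6, \Sigma_7, \Sigma_8$ using the $g^2_6$ and bielliptic analyses) and Proposition \ref{Hprime}. The only thing to be careful about is the bookkeeping: making sure the partial order of Figure \ref{9order} is respected so that the excision/push-pull assembly is legitimate, and explicitly ruling out the mixed gonality case via Castelnuovo--Severi so that $Z$ really does exhaust $\beta^{-1}(\M_9^3)$.
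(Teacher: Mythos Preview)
Your proposal is correct and follows essentially the same route as the paper's proof: use Proposition \ref{Hprime} on $\Psi$, Lemma \ref{678} on $\Sigma_6 \cup \Sigma_7 \cup \Sigma_8$, combine via excision and Theorem \ref{pushforward}, and finish with \eqref{Mg3}. Your explicit justification that $\beta^{-1}(\M_9^3)=Z$ (via Castelnuovo--Severi, equivalently the $\pp^1\times\pp^1$ bidegree argument the paper uses in Lemma \ref{hasg26}) is something the paper states but does not reprove at this spot, so including it is fine.
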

\begin{proof}
By Proposition \ref{Hprime}, the Chow ring of $\Psi = \Psi_0 \cup \cdots \cup \Psi_5$ is generated by CE classes.
Combining this with Theorem \ref{pushforward} and Lemma \ref{678}, we see that every class in $\H_{4,9} \smallsetminus \beta^{-1}(\M_9^3)$ pushes forward to a tautological class on $\M_9 \smallsetminus \M_9^3$. Such push forwards span all classes supported on $\M_9^4 \smallsetminus \M_9^3 \subset \M_9 \smallsetminus \M_9^3$. Finally, all classes supported on $\M_9^3$ are known to be tautological (see \eqref{Mg3}).
\end{proof}

\subsection{The end of our luck: bielliptics in genus $10$ and beyond} \label{noluck} 
We point out here one last coincidence in genus $10$, which allows us to see that the bielliptic locus on $\M_{10}$ is tautological. We then explain why these coincidences that drive our technique do not continue into higher genus.

For $g \geq 10$, the bielliptics completely fill the strata they occupy. Let $h = \lfloor \frac{g}{2} \rfloor$.
By \cite[Proposition 2.1]{CDC} and the sentence following it, for $g \geq 10$, we have
\begin{equation} \label{beq}
\beta^{-1}(\B_g) = \begin{cases} \Sigma_{(2, h, h+1)}(\E) \cap \Sigma_{(4, g - 1)}(\F) & \text{if $g$ even}\\
[\Sigma_{(2, h+1, h+1)}(\E) \cap \Sigma_{(4, g - 1)}(\F)] \cup  [\Sigma_{(2, h, h+2)}(\E) \cap \Sigma_{(4, g - 1)}(\F)]
&\text{if $g$ odd.}
\end{cases}
\end{equation}
Using similar techniques as in genus $7, 8,9$ we establish that the fundamental class of the bielliptic locus $\mathcal{B}_{10} \subset \M_{10}$ is tautological.
\begin{proof}[Proof of Theorem \ref{bielliptic10}]
Using \eqref{totaldeg} -- \eqref{conditional} in genus 10, one sees that $F =(4, 9)$ occurs only with $E = (2, 5, 6)$. Let $\Sigma$ be this splitting locus. 
By \eqref{beq}, we have $\Sigma = \beta^{-1}(\B_{10})$.
Meanwhile, $\codim \Sigma = 4$ is the expected codimension for $\Sigma_{(4, 9)}(\F)$, so by Lemma \ref{sigmaCE}, we see that $[\Sigma] = [\Sigma_{(4, 9)}(\F)]$ is tautological (modulo classes supported on $\beta^{-1}(\M_{10}^3)$).
By Lemma \ref{distinctparts} \eqref{dp1}, we know $A^*(\Sigma)$ is generated by restrictions of CE classes. By Theorem \ref{pushforward}, the push forward of every class supported on $\Sigma$ is tautological on $\M_{10} \smallsetminus \M_{10}^3$.
Since we are working with rational coefficients, the push forward map on Chow groups from $\Sigma$ to $\beta(\Sigma)$ is surjective. In particular, $[\beta(\Sigma)] = [\B_{10}]$ is tautological. The vanishing of $[\B_{10}]$ then follows from a theorem of Looijenga \cite{Looijenga}, which says that the tautological ring vanishes in codimension $d>g-2$.
\end{proof}

The codimension of $\beta^{-1}(\B_g) \subset \H_{4,g}$ is always $4$.
However, for $g \geq 11$, neither $\Sigma_{(4, g - 1)}(\F)$ nor $\Sigma_{(2,*,*)}(\E)$ has expected codimension $4$.
Thus, there is no way to realize the strata in \eqref{beq} as splitting loci of the expected dimension. 
As an example, in genus $12$, the bielliptics have $E = (2, 6, 7)$ and $F = (4, 11)$. In this case,  
\[h^1(\pp^1, \E nd(\O(2,6,7))) = 7 \qquad \text{and} \qquad h^1(\pp^1, \E nd(\O(4,11))) = 6, \]
so neither expected codimension is $4$.
In fact, in genus $12$, we claim van Zelm's result  \cite{VZ} that $[\B_{12}]$ is non-tautological on $\M_{12}$ implies $[\beta^{-1}(\B_{12})]$ is non-tautological on $\H_{4,12}$.
By Lemma \ref{distinctparts} \eqref{dp1}, we know that $\beta^{-1}(\B_{12})$ is generated by the restrictions of CE classes. Therefore, if $[\beta^{-1}(\B_{12})]$ were tautological, using the push-pull formula and  Theorem \ref{pushforward}, 
we would see that all classes supported on $\B_{12}$ were tautological, which is a contradiction.


\section{The Pentagonal Locus}\label{pent}
In this section, we show that $A^*(\H_{5,g}\smallsetminus \beta^{-1}(\M^4_g))$ is generated by tautological classes for $g = 7, 8, 9$. Given a degree $5$, genus $g$ cover $\alpha:C\rightarrow \p^1$, let $E = E_\alpha$ and $F = F_\alpha$ be the associated vector bundles on $\pp^1$ as in Section \ref{CEsec}. 
Let $\gamma: \pp E^\vee \to \pp^1$.
The line bundle $\O_{\pp E^\vee}(1) \otimes \gamma^* \omega_{\pp^1}$ defines a map $\pp E^\vee \to \pp^{g-1}$ such that the composition $C \subset \pp E^\vee \to \pp^{g-1}$ is the canonical embedding.
The bundles $E$ and $F$ split
\begin{align*}
E &= \O(e_1) \oplus \O(e_2) \oplus \O(e_3)\oplus \O(e_4) &\qquad &e_1 \leq e_2 \leq e_3\leq e_4
\intertext{and}
F &= \O(f_1) \oplus \O(f_2)\oplus \O(f_3)\oplus \O(f_4)\oplus \O(f_5) &\qquad &f_1 \leq f_2\leq f_3\leq f_4\leq f_5.
\end{align*}
As in the degree $4$ case, the splitting types of $E$ and $F$ give a stratification of $\H_{5,g}$.
This stratification was studied by Schreyer \cite{S} when $g = 7, 8$, and Sagraloff \cite{Sag} when $g=9$. (The translation between our notation and Schreyer's is that $a_i = f_i - 4$; the splitting type of $E$ determines the type of Schreyer's determinantal surface $Y$.)

The condition to be inside $\Psi = \H_{5,g} \smallsetminus \Supp(R^1\pi_* \E \otimes \det \E^\vee \otimes \wedge^2 \F)$ is that 
\begin{equation} \label{good}e_1 + f_1 + f_2 - (g+4) \geq -1 \qquad \Longleftrightarrow \qquad e_1 + f_1 + f_2 \geq \begin{cases} 10 & \text{if $g = 7$} \\ 11 & \text{if $g = 8$.} \\ 12 & \text{if $g=9$.} \end{cases}
\end{equation}
Just as in the degree $4$ case, there are several constraints on the splitting types. We collect some of these constraints below. Using these constraints, we recover the stratifications found by Schreyer \cite{S} in genus $7$ and $8$ and Sagraloff \cite{Sag} in genus $9$.

To begin,  we know that $\deg(E)=g+4$ and $\det E^{\otimes 2}\cong \det F $, so we have
\begin{gather}
e_1 + e_2 + e_3 + e_4 = g +4, \label{5eq1} \\
f_1 + f_2 + f_3 + f_4 + f_5 = 2g + 8.
\end{gather}
By \cite[Proposition 2.6]{DP}, we have
\begin{equation}\frac{g+4}{10} \leq e_1 \leq \frac{g+4}{4} \qquad \text{and} \qquad e_4 \leq \frac{2g + 8}{5}.
\end{equation}
Because $F$ is a subbundle of $\Sym^2 E$,
\begin{equation} \label{topf} f_5 \leq 2e_4. 
\end{equation}
Note that equations \eqref{5eq1}--\eqref{topf} always reduce us to a finite list of allowed splitting types.

Next, we introduce some notation. Every section in $H := H^0(E \otimes \det E^\vee \otimes \wedge^2 F)$ can be represented by a skew symmetric matrix
\begin{equation} \label{mat} M = \left(\begin{matrix} 0 & L_{1,2} & L_{1,3} & L_{1,4} & L_{1,5} \\ -L_{1,2} & 0 & L_{2,3} & L_{2,4} & L_{2,5} \\ -L_{1,3} & -L_{2,3} & 0 & L_{3,4} & L_{3,5} \\ -L_{1,4} & -L_{2,4} & -L_{3,4} & 0 & L_{4,5} \\ -L_{1,5} & -L_{2,5} & -L_{3,5} & -L_{4,5} & 0 \end{matrix} \right), \end{equation}
where $L_{i,j} \in H^0(\O(f_i + f_j) \otimes \O(\vec{e}) \otimes \O(-g-4))$.
The equations defining $D(\Phi(\eta)) \subset \pp E^\vee$ are the $5$ Pfaffian quadrics listed below:
\begin{align*}
Q_1 &= L_{2,5}L_{3,4} - L_{2,4}L_{3,5} + L_{2,3}L_{4,5} \\
Q_2 &= L_{1,5}L_{3,4} - L_{1,4}L_{3,5} + L_{1,3}L_{4,5} \\
Q_3 &= L_{1,5}L_{2,4} - L_{1,4}L_{2,5} + L_{1,2}L_{4,5} \\
Q_4 &= L_{1,5}L_{2,3} - L_{1,3}L_{2,5} + L_{1,2}L_{3,5} \\
Q_5 &= L_{1,4}L_{2,3} - L_{1,3}L_{2,4} + L_{1,2}L_{3,4}.
\end{align*}
Corresponding to the splitting of $E = \O(\vec{e})$, we can take coordinates $X_1,\dots,X_4$ on $\pp E^\vee$. The $L_{i,j}$ are linear homogeneous polynomials in the $X_k$ whose coefficients are elements of $H^0(\p^1,\O(f_i+f_j+e_k-g-4))$. We write these as:
\begin{align*}
    L_{i,j}&=a_{i,j}X_1+b_{i,j}X_2+c_{i,j}X_3+d_{i,j}X_4.
\end{align*}

If $L_{1,2}$ and $L_{1,3}$ were identically zero, then $Q_5$ would be reducible, which is impossible because $C$ is irreducible. Therefore, we must have
\begin{equation}
    f_1 + f_3 + e_4 \geq g+4
\end{equation}
If $X_4$ divides $L_{1,2}, L_{1,3},$ and $L_{1,4}$, then $Q_5$ is reducible. This will occur if $a_{1,4},b_{1,4}, c_{1,4}$ all identically vanish. In order for $a_{1,4}, b_{1,4},$ and $c_{1,4}$ to not all identically vanish, we must have
\begin{equation}
f_1+f_4+e_3 \geq g+4.
\end{equation}
Similarly, if $X_4$ divides $L_{1,2}$, $L_{1,3}$ and $L_{2,3}$, then $X_4$ divides $Q_5$ and $Q_5$ is reducible. To prevent this, we must have
\begin{equation}
    f_2 + f_3 + e_3  \geq g+4.
\end{equation}
Note that the curve $C$ cannot contain the section defined by $X_2=X_3=X_4=0$. Otherwise, it would be reducible. Therefore, at least one of the $Q_i$ must have a nonzero coefficient of $X_1^2$. If the coefficient of $X_1$ is zero  in $L_{1,2}, \ldots, L_{1,5}$ and $L_{2,3}, L_{2,4}$, and  $L_{2,5}$, then the coefficient of $X_1^2$ vanishes for all $Q_i$. Therefore, we must have that
\begin{equation}
f_2+f_5+e_1 \geq g+4.
\end{equation}
Similarly, we note that we must have
\begin{equation}
f_3+f_4+e_1 \geq g+4.
\end{equation}
Indeed, if not, then the coefficient of $X_1$ vanishes for all $L_{i,j}$ except possibly $j=5$. It follows that none of the quadrics have an $X_1^2$ term in them, and thus they contain the section $X_2=X_3=X_4=0$, so $C$ would be reducible.

If all $a_{1,j} = b_{1,j} = 0$, then the quadrics $Q_2, \ldots, Q_5$ all vanish on $V(X_3, X_4)$. The remaining equation 
 $Q_1$ then cuts out a divisor on the surface $V(X_3,X_4)$, so either $C$ is reducible or is entirely contained in $V(X_3,X_4)$. But this is impossible because then in the canonical embedding would send five points on $C$ to a common line, which means $C$ has a $g^3_5$. Projection from a point gives a $g^2_5$, so $C$ would have genus at most $6$.
To prevent this, 
\begin{equation}
f_1 + f_5 + e_2 \geq g+4.
\end{equation}
Another bad thing is if $L_{1,2}, L_{1,3}, L_{2,3}, L_{1,4}, L_{2,4}$ are all zero on $V(X_3, X_4)$. In this case,  the restriction of the quadrics to $V(X_3, X_4)$ is $Q_1 = L_{2,5}L_{3,4}, Q_2 = L_{1,5}L_{3,4}, Q_3 = Q_4 = Q_5 = 0$, so $V(Q_1)$ and $V(Q_2)$ share $1$-dimensional component inside $V(X_3, X_4)$. To prevent this, we need
\begin{equation} \label{beforeg26}
f_2 + f_4 + e_2 \geq g + 4.\end{equation}

Next, we note some conditions that imply $C$ has a special linear series.
If $a_{1,5} = 0$ and $a_{2,3} = 0$, then the curve meets the line $V(X_2, X_3, X_4)$ along $V(a_{2,5} a_{3,4} - a_{3,5}a_{2,4})$. 
The degree of $V(a_{2,5} a_{3,4} - a_{3,5}a_{2,4})$ is $\deg(a_{2,5}) + \deg(a_{3,4}) = 2e_1 - f_1$.
If $e_1 = 2$, the line $V(X_2, X_3, X_4)$ is contracted in the map $\pp E^\vee \to \pp^{g-1}$. If $2e_1 - f_1 = 1$, then $C$ meets $V(X_2, X_3, X_4)$ in a point $p$. Under the canonical, this point $p$ lies in the span of the five points in a fiber so $p$ plus the $g^1_5$ is a
 $g^2_6$ on $C$.
This yields the condition
\begin{equation} \label{ourg26}
\text{if $e_1 = 2$ and $e_1 + f_1 + f_5, e_1 + f_2 + f_3 < g+4$ and $2e_1 - f_1 = 1$, then $C$ has a $g^2_6$.}
\end{equation}

As another source of special linear series,
Schreyer shows \cite[p. 136]{S} that if $L_{1,2} =0$, then $C$ lies on a certain determinantal surface, which is birational to a Hirzebruch surface $\mathbb{F}_k := \pp(\O_{\pp^1} \oplus \O_{\pp^1}(k))$ for $k = f_2 - f_1$. Schreyer determines determines the class of the image of $C$ on this Hirzebruch surface in \cite[Theorem 5.7]{S}. In the case $k = 0$, we have $\mathbb{F}_0 \cong \pp^1 \times \pp^1$ and projection onto the other factor determines another pencil on $C$. Similarly, if $k = 1$, then $\mathbb{F}_1$ admits a map to $\pp^2$ and we obtain a $g^2_d$. The degree $d$ of these special linear series is given by intersecting Schreyer's class $C'$ with the $\O_{\mathbb{F}_k}(1)$ (which Schreyer calls $A$). This calculation is summarized nicely by Sagraloff in \cite[p. 65]{Sag} (to translate our splitting types, $f_i = a_{5-i} + 4$):
\begin{equation} \label{extrapencil}
\text{if $f_2 - f_1 = k$ and $L_{1,2} = 0$, then $C$ possesses a $g^{1+k}_{f_1}$.} \end{equation}

The final condition we note concerns the situation when
$e_1 + f_2 + f_4 - (g+4) < 0$. In this case, $a_{1,2} = a_{1,3} = a_{2,3} = a_{1, 4} = a_{2,4} = 0$.
Restricting the five quadrics $Q_1, \ldots, Q_5$ to the line $Z = V(X_2, X_3, X_4)$ we obtain
\begin{align*}
Q_1|_Z = a_{2,5} a_{3,4} X_1^2, \qquad \qquad Q_2|_Z = a_{1,5} a_{3,4} X_1^2, \qquad \qquad
Q_3|_Z = Q_4|_Z = Q_5|_Z = 0.
\end{align*}
In particular, if $a_{3,4} = 0$, then $C = D(\Phi(\eta))$ would contain the line $Z$, but such a curve would be reducible. Therefore,
\begin{equation} \label{a34nonzero}
\text{if $e_1 + f_2 + f_4 < g+4$, then $a_{3,4} \neq 0$.}
\end{equation}


\subsection{Strategy}
The strategy for the pentagonal locus is the same as, or even simpler than, the strategy for the tetragonal locus. 
\begin{enumerate}
    \item Use conditions \eqref{5eq1}--\eqref{beforeg26} to determine the allowed pairs of splitting types $\vec{e},\vec{f}$. The partial order on splitting types of Section \ref{slsec} induces a partial order on pairs of splitting types by $(\ee,\vec{f}')\leq (\vec{e},\vec{f})$ if $\ee\leq \vec{e}$ and $\vec{f}'\leq \vec{f}$.
    \item Starting with strata at the bottom of the partial ordering and working upwards, show that for each stratum outside of $\Psi$ at least one of the following is satisfied:
    \begin{enumerate}
        \item the stratum is contained in $\beta^{-1}(\M^4_g)$.
        \item its fundamental class in $\H_{5,g}\smallsetminus \beta^{-1}(\M^4_g)$ is tautological (modulo classes supported on strata below it in the partial order) \textit{and} the Chow ring of the locally closed stratum $\Sigma' := \Sigma \smallsetminus \beta^{-1}(\M_g^4)$ is generated by the restrictions of CE classes.
    \end{enumerate}
\end{enumerate}
This will establish that $A^*(\H_{5,g}\smallsetminus\beta^{-1}(\M^4_g))$ is generated by CE classes when $g=7,8,9$. In Section \ref{stratasec}, we show that the Chow rings of certain $\Sigma'$ are generated by restrictions of CE classes. Then, in Sections \ref{sec57}, \ref{sec58}, \ref{sec59}, we treat the cases $g = 7, 8, 9$ respectively.
\subsection{Chow rings of very unbalanced splitting strata} \label{stratasec}
In Lemma \ref{squo5}, we described each pair splitting locus as a quotient $\Sigma = [(U \times \gg_m)/G]$ where $G = \SL_2 \ltimes (\Aut(\vec{e}) \times \Aut(\vec{f}))$, and $U \subset H := H^0(E \otimes \det E^\vee \otimes \wedge^2 F)$ was the open subvariety of sections $\eta$ so that the Pfaffian locus $D(\Phi(\eta)) \subseteq \pp E^\vee$ is a smooth, irreducible curve. Let $U' \subset U$ be the further open where $C = D(\Phi(\eta))$ does not possess a $g^1_d$ for $d < 5$, so $\Sigma' := \Sigma \smallsetminus \beta^{-1}(\M_g^4) = [(U' \times \gg_m)/G]$.
We have a series of surjections
\begin{equation} \label{surj2}
A^*(BG) \twoheadrightarrow A^*([U'/G]) \twoheadrightarrow A^*(\Sigma'),
\end{equation}
The first map is induced by pullback from the structure map $v:[U'/G]\rightarrow BG$.
It will suffice to show that the images of generators on the left are expressible in terms of CE classes. To see this, we will need to know about the relations that come from the complement of $U' \subset H$.

We consider several different ``shapes" of splitting types that occur for pentagonal strata.

\begin{lem}\label{shape1}
Let $\vec{e} = (e_1, e_2, e_3, e_4)$ and $\vec{f} = (f_1, f_2, f_3, f_4, f_5)$ satisfy the following conditions:
\begin{enumerate}
    \item $e_1 < e_2 = e_3 < e_4$,
    \item $f_1 = f_2 < f_3 = f_4 < f_5$,
    \item $e_4+f_1+f_2=g+4$,
    \item $e_1+f_3+f_4=g+4$,
    \item \label{detpart} $-e_1f_1+e_2f_1-e_2f_3+e_4f_3+e_1f_5-e_4f_5\neq 0$.
\end{enumerate}
Let $\Sigma$ denote the corresponding stratum with splitting types $\vec{e}$ and $\vec{f}$. Then $A^*(\Sigma')$ is generated by the restrictions of CE classes.
\end{lem}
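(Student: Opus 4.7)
The plan is to work with the description $\Sigma' = [(U' \times \gg_m)/G]$, where $G = \SL_2 \ltimes (\Aut(\vec{e}) \times \Aut(\vec{f}))$, and to show that the images of all generators of $A^*(BG)$ in $A^*(\Sigma')$ can be written in terms of CE classes. Under the prescribed shapes of $\vec{e}$ and $\vec{f}$, the HN bundles are $L$ (rank $1$), $R$ (rank $2$), $T$ (rank $1$) for $\V(\vec{e})$, and $M$ (rank $2$), $N$ (rank $2$), $P$ (rank $1$) for $\V(\vec{f})$. By Lemma \ref{pair}, $A^*(BG)$ is freely generated by $c_2$ together with the Chern classes $\ell, \rho_1, \rho_2, \tau, \mu_1, \mu_2, \nu_1, \nu_2, \sigma$ of these HN bundles.

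First I would expand the codimension-$1$ CE classes $a_1, a_2', b_1, b_2'$ as linear combinations of $\ell, \rho_1, \tau, \mu_1, \nu_1, \sigma$ via the splitting principle applied to the HN filtrations of $\V(\vec{e})$ and $\V(\vec{f})$. Three codim-$1$ relations then hold in $A^1(\Sigma')$. The relation $2 a_1 - b_1 = 0$ comes from the $\gg_m$-bundle structure $\Sigma' \to [U'/G]$, associated to the line bundle $\pi_*(\det \V(\vec{e})^{\otimes 2} \otimes \det \V(\vec{f})^\vee)$. The other two come from distinguished coefficients of the universal Pfaffian matrix: condition (3) makes the coefficient $d_{1,2}$ of $X_4$ in $L_{1,2}$ a degree-$0$ scalar on $\pp^1$, hence a section of the line bundle $T \otimes \det M$ on the base; and condition (4) makes the coefficient $a_{3,4}$ of $X_1$ in $L_{3,4}$ a section of $L \otimes \det N$. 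I will argue non-vanishing of $a_{3,4}$ on $U'$ using \eqref{a34nonzero}, noting that $e_1 + f_2 + f_4 = e_1 + f_3 + f_4 - (f_3 - f_1) < g+4$ since $f_1 < f_3$; and for $d_{1,2}$, condition (1) forces the other three coefficients of $L_{1,2}$ to have negative degree and hence vanish identically, so $d_{1,2} = 0$ would imply $L_{1,2} \equiv 0$, and then \eqref{extrapencil} (with $k = f_2 - f_1 = 0$) would produce a $g^1_{f_1}$ on $C$, pushing $C$ out of $\Sigma'$. These non-vanishing statements give $\tau + \mu_1 = 0$ and $\ell + \nu_1 = 0$ in $A^1(\Sigma')$.

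With the three relations in hand, the six codim-$1$ generators collapse to a three-dimensional space spanned by, say, $\rho_1, \mu_1, \nu_1$. Expressing $a_1, a_2', b_2'$ in this basis gives a $3\times 3$ linear system whose determinant I expect to simplify, after substituting (3) and (4), to $-(f_1 - f_3)\bigl((e_1 + e_2 + e_4) - 2f_5\bigr)$, which equals the expression in (5) up to sign. Condition (5) thereby provides precisely the invertibility needed to express $\rho_1, \mu_1, \nu_1$ (and thus also $\ell, \tau, \sigma$) in terms of $a_1, a_2', b_2'$.

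For codim $2$, the class $c_2$ is itself a CE class. The second Chern class $\rho_2$ appears in $a_2 = \pi_*(z \cdot c_2(\E))$ with coefficient $1$, so it is CE-expressible modulo codim-$1$ products and a known multiple of $c_2$. To separate $\mu_2$ from $\nu_2$, I will use the pair $b_2 = \pi_*(z \cdot c_2(\F))$ and $b_3' = \pi_*(c_3(\F))$: the first yields $\mu_2 + \nu_2$ up to codim-$1$ products and $c_2$, while the second yields $(2f_3 + f_5)\mu_2 + (2f_1 + f_5)\nu_2$ up to codim-$1$ products. The determinant of this $2 \times 2$ system is $2(f_1 - f_3)$, nonzero by (2), so both $\mu_2$ and $\nu_2$ are individually CE-expressible. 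The principal obstacle I anticipate is the geometric non-vanishing argument for $d_{1,2}$; the remaining steps reduce to linear algebra, with condition (5) turning out to be exactly the determinant controlling the codim-$1$ system.
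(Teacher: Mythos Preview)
Your overall strategy matches the paper's: identify the HN bundles, extract codimension-$1$ relations from the non-vanishing of the distinguished coefficients $d_{1,2}$ and $a_{3,4}$, and then solve a linear system to express the HN Chern classes in terms of CE classes. The codimension-$2$ step is fine and is exactly what the paper does.

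There is, however, a genuine error in your identification of the line bundles carrying $d_{1,2}$ and $a_{3,4}$. The universal section lives in $\pi_*(\V(\vec{e}) \otimes \det\V(\vec{e})^\vee \otimes \wedge^2\V(\vec{f}))$, and you have dropped the $\det\V(\vec{e})^\vee$ factor. In your labeling ($T$ for $e_4$, $L$ for $e_1$, $M$ for $f_1=f_2$, $N$ for $f_3=f_4$), the quotient picking out $d_{1,2}$ is $\det M \otimes L^\vee \otimes \det R^\vee$, not $T\otimes\det M$; similarly, $a_{3,4}$ lands in $\det N \otimes \det R^\vee \otimes T^\vee$, not $L\otimes\det N$. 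Consequently the relations that actually hold on $\Sigma'$ are
\[
\mu_1 - \ell - \rho_1 = 0 \quad\text{and}\quad \nu_1 - \rho_1 - \tau = 0,
\]
not $\tau+\mu_1=0$ and $\ell+\nu_1=0$. Your claimed relations differ from the correct ones by exactly $a_1 = \ell+\rho_1+\tau$, which is generally nonzero on $\Sigma'$, so they are false and the subsequent ``collapse to a three-dimensional space'' is not justified.

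Two remarks. First, your use of the $\gg_m$-bundle relation $2a_1 - b_1 = 0$ is a legitimate extra input; the paper instead keeps $b_1$ as a fourth independent CE class and works with a $6\times 6$ system. Either route works once the two geometric relations are corrected. Second, it is a pleasant accident that your $3\times 3$ determinant agrees up to sign with condition (5): since each of your wrong relations differs from the correct one by the $a_1$-row already present in the system, you have unwittingly performed row operations. But this coincidence does not repair the logical gap---you must first establish relations that genuinely hold before you may use them to reduce the number of generators.
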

\begin{proof}
Set $G=\SL_2 \ltimes (\Aut(\vec{e})\times \Aut(\vec{f}))$ and let $\pi: \P \rightarrow BG$ be the $\pp^1$ bundle pulled back from $\BSL_2$.
The first part of the HN filtration for $\V(\vec{e})$ is
\begin{equation} \label{ve1}
0\rightarrow \pi^*L(e_4)\rightarrow \V(\vec{e})\rightarrow Q_1\rightarrow 0,
\end{equation}
where $L$ is rank $1$.
The next part is
\begin{equation} \label{ve2}
0\rightarrow \pi^*R(e_2)\rightarrow Q_1\rightarrow \pi^*T(e_1)\rightarrow 0,
\end{equation}
where $R$ is rank $2$ and $T$ is rank $1$. Similarly, we have the HN filtration for $\V(\vec{f})$:
\begin{equation} \label{vf1}
0\rightarrow \pi^*S(f_5)\rightarrow \V(\vec{f})\rightarrow W_1\rightarrow 0,
\end{equation}
and then
\begin{equation} \label{vf2}
0\rightarrow \pi^*M(f_3)\rightarrow W_1\rightarrow \pi^*N(f_1)\rightarrow 0.
\end{equation}
The bundle $S$ is of rank $1$, and $M$ and $N$ are of rank $2$. 
We denote the Chern classes of an HN bundle by the corresponding lowercase letter, with subscripts $i = 1, 2$ when the HN bundle has rank $2$. In particular, the right hand column vector in \eqref{mat51} below consists of the first Chern classes of the HN bundles.
From \eqref{ve1}--\eqref{vf2}, an application of the splitting principle gives the following expressions:
\begin{equation}\label{mat51}
\left(\small\begin{matrix}
 a_1 \\
 a_2' \\
 b_1 \\
 b_2' \\
\end{matrix} \right)=
\left(\small\begin{matrix}
1 & 1 & 1  & 0 & 0 & 0 \\
e_1+2e_2 & e_1+e_2+e_4 & 2e_2+e_4 & 0 & 0 & 0 \\
0 & 0 & 0 & 1 & 1 & 1 \\
0 & 0 & 0 & 2f_3+2f_1 & f_5+f_3+2f_1 & f_5+2f_3+f_1
\end{matrix} \right)
\left(\small\begin{matrix} \ell \\ r_1 \\ t \\ s \\ m_1 \\ n_1 \end{matrix}\right).
\end{equation}
Next, we show that the geometry of the curves in this stratum imposes some relations among $\ell, r_1, t, s, m_1, n_1$. We will show that modulo these relations, the Chern classes of the HN bundles are all expressible in terms of CE classes, finishing the proof.
Note that $BG$ has six generators in codimension $1$ (namely $\ell, r_1, t, s, m_1, n_1)$ but there are only four CE classes in codimension $1$, so we will need to show $A^1(BG) \to A^1([U'/G])$ has a kernel.

The two sources of relations are from the conditions \eqref{extrapencil} and \eqref{a34nonzero}. From \eqref{extrapencil}, we see that if $L_{1,2}=0$, then $C$ possesses a $g^1_4$. For degree reasons, we see that in this stratum
\[
L_{1,2}=d_{1,2}X_4.
\]
Therefore, the vanishing of $L_{1,2}$ is equivalent to the vanishing of $d_{1,2}$. Note that $\deg(d_{1,2}) = f_1 + f_2 + e_4 - (g+4) = 0$, so the vanishing of $d_{1,2}$ is one condition. Hence, the locus where $d_{1,2}$ vanishes is codimension $1$ inside the total space of sections $H$. We shall now describe more precisely the map that picks out the $d_{1,2}$ coefficient of an element in $H$.

Corresponding to the filtration on $\V(\vec{f})$, there is a quotient map
\[
\wedge^2 \V(\vec{f})\rightarrow \det(\pi^*N(f_1)).
\]
Tensoring by $\V(\vec{e})\otimes \det \V(\vec{e})^{\vee}$, we obtain a surjection
\[
\wedge^2 \V(\vec{f})\otimes\V(\vec{e})\otimes \det \V(\vec{e})^{\vee}\rightarrow \det(\pi^*N(f_1))\otimes\V(\vec{e})\otimes \det \V(\vec{e})^{\vee}.
\]
From the isomorphism
\[
\det \V(\vec{e})^{\vee}\cong \pi^*(\det R^{\vee}\otimes T^{\vee}\otimes L^{\vee})(-g-4),
\]
we obtain a surjection 
\begin{equation} \label{sur1}
\wedge^2 \V(\vec{f})\otimes\V(\vec{e})\otimes \det \V(\vec{e})^{\vee}\rightarrow \V(\vec{e})\otimes\pi^*(\det N\otimes\det R^{\vee}\otimes \det T^{\vee}\otimes L^{\vee} )(2f_1-g-4).
\end{equation}
Because $2f_1+e_i-g-4<0$ for $1\leq i \leq 3$ and $2f_1+e_4-g-4=0$, cohomology and base change implies that the push forward of \eqref{sur1} is given by
\begin{align*}
\pi_*(\wedge^2 \V(\vec{f})\otimes\V(\vec{e})\otimes \det \V(\vec{e})^{\vee})\rightarrow \pi_*(\pi^*(\det N\otimes\det R^{\vee}\otimes \det T^{\vee})(2f_1+e_4-g-4))\\
\cong \det N\otimes\det R^{\vee}\otimes \det T^{\vee}.
\end{align*}
The left-hand side above is the total space of $[H/G]$.
The above map corresponds to projection of $H$ onto the coefficient $d_{1,2}$.
Let $v: [U'/G] \to BG$ be the structure map, so $v^*: A^*(BG) \to A^*([U'/G])$ is the first map in \eqref{surj2}.
Since $d_{1,2}$ is non-vanishing on $U'$,
the pullback $v^*(\det N \otimes \det R^\vee \otimes \det T^\vee)$ admits a non-vanishing section on $[U'/G]$.
In particular we obtain the relation
\begin{equation}\label{n1}
    v^*(n_1-r_1-t)=0.
\end{equation}

Next, we turn to the condition \eqref{a34nonzero}. We want to write down a similar map that picks out the coefficient $a_{3,4}$. From the filtration on $\V(\vec{e})$, we have a surjection
\[
\V(\vec{e})\rightarrow \pi^*T(e_1).
\]
By tensoring with $\det \V(\vec{e})^{\vee}$, we obtain a surjection
\[
\V(\vec{e})\otimes \det \V(\vec{e})^{\vee}\rightarrow \pi^*(\det R^{\vee}\otimes L^{\vee})(e_1-g-4).
\]
Next, we note that we have a surjection $\wedge^2 \V(\vec{f})\rightarrow \wedge^2 W_1$, and so we obtain a surjection
\begin{equation} \label{sur3}
\wedge^2 \V(\vec{f})\otimes \V(\vec{e})\otimes \det \V(\vec{e})^{\vee}\rightarrow \wedge^2 W_1\otimes \pi^*(\det R^{\vee}\otimes L^{\vee})(e_1-g-4).
\end{equation}
There is a filtration of $\wedge^2 W_1$ with subquotients $\wedge^2 (\pi^*N(f_1))$, $\pi^*N(f_1)\otimes \pi^*M(f_3)$ and $\wedge^2(\pi^*M(f_3))$. Because $2f_1+e_1-g-4<0$, $f_1+f_3+e_1-g-4<0$, and $2f_3+e_1-g-4=0$, the $\pi$ push forward of \eqref{sur3} is given by
\[
\pi_*(\wedge^2 \V(\vec{f})\otimes \V(\vec{e})\otimes \det \V(\vec{e})^{\vee})\rightarrow \det M\otimes \det R^{\vee}\otimes \det L^{\vee}.
\]
This map corresponds to projecting onto $a_{3,4}$. Since $a_{3,4} \neq 0$ on $U'$,
 we obtain the relation 
\begin{equation}\label{m1}
    v^*(m_1-r_1-\ell)=0.
\end{equation}
Augmenting matrix \eqref{mat51} by the relations \eqref{m1} and \eqref{n1}, we obtain the matrix
\begin{equation}
\left(\small\begin{matrix}
 0\\
 0\\
 a_1 \\
 a_2' \\
 b_1 \\
 b_2' \\
\end{matrix} \right)=
\left(\small\begin{matrix}
0 & -1 & -1 & 0 & 0 & 1\\
-1 & -1 & 0 & 0 & 1 & 0\\
1 & 1 & 1  & 0 & 0 & 0 \\
e_1+2e_2 & e_1+e_2+e_4 & 2e_2+e_4 & 0 & 0 & 0 \\
0 & 0 & 0 & 1 & 1 & 1 \\
0 & 0 & 0 & 2f_3+2f_1 & f_5+f_3+2f_1 & f_5+2f_3+f_1
\end{matrix} \right)
\left(\small\begin{matrix} \ell \\ r_1 \\ t \\ s \\ m_1 \\ n_1 \end{matrix}\right).
\end{equation}
The determinant of the above $6\times 6$ matrix is the quantity in part \eqref{detpart} of the statement of the lemma.
By assumption, this determinant does not vanish, so the classes $\ell, r_1, t, s,m_1,n_1$ are expressible in terms of CE classes. 

Besides products of codimension $1$ generators, $BG$ has four codimension $2$ generators: $c_2, m_2, n_2, r_2$. By definition, $c_2$ is a CE class, so we just need to show that $m_2, n_2,$ and $r_2$ are expressible in terms of CE classes. Using the splitting principle on \eqref{ve1} and \eqref{ve2}, we obtain the following expression for $a_2$:
\[
a_2=r_2+r_1(\ell+t)+\ell t-(2e_1e_2+e_2^2+e_1e_4
     +2e_2e_4)c_2.
\]
Therefore, $r_2$ is expressible in terms of CE classes. Similarly, from \eqref{vf1} and \eqref{vf2}, we have the following expressions for $b_2$ and $b_3'$:
\begin{equation*}
\begin{split}
    b_2&=s(m_1+n_1)+m_1n_1+m_2+n_2-(2f_5f_3+f_3
      ^2+2f_5f_1+4f_3f_1+f_1^2)c_2,\\
    b_3'&=(f_3+2f_1)sm_1+(2f_3+f_1)sn_1+(f_5+f_3+f_1)m_1n_1+(f_5+2f_1)m_2+(f_5+2f_3)n_2 \\
    &\qquad -(f_5f_3^2+4f_5f_3f_1+2f_3^2f_1+f_5f_1^2+2f_3f_1^2)c_2.
    \end{split}
\end{equation*}
Since $f_1\neq f_3$, we see that $m_2$ and $n_2$ are expressible in terms of CE classes.
\end{proof}
\begin{lem}\label{forsigma2}
Let $\vec{e} = (e_1, e_2, e_3, e_4)$ and $\vec{f}=(f_1, f_2, f_3, f_4, f_5)$ satisfy the following conditions:
\begin{enumerate}
    \item $e_1<e_2<e_3=e_4$,
    \item $f_1<f_2=f_3<f_4=f_5$,
    \item $e_1+f_2+f_5=g+4$,
    \item $e_3+f_1+f_2=g+4$,
    \item \label{det2} $2e_2f_1-2e_3f_1-e_1f_2-3e_2f_2+4e_3f_2+e_1f_4+e_2f_4-2e_3f_4\neq 0$.
\end{enumerate}
Let $\Sigma$ denote the corresponding stratum with splitting types $\vec{e}$ and $\vec{f}$. Then $A^*(\Sigma')$ is generated by the restrictions of CE classes.
\end{lem}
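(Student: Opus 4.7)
The plan is to follow the proof of Lemma \ref{shape1} closely, replacing the HN structures and the supporting geometric constraints. I set $G := \SL_2 \ltimes (\Aut(\vec{e}) \times \Aut(\vec{f}))$ and let $\pi \colon \P \to BG$ denote the universal $\pp^1$-bundle. The Harder--Narasimhan filtrations give graded pieces $\pi^*R(e_3), \pi^*S(e_2), \pi^*T(e_1)$ for $\V(\vec{e})$ (with $R$ of rank $2$, and $S, T$ of rank $1$) and $\pi^*M(f_4), \pi^*N(f_2), \pi^*P(f_1)$ for $\V(\vec{f})$ (with $M, N$ of rank $2$, and $P$ of rank $1$). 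By Lemma \ref{pair}, the Chern classes $r_1, r_2, s, t, m_1, m_2, n_1, n_2, p$ together with the $c_2$ pulled back from $\BSL_2$ freely generate $A^*(BG)$. There are six codimension-$1$ generators $r_1, s, t, m_1, n_1, p$ but only four codimension-$1$ CE classes $a_1, a_2', b_1, b_2'$, so I need to produce two additional relations in $A^1([U'/G])$ coming from the complement of $U' \subset H$ in order to solve for the HN classes.

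From the splitting principle I obtain
\begin{align*}
a_1 &= r_1 + s + t,\\
a_2' &= (e_1 + e_2 + e_3)r_1 + (2e_3 + e_1)s + (2e_3 + e_2)t, \\
b_1 &= m_1 + n_1 + p, \\
b_2' &= (f_1 + 2f_2 + f_4)m_1 + (f_1 + f_2 + 2f_4)n_1 + (2f_2 + 2f_4)p.
\end{align*}
The two additional relations come from non-vanishing of $\GL_2$-invariant $2 \times 2$ determinants of scalar coefficients, playing the role of the non-vanishing of $d_{1,2}$ and $a_{3,4}$ in Lemma \ref{shape1}. By condition~(3), the coefficients $a_{i,j}$ ($X_1$-coefficient of $L_{i,j}$) for $i \in \{2, 3\}, j \in \{4, 5\}$ are scalars, assembling into a section of $\det R^\vee \otimes S^\vee \otimes N \otimes M$ on $BG$, whose $(\GL_2\times\GL_2)$-invariant determinant $a_{2,4}a_{3,5} - a_{2,5}a_{3,4}$ lies in a line bundle with first Chern class $m_1 + n_1 - 2r_1 - 2s$. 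A direct check using the vanishings $a_{1,k} = 0$ for all $k$ and $a_{2,3} = 0$ (forced by the degree computation $f_i + f_j + e_1 - (g+4) < 0$) shows that $Q_2, \ldots, Q_5$ all restrict to zero on the canonical section $V(X_2, X_3, X_4) \subset \pp E^\vee$, while $Q_1|_{V(X_2, X_3, X_4)} = (a_{2,5}a_{3,4} - a_{2,4}a_{3,5})X_1^2$. If this determinant vanished, the irreducible curve $C$ would contain this section, which is impossible. I therefore obtain the relation $v^*(m_1 + n_1 - 2r_1 - 2s) = 0$.

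For the second relation, condition~(4) makes the coefficients $c_{1,j}, d_{1,j}$ ($X_3$- and $X_4$-coefficients of $L_{1,j}$) for $j = 2, 3$ into scalars that form a section of $R^\vee \otimes S^\vee \otimes T^\vee \otimes P \otimes N$, whose $\GL_2$-invariant determinant $c_{1,2}d_{1,3} - c_{1,3}d_{1,2}$ lies in a line bundle with first Chern class $n_1 + 2p - r_1 - 2s - 2t$. If this determinant vanished, a basis change in the $\GL_2$ factor of $\Aut(\vec{f})$ acting on the rank-$2$ piece $N$ would produce a trivialization in which $L_{1,2} = 0$; then \eqref{extrapencil} with $k = f_2 - f_1$ yields a $g^{1+k}_{f_1}$ on $C$. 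In every application of this lemma in Sections~\ref{sec57}--\ref{sec59} the bound $f_1 \leq 4$ holds (forced by the constraints $e_3 + f_1 + f_2 = g+4$ combined with $e_3 \geq 3$ and $f_1 < f_2$ for the genera in question), so projecting the resulting $g^{1+k}_{f_1}$ produces a $g^1_{f_1}$ with $f_1 \leq 4$, contradicting the definition of $U'$. This gives $v^*(n_1 + 2p - r_1 - 2s - 2t) = 0$.

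Assembling these six linear relations into a $6 \times 6$ coefficient matrix with right-hand column vector $(r_1, s, t, m_1, n_1, p)^T$, a cofactor expansion exploiting the block-diagonal form of the CE part shows that the determinant equals, up to sign, the quantity in condition~(5); this I will verify by a direct computation. Since this quantity is nonzero by hypothesis, the codimension-$1$ HN classes are expressible as $\qq$-linear combinations of CE classes. For the codimension-$2$ generators $r_2, m_2, n_2$, the splitting-principle expressions for $a_2, a_3', b_2, b_3'$ give a linear system modulo products of codimension-$1$ classes; the distinctness $e_2 \neq e_3$ and $f_2 \neq f_4$ makes this system invertible, mirroring the end of the proof of Lemma \ref{shape1}. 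The main obstacle is the second geometric relation: its justification relies on $f_1 \leq 4$, which holds in all intended applications but not for every $\vec{e}, \vec{f}$ satisfying (1)--(4) in the abstract; without this bound the pencil produced by \eqref{extrapencil} need not violate gonality.
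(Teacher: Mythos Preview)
Your proof is correct and follows essentially the same route as the paper: the same HN setup (with different letters), the same two codimension-one relations --- one from the nonvanishing of the $X_1^2$-coefficient of $Q_1$ on $V(X_2,X_3,X_4)$, one from the linear independence of $L_{1,2},L_{1,3}$ --- assembled into the same $6\times 6$ matrix with determinant the quantity in condition~(5), followed by the same codimension-two step using $a_2$ and $b_2,b_3'$.

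Your caveat about the second relation is well-placed and in fact sharper than the paper's own treatment: the paper simply writes ``a $g^2_4$, which is impossible'' at that step, silently substituting the values $f_1=4$, $f_2-f_1=1$ from the sole intended application (the genus $9$ stratum $\Sigma_2$). So the abstract lemma as stated is only justified under an extra hypothesis such as $f_1\le 4$ in the paper as well; you have made this explicit rather than hidden it. One small remark: in the codimension-two step you do not actually need $e_2\neq e_3$ for $r_2$ (its coefficient in $a_2$ is $1$); only $f_2\neq f_4$ is used, to separate $m_2$ and $n_2$ via $b_2,b_3'$.
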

\begin{proof}
Set $G=\SL_2 \ltimes (\Aut(\vec{e})\times \Aut(\vec{f}))$ and let $\pi: \P \rightarrow BG$ be the $\pp^1$ bundle pulled back from $\BSL_2$. The first part of the HN filtration for $\V(\vec{e})$ is 
\begin{equation}\label{E1}
0\rightarrow \pi^*R(e_3)\rightarrow \V(\vec{e})\rightarrow \pi^* W_1\rightarrow 0
\end{equation}
and then
\begin{equation}\label{E2}
0\rightarrow \pi^*S(e_2)\rightarrow W_1\rightarrow \pi^*L(e_1)\rightarrow 0,
\end{equation}
where $R$ is of rank $2$ and $S$ and $L$ are of rank $1$.
Similarly, for $\V(\vec{f})$ we have
\begin{equation}\label{F1}
0\rightarrow \pi^*M(f_4)\rightarrow \V(\vec{f})\rightarrow W_2\rightarrow 0
\end{equation}
and
\begin{equation}\label{F2}
0\rightarrow \pi^*N(f_2)\rightarrow W_2\rightarrow T(f_1)\rightarrow 0,
\end{equation}
where $T$ is rank $1$, and $M$ and $N$ are rank $2$. As usual, we denote the Chern classes of an HN bundle by the corresponding lowercase letter, with subscripts $i = 1, 2$ when the HN bundle has rank $2$.
Using the splitting principle and the definitions of CE classes, we obtain the following expressions:
\begin{equation} \label{mat1}
\left(\small\begin{matrix}
 a_1 \\
 a_2' \\
 b_1 \\
 b_2' \\
\end{matrix} \right)=
\left(\small\begin{matrix}
1 & 1 & 1  & 0 & 0 & 0 \\
e_2+2e_3 & e_1+e_2+e_3 & e_1+2e_3 & 0 & 0 & 0 \\
0 & 0 & 0 & 1 & 1 & 1 \\
0 & 0 & 0 & 2f_4+2f_2 & f_4+2f_2+f_1 & 2f_4+f_2+f_1
\end{matrix} \right)
\left(\small\begin{matrix} \ell \\ r_1 \\ s \\ t \\ m_1 \\ n_1 \end{matrix}\right).
\end{equation}

There are $6$ generators for $A^1(BG)$, but only $4$ codimension $1$ CE classes. Therefore, we will need to study the kernel of $A^1(BG) \to A^1([U'/G])$. Let $Z=V(X_2,X_3,X_4)\subset \p E^\vee$. For degree reasons, when we restrict the quadrics to $Z$, we see that they all vanish except for possibly $Q_1|_Z$, which takes the form
\[
Q_1|_{Z}=(a_{2,5}a_{3,4}-a_{2,4}a_{3,5})X_1^2.
\]
The coefficient of $X_1^2$ is of degree $0$ on $\p^1$. Note that if it vanishes, the curve becomes reducible, so the vanishing of this coefficient should impose a codimension $1$ relation. To find this relation, we need to find a way of picking out the quadric $Q_1$. 

Recall that the quadrics $Q_i$ cutting out the curve are obtained from the Pfaffians of the skew-symmetric matrix \eqref{mat}. 
Using the canonical identification $\wedge^4 F \cong F^\vee \otimes \det F$, these $5$ Pfaffians correspond to a global section of the rank $5$ vector bundle $\O_{\pp E}(2) \otimes (\det E)^{\otimes 2} \otimes F^\vee \otimes \det F$ on $\pp E^\vee$. Equivalently, this is a global section of $(\Sym^2 E^{\vee})\otimes (\det E)^{\otimes 2} \otimes  F^{\vee} \otimes \det F$ on $\pp^1$. Working on $\pi:\P\rightarrow BG$, the Pfaffians thus correspond to a section of $\Sym^2 \V(\vec{e})\otimes \det\V(\vec{e})^{\vee\otimes 2}\otimes \V(\vec{f})^{\vee}\otimes \det\V(\vec{f})$. From the HN filtration on $\V(\vec{e}),$ there is a quotient
{\small \begin{equation} \label{qmap}
\Sym^2 \V(\vec{e})\otimes \det\V(\vec{e})^{\vee\otimes 2}\otimes \V(\vec{f})^{\vee}\otimes \det \V(\vec{f})\rightarrow \pi^*L^{\otimes 2}(2e_1)\otimes\det\V(\vec{e})^{\vee\otimes 2}\otimes \V(\vec{f})^{\vee}\otimes \det \V(\vec{f}), 
\end{equation}}
corresponding to the $X_1^2$ parts of the Pfaffians.
Note that we have 
\[
\det \V(\vec{e})^{\vee\otimes2}\cong \pi^*(\det R^{\vee\otimes 2}\otimes S^{\vee\otimes 2}\otimes L^{\vee\otimes 2})(-4e_3-2e_2-2e_1) 
\]
and
\[
\det \V(\vec{f})=\pi^*(\det M\otimes \det N\otimes T)(f_1+2f_2+2f_4).
\]
By the assumptions on the splitting type $\vec{f}$ and cohomology and base change, the $\pi$ push forward of \eqref{qmap} is
\[
\pi_*(\Sym^2 \V(\vec{e})\otimes \det\V(\vec{e})^{\vee\otimes 2}\otimes \V(\vec{f})^{\vee}\otimes \det \V(\vec{f}))\rightarrow \det M\otimes \det N\otimes \det R^{\vee \otimes 2}\otimes S^{\vee\otimes 2}.
\]
This quotient map corresponds to evaluating the coefficient of $X_1^2$ in $Q_1$. The line bundle on the right thus admits a non-vanishing section when pulled back to $[U'/G]$. This gives a relation:
\begin{equation}\label{rel1}
v^*(m_1+n_1-2s-2r_1)=0 \in A^1([U'/G]).
\end{equation}

We need one more codimension $1$ relation.  Note that if $L_{1,2}$ and $L_{1,3}$ are linearly dependent then, after change of basis (within the $\O(f_2) \oplus \O(f_3)$ part of $F$), we can assume $L_{1,2} = 0$. Then \eqref{extrapencil} shows that the resulting curve would have a $g^2_4$, which is impossible. Therefore, $L_{1,2}$ and $L_{1,3}$ must be linearly independent. Conditions (1), (2) and (4) imply that the degrees of $a_{1,2}, a_{1,3}, b_{1,2}, b_{1,3}$ are negative, so $L_{1,2}$ and $L_{1,3}$ are dependent if and only if
\[
c_{1,2}d_{1,3}-c_{1,3}d_{1,2} = 0.
\]
Below, we construct a morphism of vector bundles whose vanishing locus is the locus where $L_{1,2}$ and $L_{1,3}$ become dependent. From the HN filtration, we have a series of surjections
\[
\wedge^2 \V(\vec{f})\rightarrow \wedge^2 W_2\rightarrow \pi^*(N\otimes T)(f_1+f_2).
\]
Tensoring with $\V(\vec{e})\otimes \det \V(\vec{e})^{\vee}$ and pushing forward, we have
\[
\pi_*(\wedge^2 \V(\vec{f})\otimes \V(\vec{e})\otimes \det \V(\vec{e})^{\vee})\rightarrow \pi_*(\pi^*N\otimes \pi^*T\otimes\V(\vec{e})\otimes \det \V(\vec{e})^{\vee})(f_1+f_2).
\]
Because $f_1+f_2-g-4+e_i < 0$  for $i < 3$ and
$f_1 + f_2 - g - 4 + e_3 = 0$,
by cohomology and base change, the above map is
\[
\pi_*(\wedge^2 \V(\vec{f})\otimes \V(\vec{e})\otimes \det \V(\vec{e})^{\vee})\rightarrow \det R^{\vee}\otimes S^{\vee}\otimes L^{\vee}\otimes R \otimes N \otimes T.
\]
This map corresponds to projection onto the tuple of coefficients $(c_{1,2}, d_{1,2}, c_{1,3}, d_{1,3})$. Note that $R\otimes \det R^{\vee}\cong R^{\vee}$, so we can identify the section we obtained from the above map as a morphism
\[
S\otimes L\otimes R\rightarrow N\otimes T.
\]
Taking the determinant of this morphism, we have
\[
S^{\otimes 2}\otimes L^{\otimes 2}\otimes \det R\rightarrow \det N\otimes T^{\otimes 2},
\]
and if this determinant morphism vanishes, then $L_{1,2}$ and $L_{1,3}$ are dependent. Therefore, we obtain the relation
\begin{equation}\label{rel2}
    v^*(-n_1-2t+r_1+2s+2\ell)=0.
\end{equation}
Augmenting matrix \eqref{mat1} by the relations \eqref{rel1} and \eqref{rel2}, we the following matrix
\begin{equation}\label{matrix1}
\left(\small\begin{matrix}
 0\\
 0\\
 a_1 \\
 a_2' \\
 b_1 \\
 b_2' \\
\end{matrix} \right)=
\left(\small\begin{matrix}
0 & -2 & -2 & 0 & 1 & 1 \\
2 & 1 & 2 & -2 & 0 & -1\\
1 & 1 & 1  & 0 & 0 & 0 \\
e_2+2e_3 & e_1+e_2+e_3 & e_1+2e_3 & 0 & 0 & 0 \\
0 & 0 & 0 & 1 & 1 & 1 \\
0 & 0 & 0 & 2f_4+2f_2 & f_4+2f_2+f_1 & 2f_4+f_2+f_1
\end{matrix} \right)
\left(\small\begin{matrix} \ell \\ r_1 \\ s \\ t \\ m_1 \\ n_1 \end{matrix}\right).
\end{equation}
The determinant of the above $6\times 6$ matrix is the quantity in part \eqref{det2} of the statement of the lemma,
which does not vanish by assumption. Hence, $\ell, r_1, s, t, m_1, n_1 \in A^1([U'/G])$ are expressible in terms of CE classes.

In addition to the products of codimension $1$ generators, $A^2(BG)$ has four codimension $2$ generators: $c_2,r_2,n_2,m_2$. By definition $c_2$ is a CE class, so it remains to show that $r_2, n_2$ and $m_2$ are expressible in terms of CE classes. From the HN filtrations and the splitting principle, we obtain the following expression for $a_2$:
\[
a_2=-(e_1e_2+2e_1e_3+2e_2
     e_3+e_3^2)c_2+\ell r_1+\ell s+r_1s+r_2,
\]
from which it follows that $r_2$ is expressible in terms of CE classes. Similarly, we obtain the following expressions for $b_2$ and $b_3'$: 

\begin{equation*}
    \begin{split}
        b_2&=-(f_4^2+4f_4
      f_2+f_2^2+2f_4f_1+2f_2f_1)c_2+tm_1+tn_1+ m_1n_1+m_2+n_2\\
        b_3'&=-(2f_4^2f_2+2f_4f_2^2+f_4^2f_1+4f_4f_2f_1+f_2^2f_1)c_2+(f_4+2f_2)tm_1+(2f_4+f_2)tn_1+ \\ &\qquad (f_4+f_2+f_1)m_1n_1+(2f_2+f_1)m_2+(2f_4+f_1)n_2.
    \end{split}
\end{equation*}
Because $f_2\neq f_4$, we see that $m_2$ and $n_2$ are expressible in terms of CE classes.
\end{proof}

We consider one more pair of shapes of splitting types.
\begin{lem}\label{forsigma3}
Let $\vec{e}=(e_1,e_2,e_3,e_4)$ and $\vec{f}=(f_1,f_2,f_3,f_4,f_5)$ satisfy the following conditions:
\begin{enumerate}
    \item $e_1<e_2=e_3<e_4$,
    \item $f_1<f_2=f_3<f_4=f_5$,
    \item $e_1+f_2+f_5=g+4$,
    \item $e_2+f_1+f_4=g+4$,
    \item \label{det3} $-2e_2f_1 + 2e_4f_1 + e_1f_2 - 2e_2f_2 + e_4f_2 - e_1f_4 + 4e_2f_4 - 3e_4f_4\neq 0$.
\end{enumerate}
Let $\Sigma$ be the $\vec{e}, \vec{f}$ splitting locus. Then $A^*(\Sigma')$ is generated by the restrictions of CE classes.
\end{lem}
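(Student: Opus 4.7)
The plan is to follow the template of Lemmas \ref{shape1} and \ref{forsigma2}, adjusted for the present HN structure. Set $G := \SL_2 \ltimes (\Aut(\vec{e}) \times \Aut(\vec{f}))$ and let $\pi : \P \to BG$ be the $\pp^1$-bundle pulled back from $\BSL_2$. The shape $e_1 < e_2 = e_3 < e_4$ gives a three-step HN filtration of $\V(\vec{e})$ whose associated graded pieces are $\pi^*L(e_4), \pi^*R(e_2), \pi^*T(e_1)$, with $L, T$ of rank $1$ and $R$ of rank $2$; the shape $f_1 < f_2 = f_3 < f_4 = f_5$ likewise yields associated graded pieces $\pi^*M(f_4), \pi^*N(f_2), \pi^*S(f_1)$, with $M, N$ of rank $2$ and $S$ of rank $1$. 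By Lemma \ref{pair}, the ring $A^*(BG)$ is freely generated by the class $c_2$ pulled back from $\BSL_2$ together with $\ell = c_1(L), t = c_1(T), s = c_1(S)$ and the Chern classes $r_1, r_2, m_1, m_2, n_1, n_2$ of the rank-$2$ HN bundles. Applying the splitting principle to both filtrations rewrites $a_1, a_2', b_1, b_2'$ as explicit $\zz$-linear combinations of the six codimension-$1$ classes $\ell, r_1, t, s, m_1, n_1$ (in form identical to the formulas of Lemma \ref{forsigma2}).

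Since $A^1(BG)$ has six generators but only four codimension-$1$ CE classes, we must produce two additional codimension-$1$ relations on $[U'/G]$. Hypothesis (3) forces every $X_1$-coefficient $a_{i,j}$ with $1\in\{i,j\}$ or $(i,j)=(2,3)$ to vanish identically on $\pp^1$ and makes $a_{2,4}, a_{2,5}, a_{3,4}, a_{3,5}$ constants; hence only $Q_1|_{V(X_2,X_3,X_4)} = (a_{2,5}a_{3,4} - a_{2,4}a_{3,5})X_1^2$ survives among the five Pfaffians restricted to the section $V(X_2, X_3, X_4)$, and this coefficient must be nonzero on $U'$ lest $C$ become reducible. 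Running the argument of Lemma \ref{forsigma2} with $\pi^*T^{\otimes 2}(2e_1)$ as the top quotient of $\Sym^2 \V(\vec{e})$ and pushing forward the tensor by $\det\V(\vec{e})^{\otimes -2} \otimes \V(\vec{f})^\vee \otimes \det\V(\vec{f})$ (where (3) guarantees that only the sub $\pi^*(\det M \otimes \det N)(2f_4 + 2f_2)$ contributes in degree $0$) produces the first relation
\[
v^*(m_1 + n_1 - 2\ell - 2 r_1) = 0 \in A^1([U'/G]).
\]
For the second relation, hypothesis (4) makes $b_{1,4}, b_{1,5}, c_{1,4}, c_{1,5}$ constants; combined with $a_{1,j}=0$ for all $j$, the pair $(L_{1,4}, L_{1,5})$ lies in the subbundle $\ker(\V(\vec{e}) \twoheadrightarrow \pi^*T(e_1))$, and its image in $\pi^*R(e_2)$ defines a morphism $R \to M$ of rank-$2$ bundles twisted by the line bundle $S \otimes L^{-1} \otimes T^{-1}$. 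If this morphism dropped rank, then after a change of basis in $M$ one would have $L_{1,4} = 0$, and the analog of \eqref{extrapencil} would force $C$ to admit a linear series of degree less than $5$, contradicting $U'$. The non-vanishing of the $2\times 2$ determinant thus gives a nowhere-zero section of $\det R^{-1} \otimes \det M \otimes (S \otimes L^{-1} \otimes T^{-1})^{\otimes 2}$, yielding
\[
v^*(m_1 - r_1 + 2 s - 2\ell - 2 t) = 0.
\]

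Collecting the four splitting-principle identities with these two geometric relations into a $6 \times 6$ linear system in $(\ell, r_1, t, s, m_1, n_1)$ and expanding the determinant by the natural two-block structure (separating $\vec{e}$-columns from $\vec{f}$-columns), one finds that the determinant is, up to a nonzero rational scalar, the polynomial in hypothesis (5). Since (5) is assumed nonzero, the $6\times 6$ matrix is invertible over $\qq$, so each of $\ell, r_1, t, s, m_1, n_1$ is a $\qq$-linear combination of $a_1, a_2', b_1, b_2'$ in $A^1([U'/G])$.

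The codimension-$2$ generators $r_2, m_2, n_2$ are then handled exactly as in the final paragraph of Lemma \ref{forsigma2}: the splitting-principle expressions for $a_2, b_2, b_3'$ give identities of the form $a_2 \equiv r_2$, $b_2 \equiv m_2 + n_2$, $b_3' \equiv (2f_2 + f_1) m_2 + (2f_4 + f_1) n_2$ modulo products of codimension-$1$ classes and integer multiples of the CE class $c_2$, and the inequality $f_2 \neq f_4$ makes the $2\times 2$ block in $(m_2, n_2)$ invertible. Hence $r_2, m_2, n_2$ lie in the CE subring, completing the proof. The main obstacle is identifying the correct second geometric relation above so that the resulting $6\times 6$ determinant reproduces the polynomial in hypothesis (5); this is precisely what hypothesis (5) encodes as the nondegeneracy condition making the linear algebra go through.
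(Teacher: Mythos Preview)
Your overall architecture matches the paper's proof: the same HN bundles (up to relabeling), the same two codimension-one relations, and the same linear-algebra endgame in codimensions one and two. The derivation of the first relation and the codimension-two step are fine. The gap is in your justification for the second relation.

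When the morphism $R \to M \otimes (S\otimes L^{-1}\otimes T^{-1})$ drops rank, a change of basis in $M$ only achieves $b_{1,4}=c_{1,4}=0$; since $d_{1,4}$ has degree $e_4+f_1+f_4-(g+4)=e_4-e_2>0$, this gives $L_{1,4}=d_{1,4}X_4$, \emph{not} $L_{1,4}=0$. Moreover, \eqref{extrapencil} is specifically about $L_{1,2}=0$ (the two lowest summands of $F$) via Schreyer's determinantal surface; there is no ready-made analogue for $L_{1,4}$, and in any case its hypothesis is not met here. The paper's argument is instead a direct reducibility check on $V(X_4)\subset\pp E^\vee$. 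Conditions (3)--(4) force, beyond what you noted, $b_{1,2}=b_{1,3}=c_{1,2}=c_{1,3}=0$ (degree $e_2+f_1+f_2-(g+4)=f_2-f_4<0$), so $L_{1,2}$ and $L_{1,3}$ already vanish on $V(X_4)$. If in addition $L_{1,4}|_{V(X_4)}$ and $L_{1,5}|_{V(X_4)}$ are dependent, then on the codimension-two locus $V(L_{1,4},X_4)=V(L_{1,5},X_4)$ all the $L_{1,j}$ vanish, whence $Q_2=\cdots=Q_5=0$ there; thus $C$ contains $V(Q_1,L_{1,4},X_4)$, a subscheme of dimension at least $1$, and irreducibility forces $C\subset V(X_4)$, contradicting the fiberwise nondegeneracy in Theorem~\ref{CEstructure}. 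With this corrected justification your relation $v^*(m_1 - r_1 + 2s - 2\ell - 2t)=0$ holds on $[U'/G]$, and the remainder of your argument goes through verbatim to recover the determinant in hypothesis~(5).
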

\begin{proof}
Set $G=\SL_2 \ltimes (\Aut(\vec{e})\times \Aut(\vec{f}))$ and let $\pi: \P \rightarrow BG$ be the $\pp^1$ bundle pulled back from $\BSL_2$.
The HN filtration for $\V(\vec{f})$ is given by
\[
0\rightarrow \pi^*S(e_4)\rightarrow \V(\vec{e})\rightarrow W_1\rightarrow 0
\]
and
\[
0\rightarrow \pi^*R(e_2)\rightarrow W_1\rightarrow \pi^*L(e_1)\rightarrow 0.
\]
The filtration for $\V(\vec{f})$ is the same as in the previous Lemma \ref{forsigma2}. As a result, we have the following expressions for the Casnati--Ekedahl classes in terms of the generators of the Chow ring of this stratum:
\begin{equation} \label{mat2}
\left(\small\begin{matrix}
 a_1 \\
 a_2' \\
 b_1 \\
 b_2' \\
\end{matrix} \right)=
\left(\small\begin{matrix}
1 & 1 & 1  & 0 & 0 & 0 \\
2e_2+e_4 & e_1+e_2+e_4 & e_1+2e_2 & 0 & 0 & 0 \\
0 & 0 & 0 & 1 & 1 & 1 \\
0 & 0 & 0 & 2f_4+2f_2 & f_4+2f_2+f_1 & 2f_4+f_2+f_1
\end{matrix} \right)
\left(\small\begin{matrix} \ell \\ r_1 \\ s \\ t \\ m_1 \\ n_1 \end{matrix}\right).
\end{equation}

As in the previous lemma, there are $6$ codimension $1$ generators for $A^*(BG)$, but only $4$ codimension $1$ CE classes. We will need to show $A^1(BG) \to A^1([U'/G])$ has a kernel, meaning we have relations between the generators. The first relation is quite similar to the first relation from the previous Lemma \ref{forsigma2}. Not all of the quadrics cutting out the curve can vanish on $Z=V(X_2,X_3,X_4)$. We see that upon restriction to $Z$ all of the quadrics vanish, except for possibly $Q_1$, which is given by
\[
Q_1|_{Z}=(a_{2,5}a_{3,4}-a_{2,4}a_{3,5})X_1^2.
\]
As in Lemma \ref{forsigma2}, there is a quotient map 
\[
\Sym^2 \V(\vec{e})\otimes \det\V(\vec{e})^{\vee\otimes 2}\otimes \V(\vec{f})^{\vee}\otimes \det \V(\vec{f})\rightarrow \pi^*L^{\otimes 2}(2e_1)\otimes\det\V(\vec{e})^{\vee\otimes 2}\otimes \V(\vec{f})^{\vee}\otimes \det \V(\vec{f}),
\]
which corresponds to the coefficients of $X_1^2$ in the Pfaffians.
Note that we have 
\[
\det \V(\vec{e})^{\vee\otimes 2}\cong \pi^*(L^{\vee \otimes 2}\otimes S^{\vee\otimes 2}\otimes R^{\vee\otimes 2})(-2e_1-2e_4-4e_2)
\]
and
\[
\det \V(\vec{f})=\pi^*(\det M\otimes \det N\otimes T)(f_1+2f_2+2f_4).
\]
From cohomology and base change and the filtration on $\V(\vec{f})$, we see that the $\pi$ push forward of this map is
\[
\pi_*(\Sym^2 \V(\vec{e})\otimes \det\V(\vec{e})^{\vee\otimes 2}\otimes \V(\vec{f})^{\vee}\otimes \det \V(\vec{f}))\rightarrow\det M\otimes \det N\otimes S^{\vee\otimes 2}\otimes R^{\vee\otimes 2}.
\]
This quotient map corresponds to the coefficient of $X_1^2$ in $Q_1$. The non-vanishing of this coefficient
means that the pullback along $v:[U'/G] \to BG$ of the line bundle on the right has a non-vanishing section. This gives us the relation
\begin{equation}\label{relation1}
    v^*(m_1+n_1-2r_1-2s)=0 \in A^1([U'/G]).
\end{equation}

The next relation comes from considering the equations for the curve when restricted to $V(X_4)$. For degree reasons, $L_{1,2}$ and $L_{1,3}$ vanish when restricted to $V(X_4)$. Suppose that $L_{1,4}$ and $L_{1,5}$ are dependent. Then, the quadrics $Q_2,\dots, Q_5$ all vanish along $V(L_{1,4},X_4)=V(L_{1,5},X_4)$. It follows that $V(Q_1,L_{1,4},X_4)$ is contained in the curve. However, $\p E^{\vee}$ has dimension $4$, so the locus  $V(Q_1,L_{1,4},X_4)$ has dimension at least $1$. This means that $C$ would be contained in $V(X_4)$, which is impossible. Therefore, the restrictions of $L_{1,4}$ and $L_{1,5}$ to $V(X_4)$ must be independent. Because $e_1 + f_1 + f_5 - g - 4 < 0$, we have
\[L_{1,4}|_{V(X_4)} = b_{1,4} X_2 + c_{1,4} X_3 \qquad \text{and} \qquad 
L_{1,5}|_{V(X_4)} = b_{1,5} X_2 + c_{1,5} X_3.\]
Therefore, $L_{1,4}|_{V(X_4)}$ and $L_{1,5}|_{V(X_4)}$ are dependent if and only if
\[
b_{1,4}c_{1,5}-b_{1,5}c_{1,4}=0.
\]

As in Lemma \ref{forsigma2}, we construct a morphism of vector bundles whose vanishing locus is the locus where $L_{1,4}|_{V(X_4)}$ and $L_{1,5}|_{V(X_4)}$ are dependent. From the HN filtration and the corresponding filtration on $\wedge^2 \V(\vec{f})$, we have a surjection
\begin{equation} \label{vftok}
\wedge^2 \V(\vec{f})\rightarrow K,
\end{equation}
where $K$ is a vector bundle admitting a filtration
\[
0\rightarrow \pi^*M(f_4)\otimes \pi^*T(f_1)\rightarrow K\rightarrow \wedge^2 W_2\rightarrow 0.
\]
Tensoring \eqref{vftok} with the map $\V(\vec{e})\otimes \det \V(\vec{e})^{\vee}\rightarrow W_1\otimes \det \V(\vec{e})^{\vee}$, we obtain a surjection
\begin{equation}\label{tokw}
\wedge^2 \V(\vec{f})\otimes \V(\vec{e})\otimes \det \V(\vec{e})^{\vee} \rightarrow K\otimes W_1\otimes \det \V(\vec{e})^{\vee}.
\end{equation}
By cohomology and base change and the assumptions on the splitting types $\vec{e}$ and $\vec{f}$, the $\pi$ push forward of \eqref{tokw} is given by
\[
\pi_*(\wedge^2 \V(\vec{f})\otimes \V(\vec{e})\otimes \det \V(\vec{e})^{\vee})\rightarrow R\otimes \det R^{\vee}\otimes L^{\vee}\otimes  S^{\vee}\otimes M\otimes T.
\]
This map corresponds to projection onto the tuple of coefficients $(b_{1,4},c_{1,4},b_{1,5},c_{1,5})$. Since $R$ has rank $2$, we have $R \otimes \det R^\vee \cong R^\vee$.
The section obtained from the above map can be identified with a morphism
\[
 R\otimes L\otimes S\rightarrow M\otimes T.
\]
The associated determinant morphism
\[
\det R \otimes L^{\otimes 2}\otimes S^{\otimes 2}
\rightarrow
\det M\otimes T^{\otimes 2}
\]
vanishes precisely when $b_{1,4}c_{1,5}-b_{1,5}c_{1,4}=0$. 
Since this quantity is non-vanishing on $[U'/G]$, we obtain the relation
\begin{equation}\label{relation2}
    v^*(2t+m_1-2s-2\ell-r_1)=0.
\end{equation}

We augment the matrix \eqref{mat2} by the relations \eqref{relation1}, \eqref{relation2} to obtain
\begin{equation} \label{matrixaug1}
\left(\small\begin{matrix}
 0\\
 0\\
 a_1 \\
 a_2' \\
 b_1 \\
 b_2' \\
\end{matrix} \right)=
\left(\small\begin{matrix}
0 & -2 & -2 & 0 & 1 & 1\\
-2 & -1 & -2 & 2 & 1 & 0\\
1 & 1 & 1  & 0 & 0 & 0 \\
2e_2+e_4 & e_1+e_2+e_4 & e_1+2e_2 & 0 & 0 & 0 \\
0 & 0 & 0 & 1 & 1 & 1 \\
0 & 0 & 0 & 2f_4+2f_2 & f_4+2f_2+f_1 & 2f_4+f_2+f_1
\end{matrix} \right)
\left(\small \begin{matrix} \ell \\ r_1 \\ s \\ t \\ m_1 \\ n_1 \end{matrix}\right).
\end{equation}
The determinant of this matrix is the quantity in part \eqref{det3} of the statement of the lemma. It is non-vanishing by assumption, so on $[U'/G]$, the classes $\ell, r_1, s, t, m_1, n_1$ are expressible in terms of the CE classes.

Besides products of codimension $1$ classes, $A^*(BG)$ has $4$ codimension $2$ generators: $c_2, r_2, m_2, n_2$. Using the splitting principle and the HN filtrations, we obtain the following expression for $a_2$:
\[
a_2=-(2e_1e_2+e_2^2+e_1
      e_4+2e_2e_4)c_2+\ell r_1+\ell s+r_1s+r_2.
\]
It follows that $r_2$ is expressible in terms of CE classes. Similarly, we obtain the following expressions for $b_2$ and $b_3'$.
\begin{equation*}
    \begin{split}
       b_2&=-(f_4^2+4f_4
      f_2+f_2^2+2f_4f_1+2f_2f_1)c_2+tm_1+tn_1+ m_1n_1+m_2+n_2\\
        b_3'&=-(2f_4^2f_2+2f_4f_2^2+f_4^2f_1+4f_4f_2f_1+f_2^2f_1)c_2+(f_4+2f_2)tm_1+(2f_4+f_2)tn_1+ \\ &\qquad (f_4+f_2+f_1)m_1n_1+(2f_2+f_1)m_2+(2f_4+f_1)n_2.
    \end{split}
\end{equation*}
Because $f_4\neq f_2$, both $m_2$ and $n_2$ are expressible in terms of CE classes.
\end{proof}

\subsection{Genus $7$} \label{sec57}
Applying the constraints in \eqref{5eq1} -- \eqref{beforeg26}, one obtains a stratification of $\H_{5,7}$ based on the allowable splitting types of $\E$ and $\F$. This stratification was obtained by Schreyer \cite[p. 133]{S}, and we translate it here into our notation. The claimed special linear series (which are also listed in Schreyer's table) can be seen from \eqref{ourg26} and \eqref{extrapencil}. \begin{lem}[Schreyer]
Let $g = 7$.
There are $5$ allowed pairs of splitting types for the bundles $E$ and $F$. They give rise to the following stratification of $\H_{5,7}$:
\begin{enumerate}
    \item[($\Psi_0$)] $E = (2, 3, 3, 3), F = (4, 4, 4, 5, 5)$: the general stratum.
    \item[($Z_1$)] $E = (2, 2, 3, 4), F = (4, 4, 4, 5, 5)$: such curves possess a $g^1_4$.
    \item[($Z_2$)] $E = (2, 3, 3, 3), F = (3, 4, 5, 5, 5)$: such curves possess a $g^2_6$.
    \item[($Z_3$)] $E = (2, 2, 3, 4), F = (3, 4, 4, 5, 6)$: such curves possess a $g^2_6$.
    \item[($Z_4$)] $E = (2, 3, 3, 3), F = (3, 3, 5, 5, 6)$: such curves possess a $g^1_3$.
\end{enumerate}
\end{lem}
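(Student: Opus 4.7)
The plan has two parts: enumerate all pairs $(\vec{e}, \vec{f})$ of splitting types satisfying \eqref{5eq1}--\eqref{beforeg26} for $g = 7$, then identify the special linear series stratum by stratum.

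For the enumeration, specializing to $g = 7$ gives $\sum e_i = 11$ and $\sum f_i = 22$. The bounds $\tfrac{11}{10} \leq e_1 \leq \tfrac{11}{4}$ force $e_1 = 2$, and $e_4 \leq \tfrac{22}{5}$ gives $e_4 \leq 4$. Together with $e_1 \leq e_2 \leq e_3 \leq e_4$, this leaves only $\vec{e} \in \{(2,3,3,3),\, (2,2,3,4)\}$. For each such $\vec{e}$, a finite case check of $\vec{f}$ satisfying $\sum f_i = 22$, $f_5 \leq 2e_4$, and the pairwise inequalities produces exactly the five listed strata.

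For the special linear series, the cleanest case is $Z_4$: every coefficient of $L_{1,2}$ has degree $f_1 + f_2 + e_k - 11 \leq -2$, forcing $L_{1,2} \equiv 0$; since $f_2 - f_1 = 0$, \eqref{extrapencil} yields a $g^1_3$. For $Z_2$, all $a_{1,j}$ have negative degree and vanish, so $Q_2, \ldots, Q_5$ lack any $X_1^2$ term while $Q_1|_{V(X_2,X_3,X_4)}$ is a nonzero degree-$1$ polynomial on the canonically contracted section $V(X_2, X_3, X_4) \cong \pp^1$ (contracted because $e_1 = 2$). Hence $C$ meets this section in a single point whose canonical image lies in the span of every fiber of $\alpha$, and geometric Riemann--Roch delivers a $g^2_6$. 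For $Z_3$, the same idea applies to the surface $V(X_3, X_4) \cong \pp^1 \times \pp^1$, which is canonically contracted to a line $\ell \subset \pp^{g-1}$ because $e_1 = e_2 = 2$; a bidegree bookkeeping on the Pfaffian quadrics shows $C \cap V(X_3, X_4)$ is a single point whose image lies on $\ell$, and $\ell$ itself is contained in the span of every canonical fiber, again producing a $g^2_6$.

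The subtlest case is $Z_1$: no $L_{i,j}$ is forced to vanish in a way that triggers \eqref{extrapencil}, and $C$ does not automatically meet any canonically contracted sub-scroll. Instead, one exploits the degenerate scroll structure directly. The quotient $E^\vee \twoheadrightarrow \O(-2)^{\oplus 2}$ gives a rational projection $\pp E^\vee \dashrightarrow \pp(\O(-2)^{\oplus 2}) \cong \pp^1 \times \pp^1$, defined away from the codimension-$2$ sub-scroll $\pp(\O(-4) \oplus \O(-3)) \cong \mathbb{F}_1$, which $C$ generically misses. Composing with projection onto the second $\pp^1$ factor produces a new pencil on $C$; a bidegree computation in Schreyer's scroll-theoretic framework \cite[Theorem 5.7]{S} shows this pencil has degree $4$. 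This bidegree calculation is the main technical obstacle, since it requires the full scrollar-invariant machinery rather than the elementary coefficient-vanishing arguments that suffice for the other strata.
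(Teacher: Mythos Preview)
Your enumeration of allowed $(\vec{e},\vec{f})$ pairs is correct and matches the paper's approach (which simply cites Schreyer's table). Your arguments for $Z_4$ (via \eqref{extrapencil}) and for $Z_2, Z_3$ (via direct intersection with the contracted sub-scrolls $V(X_2,X_3,X_4)$ and $V(X_3,X_4)$) are valid and in fact more explicit than what the paper provides; note that the hypotheses of \eqref{ourg26} as stated do not literally apply to $Z_2$ or $Z_3$, so your direct bidegree computations are a genuine improvement.

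The argument for $Z_1$, however, contains a real error. The projection you describe---from the quotient $E^\vee \twoheadrightarrow \O(-2)^{\oplus 2}$, composed with the fiber coordinate $[X_1:X_2]$---produces the pencil $|X_1,X_2|$ on $C$. Since $X_1,X_2$ are sections of $\O_{\pp E^\vee}(1)\otimes\gamma^*\O(-2)$, their restriction to $C$ lies in $|\omega_\alpha \otimes \alpha^*\O(-2)|$, which has degree $22-10=12$; and since $C$ generically avoids the center $V(X_1,X_2)=\pp(\O(-3)\oplus\O(-4))$ (as you correctly note), there are no base points. So this is a $g^1_{12}$, not a $g^1_4$. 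Moreover, Schreyer's Theorem~5.7, which underlies \eqref{extrapencil}, requires $L_{1,2}=0$; on $Z_1$ the coefficients $c_{1,2},d_{1,2}$ have degrees $0$ and $1$ and do not vanish, nor can $L_{1,2}$ be forced to zero by the $\GL_3$-action on the $\O(4)^{\oplus 3}$ summand of $F$ (the three sections $L_{1,2},L_{1,3},L_{2,3}$ generically span the $3$-dimensional space $H^0(E(-3))$). The paper itself does not prove this case either---it cites Schreyer's table directly---so the gap is not yours alone, but the specific construction you sketch is not the mechanism. The actual $g^1_4$ on $Z_1$ arises from a more delicate determinantal-surface argument in Schreyer's framework that is not captured by the naive projection.
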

As our labeling suggests,
by a happy coincidence, all strata outside of the ``good open" $\Psi$ actually lie inside $\beta^{-1}(\M_7^4)$.
\begin{cor} \label{7pent}
The Chow ring of $\H_{5, 7} \smallsetminus \beta^{-1}(\M_7^4)$ is generated by the restrictions of tautological classes. Hence, all classes supported on $\M_7 \smallsetminus \M_7^4$ are tautological.
\end{cor}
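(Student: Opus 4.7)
The plan is to verify that on the complement of $\beta^{-1}(\M_7^4)$, only the generic stratum $\Psi_0$ survives, and this stratum lies inside the good open $\Psi$ from Proposition \ref{Hprime}. Concretely, I would first check which of the strata $Z_1, Z_2, Z_3, Z_4$ lie in $\beta^{-1}(\M_7^4)$. The stratum $Z_1$ clearly does, since covers there admit an explicit $g^1_4$ by \eqref{extrapencil} (here $f_2 - f_1 = 0$ and the relevant coefficient is a degree $0$ polynomial on $\pp^1$). The stratum $Z_4$ has a $g^1_3$, hence gonality at most $3 < 4$. For $Z_2$ and $Z_3$, a $g^2_6$ is produced by \eqref{ourg26}; in genus $7$, any $g^2_6$ gives gonality at most $4$ (a smooth plane sextic has arithmetic genus $10$, so a genus $7$ curve mapping birationally to a plane sextic has at least three nodes, and projection from any singular point of the image produces a $g^1_4$; the remaining cases of the $g^2_6$ factor through a lower gonality map). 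Thus $Z_1, Z_2, Z_3, Z_4 \subseteq \beta^{-1}(\M_7^4)$.

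Next, I would verify that $\Psi_0 \subseteq \Psi$. From the defining inequality \eqref{good}, membership in $\Psi$ for $g=7$ is $e_1 + f_1 + f_2 \geq 10$, and on $\Psi_0$ we have $2 + 4 + 4 = 10$. Hence
\[
\H_{5,7} \smallsetminus \beta^{-1}(\M_7^4) \subseteq \Psi_0 \subseteq \Psi.
\]
By Proposition \ref{Hprime}(2), the Chow ring of $\Psi$ is generated by the restrictions of CE classes, and by Lemma \ref{CEinR} the CE classes are tautological. Restricting further to the open $\H_{5,7} \smallsetminus \beta^{-1}(\M_7^4)$, we conclude $A^*(\H_{5,7} \smallsetminus \beta^{-1}(\M_7^4))$ is generated by tautological classes.

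Finally, I would invoke Theorem \ref{pushforward} with $k=5$ to push forward: the image of the tautological ring of $\H_{5,7} \smallsetminus \beta^{-1}(\M_7^4)$ under $\beta'_*$ lies in $R^*(\M_7 \smallsetminus \M_7^4)$, and $\beta'_*$ is surjective onto $A^*(\M_7^5 \smallsetminus \M_7^4) = A^*(\M_7 \smallsetminus \M_7^4)$ (since the generic genus $7$ curve is pentagonal). Combined with the known tautological generation of classes supported on $\M_7^4$ (assembled from Lemma \ref{H47} and \eqref{Mg3}), this would yield the full statement. The essentially nonexistent obstacle here — in contrast with $g=8,9$ — is that none of the ``bad'' strata outside $\Psi$ survive in $\H_{5,7} \smallsetminus \beta^{-1}(\M_7^4)$, so no analogue of Section \ref{stratasec} is actually required; the only substantive point to verify carefully is the $g^2_6 \Rightarrow g^1_4$ implication for the strata $Z_2$ and $Z_3$.
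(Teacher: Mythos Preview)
Your proposal is correct and follows essentially the same approach as the paper: show that each $Z_i$ lies in $\beta^{-1}(\M_7^4)$ (immediate for $Z_1, Z_4$; for $Z_2, Z_3$ via the $g^2_6 \Rightarrow$ gonality $\leq 4$ argument using plane sextics), so that the complement of $\beta^{-1}(\M_7^4)$ sits inside $\Psi_0 = \Psi$, then apply Proposition~\ref{Hprime} and Theorem~\ref{pushforward}. The only minor slackness is your phrase ``the remaining cases of the $g^2_6$ factor through a lower gonality map'': the degree-$2$-onto-a-cubic case does not literally factor through a $g^1_{\leq 3}$, but composing with projection of the cubic from a point yields a $g^1_4$, which is what you need.
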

\begin{proof}
It this case, we have $\Psi = \Psi_0$. Applying Proposition \ref{Hprime}, it suffices to show that all other strata $Z_i$ are contained in $\beta^{-1}(\M_7^4)$. This follows immediately for $Z_1$ and $Z_4$.
Suppose that a $7$ curve $C$ possesses a $g^2_6$. Degree $6$ plane curves have arithmetic genus $10$. If the $g^2_6$ sends $C$ birationally onto its image, then the image must have a double point (or worse). Projection from such a point gives a $g^1_4$ (or $g^1_k$ for $k < 4$). Otherwise, the $g^2_6$ sends $C$ with degree three onto a conic (so $C$ has a $g^1_3$) or with degree two onto a cubic. Every cubic admits a degree $2$ map to $\pp^1$ (by projecting from a point) so composing these two degree $2$ maps, we see that $C$ has a $g^1_4$. Thus, $Z_2$ and $Z_3$ are also contained in $\beta^{-1}(\M_7^4)$.
\end{proof}

Combining Corollary \ref{7pent} with Lemma \ref{H47}
completes the proof of Theorem \ref{main}. There is still some work to do in genus $8$ and $9$.

\subsection{Genus $8$} \label{sec58}
The constraints \eqref{5eq1}--\eqref{beforeg26} from the beginning of the section give a stratification of $\H_{5,8}$, which was first observed by Schreyer \cite[p. 133]{S}. The claimed linear series can be seen from \eqref{ourg26} and \eqref{extrapencil}. The codimensions of strata are determined by \eqref{codimeq5}.
\begin{lem}[Schreyer]
Let $g = 8$.
There are $7$ allowed pairs of splitting types for the bundles $E$ and $F$. They give rise to the following stratification of $\H_{5,8}$:
\begin{enumerate}
    \item[($\Psi_0$)] $E = (3, 3, 3, 3), F = (4, 5, 5, 5, 5)$: the general stratum.
    \item[($\Psi_1$)] $E = (2, 3, 3, 4), F = (4, 5, 5, 5, 5)$: codimension $1$. 
    \item[($\Sigma_2$)] $E = (2, 3, 3, 4), F = (4, 4, 5, 5, 6)$: codimension $2$. 
    \item[($Z_3$)] $E = (3, 3, 3, 3), F = (4, 4, 5, 5, 6)$: such curves possess a $g^1_4$.
    \item [($Z_4$)] $E = (2, 2, 4, 4), F = (4, 4, 4, 6, 6)$: such curves possess a $g^1_4$.
    \item [($Z_5$)] $E = (2, 3, 3, 4), F = (3, 4, 5, 6, 6)$: such curves possess a $g^2_6$.
    \item [($Z_6$)] $E = (3, 3, 3, 3), F = (3, 3, 6, 6, 6)$: such curves possess a $g^1_3$.
\end{enumerate}
\end{lem}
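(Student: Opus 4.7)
The plan is to enumerate the allowed pairs $(\vec{e}, \vec{f})$ using the constraints \eqref{5eq1}--\eqref{beforeg26}, and then verify the claimed linear series on each $Z_i$ via the criteria \eqref{ourg26} and \eqref{extrapencil}. First, from $\sum e_i = g+4 = 12$, together with the bounds $\lceil (g+4)/10 \rceil = 2 \leq e_1 \leq \lfloor (g+4)/4 \rfloor = 3$ and $e_4 \leq \lfloor 2(g+4)/5 \rfloor = 4$, one sees $\vec{e}$ must be one of $(2,2,4,4)$, $(2,3,3,4)$, or $(3,3,3,3)$. For each such $\vec{e}$, I would enumerate $\vec{f} = (f_1 \leq \cdots \leq f_5)$ satisfying $\sum f_i = 24$, $f_5 \leq 2e_4$, and the seven lower-bound inequalities $f_i + f_j + e_k \geq g+4$ from \eqref{5eq1}--\eqref{beforeg26}. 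A routine finite check produces exactly the seven pairs in the statement; the codimensions of $\Psi_0, \Psi_1, \Sigma_2$ are then read off from \eqref{codimeq5}.

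It remains to identify the special linear series on the four closed strata. For $Z_3$ and $Z_6$, every coefficient of the entry $L_{1,2}$ has degree $f_1 + f_2 + e_k - 12 \leq -1$, so $L_{1,2} \equiv 0$ identically. Since $f_2 - f_1 = 0$ in both cases, \eqref{extrapencil} with $k = 0$ produces a $g^1_{f_1}$, which is a $g^1_4$ on $Z_3$ and a $g^1_3$ on $Z_6$. For $Z_5$, I would directly verify the hypotheses of \eqref{ourg26}: $e_1 = 2$, $e_1 + f_1 + f_5 = 11 < 12$, $e_1 + f_2 + f_3 = 11 < 12$, and $2e_1 - f_1 = 1$, producing the claimed $g^2_6$.

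The main obstacle is $Z_4$. Here $f_1 + f_2 + e_k - 12 = e_k - 4 \in \{-2,-2,0,0\}$, so the coefficients of $L_{1,2}$ on $X_3$ and $X_4$ are nonzero constants that do not vanish a priori, and \eqref{extrapencil} is not immediately applicable. My plan is to exploit the repeated parts $e_1 = e_2 = 2$ and $f_1 = f_2 = f_3 = 4$: the $\GL_3$-factor of $\Aut(\vec{f})$ acts on the top-left $3 \times 3$ skew submatrix of $M$, and combined with the $\GL_2$-factor of $\Aut(\vec{e})$ acting on $(X_3, X_4)$, a linear-algebra argument should show that every cover in $Z_4$ is isomorphic within its stratum to one with $L_{1,2} \equiv 0$. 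Once this normal form is achieved, \eqref{extrapencil} with $k = f_2 - f_1 = 0$ produces the $g^1_4$. This change-of-basis normalization is essentially the substance of Schreyer's analysis in \cite[\S 5]{S}, which I would cite to complete the verification and thereby recover his classification.
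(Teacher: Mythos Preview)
Your proposal is correct and matches the paper's approach, which likewise reduces to the constraints \eqref{5eq1}--\eqref{beforeg26} for the enumeration, \eqref{codimeq5} for the codimensions, and \eqref{ourg26}--\eqref{extrapencil} for the linear series, with a citation to Schreyer for the details. In fact you are more careful than the paper on one point: for $Z_4$ neither \eqref{ourg26} nor \eqref{extrapencil} applies as stated (since $e_1+f_1+f_5=12$ and $L_{1,2}$ is not forced to vanish), and you correctly isolate the extra change-of-basis step. Your sketch there is sound: the three entries $L_{1,2},L_{1,3},L_{2,3}$ lie in the $2$-dimensional space $\langle X_3,X_4\rangle$, so the map $\wedge^2(\O(4)^{\oplus 3})\to\langle X_3,X_4\rangle$ has nontrivial kernel; any nonzero kernel element of $\wedge^2 k^3$ is decomposable, hence a $\GL_3$-change of basis on the $\O(4)^{\oplus 3}$ summand of $F$ puts it in the $f_1\wedge f_2$ slot, giving $L_{1,2}=0$ and then a $g^1_4$ via \eqref{extrapencil}.
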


This time there is a stratum, $\Sigma_2$, which lives outside $\Psi$ and not inside $\beta^{-1}(\M_8^4)$. Nevertheless, using arguments similar to Lemmas \ref{H47} and \ref{H48}, we have the following.
\begin{lem} \label{H58}
The Chow ring $A^*(\H_{5,8} \smallsetminus \beta^{-1}(\M_8^4))$ is generated by CE classes. Hence, all classes supported on $\M_8 \smallsetminus \M_8^4$ are tautological.
\end{lem}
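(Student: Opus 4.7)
The plan is to execute Strategy (1)--(2) of Section 5.1 by locating all strata outside $\Psi = \Psi_0 \cup \Psi_1$ in the list above and handling them one at a time, starting from the bottom of our partial order.

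First, I would dispose of $Z_3, Z_4, Z_5, Z_6$ by showing they are all contained in $\beta^{-1}(\M_8^4)$. For $Z_3$ and $Z_4$ this is immediate from the table (their curves possess a $g^1_4$). For $Z_6$, trigonal curves composed with a degree $2$ self-map of $\pp^1$ give a $g^1_4$. For $Z_5$, whose curves possess a $g^2_6$, the same argument used in Corollary \ref{7pent} applies essentially verbatim: a $g^2_6$ on a genus $8$ curve is either birational onto a plane sextic (whose arithmetic genus is $10$, hence its image has at least two nodes, and projection from a node yields a $g^1_4$), or presents the curve as a double cover of a cubic (which has a $g^1_2$, giving a $g^1_4$), or as a triple cover of a conic (which already has a $g^1_3$). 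After excising these four strata, the only remaining strata of $\H_{5,8} \smallsetminus \beta^{-1}(\M_8^4)$ are $\Psi_0, \Psi_1,$ and $\Sigma_2$.

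Next, Proposition \ref{Hprime} shows that $A^*(\Psi) = A^*(\Psi_0 \cup \Psi_1)$ is generated by CE classes, so only $\Sigma_2$ remains to be handled. The key observation, analogous to the coincidences in Lemmas \ref{H47} and \ref{H48}, is that $\Sigma_2$ can be realized as a \emph{single} splitting locus of $\F$ in its expected codimension, once we have excised $\beta^{-1}(\M_8^4)$. Explicitly, the splitting types $\vec{f}'$ satisfying $\vec{f}' \leq (4,4,5,5,6)$ and appearing in our table are precisely $(4,4,5,5,6), (4,4,4,6,6), (3,4,5,6,6),$ and $(3,3,6,6,6)$, corresponding to $\Sigma_2, Z_4, Z_5, Z_6$. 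After excising $\beta^{-1}(\M_8^4)$, the last three are gone, so $\Sigma_2 = \Sigma_{(4,4,5,5,6)}(\F) \cap (\H_{5,8} \smallsetminus \beta^{-1}(\M_8^4))$. Since $\codim \Sigma_2 = 2 = h^1(\pp^1, \E nd(\O(4,4,5,5,6)))$, Lemma \ref{sigmaCE} (for $\F$ instead of $\E$) shows that the fundamental class $[\overline{\Sigma}_2]$ in $A^*(\H_{5,8} \smallsetminus \beta^{-1}(\M_8^4))$ is expressible in CE classes.

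It remains to check that the Chow ring of the locally closed stratum $\Sigma_2' := \Sigma_2 \smallsetminus \beta^{-1}(\M_8^4)$ is generated by restrictions of CE classes. For this I would verify that the pair $\vec{e} = (2,3,3,4), \vec{f} = (4,4,5,5,6)$ satisfies the five conditions of Lemma \ref{shape1}: the splitting inequalities (1)--(2) are clear, and we compute $e_4+f_1+f_2 = 12 = g+4$, $e_1+f_3+f_4 = 12 = g+4$, and the determinantal quantity in (5) equals $-3 \neq 0$. Lemma \ref{shape1} then gives the desired generation. By the push--pull formula, every class supported on $\Sigma_2$ in $\H_{5,8} \smallsetminus \beta^{-1}(\M_8^4)$ is tautological, so $A^*(\H_{5,8} \smallsetminus \beta^{-1}(\M_8^4))$ is generated by CE classes. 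Applying Theorem \ref{pushforward} pushes this down to $\M_8 \smallsetminus \M_8^4$, and combining with Lemma \ref{H48} (which handles $\M_8^4$) yields the second claim. The main conceptual obstacle is the realization of $\Sigma_2$ as a single $\vec{f}$-splitting locus in expected codimension after the tetragonal excision; all the subsequent work is mechanical verification using the tools already assembled.
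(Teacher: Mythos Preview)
Your proof is correct and follows essentially the same approach as the paper's: excise $Z_3,\ldots,Z_6$ into $\beta^{-1}(\M_8^4)$, realize $\Sigma_2$ as $\Sigma_{(4,4,5,5,6)}(\F)$ in its expected codimension $2$, apply Lemma~\ref{sigmaCE} for the fundamental class, and Lemma~\ref{shape1} for the Chow ring of the stratum. Two small slips to fix: composing a $g^1_3$ with a degree $2$ self-map of $\pp^1$ yields a $g^1_6$, not a $g^1_4$ (the correct justification is simply $\M_8^3 \subset \M_8^4$); and your enumeration of strata with $\vec{f}' \leq (4,4,5,5,6)$ omits $Z_3$, which also has $\vec{f} = (4,4,5,5,6)$, though this is harmless since $Z_3$ is already excised.
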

\begin{proof}
By a similar argument to the proof of Corollary \ref{7pent}, every genus $8$ curve possessing a $g^2_6$ also possesses a $g^1_k$ for $k \leq 4$. In particular,
we see that $Z=Z_3\cup Z_4\cup Z_5\cup Z_6$ is contained in $\beta^{-1}(\M^4_8)$. 
Next, on $\H_{5,8}\smallsetminus Z$, we have $\Sigma_2=\Sigma_{(4,4,5,5,6)}(\F).$
Hence, we have
\[
\codim \Sigma_2=2=h^1(\p^1,\E nd(\O(4,4,5,5,6))).
\]
Thus, by Lemma \ref{degenformulas}, the fundamental class of $\Sigma_2$ inside $\H_{5,8}\smallsetminus \beta^{-1}(\M^4_8)$ is tautological. By Lemma \ref{shape1}, $A^*(\Sigma_2')$ is generated by tautological classes. It then follows from the push-pull formula that every class supported on $\Sigma_2'\subset \H_{5,8}\smallsetminus \beta^{-1}(\M^4_8)$ is tautological. By Proposition \ref{good}, we know $A^*(\Psi)=A^*(\Psi_0\cup \Psi_1)$ is generated by tautological classes. It follows that all of $A^*(\H_{5,8}\smallsetminus \beta^{-1}(\M^4_8))$ is generated by tautological classes.
\end{proof}
Combining Lemma \ref{H58} with Lemma \ref{H48} completes the proof of Theorem \ref{main8}. The rest of the paper will deal with the case $g=9$.

\subsection{Genus $9$} \label{sec59}
There is a similar stratificaion in genus $9$, which was given by Sagraloff \cite{Sag}. Below, we translate Sagraloff's notation into ours. The stratification can be obtained from the conditions \eqref{5eq1}--\eqref{beforeg26}, and the claimed linear series can be seen from \eqref{ourg26} and \eqref{extrapencil}. The codimensions of strata are determined by \eqref{codimeq5}.
\begin{lem}[Sagraloff]
Let $g=9$. There are $7$ allowed pairs of splitting types for the bundles $E$ and $F$. They give rise to the following stratification of $\H_{5,9}$:
\begin{enumerate}
     \item[($\Psi_0$)] $E = (3, 3, 3, 4), F = (5, 5, 5, 5, 6)$: the general stratum.
    \item[($\Psi_1$)] $E=(3,3,3,4)$, $F=(4,5,5,6,6)$: codimension $2$. 
    \item[($\Sigma_2$)] $E=(2,3,4,4)$, $F=(4,5,5,6,6)$: codimension $2$. 
    \item[($\Sigma_3$)] $E=(2,3,3,5)$, $F=(4,5,5,6,6)$: codimension $4$.
    \item[($Z_4$)] $E=(3,3,3,4)$, $F=(4,4,6,6,6)$: such curves possess a $g^1_4$.
    \item[($Z_5$)] $E=(2,3,4,4)$, $F=(4,4,5,6,7)$: such curves possess a $g^1_4$.
    \item[($Z_6$)] $E=(2,3,4,4)$, $F=(3,4,6,6,7)$: such curves possess a $g^2_6$.
\end{enumerate}
\end{lem}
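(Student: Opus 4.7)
The plan is to derive the stratification directly from the linear constraints \eqref{5eq1}--\eqref{beforeg26}, and to verify the claimed special linear series on $Z_4,Z_5,Z_6$ using \eqref{ourg26} and \eqref{extrapencil}. The enumeration is a finite case analysis, so there is no conceptual obstacle; the work is bookkeeping.

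First, for the splitting type of $E$, the sum \eqref{5eq1} together with the bounds $\lceil (g+4)/10\rceil \le e_1$ and $e_4\le\lfloor (2g+8)/5\rfloor$ leave only the four candidates $(2,2,4,5)$, $(2,3,3,5)$, $(2,3,4,4)$, $(3,3,3,4)$ for $\vec{e}$. I would eliminate $(2,2,4,5)$ by showing that no $\vec{f}$ with $\sum f_i=26$ and $f_1\le\cdots\le f_5\le 2e_4=10$ can simultaneously satisfy all of \eqref{5eq1}--\eqref{beforeg26}: the coincidence $e_1=e_2=2$ forces each of $f_1+f_5$, $f_2+f_4$, $f_2+f_5$, $f_3+f_4$ to be at least $11$, and a short case check against the sum condition together with the subbundle bounds for $F\hookrightarrow\Sym^2 E$ shows that no such $\vec{f}$ exists.

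Next, for each of the three surviving splitting types of $E$, I would enumerate all $\vec{f}$ satisfying \eqref{5eq1}--\eqref{beforeg26}, the sum condition, monotonicity, and the subbundle constraint $F\hookrightarrow\Sym^2 E$. Each case reduces to a short search indexed by $f_5$, and the outcome is: for $\vec{e}=(3,3,3,4)$ the admissible $\vec{f}$ are $(5,5,5,5,6)$, $(4,5,5,6,6)$, $(4,4,6,6,6)$; for $\vec{e}=(2,3,4,4)$ they are $(4,5,5,6,6)$, $(4,4,5,6,7)$, $(3,4,6,6,7)$; and for $\vec{e}=(2,3,3,5)$ the only admissible $\vec{f}$ is $(4,5,5,6,6)$. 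This yields exactly the seven strata $\Psi_0,\Psi_1,\Sigma_2,\Sigma_3,Z_4,Z_5,Z_6$.

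Finally, each codimension follows from \eqref{codimeq5} by decomposing $\O(\vec{e})\otimes\O(-13)\otimes\wedge^2\O(\vec{f})$ into line bundle summands and summing the resulting $h^1$-contributions; this gives $\codim\Psi_0=0$, $\codim\Psi_1=\codim\Sigma_2=2$, and $\codim\Sigma_3=4$. For the special linear series: on $Z_4$ and $Z_5$ we have $f_1=f_2=4$, and each coefficient of $L_{1,2}$ lies in $H^0(\pp^1,\O(f_1+f_2+e_i-(g+4)))$ with $f_1+f_2+e_i-(g+4)\le -1$, so $L_{1,2}\equiv 0$; applying \eqref{extrapencil} with $k=f_2-f_1=0$ produces a $g^1_4$. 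On $Z_6$ one checks $e_1=2$, $2e_1-f_1=1$, and $e_1+f_1+f_5=e_1+f_2+f_3=12<g+4$, so \eqref{ourg26} yields a $g^2_6$. The principal obstacle throughout is combinatorial: in the second step one must check many cases without overlooking a valid $\vec{f}$, and in excluding $(2,2,4,5)$ one must carefully track which of the inequalities \eqref{5eq1}--\eqref{beforeg26} become binding.
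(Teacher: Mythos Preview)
Your proposal is correct and follows precisely the approach the paper indicates: the paper does not give a detailed proof but attributes the lemma to Sagraloff and states that the stratification follows from \eqref{5eq1}--\eqref{beforeg26}, the linear series from \eqref{ourg26} and \eqref{extrapencil}, and the codimensions from \eqref{codimeq5}. Your case analysis carries this out explicitly; note that in eliminating $\vec{e}=(2,2,4,5)$ you do not actually need the extra subbundle bounds beyond $f_5\le 2e_4$, since $f_1+f_5\ge 11$ and $f_2+f_4\ge 11$ force $f_3\le 4$, hence $f_2+f_3\le 8$, contradicting $f_2+f_3+e_3\ge 13$.
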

\begin{lem} \label{H59}
The Chow ring $A^*(\H_{5,9}\smallsetminus \beta^{-1}(\M^4_9))$ is generated by tautological classes. Hence, all classes supported on $\M_9^5$ are tautological.
\end{lem}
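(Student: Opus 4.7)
The plan is to implement the strategy from Section \ref{strategysec}, starting at the bottom of the partial order and working upwards. First, I would verify that $Z_4, Z_5, Z_6 \subseteq \beta^{-1}(\M_9^4)$. For $Z_4$ and $Z_5$ this is immediate from the description of their special linear series. For $Z_6$, an argument analogous to the proof of Corollary \ref{7pent} works: a genus $9$ curve with a $g^2_6$ either maps birationally onto a plane sextic (which must have at least one double point by a genus calculation, so projection from that point yields a $g^1_4$), or maps with degree $3$ onto a conic (giving a $g^1_3$), or maps with degree $2$ onto a plane cubic (making the curve bielliptic or hyperelliptic, hence of gonality at most $4$).

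Next, working on $\H_{5,9} \smallsetminus \beta^{-1}(\M_9^4)$, I would show the fundamental class of $\overline{\Sigma}_3$ is tautological. The stratum $\Sigma_3$ coincides with the single splitting locus $\Sigma_{(2,3,3,5)}(\E)$ on this open, since $(2,3,3,5)$ is minimal among the allowed $\vec{e}$'s remaining after excising $Z$. A direct calculation gives $h^1(\p^1, \E nd(\O(2,3,3,5))) = 4 = \codim \Sigma_3$, so $\Sigma_3$ occurs in the expected codimension and Lemma \ref{sigmaCE} applies. A numerical check shows $\vec{e} = (2,3,3,5)$ and $\vec{f} = (4,5,5,6,6)$ satisfy the hypotheses of Lemma \ref{forsigma3}; in particular the determinant in condition (5) evaluates to $-9 \neq 0$. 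Hence $A^*(\Sigma_3')$ is generated by restrictions of CE classes, and the push-pull formula shows that every class supported on $\Sigma_3$ inside $\H_{5,9} \smallsetminus \beta^{-1}(\M_9^4)$ is tautological.

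Continuing up the partial order, on the further open complement of $\overline{\Sigma}_3$, the stratum $\Sigma_2$ coincides with the single splitting locus $\Sigma_{(2,3,4,4)}(\E)$. A calculation gives $h^1(\p^1, \E nd(\O(2,3,4,4))) = 2 = \codim \Sigma_2$, and the pair $\vec{e}=(2,3,4,4), \vec{f}=(4,5,5,6,6)$ satisfies the hypotheses of Lemma \ref{forsigma2}: the determinant in condition (5) equals $-1 \neq 0$. As before, Lemma \ref{sigmaCE} gives that the fundamental class of $\overline{\Sigma}_2$ is tautological modulo classes supported on $\Sigma_3$, and Lemma \ref{forsigma2} gives that $A^*(\Sigma_2')$ is generated by restrictions of CE classes. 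Together with the previous step, the push-pull formula yields that all classes supported on $\Sigma_2$ in $\H_{5,9}\smallsetminus\beta^{-1}(\M_9^4)$ are tautological.

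Finally, the complement $\H_{5,9} \smallsetminus (\beta^{-1}(\M_9^4) \cup \overline{\Sigma}_2 \cup \overline{\Sigma}_3) = \Psi_0 \cup \Psi_1 = \Psi$ has its Chow ring generated by CE classes by Proposition \ref{Hprime}. Assembling the three steps, $A^*(\H_{5,9} \smallsetminus \beta^{-1}(\M_9^4))$ is generated by tautological classes; Theorem \ref{pushforward} then implies every class supported on $\M_9^5 \smallsetminus \M_9^4$ is tautological on $\M_9 \smallsetminus \M_9^4$. Combined with the conclusion of Section \ref{tet9} that classes supported on $\M_9^4$ are tautological, the lemma follows. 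The main content beyond the framework of Section \ref{stratasec} is really the fortuitous numerical coincidence that the two bad pair-strata happen to each be realized as a single splitting locus of $\E$ occurring in its expected codimension, and that the determinant conditions of Lemmas \ref{forsigma2} and \ref{forsigma3} are satisfied for the relevant splitting types; I do not expect a serious obstacle beyond these verifications.
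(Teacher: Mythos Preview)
Your proposal is correct and follows essentially the same approach as the paper's proof: excise $Z_4 \cup Z_5 \cup Z_6$ into $\beta^{-1}(\M_9^4)$, then handle $\Sigma_3$ and $\Sigma_2$ in turn as single splitting loci of $\E$ occurring in the expected codimension, applying Lemmas \ref{forsigma3} and \ref{forsigma2} respectively, and finish with Proposition \ref{Hprime} on $\Psi$. Your additional explicit verification of the determinant conditions ($-9$ and $-1$) and the extra detail in the $g^2_6$ argument for $Z_6$ are welcome but not substantively different from what the paper does.
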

\begin{proof}
First, we see that $Z=Z_4\cup Z_5\cup Z_6$ is contained in $\beta^{-1}(\M^4_9)$. Then, note that on $\H_{5,9}\smallsetminus Z$, we have that $\Sigma_3=\Sigma_{(2,3,3,5)}(\E)$. Moreover, we see that
\[
\codim \Sigma_3=4=h^1(\p^1,\E nd(\O(2,3,3,5))).
\]
By Lemma \ref{degenformulas}, it follows that the fundamental class of $\Sigma_3$ is tautological. By Lemma \ref{forsigma3}, we see $A^*(\Sigma_3')$ is generated by tautological classes, so by the push-pull formula, every class supported on $\Sigma_3'\subset \H_{5,9}\smallsetminus \beta^{-1}(\M^4_9)$ is tautological.

Similarly, on $\H_{5,9}\smallsetminus \beta^{-1}(\M^4_9)$, we have $\Sigma_2=\Sigma_{(2,3,4,4)}(\E)$, and
\[
\codim \Sigma_2=2=h^1(\p^1,\E nd(\O(2,3,4,4))).
\]
Applying Lemma \ref{degenformulas}, the fundamental class of $\Sigma_2$ is tautological. Applying Lemma \ref{forsigma2}, we see that every class supported on $\Sigma_2'\subset \H_{5,9}\smallsetminus \beta^{-1}(\M^4_9)$ is expressible in terms of tautological classes. 
By Proposition \ref{good}, $A^*(\Psi_0\cup \Psi_1)$ is generated by tautological classes. Therefore, $A^*(\H_{5,9}\smallsetminus \beta^{-1}(\M^4_9))$ is generated by tautological classes. 
\end{proof}

\section{The General Genus 9 Curve}\label{Gen}
Mukai \cite{Muk} completely characterized canonical curves of genus $9$ without a $g^1_5$ as linear sections of a symplectic Grassmannian. We briefly recall his construction here.
Let $V$ be a six-dimensional vector space equipped with a symplectic form $\sigma$. The \emph{symplectic Grassmannian} $Sp(3, V) \subset G(3, V)$ parametrizes three-dimensional \emph{symplectic subspaces} $U \subset V$, i.e. subspaces such that $\sigma|_{U} = 0$.
The Grassmannian $G(3, V)$ embeds in $\pp(\wedge^3 V) \cong \pp^{19}$ via the Pl\"ucker embedding. Contracting with the symplectic form gives a map
\[\sigma^\sharp: \wedge^3 V \to V,\]
and the symplectic Grassmannian is the intersection of $G(3, V)$ with $\pp(\ker \sigma^\sharp) \subset \pp(\wedge^3 V)$. Note that the subspace $\pp(\ker \sigma^\sharp) \subset \pp(\wedge^3 V)$ corresponds to subspace of symmetric matrices in Mukai's description of the Pl\"ucker embedding \cite[p. 1544]{Muk}.

Recall that we use the subspace convention for Grassmannians and projective spaces. For example, given a globally generated rank $3$ vector bundle $E$ on $C$, the evaluation map $H^0(E) \to E$ determines a map $C \to G(3, H^0(E)^\vee)$ by considering the dual $E^\vee \rightarrow H^0(E)^\vee$. Similarly, the canonical embedding sends a curve $C \hookrightarrow \pp (H^0(\omega_C)^\vee)$.
The following is an amalgamation of Mukai's Theorems A, B, and C of \cite{Muk}.

\begin{thm}\label{Mukai}
Suppose $C$ is a smooth curve of genus $9$ with no $g^1_5$. Then there is a unique rank $3$ vector bundle $E$ on $C$ with the following properties:
\begin{enumerate}
    \item $\det E\cong \omega_C$.
    \item $h^0(C, E)=6$.
    \item $E$ is globally generated and for every $3$-dimensional subspace $U\subset H^0(E)$, the evaluation homomorphism $U\otimes \O_{C}\rightarrow E$ is injective or everywhere of rank $2$.
\end{enumerate}
The bundle $E$ induces a morphism $C\rightarrow G(3,H^0(E)^\vee)$ whose image is contained in the symplectic Grassmannian $Sp(3,H^0(E)^\vee) \subset G(3, H^0(E)^\vee)$. The curve $C$ in its canonical embedding is obtained by intersecting $Sp(3,H^0(E)^\vee)\subset \pp(\ker \sigma^\sharp) \cong \p^{13}$ with an eight dimensional linear subspace $\p^8 \subset\pp^{13}$. Such a linear space is unique up to the action of $\PGSp_6$, the subgroup of $\PGL_6$ fixing the one dimensional space spanned by a symplectic form.
\par Moreover, a canonical curve $C$ of genus $9$ is the intersection $\p^8\cap Sp(3,6)$ if and only if $C$ has no $g^1_5$.
\end{thm}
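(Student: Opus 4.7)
The plan is to break the theorem into three logically distinct pieces: (i) existence and uniqueness of the bundle $E$ satisfying (1)--(3), (ii) identification of the symplectic structure and realization of the canonical curve as a linear section, and (iii) the characterization via absence of a $g^1_5$.

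For existence, I would construct $E$ as (the dual of) a Lazarsfeld--Mukai bundle associated to a net of degree $8$. Since $\rho(9,2,8) = 0$, such nets are expected to exist, and the resulting rank $3$ bundle has $\det E \cong \omega_C$ by the construction. Riemann--Roch gives $\chi(E)=0$, so $h^0(E)=h^1(E)$, and the exact value $h^0(E) = 6$ would follow from the defining sequence together with Serre duality. The no-$g^1_5$ assumption enters to rule out pathological behavior: a destabilizing line subbundle $L \subset E$ with $h^0(L) \geq 2$ would produce a $g^1_d$ with $d \leq 5$, contradicting the hypothesis. For uniqueness, condition (3) forces semistability, and a Hom-space computation together with a Clifford-type argument shows any two such bundles must be isomorphic.

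Next, I would construct the symplectic form via multiplication of sections. Because $\wedge^2 E \cong E^\vee \otimes \det E = E^\vee \otimes \omega_C$, combining multiplication with Serre duality gives
\[\wedge^2 H^0(E) \longrightarrow H^0(\wedge^2 E) \cong H^0(E^\vee \otimes \omega_C) \cong H^1(E)^\vee = H^0(E)^\vee.\]
Setting $V := H^0(E)^\vee$, this data is equivalent to an element of $V \otimes \wedge^2 V$, which should be interpreted as the contraction $\sigma^\sharp \colon \wedge^3 V \to V$ of a symplectic form $\sigma \in \wedge^2 V^\vee$. Nondegeneracy of $\sigma$ would be forced by the rank condition in (3): a kernel of $\sigma$ would yield a subbundle of $E$ along which evaluation drops rank. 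That the Pl\"ucker image of $C$ lies in $\pp(\ker \sigma^\sharp) \cong \pp^{13}$ is immediate from the construction. Using $\wedge^3 E \cong \omega_C$, the composition of the Grassmann embedding with Pl\"ucker identifies the restriction of $\O(1)$ with $\omega_C$, so $C \hookrightarrow \pp(H^0(\omega_C)^\vee) = \pp^8$ is the canonical embedding; a dimension count then gives $C = Sp(3,V) \cap \pp^8$.

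Finally, for the characterization: the forward direction is handled by the above. For the converse, I would argue that if $C$ had a $g^1_5$, then the construction would fail at step (i) or at the nondegeneracy step in (ii). Concretely, a pencil of degree $5$ produces a line subbundle of $E$ whose evaluation on a carefully chosen $3$-dimensional subspace of $H^0(E)$ drops rank, violating condition (3); equivalently, any candidate realization $C = \pp^8 \cap Sp(3,V')$ would force $C$ to meet a Schubert subvariety in dimension too large. Uniqueness of $\pp^8$ up to $\PGSp_6$ follows from uniqueness of $E$, since two realizations differ by a linear automorphism of $V$ preserving $\sigma$ up to scalar. The main obstacle I anticipate is the detailed verification of condition (3)---in particular, ruling out the rank-two degenerations of the evaluation---which is where Mukai's original argument leans heavily on Clifford's theorem and a careful classification of possible sub-line-bundles of $E$.
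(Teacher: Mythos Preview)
The paper does not prove this theorem. It is stated explicitly as ``an amalgamation of Mukai's Theorems A, B, and C of \cite{Muk}'' and is used as a black box; the authors invoke it to set up the quotient description of $\M_9 \smallsetminus \M_9^5$ but never reprove any part of it. So there is no argument in the paper to compare your proposal against.

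That said, a brief comment on your sketch as a standalone outline of Mukai's argument: the broad architecture is right (Lazarsfeld--Mukai bundle, symplectic form from the pairing $\wedge^2 H^0(E) \to H^0(\wedge^2 E) \cong H^0(E)^\vee$, identification of the canonical model as a linear section), and this is indeed how Mukai proceeds. But several of your transitional claims are heuristic rather than arguments. In particular: (a) the jump from ``an element of $V \otimes \wedge^2 V$'' to ``the contraction $\sigma^\sharp$ of a symplectic form'' is not automatic---you need that the pairing factors through a \emph{skew} form on $V$, which Mukai deduces from a more careful analysis; (b) your nondegeneracy argument (``a kernel of $\sigma$ would yield a subbundle\ldots'') is vague and would need to be made precise via Mukai's classification of evaluation ranks; and (c) the converse direction (linear sections of $Sp(3,6)$ have no $g^1_5$) is the hardest part of Mukai's paper and your one-line treatment does not capture it---Mukai's argument there goes through a delicate study of maximal linear subspaces of $Sp(3,6)$ and is not a simple rank-drop observation. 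If you intend to include a proof rather than a citation, those three points are where the real work lies.
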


Let $\Delta\subset G(9,14) = \mathbb{G}(8, 13)$ be the locus of linear subspaces whose intersection with $Sp(3,6) \subset \pp(\ker \sigma^\sharp) = \pp^{13}$ is not a smooth genus $9$ curve.
The above theorem provides a morphism
\begin{equation} \label{phieq}
\phi:[G(9,14)\smallsetminus \Delta/\PGSp_6]\rightarrow \M_9\smallsetminus \M^5_9.
\end{equation}
We wish to show that $\phi$ is an isomorphism. The basic idea of our proof is modeled after \cite[Theorem 5.7]{PV}.
In particular, we make use of the following standard lemma, whose proof we include for completeness.

\begin{lem} \label{replem}
Let $f: \mathcal{X} \to \mathcal{Y}$ be a separated morphism of connected smooth Deligne--Mumford stacks that are of finite type over a field. 
Suppose that
\begin{enumerate}
\item the characteristic of the ground field is zero,
\item $f$ induces an isomorphism on stabilizer groups of geometric points, and
\item $f$ induces a bijection on geometric points.
\end{enumerate}
Then $f$ is an isomorphism.
\end{lem}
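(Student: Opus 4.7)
The plan is to reduce the statement to classical results about morphisms of schemes by checking in succession that $f$ is representable, flat, and a monomorphism, and then invoking that a flat monomorphism locally of finite presentation is an open immersion.

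First, condition (2) says that the relative inertia stack $I_f$ has trivial geometric fibers, which is equivalent to $f$ being representable by algebraic spaces. Condition (3) combined with the finite-type hypothesis makes every geometric fiber of $f$ zero-dimensional, so $f$ is quasi-finite. Since $\mathcal{X}$ and $\mathcal{Y}$ are connected and smooth, they are irreducible and equidimensional; bijectivity on geometric points makes $f$ dominant, so $\dim \mathcal{X} = \dim \mathcal{Y}$. Choosing a smooth atlas $V \to \mathcal{Y}$, representability yields a smooth scheme $U := \mathcal{X} \times_\mathcal{Y} V$, and the miracle flatness theorem applied to the quasi-finite morphism $U \to V$ between smooth schemes of equal dimension shows $f$ is flat.

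Next I would show that $f$ is a monomorphism, i.e.\ that the diagonal $\Delta_f \colon \mathcal{X} \to \mathcal{X} \times_\mathcal{Y} \mathcal{X}$ is an isomorphism. Since $f$ is separated, $\Delta_f$ is a closed immersion, so it suffices to verify set-theoretic surjectivity together with reducedness of $\mathcal{X} \times_\mathcal{Y} \mathcal{X}$. Unpacking the groupoid of $\bar{k}$-points of $\mathcal{X} \times_\mathcal{Y} \mathcal{X}$ using (2) and (3), every such point is isomorphic to one in the image of $\Delta_f$, giving set-theoretic surjectivity. For reducedness, the fiber product is flat over $\mathcal{X}$ of relative dimension zero, and the same groupoid analysis shows each geometric fiber is a single reduced $\bar{k}$-point with trivial stabilizer; combined with the smoothness of $\mathcal{X}$ and the characteristic-zero hypothesis, this gives reducedness. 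Hence $\Delta_f$ is an isomorphism, so $f$ is a monomorphism, and being also flat and locally of finite presentation, $f$ is an open immersion by EGA~IV.17.9.1. The bijection on points together with the connectedness of $\mathcal{Y}$ then forces $f$ to be an isomorphism.

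The step I expect to be most delicate is the reducedness of $\mathcal{X} \times_\mathcal{Y} \mathcal{X}$ along the diagonal, which is essentially equivalent to showing that each scheme-theoretic fiber of $f$ is reduced, the infinitesimal content of the geometric bijection. This is where the characteristic-zero hypothesis, the equality of tangent-space dimensions coming from smoothness, and the isomorphism on stabilizers all enter together; an equally valid alternative would be to prove $f$ is \'etale directly by a Jacobian calculation on atlases, which amounts to the same infinitesimal input.
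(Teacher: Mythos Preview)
Your outline diverges from the paper's argument and leaves a genuine gap at the step you yourself identify as delicate. The groupoid analysis using (2) and (3) shows each geometric fiber of $f$ is a single point with trivial stabilizer \emph{as a set}, but it says nothing about nilpotents; the assertion that ``each geometric fiber is a single reduced $\bar{k}$-point'' is exactly what is at stake and does not follow from (2), (3), smoothness, and flatness alone. (In characteristic $p$ the Frobenius on $\mathbb{A}^1$ satisfies all of these yet has nonreduced fibers, so characteristic zero must enter nontrivially here, not merely be invoked.) Your proposed alternative, proving \'etaleness ``by a Jacobian calculation on atlases,'' asks that $T_uU\to T_{g(u)}V$ be injective at every point, and nothing in the hypotheses forces this beyond a dense open; bridging from generic to global is precisely the missing content. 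A minor additional point: when you form $U=\mathcal{X}\times_{\mathcal{Y}}V$, representability by algebraic spaces gives only that $U$ is an algebraic space; you still need the standard fact that a quasi-finite separated algebraic space over a scheme is a scheme before invoking miracle flatness.

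The paper's proof is shorter and sidesteps both flatness and the monomorphism step. After pulling back along a smooth atlas $Y\to\mathcal{Y}$ to obtain a morphism $f'\colon X\to Y$ of smooth schemes (using the same algebraic-space-to-scheme step just mentioned), it invokes generic smoothness in characteristic zero together with the bijection on closed points to conclude that $f'$ is birational. Zariski's Main Theorem then says a birational, quasi-finite, separated morphism to a normal target is an open immersion, and bijectivity upgrades this to an isomorphism. If you try to close your gap, you will find yourself using generic smoothness to get \'etaleness on a dense open and then a degree-one argument to propagate, at which point you have essentially reproduced the paper's birational-plus-ZMT step; the miracle-flatness detour, while correct, is not needed for the conclusion.
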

\begin{proof}
Let $a: Y \to \Y$ be a connected, smooth cover by a smooth scheme $Y$. 
Let $X := \X \times_{\Y} Y$ be the fiber product, so we have a diagram
\begin{center}
\begin{tikzcd}
X \arrow{d}[swap]{b} \arrow{r}{g} & Y  \arrow{d}{a}\\
\mathcal{X} \arrow{r}[swap]{f} & \mathcal{Y}.
\end{tikzcd}
\end{center}
Suppose $x: \Spec k \to X$ is a geometric point. The stabilizer group $G_x$ of $x$ is equal to the fiber product of stabilizer groups $G_{b(x)} \times_{G_{f(b(x))}} G_{g(x)}$. But $Y$ is a scheme, so $G_{g(x)}$ is trivial. Hence, $G_x = \ker(G_{b(x)} \to G_{f(b(x)})$, which is trivial by hypothesis.
By \cite[Theorem 2.2.5]{Conrad}, it follows that $X$ is an algebraic space.
Further, the map $f': X \to Y$ is quasi-finite and separated so by \cite[\href{https://stacks.math.columbia.edu/tag/03XX}{Tag 03XX}]{stacks-project}, we know $X$ is a scheme. 
Because $f$ induces an isomorphism on stabilizer groups of geometric points, the map $f': X \to Y$ is a bijection on geometric points.
Because $a$ is smooth and $\X$ is smooth and connected, we know $X$ is also smooth and connected.
Working in characteristic zero, the map $f'$ is generically smooth, hence birational. Now, Zariski's Main Theorem shows that $f': X \to Y$ is an isomorphism.
\end{proof}

\begin{lem} \label{stablem}
Suppose the characteristic of the ground field is zero. The map $\phi$ induces an isomorphism on stabilizer groups of geometric points.
\end{lem}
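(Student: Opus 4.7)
The plan is to analyze the induced homomorphism of stabilizer groups, namely the restriction map $\rho: H \to \Aut(C)$ that sends $g \in \PGSp_6$ stabilizing $\p^8$ to its restriction $g|_C$, where $H$ is the stabilizer of $[\p^8]$ and $C = \p^8 \cap Sp(3, V)$ is the resulting canonical genus $9$ curve. The goal is to exhibit $\rho$ as a bijection by constructing an explicit inverse coming from Mukai's construction of the ambient symplectic Grassmannian.

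For surjectivity, given $\tau \in \Aut(C)$, the uniqueness clause of Theorem \ref{Mukai} implies $\tau^* E \cong E$, where $E$ is the Mukai bundle. Any choice of isomorphism $\varphi: \tau^* E \xrightarrow{\sim} E$ produces an automorphism $\tilde \tau \in \GL(V)$ of $V = H^0(C, E)^\vee$, well-defined up to scalar because $E$ is simple. The symplectic form $\sigma$ on $V$ is characterized (up to scalar) as the unique form for which the image of $C \hookrightarrow G(3, V)$ lies in $Sp(3, V)$; since $\tilde\tau$ permutes the Lagrangian planes $E_p^\vee$ according to $\tau$, pullback gives $\tilde \tau^* \sigma = c\,\sigma$ for some $c \in k^*$, placing $\tilde \tau$ in $\mathrm{GSp}_6$. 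The resulting class $g \in \PGSp_6$ satisfies $g|_C = \tau$, and since $\p^8$ is the linear span of the canonical curve $C$ in $\p^{13}$, we automatically have $g \cdot \p^8 = \p^8$, so $g \in H$.

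For injectivity, suppose $g \in H$ acts trivially on $C$. Then $g$ fixes each point $[E_p^\vee] \in Sp(3, V) \subset G(3, V)$, so $g$ preserves each Lagrangian $E_p^\vee \subset V$ as a subspace. This yields a global automorphism $\bar g$ of the rank-$3$ sub-bundle $E^\vee \subset V \otimes \O_C$ underlying the Mukai embedding. Since Mukai shows that $E$ is stable, it is simple, so $\Aut(E^\vee) = \gg_m$ and $\bar g = \lambda \cdot \id_{E^\vee}$ for some scalar $\lambda$. Therefore $g$ acts as $\lambda \cdot \id$ on every $E_p^\vee \subset V$. These subspaces span $V$: any proper $V' \subsetneq V$ containing all of them would have annihilator $V'^\perp \subset H^0(C, E)$ lying inside $\bigcap_{p \in C} H^0(E(-p)) = 0$, forcing $V' = V$. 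Consequently $g - \lambda \cdot \id_V$ vanishes on $V$, so $g$ is a scalar, i.e.\ trivial in $\PGSp_6$.

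The main obstacle is purely the external input from Mukai's theory: the stability (hence simplicity) of $E$, and the canonicity up to scalar of the symplectic form $\sigma$. Both are established in \cite{Muk}; once they are in hand, the linear-algebra arguments above carry out essentially formally, using the Mukai embedding $C \hookrightarrow Sp(3, V)$ as the bridge between automorphisms of $C$ and elements of $\PGSp_6$.
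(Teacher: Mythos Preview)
Your proof is correct and follows essentially the same approach as the paper's: both arguments use the uniqueness of the Mukai bundle to get $\tau^*E\cong E$ for surjectivity, and the simplicity of $E$ (automorphisms are scalars) for injectivity. The only cosmetic difference is that for injectivity the paper packages the argument into a commutative square between $E^\vee\to V$ and itself, while you instead verify explicitly that the Lagrangian fibers $E_p^\vee$ span $V$; these are equivalent ways of concluding that the lift $\tilde g$ is a scalar.
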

\begin{proof}
In characteristic zero, finite group schemes are smooth, so it suffices to show the map induces a bijection on the finite stabilizer groups.
Suppose $x: \Spec k \to [G(9, 14) \smallsetminus \Delta/\PGSp_6]$ is a geometric point. Such a point is the data of $(V, \sigma, W)$ where $V$ is a six-dimensional vector space, $\sigma$ is a symplectic form remembered up to scaling and $ W \subset \ker \sigma^\sharp \subset \wedge^3 V$ is a $9$ dimensional subspace.
The stabilizer group of $x$ is the subgroup of elements $\gamma \in \PGSp_6 \subset \PGL_6$ that send 
$W \subset \ker \sigma^\sharp \subset \wedge^3 V$ into itself.
The image $\phi(x)$ is the genus $9$ curve
\[C := \pp W \cap Sp(3, V) \subset \pp(\ker \sigma^\sharp) \subset \pp(\wedge^3 V).\]
The
automorphism group of $\phi(x)$ is the automorphism group of $C$.

To see that $\phi$ induces an injection on these stabilizer groups, suppose $\gamma \in \PGSp_6$ induces the identity on $C$. Let $E^\vee \to V$ be the restriction of the tautological sequence on $Sp(3, V)$ to $C$. 
By \cite[Section 4]{Muk},
the bundle $E$ is the unique rank $3$ bundle of Mukai's Theorem \ref{Mukai} and 
$E^\vee \to V$ is dual to the evaluation map $H^0(E) \to E$.
Let $\tilde{\gamma} \in \mathrm{GSp}_6 \subset \GL_6$ be a lift of $\gamma$.
Since $\gamma$ induces the identity on $C \subset Sp(3, V)$, there must exist an automorphism $\epsilon$ of $E$ on $C$ so that the diagram below commutes
\begin{equation} \label{eg}
\begin{tikzcd}
E^\vee \arrow{d}[swap]{\epsilon^\vee} \arrow{r} & V  \arrow{d}{\tilde{\gamma}} \\
E^\vee \arrow{r} & V.
\end{tikzcd}
\end{equation}
Above, the horizontal maps are the same (restricted from the tautological sequence on $Sp(3, V)$)
In \cite[Proposition 3.5(3)]{Muk}, Mukai showed that the only automorphisms of $E$ are scalars, so $\epsilon$ is a scalar. For \eqref{eg} to commute, the map $\tilde{\gamma}$ must be the dual of the effect of $\epsilon$ on global sections. Hence, $\tilde{\gamma}$ is also a scalar, so $\gamma$ is the identity.

To see that $\phi$ induces a surjection on stabilizer groups, suppose we have an automorphism $i: C \to C$. We need to find an element $\gamma \in \PGSp_6(S)$ that induces $i$ on $C \subset \pp(\ker \sigma^\sharp) \subset \pp(\wedge^3 V)$. 
Let $E$ on $C$ be the restriction of the tautological bundle on $Sp(3, V)$, which is the Mukai bundle.
The tautological surjection $V \to E$ induces an isomorphism $V \cong H^0(E)$.
The pullback $i^*E$ has the properties of Theorem \ref{Mukai}, so Mukai's uniqueness tells us $i^*E \cong E$,
and moreover, this isomorphism is unique up to scaling \cite[Proposition 3.5(3)]{Muk}. Now $i^*$ gives rise to an automorphism
\[\gamma^\vee: V^\vee \cong H^0(E) \xrightarrow{i^*} H^0(i^*E) \cong H^0(E) \cong V^\vee,\]
which is well-defined up to scaling, and preserves the symplectic form up to scaling. By construction, the dual of this element, $\gamma \in \PGSp_6$ induces the automorphism $i : C \to C$.
\end{proof}

\begin{lem}\label{sep}
The quotient $[G(9, 14) \smallsetminus \Delta/\PGSp_6]$ is separated.
\end{lem}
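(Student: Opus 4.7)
The plan is to verify the valuative criterion for separatedness. Let $R$ be a DVR with fraction field $K$. Since the statement is étale-local on $\Spec R$ and $\PGSp_6$ is smooth, after an étale base change any two $R$-points of the quotient are represented by morphisms $f_1, f_2 : \Spec R \to G(9,14) \smallsetminus \Delta$, and an isomorphism between their restrictions to $\Spec K$ is given by an element $g_K \in \PGSp_6(K)$ satisfying $g_K \cdot f_1|_K = f_2|_K$. It therefore suffices to extend $g_K$ to an element of $\PGSp_6(R)$.

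First, I descend the problem to $\M_9$. Each $f_i$ yields a family of smooth genus-$9$ curves $\C_i \to \Spec R$, obtained by intersecting $Sp(3,V)$ with the corresponding $\pp^8 \subset \pp^{13}$, and $g_K$ induces an isomorphism $\C_1|_K \cong \C_2|_K$. Since $\M_9$ is a separated DM stack and our curves have finite automorphism groups, $\underline{\mathrm{Isom}}_R(\C_1, \C_2)$ is a finite $R$-scheme. Because $R$ is integrally closed in $K$, any $K$-point of a finite $R$-scheme extends uniquely to an $R$-point, so the generic isomorphism extends to an isomorphism $\psi : \C_1 \xrightarrow{\sim} \C_2$ over $\Spec R$.

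Next, I upgrade $\psi$ to an extension of $g_K$. Let $\mathcal{E}_i$ on $\C_i$ denote the Mukai bundle, obtained by restricting the tautological bundle from $Sp(3,V)$. On each geometric fiber of $\C_1 \to \Spec R$ both $\mathcal{E}_1$ and $\psi^*\mathcal{E}_2$ are Mukai bundles for the same curve, so Theorem \ref{Mukai} combined with $\mathrm{End}(E) = k \cdot \id$ from \cite[Proposition 3.5(3)]{Muk} shows that $\dim \Hom(\psi^*\mathcal{E}_2, \mathcal{E}_1)$ is identically one on fibers. Cohomology and base change then makes the pushforward of $\mathcal{H}om(\psi^*\mathcal{E}_2, \mathcal{E}_1)$ to $\Spec R$ a line bundle; rescaling the generic iso by a power of a uniformizer yields a generator, hence an isomorphism $\psi^*\mathcal{E}_2 \cong \mathcal{E}_1$ over $\C_1$. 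Pushing this iso forward and combining with the canonical identifications of the pushforwards of $\mathcal{E}_i$ with the fixed symplectic space (coming from the tautological sequence on $Sp(3,V)$) yields an $R$-automorphism of a free rank-$6$ $R$-module that preserves the symplectic form up to a scalar (since this $R$-linear condition holds generically), giving the required $g_R \in \PGSp_6(R)$.

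The technical heart of the argument is step two: promoting Mukai's pointwise uniqueness to an isomorphism in families, while carefully tracking the ``up to scalar'' ambiguity that makes the final output naturally land in $\PGSp_6$ rather than $\GL_6$. Step one is routine given separatedness of $\M_9$ and the fact that DVRs are integrally closed in their fraction fields.
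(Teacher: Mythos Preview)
Your argument is correct and takes a genuinely different route from the paper. The paper argues via GIT: it observes that $\Delta$ is a $\PGSp_6$-invariant irreducible divisor, so $X := G(9,14) \smallsetminus \Delta$ sits in the semistable locus for the ample line bundle $L = \O(\Delta)$; Mukai's Theorem~\ref{Mukai} shows orbits in $X$ are closed, and Lemma~\ref{stablem} gives finite stabilizers, whence $X$ lies in the stable locus and standard results from \cite{edidin} yield properness of the action and hence separatedness of the quotient. Your approach instead verifies the valuative criterion directly, factoring through $\M_9$: you extend the isomorphism of curves using separatedness of $\M_9$, then promote it to an isomorphism of Mukai bundles via cohomology and base change, and finally read off the extension of $g_K$. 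Both arguments ultimately rest on Mukai's uniqueness and the simplicity $\mathrm{End}(E)=k$; the paper packages these as ``closed orbits and finite stabilizers,'' while you use them to build the extension by hand. Your approach avoids the GIT layer but demands more care in the last step: you should lift $g_K$ to $\tilde{g}_K \in \mathrm{GSp}_6(K)$ to produce the generic bundle isomorphism, and then verify that the constructed $g_R$ really restricts to $g_K$ over $K$ (this holds because for $\tilde{g}_K \in \mathrm{GSp}_6$ the symplectic identification $V \cong V^\vee$ carries $\tilde{g}_K^{-T}$ to a scalar multiple of $\tilde{g}_K$, and the similitude factor automatically lies in $R^\times$ once the matrix lies in $\GL_6(R)$).
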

\begin{proof}
By Mukai \cite[Lemma 4.1]{Muk}, if the intersection $\pp^8 \cap Sp(3, 6)$ is smooth of the expected dimension $1$, then it is  a genus $9$ curve. Thus, $\Delta$ is the locus of linear spaces whose intersection with $Sp(3, 6)$ has a point with tangent space of dimension $2$ or more.
Considering the incidence correspondence
\[\{(p, \Lambda) \in Sp(3, 6) \times G(9, 14) : \dim (\pp \Lambda \cap \mathbb{T}_p Sp(3, 6)) \geq 2\}, \]
one sees that $\Delta$ is an irreducible divisor. Let $L = \O(\Delta)$ be the corresponding ample line bundle on $G(9, 14)$.

Let $V$ be a $6$-dimensional vector space equipped with a symplectic form $\sigma$.
The group $G:= \PGSp_6$ acts on $G(9, 14) = G(9, \ker \sigma^\sharp)$ via the $14$-dimensional representation $\ker \sigma^\sharp$.
Let $X = G(9, 14) \smallsetminus \Delta$.
We claim that the orbit of every point in $X$ is closed in $X$. Indeed suppose $x'$ is in the closure of the orbit of $x \in X$.  The orbit of $x$ corresponds to a constant family of a curve $[C] \in \M_9 \smallsetminus \M_9^5$. If $x' \in X$ is in the closure of the orbit of $x$, the intersection of the corresponding linear space with $Sp(3, 6)$ is a smooth curve $C'$ in the closure of the constant family of $C$, so $C' = C$. By Mukai's Theorem \ref{Mukai}, $x'$ is in the orbit of $x$. Because the orbits are closed, $X$ is contained in the stable locus of the action of $G$ on $G(9, 14)$ with respect to $L$ (see \cite[Definition 1.7(c)]{GIT}). 

By Lemma \ref{stablem}, the stabilizers of $G$ acting on $X$ are all finite. Therefore, \cite[Theorem 4.18]{edidin} shows that the action of $G$ on $X$ is proper. By \cite[Proposition 4.17]{edidin} the quotient stack $[X/G]$ is separated.
\end{proof}

\begin{cor}
Assume the characteristic of the ground field is $0$.
The map $\phi$ in \eqref{phieq} is an isomorphism.
\end{cor}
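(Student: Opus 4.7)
The plan is to verify the hypotheses of Lemma \ref{replem} for the map $\phi$. The three conditions there are: (i) characteristic zero (which is assumed), (ii) $\phi$ induces an isomorphism on stabilizer groups of geometric points, and (iii) $\phi$ induces a bijection on geometric points. Condition (ii) is exactly the content of Lemma \ref{stablem}. So the real work is checking condition (iii), along with the standing assumptions of Lemma \ref{replem}, namely that $\phi$ is a separated morphism of connected smooth Deligne--Mumford stacks of finite type.

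The standing assumptions are straightforward. Both stacks are finite type over the ground field. Smoothness: $G(9,14)$ is smooth, $\Delta \subset G(9,14)$ is closed, and $\PGSp_6$ is a smooth group scheme, so the quotient is smooth; the target $\M_9 \smallsetminus \M_9^5$ is open in the smooth stack $\M_9$. Connectedness: $G(9,14) \smallsetminus \Delta$ is a dense open in the irreducible variety $G(9,14)$, and $\M_9 \smallsetminus \M_9^5$ is a dense open in the irreducible stack $\M_9$. The DM property of the source follows from finiteness of stabilizers (Lemma \ref{stablem}, combined with Mukai's result that $\Aut(C)$ is finite for a smooth genus $9$ curve), together with the properness of the action, which was established in the proof of Lemma \ref{sep}; the target is a well-known DM stack. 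Separatedness of $\phi$ follows from separatedness of the source (Lemma \ref{sep}) and the target ($\M_9$ is separated), since any morphism between two stacks that are separated over a common base is itself separated.

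It remains to prove condition (iii), the bijection on geometric points, which is essentially a restatement of Mukai's Theorem \ref{Mukai}. For surjectivity, given a geometric point $[C] \in \M_9 \smallsetminus \M_9^5$, Theorem \ref{Mukai} provides a Mukai bundle $E$ and an identification of $C$ with $\pp W \cap Sp(3,H^0(E)^\vee)$ for some nine-dimensional $W \subset \ker \sigma^\sharp$, which smoothness of $C$ forces to lie outside $\Delta$; the tuple $(H^0(E)^\vee, \sigma, W)$ defines a geometric point of the quotient stack that $\phi$ sends to $[C]$. For injectivity, suppose two tuples $(V, \sigma, W)$ and $(V', \sigma', W')$ yield the same abstract genus $9$ curve $C$. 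Mukai's uniqueness statement in Theorem \ref{Mukai} asserts that any two linear-section realizations of $C$ are $\PGSp_6$-conjugate, which exactly says the two geometric points of $[G(9,14) \smallsetminus \Delta/\PGSp_6]$ coincide. Combining with Lemma \ref{stablem}, condition (iii) holds and Lemma \ref{replem} gives the desired isomorphism.

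The only non-routine step is interpreting Mukai's uniqueness correctly in the language of the quotient stack; in particular one must use that Mukai already worked up to $\PGSp_6$ rather than a larger group, so no further identifications are lost. All other verifications are formal.
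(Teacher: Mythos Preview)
Your proposal is correct and follows essentially the same approach as the paper: both invoke Lemma \ref{replem}, citing Mukai's Theorem \ref{Mukai} for the bijection on geometric points, Lemma \ref{stablem} for the isomorphism on stabilizers, and Lemma \ref{sep} for separatedness. You simply spell out in more detail the routine verifications (smoothness, connectedness, DM, finite type) that the paper leaves implicit.
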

\begin{proof}
Mukai's Theorem \ref{Mukai} says that $\phi$ induces a bijection on geometric points. By Lemma \ref{sep}, the source of $\phi$ is separated, and hence the map $\phi$ is separated.
By Lemma \ref{stablem}, we know $\phi$ induces an isomorphism on stabilizer groups of geometric points. Thus, $\phi$
 is an isomorphism by Lemma \ref{replem}.
 \end{proof}

\begin{rem}
In positive characteristic,
the only way $\phi$ can fail to be an isomorphism is if $\phi$ induces a purely inseparable extension of function fields.
However, even if $\phi$ is a purely inseparable extension of degree $d$, the maps $\phi^*$ and $\frac{1}{d}\phi_*$ are mutually inverse. Therefore, we still have an isomorphism of Chow rings $A^*([G(9,14)\smallsetminus \Delta/\PGSp_6]) \cong A^*(\M_9\smallsetminus \M^5_9)$, which is actually all we need for our purposes.
\end{rem}

Our task is now to compute generators for the Chow ring of $[G(9,14)\smallsetminus \Delta/\PGSp_6]$ and show that they are tautological. First, note that there is an exact sequence
\[
1\rightarrow \mu_2\rightarrow \SP_6\rightarrow \PGSp_6\rightarrow 0.
\]
It follows that
\[
[G(9,14)\smallsetminus \Delta/\SP_6]\rightarrow [G(9,14)\smallsetminus \Delta/\PSp_6]
\]
is a $\mu_2$-banded gerbe. Hence, the two stacks have isomorphic Chow rings (with $\qq$-coefficients), so we may work with the $\SP_6$ quotient instead. The stack $[G(9,14)\smallsetminus \Delta/\SP_6]$ is an open substack of a Grassmann bundle over $\BSp_6$. Therefore, its Chow ring is generated by the Chern classes of the tautological subbundle $\mathcal{S}$ of the Grassmann bundle together with the (pullbacks of) generators of the Chow ring of $\BSp_6$. Totaro \cite[Section 15]{T} computed the Chow ring of $\BSp_n$.
\begin{prop}[Totaro]
The Chow ring $A^*(\BSp_{2n})$ is isomorphic to $\zz[c_2,c_4,c_6,\dots,c_{2n}]$ where $c_{2i}$ are the classes of the standard representation (induced via $\SP_{2n}\hookrightarrow \SL_{2n}$).
\end{prop}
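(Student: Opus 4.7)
The plan is to prove the integral statement by induction on $n$, combining three ingredients: a Gysin-type sequence relating $A^*(\BSp_{2n})$ to $A^*(\BSp_{2n-2})$, a pullback to the classifying space of the Siegel parabolic to force odd Chern classes to vanish, and a pullback to a maximal torus to verify algebraic independence. The base case $\BSp_0 = \mathrm{pt}$ is trivial.

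First, let $V$ denote the standard rank $2n$ representation of $\SP_{2n}$, viewed as an equivariant vector bundle on $\BSp_{2n}$. Since $\SP_{2n}$ acts transitively on $V \smallsetminus 0$ with stabilizer isomorphic to $\SP_{2n-2} \ltimes \gg_a^{2n-1}$, the quotient $[(V \smallsetminus 0)/\SP_{2n}]$ is an affine bundle over $\BSp_{2n-2}$, so by $\mathbb{A}^1$-homotopy invariance $A^*([(V \smallsetminus 0)/\SP_{2n}]) \cong A^*(\BSp_{2n-2})$. The localization sequence for the closed zero section then yields a right-exact sequence
\[
A^{*-2n}(\BSp_{2n}) \xrightarrow{\,\cdot\, c_{2n}(V)\,} A^*(\BSp_{2n}) \longrightarrow A^*(\BSp_{2n-2}) \longrightarrow 0.
\]
Under the induced pullback $\BSp_{2n-2} \to \BSp_{2n}$, the restriction $V|_{\SP_{2n-2}}$ decomposes as the standard representation $V'$ of $\SP_{2n-2}$ plus a trivial rank two summand, so $c_i(V) \mapsto c_i(V')$ for $i \leq 2n-2$ while $c_{2n}(V) \mapsto 0$. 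Inductively, the surjection above shows $A^*(\BSp_{2n})$ is generated as a $\zz$-algebra by $c_2(V), c_4(V), \ldots, c_{2n}(V)$ together with the odd Chern classes of $V$.

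Next, to show that all odd Chern classes of $V$ vanish \emph{integrally}, pass to the Siegel parabolic $P \subset \SP_{2n}$, whose homogeneous space is the Lagrangian Grassmannian $LG(n,2n)$. On $BP$, the pulled-back $V$ admits a tautological Lagrangian subbundle $L$ of rank $n$ with $V/L \cong L^\vee$ via the symplectic form, so by the splitting principle
\[
c(V)|_{BP} = c(L)\, c(L^\vee) = \prod_{i=1}^n (1 - x_i^2),
\]
forcing every odd Chern class of $V$ to vanish on $BP$. Since $\SP_{2n}$ is a special group in the sense of Serre, the fibration $BP \to \BSp_{2n}$ is Zariski locally trivial with fiber $LG(n,2n)$; because $LG(n,2n)$ admits a cellular decomposition by Schubert varieties realizing $A^*(LG(n,2n))$ as a free $\zz$-module, the Leray--Hirsch theorem gives an integral splitting $A^*(BP) \cong A^*(\BSp_{2n}) \otimes_{\zz} A^*(LG(n,2n))$. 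In particular, pullback is injective and the odd Chern classes of $V$ must vanish already in $A^*(\BSp_{2n})$.

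Finally, to verify algebraic independence, pull back further to a maximal torus $T \cong \gg_m^n \subset \SP_{2n}$, for which $A^*(BT) = \zz[t_1, \ldots, t_n]$. The restriction $V|_T$ is $\bigoplus_{i=1}^n (\chi_{t_i} \oplus \chi_{-t_i})$, so its total Chern class is $\prod_i (1 - t_i^2)$ and the image of $c_{2j}(V)$ in $A^*(BT)$ is $(-1)^j e_j(t_1^2, \ldots, t_n^2)$. These elementary symmetric polynomials in the squares are algebraically independent over $\zz$, so the composition
\[
\zz[c_2, c_4, \ldots, c_{2n}] \twoheadrightarrow A^*(\BSp_{2n}) \longrightarrow A^*(BT)
\]
is injective, forcing the surjection to be an isomorphism. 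The main delicate point is the integral Leray--Hirsch invoked in Step 2: it requires both the specialness of $\SP_{2n}$ (to get Zariski local triviality of the $LG$-bundle $BP \to \BSp_{2n}$) and the existence of an integral Schubert basis for $A^*(LG(n,2n))$, both of which are classical.
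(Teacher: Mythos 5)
Your argument is correct, and it is worth noting that the paper does not prove this statement at all: it is quoted as Totaro's theorem with a citation to \cite[Section 15]{T}, so your proposal is a reconstruction of the standard proof rather than a parallel to anything in the text. The skeleton you use (induction via the transitive action of $\SP_{2n}$ on $V \smallsetminus 0$, the localization sequence with the Euler class $c_{2n}(V)$, and restriction to the maximal torus to rule out relations) is exactly the classical argument for the classical groups and is in the spirit of Totaro's computation. Two small points. First, the stabilizer of a nonzero vector is $\SP_{2n-2}$ extended by the $(2n-1)$-dimensional Heisenberg group, which is unipotent but not isomorphic to $\gg_a^{2n-1}$ for $n \geq 2$; this is harmless, since all you need is that the unipotent radical is an iterated extension of $\gg_a$'s, so $[(V\smallsetminus 0)/\SP_{2n}]$ is an iterated affine bundle over $\BSp_{2n-2}$ and the Chow ring is unchanged. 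Second, your Leray--Hirsch step is both slightly under-justified and avoidable: for Chow groups, Zariski local triviality plus a cellular fiber is not by itself enough --- one needs global classes on $BP$ restricting to a basis of $A^*(LG(n,2n))$ on each fiber, which do exist (monomials, or Schubert polynomials, in the Chern classes of the tautological Lagrangian subbundle), or one can instead pull back to the full isotropic flag bundle $BB \to \BSp_{2n}$, which is an iterated projective bundle because every line is isotropic for a symplectic form, making split injectivity immediate. In fact the whole odd-class detour can be skipped: in your inductive step the lifts of the generators $c_{2i}(V')$ of $A^*(\BSp_{2n-2})$ may be taken to be the classes $c_{2i}(V)$ themselves, so the localization sequence already gives generation by $c_2(V), c_4(V), \ldots, c_{2n}(V)$ alone, and once the torus computation identifies $A^*(\BSp_{2n})$ with $\zz[c_2,\ldots,c_{2n}]$ the odd Chern classes vanish for degree reasons.
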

As a result, we obtain generators for the Chow ring of $\M_9 \smallsetminus \M_9^5$.
\begin{lem} \label{gens}
The Chow ring of $\M_9\smallsetminus \M^5_9$ is generated by the Chern classes of $\mathcal{S}$ and the Chern classes $c_2(\V),c_4(\V),c_6(\V)$, where $\V$ is the standard representation of $\SP_6$. 
\end{lem}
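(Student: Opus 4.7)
The proof is essentially a chase through the setup just established. Here is the plan.

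\begin{proof}[Proof proposal]
Combining the corollary that $\phi$ is an isomorphism with the $\mu_2$-gerbe $[G(9,14)\smallsetminus \Delta/\SP_6]\to[G(9,14)\smallsetminus \Delta/\PGSp_6]$ (which induces an isomorphism on rational Chow rings, as in Remark \ref{sl2rem}), we have
\[
A^*(\M_9\smallsetminus \M^5_9) \;\cong\; A^*\bigl([G(9,14)\smallsetminus \Delta/\SP_6]\bigr).
\]
So it suffices to produce generators for the right-hand side.

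The plan is to realize $[G(9,14)\smallsetminus \Delta/\SP_6]$ as an open substack of a Grassmann bundle over $\BSp_6$. The contraction $\sigma^\sharp:\wedge^3 V\to V$ is $\SP_6$-equivariant, so $\ker\sigma^\sharp\subset \wedge^3\V$ is a subrepresentation of $\SP_6$ of dimension $14$. Forming the associated relative Grassmannian gives a Grassmann bundle
\[
p:G\bigl(9,\ker\sigma^\sharp\bigr)\;\longrightarrow\;\BSp_6,
\]
whose total space is exactly the quotient stack $[G(9,14)/\SP_6]$. The standard Grassmann bundle formula says that $A^*(G(9,\ker\sigma^\sharp))$ is generated as a module over $p^*A^*(\BSp_6)$ by the Chern classes of the tautological rank $9$ subbundle $\mathcal{S}$. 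By Totaro's theorem, $A^*(\BSp_6)$ is itself generated by $c_2(\V),c_4(\V),c_6(\V)$. Hence $A^*(G(9,\ker\sigma^\sharp))$ is generated by $c_i(\mathcal{S})$ together with $c_2(\V),c_4(\V),c_6(\V)$ (pulled back along $p$).

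Finally, $[G(9,14)\smallsetminus\Delta/\SP_6]$ is an open substack of $[G(9,14)/\SP_6]$, and restriction to an open substack is surjective on Chow groups (excision). Therefore the restrictions of $c_i(\mathcal{S})$ and $c_{2j}(\V)$ generate $A^*([G(9,14)\smallsetminus \Delta/\SP_6])$, which completes the proof.
\end{proof}

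There is no real obstacle to this argument; the content of the lemma is just the assembly of results already in place (Mukai's theorem packaged by $\phi$, the gerbe reduction, the Grassmann bundle formula, Totaro's computation of $A^*(\BSp_6)$, and excision). The only mild point worth flagging is that one must observe $\ker\sigma^\sharp$ is genuinely an $\SP_6$-subrepresentation of $\wedge^3\V$ so that the relative Grassmannian makes sense over $\BSp_6$; this is immediate from the $\SP_6$-equivariance of the contraction map. The real work of the section then begins in the next step, where one must show that $c_1(\mathcal{S}),\ldots,c_9(\mathcal{S})$ and $c_2(\V),c_4(\V),c_6(\V)$ are tautological classes on $\M_9\smallsetminus\M_9^5$ — that is where the identification of $\mathcal{S}$ with (a twist of) the Hodge bundle and the identification of $\Sym^2\V$ with the bundle of quadrics cutting out the canonical curve will enter.
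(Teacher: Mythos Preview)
Your proposal is correct and follows exactly the argument the paper gives: identify $\M_9\smallsetminus\M_9^5$ with the $\SP_6$ quotient via the $\mu_2$-gerbe, recognize this as an open in a Grassmann bundle over $\BSp_6$, and apply the Grassmann bundle formula together with Totaro's computation and excision. The paper treats this lemma as an immediate consequence of the preceding discussion rather than giving a separate proof, but the content is identical to what you wrote.
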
 
First, we deal with the Chern classes $c_i(\mathcal{S})$. Let $f:\C\rightarrow \M_9\smallsetminus \M^5_9$ be the universal curve.
\begin{lem} \label{Sbundle}
The Chern classes of $\mathcal{S}$ are tautological.
\end{lem}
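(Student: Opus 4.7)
The plan is to identify $\mathcal{S}^\vee$ with the Hodge bundle $f_*\omega_f$ up to twisting by a line bundle from the base, and then to use that the rational Picard group of $\M_9\smallsetminus \M_9^5$ is generated by a tautological class.

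Let $B := [G(9,14)\smallsetminus\Delta/\SP_6]$, write $\pi : \pp \mathcal{S} \to B$ for the tautological $\pp^8$-bundle, and let $f : \mathcal{D} \hookrightarrow \pp\mathcal{S}$ be the universal curve, obtained by intersecting $\pp\mathcal{S}$ with the relative symplectic Grassmannian. By Mukai's Theorem \ref{Mukai}, on each geometric fiber $b \in B$ the curve $\mathcal{D}_b \subset (\pp\mathcal{S})_b$ is canonically embedded, so $\O_{\pp\mathcal{S}}(1)|_{\mathcal{D}_b} \cong \omega_{\mathcal{D}_b}$. Hence $\O_{\pp\mathcal{S}}(1)|_{\mathcal{D}} \otimes \omega_f^{-1}$ is a line bundle on $\mathcal{D}$ that is trivial on every fiber of $f$, and so by cohomology and base change is the pullback $f^*L$ of a line bundle $L$ on $B$. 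Applying $\pi_*$ to the restriction map $\O_{\pp\mathcal{S}}(1) \to \O_{\pp\mathcal{S}}(1)|_{\mathcal{D}} \cong \omega_f \otimes f^*L$ yields
\[ \mathcal{S}^\vee = \pi_*\O_{\pp\mathcal{S}}(1) \longrightarrow f_*(\omega_f \otimes f^*L) \cong f_*\omega_f \otimes L, \]
which on each fiber is the restriction $H^0(\pp^8, \O(1)) \to H^0(\mathcal{D}_b, \omega_{\mathcal{D}_b})$. Both sides are $9$-dimensional and this map is injective because $\mathcal{D}_b$ spans $\pp^8$, so it is a fiberwise, hence global, isomorphism $\mathcal{S}^\vee \cong f_*\omega_f \otimes L$.

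The Whitney formula then gives
\[ c_i(\mathcal{S}^\vee) = \sum_{j=0}^{i} \binom{9-j}{i-j}\, c_j(f_*\omega_f) \cdot c_1(L)^{i-j}. \]
The classes $c_j(f_*\omega_f) = \lambda_j$ are tautological by Mumford's Grothendieck--Riemann--Roch computation, so it remains to show $c_1(L)$ is tautological. Since $B \to \M_9\smallsetminus\M_9^5$ is a $\mu_2$-gerbe, we have $A^1(B)_\qq \cong A^1(\M_9\smallsetminus\M_9^5)_\qq$, and this group is a quotient of $\Pic(\M_9)_\qq = \qq\cdot\lambda_1$ by the theorem of Harer and Arbarello--Cornalba. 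Thus $c_1(L)$ is a rational multiple of $\lambda_1$, hence tautological, and so every $c_i(\mathcal{S}^\vee)$--equivalently every $c_i(\mathcal{S})$--is tautological. The only delicate step is the cohomology-and-base-change descent identifying $\O_{\pp\mathcal{S}}(1)|_\mathcal{D} \otimes \omega_f^{-1}$ as a pullback from $B$; this is routine because $f$ is a smooth proper family of curves. An alternative to invoking Harer's Picard computation would be to determine $L$ directly from $\det\mathcal{S}^\vee \cong \det(f_*\omega_f) \otimes L^{\otimes 9}$ combined with a computation of $\det\mathcal{S}$ as a tautological line bundle on the Grassmann bundle, but the $\Pic$-based argument above is cleanest.
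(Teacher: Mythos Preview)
Your proof is correct and follows essentially the same approach as the paper's: identify $\mathcal{S}$ with the Hodge bundle up to a line bundle twist, and then invoke Harer's theorem that $\Pic(\M_g)_\qq$ is generated by $\lambda_1$ to conclude the twist is tautological. The paper states the identification $\pp\mathcal{S}^\vee \cong \pp(f_*\omega_f)$ directly from Mukai's theorem, whereas you supply the extra step of descending $\O_{\pp\mathcal{S}}(1)|_{\mathcal{D}}\otimes\omega_f^{-1}$ via cohomology and base change; this is a welcome clarification but not a different argument.
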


\begin{proof}
By Mukai's theorem, the projectivization of dual of the tautological subbundle $\p \mathcal{S}^\vee$ is identified with projectivization of the Hodge bundle $\p(f_*\omega_{f})$. Therefore, $\mathcal{S}\cong (f_*\omega_{f})^\vee \otimes \mathcal{L}$ where $\mathcal{L}$ is some line bundle on $\M_9 \smallsetminus \M_9^5$.  By a theorem of Harer \cite{H} in characteristic $0$ and Moriwaki \cite{Moriwaki} in characteristic $p$, $\Pic(\Mg)$ and hence $\Pic(\M_9 \smallsetminus \M_9^5)$ is generated by $c_1(f_*\omega_{f})$. It follows from the splitting principle that the Chern classes of $\mathcal{S}$ are tautological.
\end{proof}

Next we deal with the Chern classes $c_i(\mathcal{V})$.
Writing $f:\C\rightarrow \M_9\smallsetminus \M^5_9$ for the universal curve, let $\mathcal{I}_2(\C)$ be defined by the exact sequence
\[
0\rightarrow \mathcal{I}_2(\C)\rightarrow \Sym^2 (f_*\omega_{f}) \rightarrow f_*(\omega_f^{\otimes 2})\rightarrow 0.
\]
The bundle $\mathcal{I}_2(\C)$ is a rank $21$ bundle parametrizing the quadrics vanishing on the curve under its canonical embedding. By Petri's theorem, a nontrigonal canonical curve of genus $9$ is exactly the common zero locus of these $21$ quadrics. 
\begin{lem} \label{quadricsbundle}
The bundle $\mathcal{I}_2(\C)$ is isomorphic to $\Sym^2 \V$.
\end{lem}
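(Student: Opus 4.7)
My plan is to construct a natural bundle map $\Phi\colon \Sym^2\V\to \mathcal{I}_2(\C)$ and then to verify it is a fiberwise isomorphism by combining Petri's theorem with the transversality of Mukai's intersection description of the universal curve.

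I would begin by exploiting Theorem~\ref{Mukai}, which realizes the universal canonical curve as the scheme-theoretic intersection $\C=\p\mathcal{S}\cap \widetilde{Sp}(3,\V)$ inside the $\p^{13}$-bundle $\p(\ker\sigma^\sharp)$ over the stack $G:=[G(9,14)\smallsetminus\Delta/\Sp_6]$, where $\widetilde{Sp}(3,\V)$ denotes the relative symplectic Grassmannian. Since $\Delta$ is defined to exclude every $\p^8$ for which this intersection fails to be smooth of the expected dimension, the intersection is transverse at every geometric point of the stratum. Consequently, on each fiber the ideal of $C$ in $\p^8$ equals the restriction of the ideal of $Sp(3,V)$ in $\p^{13}$, and in particular the same equality holds in degree~$2$ since both curves and the symplectic Grassmannian are projectively non-degenerate.

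Next, I would invoke the classical fact, essentially due to Mukai, that $Sp(3,V)\subset \p^{13}=\p(\ker\sigma^\sharp)$ is projectively normal with homogeneous ideal generated by $21$ linearly independent quadrics, which form the irreducible $\Sp_6$-representation $\Sym^2V$ (the adjoint representation of $\mathfrak{sp}_6$, of dimension $21$). An explicit $\Sp_6$-equivariant map $\Sym^2\V\to\Sym^2(\ker\sigma^\sharp)^\vee$ realizing the identification can be written down by contraction against the symplectic form, and globalizes to an isomorphism $\Sym^2\V\cong \tilde q_\ast I_{\widetilde{Sp}}(2)$ of bundles on $\BSp_6$, where $\tilde q\colon \p(\ker\sigma^\sharp)\to \BSp_6$. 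Restricting these quadrics from $\p(\ker\sigma^\sharp)$ to $\p\mathcal{S}$, using that they automatically vanish on $\C=\widetilde{Sp}\cap \p\mathcal{S}$, produces the desired bundle map
\[
\Phi\colon \Sym^2\V\longrightarrow \mathcal{I}_2(\C).
\]
The identification of $\p\mathcal{S}$ with $\p(f_\ast\omega_f)^\vee$ provided by Lemma~\ref{Sbundle} introduces at most a line bundle twist, which is torsion in $\mathrm{Pic}(\M_9\smallsetminus\M_9^5)\otimes\qq$ (the Brill--Noether divisor $\M_9^5$ has nonzero class a multiple of $\lambda$, so kills $\lambda$ rationally on the open), and hence is invisible in rational Chow.

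To conclude, I would verify that $\Phi$ is a fiberwise isomorphism. The source has rank $21$ by the representation-theoretic identification, while the target has rank $21$ by Petri's theorem, applicable since $\M_9\smallsetminus\M_9^5\subset\M_9\smallsetminus\M_9^3$ so every curve in the stratum is non-trigonal. By the transversality established in the first step, the image of $\Phi$ on each fiber is precisely $I_2(C,\p^8)$, which also has dimension $21$ by Petri. Thus $\Phi$ is a surjection between bundles of equal rank and hence an isomorphism, giving $\mathcal{I}_2(\C)\cong \Sym^2\V$ as claimed. The principal obstacle is the classical input in the second step, namely the $\Sp_6$-equivariant identification of the rank-$21$ bundle of quadrics defining $Sp(3,V)$ with $\Sym^2V$; once that is accepted, transversality plus Petri converts a dimension count into the bundle isomorphism.
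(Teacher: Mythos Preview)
Your approach is essentially the same as the paper's: both identify the quadrics cutting out $C\subset\pp^8$ as restrictions of the $21$ quadrics cutting out $Sp(3,V)\subset\pp^{13}$, and then identify the $\Sp_6$-representation on the latter with $\Sym^2 V$. The paper carries out the representation-theoretic step explicitly via Mukai's decomposition $V\cong U_0\oplus U_\infty$ and his explicit equations \cite[Eq.~0.1]{Muk}, recognizing the three blocks as $\Sym^2 U_0$, $\Sym^2 U_\infty$, and $U_0\otimes U_\infty$; you instead cite it as a classical fact and supplement with the transversality/Petri dimension count, which is a reasonable way to package the same content.

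One point to correct: your handling of the potential line-bundle twist does not prove what is claimed. Even granting that $[\M_9^5]$ is a nonzero multiple of $\lambda$ so that $\lambda=0$ in $A^1(\M_9\smallsetminus\M_9^5)$, this only gives equality of rational Chern classes, not an isomorphism of bundles. In fact no twist arises at all: by Mukai's property~(1) one has $\det E\cong\omega_C$, and since the Pl\"ucker line bundle restricts as $\O_{\pp(\wedge^3 V)}(1)|_{G(3,V)}=\det U^\vee$ with $U|_C=E^\vee$, we get $\O_{\pp\mathcal{S}}(1)|_{\C}=\omega_f$. Pushing forward and using nondegeneracy of $C$ in $\pp\mathcal{S}$ yields a canonical isomorphism $\mathcal{S}^\vee\cong f_*\omega_f$, so restriction of quadrics lands directly in $\Sym^2(f_*\omega_f)$ and your map $\Phi$ has the correct target without any twist. (Your weaker conclusion would still suffice for Corollary~\ref{cV}, but the lemma as stated asserts a bundle isomorphism.)
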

\begin{proof} Because a canonical curve of genus $9$ with no $g^1_5$ is a linear section of the symplectic Grassmannian, we see that we can identify the space of quadrics vanishing on the canonical curve with the restriction to $\p^8$ of the space of quadrics defining the symplectic Grassmannian $Sp(3, V) \subset \pp(\ker \sigma^\sharp) \cong \pp^{13}$.
The symplectic Grassmannian is the zero locus of $21$ quadrics in $\p^{13}$, see \cite[Equation 0.1]{Muk}.
That is, $\mathcal{I}_2(\C)$ is the corresponding $21$-dimensional representation of $\SP_6$. Following Mukai's notation on p. 1544, let $V$ be a six-dimensional vector space with a symplectic form and choose a decomposition $V \cong U_0 \oplus U_\infty$ for two symplectic subspaces $U_0, U_\infty$.
Then we identify the representations in \cite[Equation 0.1]{Muk} as follows. The first equation (representing $6$ quadrics) lives in a space of symmetric $3 \times 3$ matrices $\Sym_3k$ corresponding to $\Sym^2 U_0$, the second equation (representing another $6$ quadrics) lives in $\Sym_3 k \cong \Sym^2 U_\infty$ and the third equation (representing $9$ quadrics) lives in a space of $3 \times 3$ matrices, $\mathrm{Mat}_3 \cong U_0 \otimes U_\infty$. Together, we recognize $(\Sym^2 U_0) \oplus (\Sym^2 U_\infty) \oplus (U_0 \otimes U_\infty)$ as $\Sym^2 V$, which is also isomorphic to the adjoint representation of $\SP_6$.
\end{proof}


\begin{cor} \label{cV}
The Chern classes $c_2(\V),c_4(\V), c_6(\V)$ are all tautological.
\end{cor}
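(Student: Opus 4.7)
My plan is to combine Lemma \ref{quadricsbundle} with the splitting principle. The defining sequence
$$0 \to \mathcal{I}_2(\C) \to \Sym^2 (f_*\omega_f) \to f_*(\omega_f^{\otimes 2}) \to 0$$
together with Lemma \ref{quadricsbundle} shows that the Chern classes of $\Sym^2 \V \cong \mathcal{I}_2(\C)$ are tautological, since the Chern classes of the Hodge bundle and of $f_*(\omega_f^{\otimes 2})$ are both tautological by Grothendieck--Riemann--Roch. It thus suffices to express each of $c_2(\V), c_4(\V), c_6(\V)$ as a polynomial in the $c_j(\Sym^2 \V)$.

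For this I would use the splitting principle. Since $\V$ is symplectic, formally write $\V = \bigoplus_{i=1}^3 (L_i \oplus L_i^{-1})$ with $x_i := c_1(L_i)$ and $y_i := x_i^2$. Then $c(\V) = \prod_i (1 - y_i)$, so the odd Chern classes of $\V$ vanish and $c_{2k}(\V) = (-1)^k e_k(y_1, y_2, y_3)$ for $k = 1, 2, 3$. From the decomposition
$$\Sym^2 \V \cong \bigoplus_i (L_i^{\otimes 2} \oplus \O \oplus L_i^{\otimes -2}) \oplus \bigoplus_{i<j}(L_iL_j \oplus L_iL_j^{-1} \oplus L_i^{-1}L_j \oplus L_i^{-1}L_j^{-1})$$
we get
$$c(\Sym^2 \V) = \prod_i (1 - 4 y_i) \prod_{i<j} \bigl(1 - (x_i+x_j)^2\bigr)\bigl(1 - (x_i-x_j)^2\bigr),$$
which again lies in $\qq[e_1(y), e_2(y), e_3(y)]$. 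The goal then reduces to showing the expansion
$$c_{2k}(\Sym^2 \V) = \lambda_k\, c_{2k}(\V) + P_k\bigl(c_2(\V), \ldots, c_{2k-2}(\V)\bigr)$$
has $\lambda_k \neq 0$ for $k = 1, 2, 3$: if so, one inverts this triangular system to express each $c_{2k}(\V)$ as a polynomial in the Chern classes of $\Sym^2 \V$.

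For $k = 1$, direct expansion (or the general identity $c_2(\Sym^2 E) = (r+2)c_2(E)$ for a rank-$r$ bundle with vanishing $c_1$) gives $\lambda_1 = 8$, so $c_2(\V) = \tfrac{1}{8}c_2(\Sym^2 \V)$ is already manifestly tautological. For $k = 2, 3$ one must extract the coefficient of $e_k(y_1, y_2, y_3)$ from the product formula above --- a bounded combinatorial calculation, which can for instance be carried out by evaluating both sides at three well-chosen specializations $(y_1, y_2, y_3) \in \qq^3$ and solving the resulting $3 \times 3$ linear system. Verifying the nonvanishing of $\lambda_2$ and $\lambda_3$ is the only non-formal step and is the main (purely mechanical) obstacle; once it is in place, the triangular inversion together with the first paragraph completes the proof.
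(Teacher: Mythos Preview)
Your approach is essentially identical to the paper's: both use the exact sequence defining $\mathcal{I}_2(\C)$ together with Lemma \ref{quadricsbundle} and Grothendieck--Riemann--Roch to see that the Chern classes of $\Sym^2\V$ are tautological, and then invoke the splitting principle (using that the odd Chern classes of $\V$ vanish) to invert a triangular relation between $c_{2k}(\Sym^2\V)$ and $c_{2k}(\V)$. The only difference is that the paper actually carries out the computation you leave as ``purely mechanical,'' obtaining $\lambda_1=8$, $\lambda_2=14$, $\lambda_3=38$ (from $c(\Sym^2\V)=1+8c_2(\V)+[22c_2(\V)^2+14c_4(\V)]+[28c_2(\V)^3+54c_2(\V)c_4(\V)+38c_6(\V)]+\cdots$), confirming the nonvanishing you needed.
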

\begin{proof}
By the previous Lemma, there is an exact sequence
\[
0\rightarrow \Sym^2 \V \rightarrow \Sym^2 (f_*\omega_{f}) \rightarrow f_*(\omega_f^{\otimes 2})\rightarrow 0.
\]
By the splitting principle and Grothendieck--Riemann--Roch,
the Chern classes of $\Sym^2 (f_*\omega_{f})$ and $f_*(\omega_f^{\otimes 2})$ are tautological. Hence, the Chern classes of $\Sym^2 \V$ are tautological. By the splitting principle and the fact that the odd Chern classes of $\V$ vanish, we have
\[
c(\Sym^2 \V) = 1 + 8 c_2(\V) + [22c_2(\V)^2 + 14 c_4(\V)] + [28 c_2(\V)^3 + 54 c_2(\V) c_4(\V) + 38 c_6(\V)] + \ldots.
\]
It follows that $c_2(\V), c_4(\V), c_6(\V)$ are tautological.
\end{proof}

By Lemmas \ref{gens} and \ref{Sbundle} and Corollary \ref{cV}, we conclude that $A^*(\M_9 \smallsetminus \M_9^5)$ is tautological. Combining this with Lemma \ref{H59} completes the proof of Theorem \ref{main9}.

\bibliographystyle{amsplain}
\bibliography{refs}
\end{document}